\documentclass[a4paper,12pt]{book}
\usepackage[T1]{fontenc}
\usepackage[utf8]{inputenc}
\usepackage{geometry}
\geometry{margin=3cm}
\usepackage{amsmath,amssymb,graphicx,stmaryrd,latexsym}
\usepackage{amsthm}
\usepackage{dsfont}\let\mathbb\mathds
\usepackage[frenchb,english]{babel}
\usepackage[all]{xy}
\usepackage{amscd}
\usepackage{makeidx}

\newtheorem{theoreme}{Theorem}[section]
\newtheorem{definition}[theoreme]{Definition}
\newtheorem{lem}[theoreme]{Lemma}
\newtheorem{ex}[theoreme]{Example}
\newtheorem{prop}[theoreme]{Proposition}
\newtheorem{cor}[theoreme]{Corollary}

\theoremstyle{remark}
\newtheorem{rem}{Remark} 

\theoremstyle{Thmbis}
\newtheorem*{Thm}{Theorem}

\newcounter{diagram}[chapter]
\renewcommand\thediagram{\thechapter.\arabic{diagram}}
\newenvironment{diagram}[1]{
\refstepcounter{diagram}
\def\titredudiagramme{#1}
\vspace{4mm}
}{
\begin{center}
  {\rm Diagram \thediagram. \emph{\titredudiagramme}}
\end{center}
\vspace{8mm}
}

%\newcommand\com{Ce qui se passe quand on écrit la commande}

%\title{Morita Theory in Enriched context}
%\author{Segrt Kruna}
\date{}

\thispagestyle{empty}

 %\date{\today{}}
\makeindex

\begin{document}

\begin{center}
{\Large UNIVERSIT\'E DE NICE SOPHIA ANTIPOLIS   --   UFR Sciences}\\
\vspace*{0.5cm}
\'Ecole Doctorale Sciences Fondamentales et Appliqu\'ees\\
\vspace*{2.5cm}
{\Large\bf TH\`ESE}\\
\vspace*{0.3cm}
pour obtenir le titre de\\
\vspace*{0.1cm}
{\Large\bf Docteur en Sciences}\\
\vspace*{0.1cm}
Sp\'ecialit\'e 
{\Large {\sc Math\'ematiques}}\\
\vspace*{0.8cm}
pr\'esent\'ee et soutenue par\\
\vspace*{0.1cm}
{\large\bf Kruna SEGRT}\\
\vspace*{1.5cm}
{\LARGE\bf Morita theory in enriched context}\\
\vspace*{1.5cm}
Th\`ese dirig\'ee par {\bf Clemens BERGER}\\
\vspace*{0.2cm}
soutenue le 24 février 2012\\
\vspace*{1.5 cm}
Membres du jury :\\
\vspace*{0.4cm}
\begin{tabular}{lllll}
M. & Clemens BERGER & & Directeur de th\`ese\\
M. & Denis-Charles CISINSKI & & Examinateur\\
M. & Paul-André MELLIES & & Rapporteur et Examinateur\\
M. & Stefan SCHWEDE & & Rapporteur\\
M. & Carlos SIMPSON & & Examinateur\\
Mme & Svjetlana TERZIC & & Co-directrice de th\`ese\\
M. & Rainer VOGT & & Rapporteur et Examinateur\\
\end{tabular}

\vspace*{2.5 cm}
Laboratoire Jean-Alexandre Dieudonn\'e, Universit\'e de Nice, Parc Valrose, 06108 Nice Cedex 2
\end{center}

\tableofcontents 

\chapter*{Introduction}\label{intro}

The aim of this thesis is to formulate and prove a homotopy-theoretical generalization of classical Morita theory. More precisely, we indicate sufficient conditions for a strong monad $T$ on a monoidal Quillen model category $\mathcal E$ in order that the homotopy category of $T$-algebras be equivalent to the homotopy category of modules of a certain monoid in $\mathcal E$. %When $\mathcal E$ is the category of $\Gamma$-spaces we recover a result of Stefan Schwede \cite{SchSHAT}.   

%In the context of strong monad, categories equivalent to a module category for a monoid in a closed symmetric monoidal category $\mathcal E$,can be described intrinsically.
%Morita equivalence is a relationship defined between rings that preserves many ring-theoretic properties. It is named after Japanese mathematician Kiiti Morita who defined equivalence and a similar notion of duality in 1958.
%More precisely, Morita \cite{MorDM} established a result on the equivalences between module categories via the functor $Hom\left(P,-\right)$, with P a projective generator. In 1962, Gabriel \cite{GDCA} proved that there is an equivalence between an abelian category and a module category over a ring via the functor $Hom\left(P,-\right)$. Few years later, Gabriel and Popescu \cite{GPCCA} studied the localization of module categories over a ring.
%Noawadays, the term Morita theory is used for results concerning equivalences of various kinds of module categories. It was studied for categories of regular algebras, for derived categories, for stable model categories \cite{SchMTADSMC}.

%\section*{Tensorial strength}
In order to state our homotopical Morita theorem we rely on the notion of tensorial strength which has been introduced by Anders Kock \cite{KockSF,KockMS} a long time ago. A strong functor $T:\mathcal A \rightarrow \mathcal B$ between categories $\mathcal A$ and $\mathcal B$, tensored over a closed symmetric monoidal category $\mathcal E$, is a functor equipped with a tensorial strength $$\sigma_{X,A}: X\otimes TA \rightarrow T(X\otimes A)$$ 
for any objects $X$ of $\mathcal E$ and $A$ of $\mathcal A$ satisfying some natural unit and associativity axioms. There is a similar notion of strong natural transformation.
If $\mathcal A$ and $\mathcal B$ are enriched and tensored over $\mathcal E$, then giving a strength for $T$ amounts to giving an enrichment of $T$ over $\mathcal E$. 
%This allows us to closely relate strong and enriched functors as well as strong and enriched natural transformations.
In particular, a given functor extends to a strong functor if and only if it extends to an enriched functor.
We rephrase these constructions of Kock in a $2$-categorical framework. This emphasizes the relative character of the notion of strength while Kock's original treatment was concentrated on monads. The $2$-categorical view point clarifies in particular the four axioms appearing in Kock's definition of a strong monad. We establish in Chapter 3 
%of Kock's construction which originally has only been applied to enriched monads and not to enriched functors. The 2-categorical view point illustrates very clearly the correspondence betwen strength and enrichment.
%This correspondence extends to a $2$-isomorphism between $2$-categories $\mathbf{StrongCat}$ and $\mathcal E\mathbf{-Cat}$:
\begin{Thm}\label{th2-cat}
The following 2-categories of tensored $\mathcal E$-categories are 2-isomorphic:
\begin{enumerate}
 \item The 2-category of strong functors and strong natural transformations of tensored $\mathcal E$-categories, $\mathbf{StrongCat}$; 
 \item The 2-category of $\mathcal E$-functors and $\mathcal E$-natural transformations of tensored $\mathcal E$-categories, $\mathcal E\mathbf{-Cat}$.
\end{enumerate}
\end{Thm}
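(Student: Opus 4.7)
My plan is to construct an identity-on-objects 2-functor $\Phi: \mathbf{StrongCat} \to \mathcal{E}\text{-}\mathbf{Cat}$ and verify that it is a 2-isomorphism by checking bijectivity on 1-cells and 2-cells, together with preservation of the two kinds of composition. On objects there is nothing to do, since both 2-categories have the same objects, namely tensored $\mathcal{E}$-categories.

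For 1-cells, given a strong functor $T: \mathcal{A} \to \mathcal{B}$ with strength $\sigma_{X,A}: X \otimes TA \to T(X \otimes A)$, I would define the $\mathcal{E}$-enrichment $T_{A,A'}: [A,A'] \to [TA, TA']$ as the transpose under the tensor-hom adjunction $(-\otimes TA) \dashv [TA,-]$ of the composite
$$[A,A'] \otimes TA \xrightarrow{\sigma_{[A,A'],A}} T([A,A'] \otimes A) \xrightarrow{T(\mathrm{ev}_{A,A'})} TA',$$
where $\mathrm{ev}$ is the counit of $(-\otimes A) \dashv [A,-]$. Conversely, given an $\mathcal{E}$-functor structure $T_{A,A'}$, I would define the strength $\sigma_{X,A}$ as the transpose of
$$X \xrightarrow{\mathrm{coev}} [A, X \otimes A] \xrightarrow{T_{A,X \otimes A}} [TA, T(X \otimes A)].$$
A diagram chase using the triangle identities shows these constructions are mutually inverse. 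The main technical step is to verify that Kock's two unit axioms for $\sigma$ translate into the $\mathcal{E}$-functor identity axiom, and that his two associativity/compatibility axioms translate into the $\mathcal{E}$-functor composition axiom. This is where the precise reformulation of tensorial strengths in the presence of a tensor structure pays off, since each axiom becomes the adjoint transpose of the other under the tensor-hom adjunction.

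For 2-cells, a strong natural transformation $\alpha: S \Rightarrow T$ is by definition a family $\alpha_A: SA \to TA$ satisfying a hexagonal compatibility with the strengths $\sigma^S$ and $\sigma^T$. Taking transposes under $(- \otimes TA) \dashv [TA,-]$ and using the 1-cell correspondence established above, I would show that this hexagon is equivalent to the commutativity of the standard $\mathcal{E}$-naturality square for $\alpha$ built from the enrichments $S_{A,A'}$ and $T_{A,A'}$. Since every step is invertible, strong natural transformations between strong functors $S, T$ are in canonical bijection with $\mathcal{E}$-natural transformations between the corresponding $\mathcal{E}$-functors.

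Finally, I would check that $\Phi$ respects identity 1-cells and identity 2-cells (trivial, since both sides reduce to identity strengths/enrichments via the triangle identities), and that it respects composition of 1-cells, vertical composition of 2-cells, and horizontal composition (whiskering) of 2-cells. These verifications are essentially formal manipulations using the naturality of the tensor-hom adjunction. The main obstacle I expect is the careful matching of axioms in the 1-cell step; once that bijection is established together with its naturality in $X$ and $A$, everything else follows by Yoneda-style arguments from the universal property of the tensor.
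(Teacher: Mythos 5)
Your proposal follows essentially the same route as the paper: the enrichment is obtained as the transpose of $T(\mathrm{ev})\circ\sigma$, the strength as the transpose of $\varphi\circ\gamma$ (coevaluation), these are shown mutually inverse, the unit/associativity axioms of $\sigma$ are matched with the unit/composition axioms of $\varphi$, and the 2-cell correspondence plus compatibility with composition are checked exactly as in Propositions \ref{propfonf-e}, \ref{proptranf-e} and Lemma \ref{lemcomptranf-e}. (Only a small slip: a strong functor has one unit and one associativity axiom — the ``four axioms'' belong to strong monads — but this does not affect your argument.)
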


The consequence of this 2-isomorphism is an equivalence between the notions of strong and enriched monads. 
In particular, under mild conditions the category of algebras over a strong monad $T$ on $\mathcal E$ is canonically enriched, tensored and cotensored over $\mathcal E$. 
This implies that the image $T(I)$ of the unit $I$ of $\mathcal E$ gets the structure of a monoid, through its identification with the endomorphism monoid $\underline{\mathcal Alg}_T(T(I),T(I))$ of the free $T$-algebra on $I$. More precisely, the strength of $T$ induces a morphism of monads $\lambda: -\otimes T(I)\to T$ which relates the categories of $T(I)$-modules and of $T$-algebras by a canonical adjunction. 
This can be considered as an embryonic form of the Morita theorem.
And indeed, our homotopical Morita theorem consists essentially in finding the right homotopical hypotheses in order to transform this formal adjunction into a Quillen equivalence. We prove in Chapter \ref{ChMthEcon} 
%\section*{Homotopical Morita theorem}

\begin{Thm}{(Homotopical Morita theorem)}
Let $\mathcal E$ be a cofibrantly generated monoidal model category with cofibrant unit $I$ and with generating cofibrations having cofibrant domain. 
Assume given a strong monad $\left(T,\mu,\eta, \sigma \right)$ on $\mathcal E$ such that
\begin{enumerate}
 %\item T is a $A_{\infty}$-strong monad; or
 \item The category of T-algebras $\mathcal Alg_{T}$ admits a transferred model structure;
 \item The unit $\eta_{X}: X\rightarrow T\left( X\right) $ is a cofibration at each cofibrant object X in $\mathcal E$;
 \item The tensorial strength $$\sigma_{X,Y}: X\otimes TY \xrightarrow{\sim} T\left(X \otimes Y \right)$$ is a weak equivalence for all cofibrant objects X,Y in $\mathcal E$;
 \item The forgetful functor takes free cell attachments in $\mathcal Alg_T$ to homotopical cell attachments in $\mathcal E$ (cf. Definition \ref{defcellatt}).
\end{enumerate}
Then the monad morphism $\lambda: -\otimes T\left(I\right)\rightarrow T $ induces a Quillen equivalence between the category of $T\left( I\right)$-modules and the category of T-algebras:  
$$ Ho\left( Mod_{T\left( I\right) }\right) \simeq Ho\left( \mathcal Alg_{T}\right) $$
\end{Thm}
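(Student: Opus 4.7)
The plan is to realize the monad morphism $\lambda$ as a Quillen adjunction $\lambda_! \dashv \lambda^*$ between $Mod_{T(I)}$ and $\mathcal Alg_T$, and then to reduce the Quillen equivalence to a cellular induction showing that the adjunction unit is a weak equivalence on every cofibrant module.

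First, I would set up the Quillen adjunction. Hypothesis (1) gives the transferred model structure on $\mathcal Alg_T$; for $Mod_{T(I)}$ one should obtain the corresponding transferred structure from the cofibrancy hypotheses on $\mathcal E$ together with the fact that $T(I)$ is cofibrant, which follows from hypothesis (2) applied to the cofibrant unit $I$. Restriction of scalars along $\lambda$ commutes with the forgetful functors to $\mathcal E$, so $\lambda^*$ preserves and reflects both weak equivalences and fibrations. Hence $(\lambda_!, \lambda^*)$ is a Quillen adjunction, and since $\lambda^*$ reflects weak equivalences on \emph{all} objects, the standard criterion reduces the Quillen equivalence to showing that the unit
$$\eta_M : M \longrightarrow \lambda^*\lambda_!(M)$$
is a weak equivalence for every cofibrant $T(I)$-module $M$.

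I would then argue by transfinite cellular induction. Cofibrant $T(I)$-modules are retracts of cell complexes built by pushouts of generating cofibrations of the form $X \otimes T(I) \to Y \otimes T(I)$, with $X \to Y$ a generating cofibration of $\mathcal E$ (so $X, Y$ cofibrant). On a free module $M = X \otimes T(I)$ one checks, using naturality of $\sigma$ and the monad unit axiom, that $\eta_M$ coincides with the strength component $\sigma_{X,I} : X \otimes T(I) \to T(X)$, which is a weak equivalence by hypothesis (3). For the inductive step, consider a cell attachment pushout
$$\begin{CD}
X \otimes T(I) @>>> Y \otimes T(I) \\
@VVV @VVV \\
M @>>> M'
\end{CD}$$
in $Mod_{T(I)}$. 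Applying $\lambda_!$ yields a free cell attachment in $\mathcal Alg_T$ along $T(X) \to T(Y)$, whose underlying square in $\mathcal E$ is a homotopical cell attachment by hypothesis (4). Juxtaposing the two pushout squares via the components of the strength produces a cube in $\mathcal E$ whose top face is the pair of weak equivalences $\sigma_{X,I}$, $\sigma_{Y,I}$ (hypothesis (3)) and whose back-left vertical is the weak equivalence $\eta_M$ given by induction. A gluing/cube lemma for homotopy pushouts then forces the remaining vertical $\eta_{M'}$ to be a weak equivalence. Hypothesis (2) keeps everything in the cofibrant range throughout the induction, and closure under transfinite compositions and retracts is standard.

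The main obstacle is the cell-attachment step: one must translate two a priori different kinds of pushouts (in $Mod_{T(I)}$ and in $\mathcal Alg_T$) into genuinely comparable homotopy pushouts in $\mathcal E$ so that the cube lemma applies. Hypothesis (4) is designed precisely to supply the correct homotopical behaviour on the $\mathcal Alg_T$ side, whereas hypotheses (2) and (3) together with the monoidal model structure on $\mathcal E$ (and the cofibrancy of $T(I)$) provide the corresponding control on the $Mod_{T(I)}$ side and on the comparison map $\sigma_{-,I}$. Assembling this cube argument rigorously is the technical heart of the proof.
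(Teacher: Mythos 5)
Your proposal is correct and follows essentially the same route as the paper: set up the adjunction induced by the monad morphism $\lambda$ (restriction of scalars preserving and reflecting fibrations and weak equivalences), reduce the Quillen equivalence to the adjunction unit on cofibrant modules, identify the unit on free modules with the strength $\sigma_{X,I}$, and run a cellular induction in which hypothesis (d) supplies the homotopical cell attachment on the $\mathcal Alg_T$ side, the pushout-product axiom supplies it on the $Mod_{T(I)}$ side, and a cube/patching lemma (the paper's generalized Reedy patching Lemma \ref{lemReedGen}), together with the telescope lemma and closure under retracts, completes the argument. This matches the paper's proof of Theorem \ref{thMain} in both structure and in the role assigned to each hypothesis.
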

%\section*{Recovering a theorem of Schwede}
In the special case where $\mathcal E$ is the category of $\Gamma$-spaces equipped with Bousfield-Friedlander's stable model structure 
\cite{BouHT} and $T$ is the strong monad associated to a well-pointed $\Gamma$-theory, this recovers a theorem proved by Schwede \cite{SchSHAT}. It was one of the main motivations of this thesis to understand Schwede's theorem as an instance of a general homotopical Morita theorem.

Hypothesis $(d)$ is difficult to check in practice since it involves an analysis of certain pushouts in the category of $T$-algebras and is not directly expressed by properties of the monad $T$.
We therefore establish in the last section of Chapter \ref{chModCT} a more accessible form of hypothesis $(d)$ provided that the model category $\mathcal E$ is pointed, has a good realization functor for simplicial objects and 
satisfies a suitable version of ``excision'' (see Section \ref{SRealMModCat} for precise definitions).
If this is the case, hypothesis $(d)$ may be reformulated as follows (cf. Proposition \ref{propForgfuncRE}):
\begin{enumerate}
 \item[\textit{(d')}] \textit{The forgetful functor takes free cell extensions in $\mathcal Alg_T$ to cofibrations in $\mathcal E$};
 \item[\textit{(d'')}] \textit{The monad $T$ takes any cofibration $X\to Y$ between cofibrant objects to a cofibration $T(X)\to T(Y)$ between cofibrant objects and the induced map $T(Y)/T(X)\to T(Y/X)$ is a weak equivalence.}
\end{enumerate}

If $\mathcal E$ is the stable model category of $\Gamma$-spaces almost all hypotheses of our homotopical Morita theorem follow from an important result of Lydakis \cite{LydSG} concerning the homotopical properties of the so-called assembly map.
In order to keep this thesis as self-contained as possible we include a proof of Lydakis' theorem. 
This proof is similar but slightly more conceptual than Lydakis' original proof and applies to $\Gamma$-spaces with values in essentially any cartesian Quillen model category for the homotopy theory of topological spaces.

\chapter{Preliminaries}

This chapter is devoted to the language of category theory. %In Section \ref{Scat}, we provide basic definitions together with some basic properties. 
In Section \ref{SSymMC} we provide definitions of monoidal categories, symmetric monoidal categories together with some examples. In Section \ref{SEnrCat}, once the notion of closed symmetric monoidal category is fixed, we define an enriched category. Then all the basic theory of categories is translated to the enriched context. In Section \ref{STensCat} we define tensored and cotensored enriched categories. In Section \ref{SMMonAlg} we provide definitions for monoid, monads and their algebras. 

For more detailed informations see \cite{BorHCA, MaclCWM}.

\section{Symmetric monoidal categories}\label{SSymMC}

%Monoidal categories explore the general notion of a monoid in a category. In this section, we provide definitions of monoidal, symmetric monoidal and closed symmetric monoidal categories together with some examples. Furthermore, we provide definition of evaluation and composition morphisms in a closed symmetric monoidal category.

\begin{definition}\label{defcatmon}
A monoidal category $\left( \mathcal E, \otimes, I\right) $ is a category $\mathcal E$ equipped with:
\begin{enumerate}
%\item A category $\mathcal E$; 
\item A bifunctor $\otimes: \mathcal E\times\mathcal E\rightarrow\mathcal E$ called the tensor product;
\item An object $I$ in $\mathcal E$, called the unit;
\item For every triple of objects $\left(X,Y,Z \right) $ in $\mathcal E$, an associativity isomorphism which is natural in $X,Y,Z$ and given by 
\begin{center}
 $a_{XYZ}:\left( X\otimes Y\right)\otimes Z\longrightarrow X\otimes\left( Y\otimes Z\right) $;
\end{center}   
\item For every object $X$ in $\mathcal E$, a left unit isomorphism natural in $X$ and given by
\begin{center}
 $l_{X}: I\otimes X\longrightarrow X $;
\end{center}
\item For every object $X$ in $\mathcal E$, a right unit isomorphism natural in $X$ and given by 
\begin{center}
 $r_{X}: X\otimes I\longrightarrow X $;
\end{center}
such that the following two diagrams commute:
\begin{diagram}{Associativity axiom}
\label{diagass}
%Dans le cas où j'aurai besoin de faire référence à ce diagramme par exemple
$$
\xymatrix@C=2cm @R=1cm{\relax 
\left( \left(X\otimes Y\right) \otimes Z\right) \otimes T  \ar[r]^{a_{X\otimes Y,Z,T}} \ar[d]_{a_{X,Y,Z}\otimes 1} & \left(X\otimes Y\right) \otimes \left( Z\otimes T\right) \ar[dd]^{a_{X,Y,Z\otimes T}}\\
    \left( X\otimes \left(Y\otimes Z\right) \right) \otimes T \ar[d]_{a_{X,Y\otimes Z,T}} \\
    X \otimes \left( \left(Y\otimes Z\right) \otimes T\right) \ar[r]_{1\otimes a_{Y,Z,T}} & X\otimes\left( Y\otimes\left( Z\otimes T\right)\right) 
}
$$
\end{diagram}
\begin{diagram}{Unit axiom}
$$
\xymatrix{\relax 
\left( X\otimes I\right) \otimes Y\ar[rr]^{a_{XIY}} \ar[rd]_{r_{X}\otimes1} && X\otimes\left( I\otimes Y\right) \ar[ld]^{1\otimes l_{Y}} \\ & X\otimes Y
}
$$
\end{diagram}
\end{enumerate}
\end{definition}
%\begin{ex}\label{exmoncat}
%We give some basic examples of monoidal categories:
%\begin{itemize}
% \item $\left(\mathbf{Set},\times, 1 \right) $, category of sets with the cartesian product;
% \item $\left(\mathbf{Top},\times, 1 \right) $, category of topological spaces with the product;
% \item $\left(\mathbf{CGTop},\times, 1 \right) $, category of compactly generated topological spaces with the product;
% \item $\left(\mathbf{Cat},\times, 1 \right) $, category of categories with the cartesian product;
% \item $\left(\mathbf{Ab},\otimes, \mathbb Z \right) $, category of abelian groups with the tensor product;
% \item $\left(\mathbf{Mod_{R}},\otimes, R \right) $, category of R-modules, where R is a commutative ring, with the tensor product;
% \item $\left(\mathbf{G-Mod_{R}},\otimes, R \right) $, category of graded R-modules, where R is a commutative ring, with its usual tensor product;
%\item $\left(\mathbf{DG-Mod_{R}},\otimes, R \right) $, category of differential graded R-modules, where R is a commutative ring, with its usual tensor product.
%\end{itemize}
%Voir si on ajoute ex de cat des comodules pour une algebre de Hopf et ex de cat de bimodules (Hovey p102)
%\end{ex}

\begin{definition}\label{defcatmonsym}
A symmetric monoidal category $\left( \mathcal E, \otimes, I, s\right) $  is a monoidal category $\left( \mathcal E, \otimes, I\right) $  equipped with, 
%\item A monoidal category $\mathcal E$;	
for every couple $\left(X,Y \right) $ of objects in $\mathcal E$, a symmetry isomorphism, natural in $X,Y$ and given by 
\begin{center}
 $s_{XY}:X\otimes Y\longrightarrow Y\otimes X $;
\end{center}   
such that the following diagrams commute
\begin{diagram}{}
$$
\xymatrix@C=3cm @R=1.5cm {\relax 
\left(X\otimes Y\right) \otimes Z \ar[r]^{s_{XY}\otimes 1} \ar[d]_{a_{XYZ}} & \left(Y\otimes X\right) \otimes Z \ar[d]^{a_{YXZ}}\\
    X\otimes \left(Y\otimes Z\right) \ar[d]_{s_{X,Y\otimes Z}} & Y\otimes \left( X\otimes Z\right) \ar[d]^{1 \otimes s_{YZ}} \\
    \left(Y\otimes Z\right) \otimes X \ar[r]_{a_{YZX}} & Y\otimes \left( Z\otimes X\right) 
}
$$
\end{diagram}
\begin{diagram}{}
$$
\xymatrix{\relax 
 X\otimes I \ar[rr]^{s_{XI}} \ar[rd]_{r_X} && I\otimes X \ar[ld]^{l_X} \\ 
& X
}
$$
\end{diagram}
\begin{diagram}{}
$$
\xymatrix@C=0.3cm{\relax 
 X\otimes Y \ar@{=}[rr] \ar[rd]_{s_{XY}} && X\otimes Y \\ 
& Y\otimes X \ar[ru]_{s_{YX}}
}
$$
\end{diagram}
\end{definition}

\begin{ex}\label{exmoncat}
We give some basic examples of symmetric monoidal categories:
\begin{itemize}
 \item $\left(\mathbf{Set},\times, 1 \right) $, category of sets with the cartesian product;
 \item $\left(\mathbf{Top},\times, 1 \right) $, category of topological spaces with the product;
 \item $\left(\mathbf{CGTop},\times, 1 \right) $, category of compactly generated topological spaces with the product;
 \item $\left(\mathbf{Cat},\times, 1 \right) $, category of categories with the cartesian product;
 \item $\left(\mathbf{Ab},\otimes, \mathbb Z \right) $, category of abelian groups with the tensor product;
 \item $\left(\mathbf{Mod_{R}},\otimes, R \right) $, category of R-modules, where R is a commutative ring, with the tensor product;
 \item $\left(\mathbf{G-Mod_{R}},\otimes, R \right) $, category of graded R-modules, where R is a commutative ring, with its usual tensor product;
\item $\left(\mathbf{DG-Mod_{R}},\otimes, R \right) $, category of differential graded R-modules, where R is a commutative ring, with its usual tensor product.
\end{itemize}
A non-symmetric example is the category of R-bimodules over a non commutative ring R, with the tensor product $\otimes_{R}$.
%Voir si on ajoute ex de cat des comodules pour une algebre de Hopf et ex de cat de bimodules (Hovey p102)
\end{ex}

\begin{definition}\label{defcatmonsymcl}
A symmetric monoidal category $\mathcal E$ is closed if for each object $X$ in $\mathcal E$, the functor $-\otimes Y: \mathcal E\to \mathcal E$ has a right adjoint $ \underline{\mathcal E}\left( Y,-\right) $.

In particular, we have a bijection
$$\mathcal E\left( X\otimes Y,Z\right)\longrightarrow \mathcal E\left( X,\underline{\mathcal E}\left( Y,Z\right)\right) $$
\end{definition}

\begin{ex}
All symmetric monoidal categories given in Example \ref{exmoncat} are closed, except $\left(Top,\times, 1 \right) $. 

In fact, for a topological space Y, the functor $-\otimes Y$ cannot have a right adjoint since it does not preserve regular epimorphisms.
\end{ex}

%\begin{rem}
%We point that by $\underline{\mathcal E}\left( X,Y\right)$ we denote the internal objects.
%\end{rem}

\begin{definition}\label{defcatmonsymbicl}
A monoidal category $\mathcal E$ is biclosed when, for each object $X$ of $\mathcal E$, both functors
\begin{center}
$-\otimes X: \mathcal E\to \mathcal E$ \hspace{.3in} and  \hspace{.3in} $X\otimes -: \mathcal E\to \mathcal E$
\end{center}
have a right adjoint.
\end{definition}

\begin{ex}
The non-symmetric monoidal category of R-bimodules over a non commutative ring R is biclosed.
\end{ex}

In a symmetric monoidal category, a consequence of the symmetry is that both functors $-\otimes X$ and $X\otimes -$ are naturally isomorphic.
Therefore, one obtains:

\begin{lem}\cite{BorHCA}
A closed symmetric monoidal category $\mathcal E$ is biclosed.
\end{lem}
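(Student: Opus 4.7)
The plan is to exploit the symmetry isomorphism to transfer the known right adjoint of $-\otimes X$ to a right adjoint of $X\otimes -$. Since $\mathcal{E}$ is closed, we already have an adjunction
\[
\mathcal{E}(Y\otimes X, Z)\;\cong\;\mathcal{E}(Y,\underline{\mathcal{E}}(X,Z))
\]
natural in $Y$ and $Z$. So the only thing left is to identify $\mathcal{E}(X\otimes Y,Z)$ with $\mathcal{E}(Y\otimes X,Z)$ naturally.

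First I would fix $X$ and observe that the symmetry $s_{X,Y}:X\otimes Y\to Y\otimes X$, being natural in $Y$ (for each fixed $X$), assembles into a natural isomorphism of functors $s_{X,-}: X\otimes -\;\xrightarrow{\;\sim\;}\; -\otimes X$. Precomposition with $s_{X,Y}^{-1}$ therefore yields a bijection, natural in $Y$ and $Z$,
\[
\mathcal{E}(X\otimes Y,Z)\;\xrightarrow{\;\sim\;}\;\mathcal{E}(Y\otimes X,Z)\;\xrightarrow{\;\sim\;}\;\mathcal{E}(Y,\underline{\mathcal{E}}(X,Z)),
\]
where the second bijection is the closed structure. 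Composing gives the desired adjunction $X\otimes -\dashv \underline{\mathcal{E}}(X,-)$, which shows that $X\otimes -$ also admits a right adjoint, namely the same internal hom. Hence $\mathcal{E}$ is biclosed.

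There is no real obstacle here: the content is simply that a functor isomorphic to one having a right adjoint has a right adjoint itself, and symmetry supplies precisely such an isomorphism between $X\otimes -$ and $-\otimes X$. The only mildly delicate point worth remarking on is the naturality of $s_{X,Y}$ in the variable $Y$ alone, but this is immediate from the stated naturality in both variables in Definition \ref{defcatmonsym}. No use of the hexagon or unit coherence axioms is required for this statement.
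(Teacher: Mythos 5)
Your argument is correct and is exactly the one the paper intends: the remark preceding the lemma notes that the symmetry makes $-\otimes X$ and $X\otimes -$ naturally isomorphic, so the right adjoint $\underline{\mathcal E}(X,-)$ guaranteed by closedness also serves as a right adjoint for $X\otimes -$. Your observation that only naturality of $s_{X,Y}$ (and no coherence axioms) is needed is accurate, so there is nothing to add.
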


\begin{rem}
A left unit morphism $l_{X}: I\otimes X\rightarrow X $ is dual to the morphism
 $$j_{X}: I\rightarrow \underline{\mathcal E}\left( X,X\right) $$
In particular, there are isomorphisms $$\mathcal E\left( X,X\right)\cong\mathcal E\left( I\otimes X,X\right)\cong\mathcal E\left( I,\underline{\mathcal E}\left( X,X\right)\right) $$
Similarly, the right unit isomorphism $r_{X}: X\otimes I\rightarrow X $ is dual to the morphism
 $$i_{X}: X\rightarrow \underline{\mathcal E}\left( I,X\right) $$ 
In particular, there are isomorphisms 
 $$\mathcal E\left( X,X\right)\cong\mathcal E\left( X\otimes I,X\right)\cong\mathcal E\left( X,\underline{\mathcal E}\left( I,X\right)\right) $$
\end{rem}

We provide definitions of the evaluation and the composition morphisms, which are closely related. 
\begin{definition}\label{defev}
Let $\mathcal E $ be a closed symmetric monoidal category.

An evaluation morphism, called ev consists in giving, for every pair of objects $\left(X,Y \right) $ in $\mathcal E$, a morphism 
$$ ev_{X}: \underline{\mathcal E}\left( X,Y\right)\otimes X \rightarrow Y$$ 
in $\mathcal E $.

By adjunction, the morphism ev is dual to the identity morphism
$$ Id_{\underline{\mathcal E}\left( X,Y\right)}: \underline{\mathcal E}\left( X,Y\right)\rightarrow \underline{\mathcal E}\left( X,Y\right)$$
\end{definition}
\begin{rem}
We will need in Chapter 3 the morphism
$$ \gamma_{Y}: X\rightarrow \underline{\mathcal E}\left( Y,X\otimes Y\right)$$
which is by adjunction dual to the identity morphism $ Id_{X\otimes Y}: X\otimes Y \rightarrow X\otimes Y$.
\end{rem}

%The following proposition describes an adjunction by an universal property.
%\begin{prop}\label{propadj}
%Consider a pair $\left( F,G\right) $ of adjoint functors $F:\mathcal C\rightleftarrows \mathcal D:G$ with F left adjoint and G right adjoint.

%A component $\eta_{C}: C\rightarrow GFC$ of the unit of the adjunction has the universal property that every morphism $f:C\rightarrow GD$ in $\mathcal C$ is given in one unique way as a morphism $$  C \xrightarrow{\eta_{C}} GFC \xrightarrow{Gg} GD$$ for $g:FC\rightarrow D$ a morphism in $\mathcal D$.

%Similarly, a component $\varepsilon_{D}: FGD\rightarrow D$ of the counit of the adjunction has the universal property that every morphism $g:FC\rightarrow D$ in $\mathcal D$ is given in one unique way as a morphism $$  FC\xrightarrow{Ff} FGD \xrightarrow{\varepsilon_{D}} D$$ for $f:C\rightarrow GD$ a morphism in $\mathcal C$.
%\end{prop}

\begin{rem}
We exhibit the universal property of the tensor-cotensor adjunction 
$$-\otimes Y:\mathcal C \leftrightarrows \mathcal D:Hom(Y,-)$$

The component $\eta_{X}: X\rightarrow Hom\left(Y,X\otimes Y\right) $ of the unit has the universal property that every morphism $f:X\rightarrow Hom\left(Y,Z \right)$ is given in one unique way as a morphism $$  X \xrightarrow{\eta_{X}} Hom\left(Y,X\otimes Y\right)\xrightarrow{Hom\left(Y,g\right)} Hom\left(Y,Z \right)$$ for a morphism $g:X\otimes Y\rightarrow Z$.

Hence, one has $f=\hat{g}=Hom\left(Y,g\right)\circ \eta_{X}$. Note that $\eta$ is the dual of the co-evaluation morphism, precisely $\gamma$.% (see Definition \ref{defcoev} ).

Similarly, the component $\varepsilon_{Z}:Hom\left(Y,Z \right)\otimes Y\rightarrow Z$ of the counit has the universal property that every morphism $g:X\otimes Y\rightarrow Z$ is given in one unique way as a morphism $$  X\otimes Y \xrightarrow{f\otimes Y} Hom\left(Y,Z \right)\otimes Y\xrightarrow{\varepsilon_{Z}} Z$$ for a morphism $f:X\rightarrow Hom\left(Y,Z \right)$.

Hence, one has $g=\hat{f}=\varepsilon_{Y}\circ f\otimes Y$. Note that  $\varepsilon$ is the evaluation morphism, precisely $ev$.% (see Definition \ref{defev}).
\end{rem}

\begin{definition}\label{defcomp}
Let $\mathcal E $ be a closed symmetric monoidal category.

A composition morphism, called c consists in giving, for every triple of objects $\left(X,Y,Z \right) $ in $\mathcal E$, a morphism
$$ c_{XYZ}: \mathcal E\left( Y,Z\right)\otimes\mathcal E\left( X,Y\right)\longrightarrow \mathcal E\left( X,Z\right)$$
in $\mathcal E$ such that the following diagram commutes 
\begin{diagram}{}
$$
\xymatrix@C=2cm @R=1.8cm{\relax 
    \mathcal E\left( Y,Z\right)\otimes\mathcal E\left( X,Y\right)\otimes X \ar[r]^{1\otimes ev} \ar[d]_{c \otimes 1} & \mathcal E\left( Y,Z\right)\otimes Y \ar[d]^{ev} \\
    \mathcal E\left( X,Z\right)\otimes X \ar[r]^{ev} & Z
  }
$$
\end{diagram}
for all objects X,Y,Z in $\mathcal E$.

The adjoint of the composition morphism is twice the evaluation morphism $$\hat{c}=ev\circ 1\otimes ev$$
\end{definition}
\begin{rem}
 Similarly, a composition morphism is determined by an evaluation morphism, such that the following diagram commutes: 
$$
\xymatrix@C=2.5cm @R=1.8cm{\relax 
   \mathcal E\left( X,Y\right)\otimes\mathcal E\left( I,X\right) \ar[r]^{c_{IXY}}  & \mathcal E\left( I,Y\right)  \\
    \mathcal E\left( X,Y\right)\otimes X \ar[u]^{1 \otimes i_X}_\cong \ar[r]^{ev} & Y \ar[u]_{i_Y}^\cong
  }
$$
This is a consequence of the compatibility of the unit and the associativity morphisms.

In fact, by adjunction we obtain
$$
\shorthandoff{;:!?}
\xymatrix @!0 @C=3.3cm @R=1.5cm{\relax 
 \mathcal E\left( X,Y\right) \ar[rr]^(.4){c\hat{}} \ar[rd]_{\mathcal E\left(1,i_{Y}\right)}  && \mathcal E\left( \mathcal E\left( I,X\right),\mathcal E\left( I,Y\right)\right) \ar[ld]^{\mathcal E\left( i_{X},1\right)} \\ 
& \mathcal E\left( X, \mathcal E\left( I,Y\right)\right) 
}
$$
it is equivalent to 
$$
\shorthandoff{;:!?}
\xymatrix @!0 @C=3.5cm @R=1.5cm{\relax 
 \mathcal E\left( X,Y\right) \ar[rr]^{\mathcal E\left( r_{X},1\right)} \ar[rd]_{\mathcal E\left( 1,i_{X}\right)}&& \mathcal E\left(X\otimes 1,Y\right) \ar[ld]^{\cong}  \\ 
& \mathcal E\left( X, \mathcal E\left( I,Y\right)\right) 
}
$$
which is equivalent to the commutativity of the following diagram
$$
\shorthandoff{;:!?}
\xymatrix @!0 @C=3cm @R=1.5cm{\relax 
 \left( X\otimes Y\right) \otimes I \ar[rr]^{a_{XYI}} \ar[rd]_{r_{X\otimes Y}} && X\otimes\left(Y\otimes I\right) \ar[ld]^{1\otimes r_{Y}} \\ 
& X\otimes Y
}
$$
The commutativity of the last diagram is a consequence of the unit axiom of a monoidal category.
For more details, see \cite{EilCC}.
Associativity of the composition corresponds to the associativity of the monoidal structure.
\end{rem}

% D'après la définition \ref{defcatmon} 

\section{Enriched categories}\label{SEnrCat}

%Morphisms between objects can serve to study objects themselves. Inversely, one would like to have a notion that allows us to study morpisms seen as objects of a category.

In this section, we introduce the notion of enriched categories. Almost all concepts and results of ordinary category theory can be extended to an $\mathcal E$-enriched context (cf. \cite{BorHCA,KelBCECT}).

\begin{definition} \label{defcatenr}
Let $\mathcal E$ be a monoidal category. An enriched category $\mathcal C$ over $\mathcal E$ consists in giving:   
\begin{enumerate}
\item A class $Ob\left(\mathcal C\right) $ of objects;
\item For every pair of objects $\left(X,Y\right) $ in $\mathcal C$, an internal object $\underline{\mathcal C}\left(X,Y\right)$ in $\mathcal E$;
\item For every triple of objects $\left(X,Y,Z \right) $ in $\mathcal C$, a composition morphism in $\mathcal E$
 $$c_{XYZ}:\underline{\mathcal C}\left(Y,Z\right)\otimes \underline{\mathcal C}\left(X,Y\right)\longrightarrow \underline{\mathcal C}\left(X,Z\right);  $$
\item For every object $X$ in $\mathcal C$, a unit morphism in $\mathcal E$
 $$j_{X}: I\longrightarrow \underline{\mathcal C}\left(X,X\right); $$
such that the following coherence diagrams commute:
\begin{diagram}{}
$$
\xymatrix {\relax
\left( \underline{\mathcal C}\left(Z,T\right)\otimes \underline{\mathcal C}\left(Y,Z\right)\right)\otimes \underline{\mathcal C}\left(X,Y\right) \ar[rr]^{a} \ar[d]_{c\otimes 1} && \underline{\mathcal C}\left(Z,T\right)\otimes\left( \underline{\mathcal C}\left(Y,Z\right)\otimes \underline{\mathcal C}\left(X,Y\right)\right) \ar[d]^{1\otimes c}\\
\underline{\mathcal C}\left(Y,T\right)\otimes\underline{\mathcal C}\left(X,Y\right) \ar[rd]_{c} && \underline{\mathcal C}\left(Z,T\right)\otimes\underline{\mathcal C}\left(X,Z\right) \ar[ld]^{c} \\
& \underline{\mathcal C}\left(X,T\right) 
}
$$
\end{diagram}
\begin{diagram}{}
$$
\xymatrix @C=2cm @R=1.5cm{\relax  
I\otimes \underline{\mathcal C}\left(X,Y\right) \ar[r]^l \ar[d]_{j_{Y}\otimes 1} & \underline{\mathcal C}\left(X,Y\right) \ar[d]_{Id} & \underline{\mathcal C}\left(X,Y\right)\otimes I \ar[l]_r \ar[d]^{1\otimes j_{X}}\\
\underline{\mathcal C}\left(Y,Y\right)\otimes \underline{\mathcal C}\left(X,Y\right) \ar[r]^c & \underline{\mathcal C}\left(X,Y\right) & \underline{\mathcal C}\left(X,Y\right)\otimes \underline{\mathcal C}\left(X,X\right) \ar[l]_c 
}
$$
\end{diagram}

An enriched category $\mathcal C$ over $\mathcal E$ is also called a $\mathcal E$-category.
\end{enumerate}
\end{definition}

\begin{ex}
Taking enriched categories over different examples of monoidal categories $\mathcal E$ given in Example \ref{exmoncat}, we recover some familiar categories.  
\begin{itemize}
 \item $\mathbf{Set}-$category is an ordinary (locally small) category;  
 \item $\mathbf{Ab}-$category is a linear category;  
 \item $\mathbf{Cat}-$category is a 2-category;  
 \item $\mathbf{DG-Mod_{R}}$-category is a differential graded category.
\end{itemize}
\end{ex}

We further generalize other basic concepts and results of ordinary category theory to the enriched context.

\begin{definition} \label{deffonenr}
Let $\mathcal A$ and $\mathcal B$ be two categories enriched over a monoidal category $\mathcal E$.

A $\mathcal E$-functor $\left(F,\varphi_{F}\right)$ consists in giving:   
\begin{enumerate}
\item A functor $F:\mathcal A\longrightarrow\mathcal B$;
\item For every object $X$ in $\mathcal A$, an object $FX$ in $\mathcal B$;
\item For every pair of objects $\left(X,Y\right) $ in $\mathcal A$, a morphism in $\mathcal E$
 $$\varphi_{F}:\underline{\mathcal A}\left(X,Y\right)\longrightarrow \underline{\mathcal B}\left(FX,FY\right) $$ called the enrichment morphism
such that the following diagrams commute:
\begin{diagram}{Unit axiom}
$$
\xymatrix @C=1.5cm @R=1.2cm{\relax 
\underline{\mathcal A}\left(X,X\right) \ar[rr]^{\varphi_{F}}  && \underline{\mathcal B}\left(FX,FX\right)  \\ 
& I \ar[lu]^{j_X} \ar[ru]_{j_{FX}}
}
$$
\end{diagram}

\begin{diagram}{Composition axiom}
$$
\xymatrix @C=2.7cm @R=1.5cm {\relax
\underline{\mathcal A}\left(Y,Z\right)\otimes \underline{\mathcal A}\left(X,Y\right) \ar[r]^{c_{XYZ}} \ar[d]_{\varphi_{F}\otimes \varphi_{F}} & \underline{\mathcal A}\left(X,Z\right) \ar[d]^{\varphi_{F}} \\
\underline{\mathcal B}\left(FY,FZ\right)\otimes \underline{\mathcal B}\left(FX,FY\right) \ar[r]^{c_{FXFYFZ}} & \underline{\mathcal B}\left(FX,FZ\right)
}
$$
\end{diagram}

\end{enumerate}
\end{definition}

\begin{definition} \label{deftnatenr}
Let $\mathcal A$ and $\mathcal B$ be two categories enriched over a monoidal category $\mathcal E$ and $F,G:\mathcal A\rightarrow\mathcal B$ two $\mathcal E$-functors. 

A $\mathcal E$-natural transformation $\alpha: F\longrightarrow G$ consists in giving, for every object $X$ in $\mathcal A$, a morphism
 $$\alpha_{X}:I\longrightarrow \underline{\mathcal B}\left(FX,GX\right)  $$     
in $\mathcal E$ such that the following diagram commutes 
\begin{diagram}{}
$$
\shorthandoff{;:!?}
\xymatrix @!0 @C=4.7cm @R=2.3cm{\relax
I\otimes \underline{\mathcal A}\left(X,Y\right) \ar[rr]^{\alpha_{Y}\otimes \varphi_{F}} &&  \underline{\mathcal B}\left(FY,GY\right)\otimes \underline{\mathcal B}\left(FX,FY\right) \ar[d]^{c}\\
\underline{\mathcal A}\left(X,Y\right) \ar[u]^{l^{-1}} \ar[d]_{r^{-1}} && \underline{\mathcal B}\left(FX,GY\right)\\
\underline{\mathcal A}\left(X,Y\right)\otimes I \ar[rr]^{\varphi_{G}\otimes \alpha_{X}} && \underline{\mathcal B}\left(GX,GY\right)\otimes \underline{\mathcal B}\left(FX,GX\right) \ar[u]_{c}
}
$$
\end{diagram}
for all objects X,Y in $\mathcal E$. 
\end{definition}

Kelly \cite{KelBCECT} observed that one can write the ordinary $\mathcal E$-naturality condition (Definition \ref{deftnatenr}) in the more compact form. We have the following definition: 
\begin{definition}\label{deftnatcomp}
Let $\mathcal E$ be a closed symmetric monoidal category. Consider two $\mathcal E$-categories $\mathcal C$ and $\mathcal D$ and two $\mathcal E$-functors $F,G:\mathcal C\rightarrow\mathcal D$.

A $\mathcal E$-natural transformation $\alpha:F \longrightarrow G$ consists in giving a family of morphisms $\alpha_{A}:FA \longrightarrow GA$ in $\mathcal D$, indexed by the objects in $\mathcal C$ and such that the following diagram
\begin{diagram}{}
$$
\shorthandoff{;:!?}
\xymatrix @C=3.5cm @R=2cm{\relax
\underline{\mathcal C}\left(A,B\right) \ar[r]^{\varphi_{F}} \ar[d]_{\varphi_{G}} & \underline{\mathcal D}\left(FA,FB\right) \ar[d]^{\underline{\mathcal D}\left(1,\alpha_{B}\right)} \\
\underline{\mathcal D}\left(GA,GB\right) \ar[r]^{\underline{\mathcal D}\left(\alpha_{A},1\right)} & \underline{\mathcal D}\left(FA,GB\right) 
}
$$
\end{diagram}
commutes in $\mathcal E$. 
\end{definition}
The composition of two enriched natural transformations is an enriched natural transformation using Definition \ref{deftnatcomp}.

\begin{lem}\cite{BorHCA}
If $\mathcal E$ is a closed symmetric monoidal category, then the category $\mathcal E$ is itself a $\mathcal E$-category. 
\end{lem}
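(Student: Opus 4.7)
The plan is to exhibit the $\mathcal E$-enrichment of $\mathcal E$ on itself using the internal hom provided by the closed symmetric monoidal structure. The data are essentially pinned down by Definitions \ref{defcatmonsymcl}, \ref{defev} and \ref{defcomp}; what remains is to assemble them into the shape required by Definition \ref{defcatenr} and to verify the two coherence diagrams.

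For the data, I take the $\mathcal E$-valued hom-objects to be the internal homs $\underline{\mathcal E}(X,Y)$ characterized by the natural bijection $\mathcal E(Z\otimes Y,X)\cong \mathcal E(Z,\underline{\mathcal E}(Y,X))$, with counit the evaluation morphism $ev:\underline{\mathcal E}(X,Y)\otimes X\to Y$ of Definition \ref{defev}. The composition $c_{XYZ}:\underline{\mathcal E}(Y,Z)\otimes \underline{\mathcal E}(X,Y)\to \underline{\mathcal E}(X,Z)$ is the one prescribed in Definition \ref{defcomp}, i.e.\ the adjoint transpose of the double evaluation
$$\underline{\mathcal E}(Y,Z)\otimes \underline{\mathcal E}(X,Y)\otimes X \xrightarrow{1\otimes ev} \underline{\mathcal E}(Y,Z)\otimes Y \xrightarrow{ev} Z.$$
The unit $j_X:I\to \underline{\mathcal E}(X,X)$ is defined as the transpose of the left unit isomorphism $l_X:I\otimes X\to X$; by the unit axiom of the symmetric monoidal structure, it equally well transposes the right unit $r_X$.

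For the verification, I use that by closedness a morphism $A\to \underline{\mathcal E}(X,Z)$ is uniquely determined by its transpose $A\otimes X\to Z$, obtained by composing with $ev$. Both coherence diagrams therefore reduce, after transposition and reshuffling of associators, to equalities of composites of evaluations, unit isomorphisms and associators in $\mathcal E$. For the unit axioms, transposing $c_{XXY}\circ(1\otimes j_X)$ produces $ev_{XY}\circ (1\otimes l_X)$ acting on $\underline{\mathcal E}(X,Y)\otimes I \otimes X$, which by the unit axiom of the monoidal structure collapses to the canonical identification with $\underline{\mathcal E}(X,Y)\otimes X$ followed by $ev_{XY}$; the triangle on the $j_Y$ side is handled symmetrically. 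For associativity, transposing the pentagon yields two maps $\underline{\mathcal E}(Z,T)\otimes \underline{\mathcal E}(Y,Z)\otimes \underline{\mathcal E}(X,Y)\otimes X\to T$, and unwinding the definition of $c$ one application at a time shows that both reduce to the triple composite $ev_{ZT}\circ (1\otimes ev_{YZ})\circ (1\otimes 1\otimes ev_{XY})$, modulo associators.

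The main obstacle is the bookkeeping in the associativity check: because $c$ is built by two successive adjoint transpositions, each occurrence of $c$ inside the pentagon must be unwrapped via the counit $ev$, and the resulting associators have to be moved past the evaluation maps using naturality. The manipulation is purely formal but lengthy; MacLane's coherence theorem ensures that once both sides have been traced to their evaluation-counit core, the remaining associator diagram commutes automatically, so no further ad hoc verification is needed.
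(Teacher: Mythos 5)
Your construction is correct and is exactly the canonical self-enrichment that the cited reference (and the paper's own preliminaries) has in mind: the paper offers no written proof beyond the citation of \cite{BorHCA}, but the data you assemble — the internal hom from Definition \ref{defcatmonsymcl}, the evaluation of Definition \ref{defev}, the composition adjoint to the double evaluation of Definition \ref{defcomp}, and the unit $j_X$ transposing $l_X$ as in the remark following those definitions — are precisely the ingredients the paper sets up for this purpose, and your reduction of the two coherence diagrams of Definition \ref{defcatenr} to evaluation identities by adjoint transposition is the standard verification. Nothing essential is missing.
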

%Voir si faire Borceux 655 et 657 p322 et 324 
\begin{rem}~\\
A $\mathcal E$-category $\mathcal C$ admits itself an underlying category $\mathcal C_{0}$ in the ordinary sense such that
\begin{enumerate}
\item $Ob\left(\mathcal C_{0}\right)=Ob\left(\mathcal C\right)$
\item $Mor_{\mathcal C_{0}}\left( X,Y\right)=\mathcal E\left( I,\underline{\mathcal C}\left( X,Y\right)\right)$
\end{enumerate}
\end{rem}

\begin{rem}~\\
\begin{enumerate}
 \item Every enriched natural transformation induces a natural transformation between underlying functors (Definition \ref{deftnatcomp} with the functor $\left(- \right)_{o} $ everwhere). 
 \item Notation: we say that the natural transformation $\left( \alpha_{X}\right)_{0}:F_{0}X \Rightarrow G_{0}X$ extends to an enriched natural transformation $\alpha_{X}:FX \Rightarrow GX$  
\end{enumerate}
\end{rem}

We generalize the case of adjoint functors to the enriched context.

\begin{definition}
Let $\mathcal E$ be a closed symmetric monoidal category, $\mathcal A$ and $\mathcal B$ two $\mathcal E$-categories.

A couple of $\mathcal E$-functors $F:\mathcal A\rightarrow \mathcal B$, $G:\mathcal B\rightarrow \mathcal A$ defines a $\mathcal E$-adjunction, with F left adjoint to G and G right adjoint to F, when for every pair of objects $\left( A,B\right) \in Ob\left(\mathcal A\ \right)\times Ob\left(\mathcal B\ \right) $ there are isomorphisms in $\mathcal E$:  
\begin{center}
$\mathcal B\left(F\left( A\right),B \right)\cong \mathcal A\left(A,G\left( B\right)\right)$; 
\end{center}
which are $\mathcal E$-natural in A and B.
\end{definition}

\begin{prop}\cite{BorHCA}
Let $\mathcal E$ be a closed symmetric monoidal category. Let $\mathcal A$ and $\mathcal B$ be two $\mathcal E$-categories and $G:\mathcal B\rightarrow \mathcal A$ a $\mathcal E$-functor.
The following are equivalent:

\begin{enumerate}
 \item Functor G has a left $\mathcal E$-adjoint $F:\mathcal A\rightarrow \mathcal B$;
 \item For every object A in $\mathcal A$, there is an object $F\left( A\right)$ in $\mathcal B$ with isomorphisms
\begin{center}
$\mathcal B\left(F\left( A\right),B \right)\cong \mathcal A\left(A,G\left( B\right)\right)$ 
\end{center}
which are $\mathcal E$-natural in B $\in \mathcal B$.

\end{enumerate}
\end{prop}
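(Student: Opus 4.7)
The implication (1)$\Rightarrow$(2) is immediate: a full $\mathcal{E}$-natural isomorphism in both variables is in particular $\mathcal{E}$-natural in $B$, so there is nothing to prove. The substance lies in (2)$\Rightarrow$(1).

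The plan is to reconstruct the full data of an $\mathcal{E}$-adjunction starting from the pointwise representations. First, set the unit $\eta_A:I\to \underline{\mathcal{A}}(A,GF(A))$ to be the image of the identity element $j_{F(A)}:I\to\underline{\mathcal{B}}(F(A),F(A))$ under the assumed isomorphism $\underline{\mathcal{B}}(F(A),B)\cong\underline{\mathcal{A}}(A,G(B))$ at $B=F(A)$. Next, define the enrichment morphism $\varphi_F:\underline{\mathcal{A}}(A,A')\to \underline{\mathcal{B}}(F(A),F(A'))$ as the composite
$$\underline{\mathcal{A}}(A,A')\xrightarrow{\underline{\mathcal{A}}(A,\,\eta_{A'}\otimes 1)\circ r^{-1}}\underline{\mathcal{A}}(A,GF(A'))\xrightarrow{\cong}\underline{\mathcal{B}}(F(A),F(A')),$$
where the first arrow is ``post-composition with $\eta_{A'}$'' using the composition law of $\underline{\mathcal{A}}$ and the right-unit isomorphism, and the second arrow is the inverse of the given isomorphism at $B=F(A')$. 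This is the only choice compatible with the adjunction isomorphism sending $j_{F(A)}$ to $\eta_A$.

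With these definitions in hand, I would verify the $\mathcal{E}$-functor axioms for $F$ (unit and composition). The unit axiom follows directly from the naturality of the adjunction isomorphism in $B$ applied to $j_{F(A)}$. The composition axiom reduces, after rewriting $\varphi_F$ through the isomorphism, to the associativity of composition in $\underline{\mathcal{A}}$ together with the axioms of the $\mathcal{E}$-functor $G$; this is a diagram chase combining the compositional law of $\underline{\mathcal{A}}$ with the definition of $\eta$. The key observation is that once $\varphi_F$ is expressed, via the iso, as post-composition with $\eta$, each axiom becomes the corresponding axiom of $\underline{\mathcal{A}}$ transported across the isomorphism.

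The main obstacle, and the only nontrivial point, is promoting the $\mathcal{E}$-naturality of $\underline{\mathcal{B}}(F(A),B)\cong\underline{\mathcal{A}}(A,G(B))$ from the variable $B$ to the variable $A$. I would argue this as follows: the composite with $\varphi_F$ on one side and $\varphi_G$ followed by the adjunction iso on the other both define morphisms $\underline{\mathcal{A}}(A,A')\otimes\underline{\mathcal{B}}(F(A'),B)\to \underline{\mathcal{A}}(A,G(B))$, and by the $\mathcal{E}$-naturality in $B$ combined with the very construction of $\varphi_F$ from $\eta$, both morphisms agree after adjoining across the fixed-$A$ isomorphism. In categorical terms, this is an instance of the enriched Yoneda lemma: the family $\{F(A)\}$ with the transition morphisms $\varphi_F$ is uniquely determined by requiring $\mathcal{E}$-naturality in $B$, so naturality in $A$ comes for free from the functoriality of the construction. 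The final step is to observe that the unit $\eta$ so defined is itself an $\mathcal{E}$-natural transformation $\mathrm{id}_{\mathcal{A}}\Rightarrow GF$, which encodes the adjunction in its triangle-identity form and concludes the proof.
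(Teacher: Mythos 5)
Your argument is correct and is essentially the proof the paper implicitly relies on: the paper states this proposition without proof, citing Borceux, and the standard argument there (and in Kelly) is exactly your route --- take $\eta_A$ to be the image of $j_{F(A)}$ under the isomorphism at $B=F(A)$, define $\varphi_F$ as the Yoneda-transpose of post-composition with $\eta$, and invoke the enriched Yoneda lemma (representability of $\underline{\mathcal A}(A,G-)$ by $\underline{\mathcal B}(F(A),-)$) to obtain the $\mathcal E$-functor axioms for $F$ and the $\mathcal E$-naturality in $A$. The only blemish is notational: the first arrow you write as $\underline{\mathcal A}(A,\eta_{A'}\otimes 1)\circ r^{-1}$ should be the composite $\underline{\mathcal A}(A,A')\cong I\otimes\underline{\mathcal A}(A,A')\xrightarrow{\eta_{A'}\otimes 1}\underline{\mathcal A}(A',GF(A'))\otimes\underline{\mathcal A}(A,A')\xrightarrow{c}\underline{\mathcal A}(A,GF(A'))$, i.e.\ composition in $\underline{\mathcal A}$ after tensoring with the unit element, not the internal hom-functor applied to a tensor.
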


\section{Tensored and cotensored enriched categories}\label{STensCat}

In this section, we provide definitions of tensored and cotensored $\mathcal E$-categories.
 
\begin{definition}\label{defcatenrtens}
Let $\mathcal E$ be a closed symmetric monoidal category and $\mathcal C$ a $\mathcal E$-category.

The category $\mathcal C$ is called $\mathcal E$-tensored if, for every object A in $\mathcal C$, the functor $$\underline{\mathcal C}\left(A,-\right):\mathcal C \rightarrow \mathcal E$$ admits a left adjoint $$-\otimes A:\mathcal E \rightarrow \mathcal C$$ such that:
\begin{enumerate}
\item For every object A in $\mathcal C$, there is an isomorphism
$$ I\otimes A\cong A, $$ natural in A; 
\item For every pair of objects $\left(X,Y \right)$ in $\mathcal E$ and every object A in $\mathcal C$, there is an isomorphism
$$ \left(X\otimes_{\mathcal E} Y \right)\otimes A\cong X\otimes\left(Y\otimes A \right) $$
which is natural in X, Y, A.
\end{enumerate}
\end{definition}

\begin{definition}\label{defcatenrcot}
Let $\mathcal E$ be a closed symmetric monoidal category and $\mathcal C$ a $\mathcal E$-category.

The category $\mathcal C$ is called $\mathcal E$-cotensored if the functor $$\underline{\mathcal C}\left(A,-\right):\mathcal C \rightarrow \mathcal E$$ admits a right adjoint:$$A^{\left( -\right) }:\mathcal E \rightarrow \mathcal C$$ such that:
\begin{enumerate}
\item For every object A in $\mathcal C$, there is an isomorphism
$$ A^{I}\cong A,$$ natural in A;  
\item For every pair of objects $\left(X,Y \right)$ in $\mathcal E$ and every object A in $\mathcal C$, there is an isomorphism:
$$ A^{X\otimes_{\mathcal E}Y}\cong \left(A^{X} \right)^{Y} $$ natural in X, Y, A.

\end{enumerate}
\end{definition}

\begin{rem}
Putting $\mathcal C=\mathcal E$, it follows from Definition \ref{defcatenrtens} and \ref{defcatenrcot} that $\mathcal E$ is tensored and cotensored over itself.
\end{rem}

%\begin{rem} 
%If $\mathcal E$ is a closed symmetric monoidal category, then 
%\begin{enumerate}
%\item Category $\mathcal A$ is tensored if for every object $A$ in $\mathcal A$ and $X$ in $\mathcal E$, there is an object $X\otimes A$ in $\mathcal A$ together with %isomorphisms in $\mathcal E$
%$$ \underline{\mathcal A}\left(X\otimes A,B\right)\cong \underline{\mathcal E}\left(X,\underline{\mathcal A}\left(A,B\right)\right)$$
%which are $\mathcal E$-natural in $B$ in $\mathcal A$.

%\item Category $\mathcal A$ is cotensored if for every object $A$ in $\mathcal A$ and $X$ in $\mathcal E$, there is an object $\underline{\mathcal E}\left(X,A \right)$ in $\mathcal A$ together with isomoprhisms in $\mathcal E$
%$$\underline{\mathcal A}\left(B,\underline{\mathcal E}\left(X,A \right)\right)\cong \underline{\mathcal E}\left(X,\underline{\mathcal A}\left(B,A \right)\right)$$
%which are $\mathcal E$-natural in $B$ in $\mathcal A$.
%\end{enumerate}

%For the definitions \ref{defcatenrcot} and \ref{defcatenrtens} one has a classical adjunction, whereas in the remark there is an enriched adjunction over $\mathcal E$ and the unit and the associativity axioms are already satisfied.
%\end{rem}
%\begin{lem}
%If $\mathcal E$ is a symmetric monoidal closed category, then $\mathcal E$ is a tensored and cotensored $\mathcal E$-category itself.
%\end{lem}
\begin{prop}
Let $\mathcal E$ be a closed symmetric monoidal category and $\mathcal C$ a $\mathcal E$-category. Then
\begin{enumerate}
\item $\mathcal C$ is tensored if and only if every $\mathcal E$-functor $\underline{\mathcal C}\left(X,-\right):\mathcal C\rightarrow \mathcal E $, for an object X in $\mathcal C$, has a left $\mathcal E$-adjoint $-\otimes X:\mathcal E\rightarrow \mathcal C$;
\item $\mathcal C$ is cotensored if and only if every $\mathcal E$-functor $\underline{\mathcal C}\left(X,-\right):\mathcal C\rightarrow \mathcal E $, for an object X in $\mathcal C$, has a right $\mathcal E$-adjoint $X^{-}:\mathcal E\rightarrow \mathcal C$.
\end{enumerate}
\end{prop}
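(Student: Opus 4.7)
Both implications of part~(1) hinge on the enriched hom-isomorphism
$$\underline{\mathcal C}(Y\otimes X,B)\;\cong\;\underline{\mathcal E}(Y,\underline{\mathcal C}(X,B))$$
in $\mathcal E$, $\mathcal E$-natural in $Y$ and $B$. Applying the global-elements functor $\mathcal E(I,-)$ recovers the ordinary tensor--hom bijection, so one passes freely between the enriched and ordinary adjunction as soon as this internal isomorphism is available. I will establish it in both directions by probing with an arbitrary test object $T\in\mathcal E$ and invoking the (ordinary) Yoneda lemma. The direction $(\Leftarrow)$ is immediate from the definition of an enriched adjunction: specialising at $Y=I$ and using $\underline{\mathcal E}(I,Z)\cong Z$ gives axiom~(1) of Definition~\ref{defcatenrtens}, while iterating the isomorphism together with the closed structure $\underline{\mathcal E}(Y_1\otimes Y_2,Z)\cong\underline{\mathcal E}(Y_1,\underline{\mathcal E}(Y_2,Z))$ yields axiom~(2).

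\textbf{The main direction $(\Rightarrow)$.} Assume $\mathcal C$ is tensored. For fixed $X\in\mathcal C$, $Y\in\mathcal E$ and $B\in\mathcal C$, and for an arbitrary test object $T\in\mathcal E$, I form the chain of natural bijections
$$
\begin{aligned}
\mathcal E\bigl(T,\underline{\mathcal C}(Y\otimes X,B)\bigr)
&\;\cong\;\mathcal C\bigl(T\otimes(Y\otimes X),B\bigr)\\
&\;\cong\;\mathcal C\bigl((T\otimes_{\mathcal E} Y)\otimes X,B\bigr)\\
&\;\cong\;\mathcal E\bigl(T\otimes_{\mathcal E} Y,\underline{\mathcal C}(X,B)\bigr)\\
&\;\cong\;\mathcal E\bigl(T,\underline{\mathcal E}(Y,\underline{\mathcal C}(X,B))\bigr),
\end{aligned}
$$
using successively the ordinary tensor--hom adjunction, axiom~(2) of the tensoring, the tensor--hom adjunction again, and the closed structure of $\mathcal E$. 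Yoneda then supplies an isomorphism $\underline{\mathcal C}(Y\otimes X,B)\cong\underline{\mathcal E}(Y,\underline{\mathcal C}(X,B))$ inheriting $\mathcal E$-naturality in $Y$ and $B$ from the naturality of each step of the chain. This internal isomorphism simultaneously equips $-\otimes X$ with the enrichment required to make it a $\mathcal E$-functor (its component on internal homs is $\underline{\mathcal E}(Y,Y')\to\underline{\mathcal E}(Y,\underline{\mathcal C}(X,Y'\otimes X))\cong\underline{\mathcal C}(Y\otimes X,Y'\otimes X)$, where the first arrow is $\underline{\mathcal E}(Y,-)$ of the unit of the ordinary adjunction) and witnesses the $\mathcal E$-naturality of the adjunction.

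\textbf{Main obstacle and part~(2).} The only delicate point is checking that the enrichment of $-\otimes X$ read off from the hom-isomorphism respects units and composition, so that $-\otimes X$ is a genuine $\mathcal E$-functor. This reduces to a diagram chase using the unit axiom $I\otimes X\cong X$, the associativity axiom of the tensoring, and the coherence of the monoidal structure of $\mathcal E$; no serious difficulty arises because each ingredient is already $\mathcal E$-natural in the relevant variables. Part~(2) is proved by a formally dual argument: replace the adjunction $(-\otimes X)\dashv\underline{\mathcal C}(X,-)$ by $\underline{\mathcal C}(X,-)\dashv X^{(-)}$ and run the same Yoneda probe to convert the ordinary cotensor adjunction together with the cotensor coherence axioms of Definition~\ref{defcatenrcot} into the enriched hom-isomorphism $\underline{\mathcal C}(A,X^Y)\cong\underline{\mathcal E}(Y,\underline{\mathcal C}(A,X))$, and conversely.
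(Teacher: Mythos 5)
Your Yoneda-probe chain is the right computational core: it does produce, for each $Y$ and $B$, an isomorphism $\underline{\mathcal C}(Y\otimes X,B)\cong\underline{\mathcal E}(Y,\underline{\mathcal C}(X,B))$ in $\mathcal E$, natural in $Y$ and $B$ in the \emph{ordinary} sense. The genuine gap is the passage from this to a left $\mathcal E$-adjoint. By the criterion recalled at the end of Section \ref{SEnrCat}, what is needed is that these isomorphisms be $\mathcal E$-natural in $B$, i.e.\ compatible with the composition morphisms of $\underline{\mathcal C}$ and $\underline{\mathcal E}$ as diagrams in $\mathcal E$ (Definition \ref{deftnatcomp}); this is strictly stronger than ordinary naturality, which only tests those diagrams on global elements $I\to\underline{\mathcal C}(B,B')$, and the unit $I$ need not detect anything. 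Your justification --- ``each ingredient is already $\mathcal E$-natural in the relevant variables'' --- is circular: Definition \ref{defcatenrtens} hands you only an ordinary adjunction and ordinarily natural isomorphisms (a), (b), so no step of your chain is given as $\mathcal E$-natural, and upgrading Set-level naturality to $\mathcal E$-naturality is precisely the non-formal content of the forward implication (the paper itself states the proposition without proof, so this burden cannot be discharged by reference). The same issue infects your construction of the enrichment of $-\otimes X$, which is extracted from the very isomorphism whose $\mathcal E$-naturality is in question.

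The standard repair keeps your chain but changes its role. First define, independently of the chain, the canonical comparison map $\kappa_{Y,B}:\underline{\mathcal C}(Y\otimes X,B)\to\underline{\mathcal E}(Y,\underline{\mathcal C}(X,B))$ as the adjunct (in $\mathcal E$) of $\underline{\mathcal C}(Y\otimes X,B)\otimes Y\xrightarrow{1\otimes\eta_Y}\underline{\mathcal C}(Y\otimes X,B)\otimes\underline{\mathcal C}(X,Y\otimes X)\xrightarrow{c}\underline{\mathcal C}(X,B)$, where $\eta_Y$ is the unit of the ordinary adjunction; since $\kappa$ is built from the composition law $c$, it is $\mathcal E$-natural in $B$ by construction (this is the same device the paper uses in Chapter 3, where $\widehat{\sigma}=\varphi\circ\gamma$). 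Then use your four-step bijection only to verify that $\mathcal E(T,\kappa_{Y,B})$ coincides with it for every test object $T$, so that $\kappa_{Y,B}$ is invertible; this identification is a genuine diagram chase in which the unit and associativity isomorphisms (a), (b) of Definition \ref{defcatenrtens} and the naturality of the ordinary adjunction actually get used (note that, as written, your forward direction never invokes (a) at all). With $\kappa$ invertible and $\mathcal E$-natural in $B$, the criterion of Section \ref{SEnrCat} yields the left $\mathcal E$-adjoint, the enrichment of $-\otimes X$ and $\mathcal E$-naturality in $Y$ coming for free; the dual argument for part (2) needs the same correction.
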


\begin{prop}\label{propprescot}\cite{BorHCA}
Let $\mathcal E$ be a closed symmetric monoidal category. Let $\mathcal A$ and $\mathcal B$ be cotensored $\mathcal E$-categories and $G:\mathcal B\rightarrow \mathcal A$ a $\mathcal E$-functor. 

Then G has a left $\mathcal E$-adjoint functor if and only if
\begin{enumerate}
 \item The functor G preserves cotensors;
 \item The underlying functor $G_{0}: \mathcal B_{0} \rightarrow \mathcal A_{0} $ has a left adjoint.
\end{enumerate}
\end{prop}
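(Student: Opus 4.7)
The plan is to treat the two implications separately. For the forward direction, suppose $G$ admits a left $\mathcal E$-adjoint $F$. Then on underlying categories $F_0$ is left adjoint to $G_0$, giving condition~(b). For condition~(a), I would observe that a cotensor $A^X$ is characterised by the $\mathcal E$-natural isomorphism $\underline{\mathcal A}(A',A^{X})\cong \underline{\mathcal E}(X,\underline{\mathcal A}(A',A))$, and that any right $\mathcal E$-adjoint preserves such representing objects: applying the $\mathcal E$-adjunction twice gives $\underline{\mathcal A}(A',G(B^{X}))\cong \underline{\mathcal B}(FA',B^{X})\cong \underline{\mathcal E}(X,\underline{\mathcal B}(FA',B))\cong \underline{\mathcal E}(X,\underline{\mathcal A}(A',GB))\cong \underline{\mathcal A}(A',(GB)^{X})$, so the canonical comparison $G(B^{X})\to (GB)^{X}$ is an isomorphism by $\mathcal E$-Yoneda.

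For the converse, let $F_0\dashv G_0$ be the given adjunction on underlying categories. My first step is to produce, for every pair $(A,B)\in\mathcal A\times\mathcal B$, a candidate isomorphism $\underline{\mathcal B}(F_0A,B)\cong\underline{\mathcal A}(A,GB)$ in $\mathcal E$. To do this I compute, for any $X\in\mathcal E$,
\[
\mathcal E(X,\underline{\mathcal B}(F_0A,B))\cong\mathcal B_0(F_0A,B^{X})\cong\mathcal A_0(A,G(B^{X}))\cong\mathcal A_0(A,(GB)^{X})\cong\mathcal E(X,\underline{\mathcal A}(A,GB)),
\]
using in turn the cotensor adjunction in $\mathcal B$, the ordinary adjunction $F_0\dashv G_0$, the hypothesis that $G$ preserves cotensors, and the cotensor adjunction in $\mathcal A$. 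All bijections are natural in $X$, so the enriched Yoneda lemma yields the desired isomorphism in $\mathcal E$.

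Next I would extend $F_0$ to an $\mathcal E$-functor $F$. The enrichment morphism $\varphi_F\colon \underline{\mathcal A}(A,A')\to\underline{\mathcal B}(F_0A,F_0A')$ is defined as the transpose, under the isomorphism constructed above, of the $\underline{\mathcal A}$-image of the unit $\eta_A\colon A\to G_0F_0A=GF_0A$; more formally, it is the mate (under $\underline{\mathcal B}(F_0A,F_0A')\cong \underline{\mathcal A}(A,GF_0A')$) of $\underline{\mathcal A}(1,\eta_{A'})\circ j$-type data obtained from $\eta$. I would then verify the unit and composition axioms of Definition~\ref{deffonenr} by adjoint transposition, reducing them to the corresponding axioms for $G$ together with the triangular identities of the underlying adjunction $F_0\dashv G_0$.

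Finally I would check $\mathcal E$-naturality of the bijection $\underline{\mathcal B}(FA,B)\cong\underline{\mathcal A}(A,GB)$ in both $A$ and $B$, which amounts to the commutativity of the square appearing in Definition~\ref{deftnatcomp}; by Yoneda it suffices to check it after applying $\mathcal E(X,-)$ for every $X\in\mathcal E$, where it reduces to the ordinary naturality of $F_0\dashv G_0$ combined with the functoriality of cotensors. This last verification is the main technical obstacle: one must carefully keep track of the two different cotensor adjunctions and the comparison $G(B^X)\cong(GB)^X$, checking that all the resulting squares paste together coherently. Once this is done, $F$ is the desired left $\mathcal E$-adjoint of $G$.
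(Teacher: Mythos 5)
The paper itself gives no argument for this statement (it is quoted from Borceux), so there is no internal proof to compare with; judged on its own, your sketch follows the standard Borceux-style route and its two directions are sound in outline. The forward direction (underlying adjunction plus preservation of cotensors via the chain of $\mathcal E$-natural isomorphisms and Yoneda) is fine. In the converse, however, you have placed the real weight of the proof on the step you yourself flag as the ``main technical obstacle'': having produced the isomorphism $\underline{\mathcal B}(F_0A,B)\cong\underline{\mathcal A}(A,GB)$ by representability, you still must show it is $\mathcal E$-natural, and your plan to enrich $F_0$ by a somewhat vaguely described mate construction and then check $\mathcal E$-naturality in both variables is more work than necessary and is the one place where your sketch is not yet a proof. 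The standard way to discharge it is to reverse the order of the argument: define the comparison directly as the composite
$$\theta_B\colon \underline{\mathcal B}(F_0A,B)\xrightarrow{\ \varphi_G\ }\underline{\mathcal A}(GF_0A,GB)\xrightarrow{\ \underline{\mathcal A}(\eta_A,1)\ }\underline{\mathcal A}(A,GB),$$
which is $\mathcal E$-natural in $B$ by construction (it is built from the enrichment of $G$ and a fixed morphism $\eta_A$), and then prove it is an \emph{isomorphism} by applying $\mathcal E(X,-)$, using the two cotensor adjunctions and the hypothesis $G(B^X)\cong (GB)^X$ to identify $\mathcal E(X,\theta_B)$ with the ordinary adjunction bijection $\mathcal B_0(F_0A,B^X)\cong\mathcal A_0(A,G_0(B^X))$. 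Moreover, by the proposition stated in the paper just before this one (existence of a left $\mathcal E$-adjoint is equivalent to having, for each $A$, an object $F(A)$ with isomorphisms $\mathcal E$-natural in $B$ only), $\mathcal E$-naturality in $B$ alone suffices: you do not need to enrich $F_0$ by hand nor verify naturality in $A$, as the $\mathcal E$-functor structure on $F$ comes for free from that criterion. Two small further remarks: the Yoneda argument you invoke is just ordinary Yoneda in $\mathcal E$ applied to the functors $\mathcal E(X,-)$, and in the forward direction one should note that the composite isomorphism is indeed induced by the canonical comparison map $G(B^X)\to(GB)^X$, which is immediate once that map is written as the transpose of the evaluation data.
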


\section{Monoids, monads and their algebras}\label{SMMonAlg}

Monads are important in the theory of adjoint functors and they generalize closure operators on partially ordered sets to arbitrary categories.
The notion of algebras over a monad generalizes classical notions from universal algebra, and in this sense, monads can be thought of as "theories".

In this section, we provide definitions of monoids, monads and their algebras and indicate some elementary properties, with special emphasis on the case where the base category is regular and the monad preserves reflexive coequalizers.

\begin{definition}
A monoid $\left(M,m,n\right)$ in a monoidal category $\mathcal C$ consists in giving:
\begin{enumerate}
\item An object M in $\mathcal C$;
\item Unit and multiplication morphisms $n:I \longrightarrow M$ and $m:M\otimes M \longrightarrow M$
such that the following diagrams commute: 
\end{enumerate}
\begin{diagram}{}
$$
\shorthandoff{;:!?}
\xymatrix @!0 @C=3cm @R=2.7cm{\relax
I\otimes M\ar[r]^{n\otimes M} \ar[rd]_{l} & M\otimes M \ar[d]^{m} & M\otimes I \ar[l]_{M\otimes n} \ar[ld]^{r} \\
& M
}
$$
\end{diagram} 
\begin{diagram}{}
$$
\shorthandoff{;:!?}
\xymatrix @!0 @C=4.7cm @R=2.7cm{\relax
\left( M\otimes M\right) \otimes M \ar[r]^{a} \ar[d]_{m\otimes M} & M\otimes \left( M\otimes M\right) \ar[r]^{M\otimes m} & M\otimes M \ar[d]^{m} \\
M\otimes M \ar[rr]^{m} && M
}
$$
\end{diagram}
\end{definition}
%\begin{rem}
%By duality, a comonoid in $\mathcal C$ is a monoid in $\mathcal C^{op}$.
%\end{rem}

Some basic examples of monoids are:
\begin{ex}~\\
\begin{itemize}
 \item A monoid in $\left(\mathbf{Set}, \times, 1 \right) $ is just a monoid in the ordinary sense; 
 \item A monoid in $\left(\mathbf{Top}, \times, * \right) $ is a topological monoid; 
 \item A monoid in $\left(\mathbf{Ab},\otimes, \mathbb Z \right)$ is a ring;
 \item A monoid in $\left(\mathbf{Mod_{R}},\otimes, R \right) $ is a R-algebra.
\end{itemize}
\end{ex}

\begin{definition}
Let $\left(M,m,n\right)$ and $\left(M',m',n'\right)$ be two monoids in a monoidal category $\mathcal C$. 

A morphism of monoids $f:M\rightarrow M'$ is such that the following diagrams commute:
\begin{diagram}{}
$$
\shorthandoff{;:!?}
\xymatrix @!0 @C=2cm @R=2.7cm{\relax
M \ar[rr]^{f}  && M' \\
& I \ar[ru]_{n'} \ar[lu]^{n}
}
$$
\end{diagram} 
\begin{diagram}{}
$$
\shorthandoff{;:!?}
\xymatrix @!0 @C=4cm @R=2.7cm{\relax
M \otimes M \ar[r]^{f\otimes f} \ar[d]_{m} & M'\otimes M' \ar[d]^{m'} \\
M \ar[r]^{f} & M'
}
$$
\end{diagram}
\end{definition}
The monoids and the morphisms of monoids in a monoidal category $\mathcal C$ constitute a category, written $Monoids\left(\mathcal C\right) $.

\begin{definition}
A monad $\left(T,\mu,\eta\right)$ in a category $\mathcal C$ consists in giving:
\begin{enumerate}
\item A functor $T:\mathcal C\rightarrow \mathcal C$;
\item Natural transformations $\eta:Id_{\mathcal C}\longrightarrow T$ and $\mu:TT\longrightarrow T$ called the unit and the multiplication of the monad, 
such that the following diagrams commute: 
\end{enumerate}
\begin{diagram}{}
$$
\shorthandoff{;:!?}
\xymatrix @!0 @C=2.5cm @R=2.3cm{\relax
T\ar[r]^{\eta T} \ar@{=}[rd] & TT \ar[d]^{\mu} & T \ar[l]_{T\eta} \ar@{=}[ld] \\
& T
}
$$
\end{diagram} 
\begin{diagram}{}
$$
\shorthandoff{;:!?}
\xymatrix @!0 @C=3cm @R=2.3cm{\relax
TTT \ar[r]^{\mu T} \ar[d]_{T\mu} & TT \ar[d]^{\mu} \\
TT \ar[r]^{\mu} & T
}
$$
\end{diagram}
\end{definition}
%By duality, a comonad in $\mathcal C$ is a monad in $\mathcal C^{op}$.

\begin{rem}
A monad $\left(T,\mu,\eta\right)$ in a category $\mathcal C$ is a monoid in the category of endofunctors of $\mathcal C$, where the monoidal structure is given by composition of endofunctors.   
\end{rem}

\begin{definition}
Let $\left(T,\mu,\eta \right) $ and $\left(S,\xi,\zeta \right) $ be two monads in a category $\mathcal C$. 

A morphism of monads $\lambda:S\rightarrow T$ consists in giving a natural transformation $\lambda:S\rightarrow T$ such that the following diagrams commute:
\begin{diagram}{} 
$$
\shorthandoff{;:!?}
\xymatrix  @!0 @C=2cm @R=2cm{\relax 
 S \ar[rr]^{\lambda}  && T \\ 
& I \ar[ru]_{\eta} \ar[lu]^{\zeta} 
}
$$
\end{diagram} 

\begin{diagram}{}
$$
\shorthandoff{;:!?}
\xymatrix @!0 @C=3.5cm @R=2.5cm{\relax
SS \ar[r]^{\lambda\circ \lambda} \ar[d]_{\xi} & TT \ar[d]^{\mu} \\
S \ar[r]^{\lambda} & T
}
$$
\end{diagram} 
\end{definition}
The monads and the morphisms of monads in a monoidal category $\mathcal C$ constitute a category, written $Monads\left(\mathcal C\right) $.

We can translate the notion of a monad to the enriched context.
\begin{definition}

A $\mathcal E$-monad $\left(T,\mu,\eta,\varphi\right)$ in a $\mathcal E$-category $\mathcal C$ consists in giving:
\begin{enumerate}
\item A $\mathcal E$-functor $\left( T, \varphi \right)$, where $T:\mathcal C\rightarrow\mathcal C$ and $\varphi_{T}:\underline{\mathcal E}(A,B)\rightarrow\underline{\mathcal E}(TA,TB) $ denotes the enrichment morphism;
\item $\mathcal E$-natural transformations $\eta:Id_{\mathcal C}\longrightarrow T$ and $\mu:TT\longrightarrow T$,
such that the following diagrams commute:
\end{enumerate}
\begin{diagram}{}
\label{diageta}
$$
\shorthandoff{;:!?}
\xymatrix @!0 @C=2.5cm @R=2.3cm{\relax
T\ar[r]^{\eta T} \ar@{=}[rd] & TT \ar[d]^{\mu} & T \ar[l]_{T\eta} \ar@{=}[ld] \\
& T
}
$$
\end{diagram} 
 
\begin{diagram}{}
\label{diagmu}
$$
\shorthandoff{;:!?}
\xymatrix @!0 @C=3cm @R=2.3cm{\relax
TTT \ar[r]^{\mu T} \ar[d]_{T\mu} & TT \ar[d]^{\mu} \\
TT \ar[r]^{\mu} & T
}
$$
\end{diagram}
\end{definition}

Every monoid admits an induced monad. We have the following lemma:
\begin{lem}\label{lememon}
Let $\mathcal E$ be a symmetric monoidal category and suppose that $\left(M,m,n \right) $ is a monoid in $\mathcal E$.

Then we can construct a $\mathcal E$-monad $\left( -\otimes M,\eta,\mu\right) $ with $\eta$ and $\mu$ given by:
\begin{diagram}{}
$$
\shorthandoff{;:!?}
\xymatrix  @!0 @C=2cm @R=2cm{\relax 
X  \ar[rr]^{\eta_{X}} \ar[rd]_{r^{-1}} && X\otimes M \\ 
& X\otimes I \ar[ru]_{X\otimes n} 
}
$$
\end{diagram}
%\qquad
\begin{diagram}{}
$$
\shorthandoff{;:!?}
\xymatrix  @!0 @C=2cm @R=2cm{\relax 
\left( X\otimes M\right) \otimes M  \ar[rr]^{\mu_{X}} \ar[rd]_{a} && X\otimes M \\ 
& X\otimes \left( M\otimes M\right)  \ar[ru]_{X\otimes m} 
}
$$
\end{diagram}
\end{lem}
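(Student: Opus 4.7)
The plan is to verify the three layers of structure separately: the $\mathcal{E}$-enrichment of the endofunctor $T=-\otimes M$, the $\mathcal{E}$-naturality of $\eta$ and $\mu$, and the monad axioms.

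First I would exhibit the enrichment of $T$. In any closed symmetric monoidal category, tensoring with a fixed object is canonically an $\mathcal{E}$-functor: define $\varphi_T:\underline{\mathcal{E}}(X,Y)\to\underline{\mathcal{E}}(X\otimes M,Y\otimes M)$ as the adjoint transpose of the composite $\underline{\mathcal{E}}(X,Y)\otimes(X\otimes M)\xrightarrow{a^{-1}}(\underline{\mathcal{E}}(X,Y)\otimes X)\otimes M\xrightarrow{ev\otimes 1}Y\otimes M$. The unit and composition axioms of Definition \ref{deffonenr} for $\varphi_T$ follow by transposition from the triangle defining $ev$ and the axiom defining composition $c$ in $\underline{\mathcal{E}}$ (Definition \ref{defcomp}); one reads them off from the fact that $-\otimes M$ acts on hom-objects by post-composing with the evaluation.

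Next I would check that $\eta$ and $\mu$ are $\mathcal{E}$-natural in the sense of Definition \ref{deftnatcomp}. For $\eta_X=(X\otimes n)\circ r_X^{-1}$, the required square compares the two ways of sending $\underline{\mathcal{E}}(X,Y)$ into $\underline{\mathcal{E}}(X,Y\otimes M)$; transposing back to $\mathcal{E}$, it reduces to the naturality of $r$ together with functoriality of $\otimes$. For $\mu_X=(X\otimes m)\circ a_{X,M,M}$, the analogous square reduces, again after transposition, to the naturality of $a$ with respect to morphisms in the first slot, which is part of the axioms of a monoidal category.

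For the monad axioms I would unfold the definitions. The left unit triangle $\mu\circ\eta T=\mathrm{id}_T$ expands to
$$(X\otimes m)\circ a_{X,M,M}\circ\bigl((X\otimes M)\otimes n\bigr)\circ r_{X\otimes M}^{-1}=\mathrm{id}_{X\otimes M};$$
using naturality of $a$ and of $r^{-1}$ this rewrites as $X\otimes\bigl(m\circ(M\otimes n)\circ r_M^{-1}\bigr)$, which equals $\mathrm{id}_{X\otimes M}$ by the right unit axiom for the monoid $M$. The right unit triangle $\mu\circ T\eta=\mathrm{id}_T$ is analogous, using the left unit axiom for $M$ and the coherence triangle relating $l$, $r$ and $a$. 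The associativity square $\mu\circ T\mu=\mu\circ\mu T$ expands to a diagram in $\mathcal{E}$ involving $a_{X,M,M}$, $a_{X\otimes M,M,M}$ and $a_{X,M\otimes M,M}$; applying Mac Lane's pentagon once collapses all associators into $a_{X,M,M\otimes M}\circ a_{X,M,M}\otimes M$ and the remaining identity is precisely $X\otimes\bigl(m\circ(M\otimes m)\bigr)=X\otimes\bigl(m\circ(m\otimes M)\circ a_{M,M,M}^{-1}\bigr)$, which is the associativity axiom for $M$.

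The only genuine obstacle is keeping track of the coherence isomorphisms: each step is conceptually trivial, but the diagrams grow large once all associators and unitors are inserted. Mac Lane's coherence theorem ensures that any two such insertions agree, so the verification is mechanical rather than creative; no new idea beyond the monoid axioms is needed.
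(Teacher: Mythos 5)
Your verification is correct, and it is worth noting that the paper in fact states Lemma \ref{lememon} without any proof, so there is no argument of the author's to measure yours against; the closest the thesis comes is Lemma \ref{lemstmon}, where the same monad $-\otimes M$ is shown to be a \emph{strong} monad with tensorial strength equal to the associativity isomorphism, and a remark observing that the enriched and strong formulations are interchangeable via Corollary \ref{cormons-e}. Your route is the direct enriched one: the $\varphi_T$ you define (the transpose of $(ev\otimes M)\circ a^{-1}$) is exactly the enrichment that corresponds, under the strength--enrichment dictionary of Lemma \ref{lemCStEnr}, to the strength $\sigma=a$ used in Lemma \ref{lemstmon}, so the two approaches agree on the nose; the strength route buys shorter diagrams (all axioms collapse to Mac Lane coherence plus the monoid axioms), while your route has the advantage of not relying on the Chapter 3 correspondence, which is proved only after this lemma is used. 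Two small points of hygiene: the statement needs $\mathcal E$ closed for ``$\mathcal E$-monad'' to make sense (you tacitly and correctly add this; symmetry is actually never used), and in the unit computations what you invoke as ``naturality of $r^{-1}$'' is really the derived coherence identity $r_{X\otimes M}=(X\otimes r_M)\circ a_{X,M,I}$ together with naturality of $a$ — but your blanket appeal to Mac Lane's coherence theorem legitimately covers this (the paper itself derives that identity in a remark in Section \ref{SSymMC}), so this is an imprecision of bookkeeping, not a gap.
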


\begin{definition}
Let $\left(T,\mu,\eta\right)$ be a monad on a category $\mathcal C$.

An algebra on a monad $\left(T,\mu,\eta\right)$ is a pair $\left(C,\xi_{C}\right)$ consisting of an object C of $\mathcal C$ together with a morphism $\xi_{C}:TC\rightarrow C$ such that the following diagrams commute:
\begin{diagram}{}
$$
\shorthandoff{;:!?}
\xymatrix @!0 @C=3cm @R=2.3cm{\relax
TTC \ar[r]^{\mu_{C}} \ar[d]_{T\left(\xi_{C}\right)} & TC \ar[d]^{\xi_{C}} \\
TC \ar[r]^{\xi_{C}} & C
}
$$
\end{diagram}
\begin{diagram}{}
$$
\shorthandoff{;:!?}
\xymatrix @!0 @C=2.5cm @R=2.3cm{\relax
C\ar[r]^{\eta_{C}} \ar@{=}[rd] & TC \ar[d]^{\xi_{C}} \\
& C
}
$$
\end{diagram} 
An algebra on a monad $\left(T,\mu,\eta\right)$ is also called a T-algebra.
\end{definition}

\begin{definition}
Let $\left(T,\mu,\eta\right)$ be a monad on a category $\mathcal C$.
Given two T-algebras $\left(C,\xi_{C}\right)$ and $\left(D,\xi_{D}\right)$ on $\mathcal C$, a morphism $$f:\left(C,\xi_{C}\right)\rightarrow \left(D,\xi_{D}\right)$$ of T-algebras is a morphism $f:C\rightarrow D$ in $\mathcal C$ such that the following diagram commutes 
\begin{diagram}{}
$$
\shorthandoff{;:!?}
\xymatrix @!0 @C=3cm @R=2.3cm{\relax
TC \ar[r]^{T\left(f \right)} \ar[d]_{\xi_{C}} & TD \ar[d]^{\xi_{D}} \\
C \ar[r]^{f} & D
}
$$
\end{diagram}
\end{definition}

$T$-algebras and morhisms of $T$-algebras constitute a category $\mathcal Alg_{T}$ (i.e. $\mathcal C^{T}$), also called the Eilenberg-Moore category of the monad.  

The following proposition characterizes the forgetful functor $U_T$ from the category of $T$-algebras to the underlying category.

\begin{prop}\cite{BorHCA}
Let $\left(T,\mu,\eta\right)$ be a monad on a category $\mathcal C$.
Consider the forgetful functor
\begin{eqnarray*}
U_{T}:\mathcal Alg_{T} & \longrightarrow & \mathcal C \\ 
 \left(C,\xi_{C}\right)& \longrightarrow & C \\
 \left(\left(C,\xi_{C}\right)\xrightarrow{f} \left(D,\xi_{D}\right)\right) & \longrightarrow & \left(C \xrightarrow{f} D \right) 
\end{eqnarray*}
Then:
\begin{enumerate}
 \item $U_{T}$ is faithful;
 \item $U_{T}$ reflects isomorphisms;
 \item $U_{T}$ has a left adjoint $F_{T}$ given by:
\end{enumerate}
\begin{eqnarray*}
F_{T}: \mathcal C & \longrightarrow & \mathcal Alg_{T} \\ 
 C & \longrightarrow & \left( TC, \mu_{C}\right)\\ 
\left(C \xrightarrow{f} C' \right) & \longrightarrow & \left( \left( TC, \mu_{C}\right)\xrightarrow{T(f)} \left( TC',\mu_{C'}\right) \right)
\end{eqnarray*}
\end{prop}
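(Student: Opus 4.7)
The plan is to handle the three assertions in order, each being essentially a direct verification from the definitions.

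For (a), faithfulness of $U_T$ is immediate: a morphism of $T$-algebras $f:(C,\xi_C)\to(D,\xi_D)$ is by definition a morphism $f:C\to D$ in $\mathcal{C}$ satisfying the compatibility square $\xi_D\circ T(f)=f\circ\xi_C$. Hence two morphisms of $T$-algebras with the same underlying arrow in $\mathcal{C}$ are equal.

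For (b), suppose $f:(C,\xi_C)\to(D,\xi_D)$ is a morphism of $T$-algebras whose underlying arrow $f:C\to D$ is invertible in $\mathcal{C}$, with inverse $f^{-1}:D\to C$. I would show that $f^{-1}$ is itself a morphism of $T$-algebras, i.e.\ that $\xi_C\circ T(f^{-1})=f^{-1}\circ\xi_D$. Starting from $\xi_D\circ T(f)=f\circ\xi_C$, pre-compose with $T(f^{-1})$ and post-compose with $f^{-1}$, using the functoriality of $T$ (so that $T(f)\circ T(f^{-1})=T(f\circ f^{-1})=T(\mathrm{id}_D)=\mathrm{id}_{TD}$). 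This yields the required identity, so $f^{-1}$ lifts to an inverse in $\mathcal{Alg}_T$.

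For (c), I would first check that $F_T(C)=(TC,\mu_C)$ is a $T$-algebra: the structure axiom $\mu_C\circ\mu_{TC}=\mu_C\circ T(\mu_C)$ is precisely the associativity of $\mu$, and $\mu_C\circ\eta_{TC}=\mathrm{id}_{TC}$ is one half of the unit axiom of the monad. Functoriality of $F_T$ on arrows follows from naturality of $\mu$. Then I would construct the adjunction bijection
$$\Phi:\mathcal{Alg}_T\bigl(F_T C,(D,\xi_D)\bigr)\xrightarrow{\;\cong\;}\mathcal{C}\bigl(C,U_T(D,\xi_D)\bigr),\qquad g\longmapsto g\circ\eta_C,$$
with inverse $\Psi:h\mapsto\xi_D\circ T(h)$. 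To see $\Psi(h)$ is a morphism of $T$-algebras one uses naturality of $\mu$ and the algebra axiom on $(D,\xi_D)$; the identities $\Phi\Psi=\mathrm{id}$ and $\Psi\Phi=\mathrm{id}$ come respectively from the unit axiom $\xi_D\circ\eta_D=\mathrm{id}_D$ and from the fact that any $g:(TC,\mu_C)\to(D,\xi_D)$ of $T$-algebras satisfies $\xi_D\circ T(g)=g\circ\mu_C$, which combined with $\mu_C\circ T(\eta_C)=\mathrm{id}_{TC}$ recovers $g$. Naturality of $\Phi$ in both variables is a short diagram chase using naturality of $\eta$.

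None of the three steps poses a real obstacle; the only mildly delicate point is the verification that $\Psi(h)=\xi_D\circ T(h)$ respects the algebra structure, which requires combining the naturality square of $\mu$ at $h$ with the associativity axiom of $(D,\xi_D)$.
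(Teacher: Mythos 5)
Your verification is correct; note that the paper itself states this proposition without proof, citing Borceux, so there is no internal argument to compare against — what you give is the standard Eilenberg--Moore construction and it is exactly what the cited reference does. The only cosmetic remark is that the identity $\Phi\Psi=\mathrm{id}$ uses the naturality of $\eta$ (to rewrite $\xi_D\circ T(h)\circ\eta_C$ as $\xi_D\circ\eta_D\circ h$) in addition to the unit axiom $\xi_D\circ\eta_D=\mathrm{id}_D$; you invoke naturality of $\eta$ only for the naturality of $\Phi$, but the same square is needed here as well.
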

Moreover, the unit of the adjunction  $\eta:I_{\mathcal C}\rightarrow U_{T}F_{T}=T$ and the counit $\varepsilon:F_{T}U_{T}\rightarrow I_{\mathcal Alg_{T}}$ is given by $\varepsilon_{\left(C,\xi_{C} \right) }=\xi_{C} $.
 
\begin{lem}\label{lemabscoeq}(\cite{BorHCA}) %4.3.3, p199
Let $\left(T,\mu,\eta\right)$ be a monad on a category $\mathcal C$. 

For every T-algebra X, the following diagram is a coequalizer in $\mathcal Alg_{T}$:
\begin{diagram}{}\label{diagcoeqtalg}
$$
\shorthandoff{;:!?}
\xymatrix @!0 @C=3cm {\relax 
TTX \ar@<2pt>[r]^{\mu_{X}} \ar@<-2pt>[r]_{T\left(\xi_{X} \right) } & TX \ar[r]^{\xi_{X}} & X 
}
$$
\end{diagram}
Moreover, the forgetful functor $U_{T}:\mathcal Alg_{T}\rightarrow \mathcal C $ takes this coequalizer to a split coequalizer in $\mathcal C$
\begin{diagram}{}
$$
\shorthandoff{;:!?}
\xymatrix @!0 @C=3cm {\relax 
TTX \ar@<2pt>[r]^{\mu_{X}} \ar@<-2pt>[r]_{T\left(\xi_{X} \right) } & TX \ar@/_2pc/@{.>}[l]_{\eta_{TX}} \ar[r]^{\xi_{X}} & X \ar@/_2pc/@{.>}[l]_{\eta_{X}}
}
$$
\end{diagram}
\end{lem}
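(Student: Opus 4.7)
The plan is to first exhibit the underlying diagram in $\mathcal{C}$ as a split coequalizer, and then lift this to a coequalizer in $\mathcal{Alg}_T$. Recall that a split coequalizer consists of a diagram $A\rightrightarrows B\xrightarrow{q} C$ together with maps $s\colon C\to B$ and $t\colon B\to A$ such that $qs=1_C$, $fs=qt$ (for $f$ the map one keeps), and $gt=1_B$ (for $g$ the other parallel map). Split coequalizers are \emph{absolute}: they are preserved by any functor, hence are coequalizers in the ambient category.

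First I would verify that
$$
\xymatrix{TTX \ar@<2pt>[r]^{\mu_X}\ar@<-2pt>[r]_{T(\xi_X)} & TX\ar[r]^{\xi_X}\ar@/_1.5pc/[l]_{\eta_{TX}} & X\ar@/_1.5pc/[l]_{\eta_X}}
$$
is a split coequalizer in $\mathcal{C}$. The required identities are: $\xi_X\circ\eta_X=1_X$ (algebra unit axiom), $\mu_X\circ\eta_{TX}=1_{TX}$ (monad unit axiom), $T(\xi_X)\circ\eta_{TX}=\eta_X\circ\xi_X$ (naturality of $\eta$ applied to $\xi_X$), and $\xi_X\circ\mu_X=\xi_X\circ T(\xi_X)$ (algebra associativity axiom). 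Each of these is directly one of the axioms defining a $T$-algebra or a monad. Absoluteness then shows that the top row, with $\xi_X$ as coequalizer, is a coequalizer in $\mathcal{C}$, which takes care of the second assertion (and gives us the coequalizer property in $\mathcal{C}$ for use in the next step).

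Next I would lift the coequalizer to $\mathcal{Alg}_T$. Note that $\mu_X,T(\xi_X)\colon (TTX,\mu_{TX})\to(TX,\mu_X)$ and $\xi_X\colon (TX,\mu_X)\to(X,\xi_X)$ are already morphisms of $T$-algebras (the first two by naturality of $\mu$ and the monad associativity axiom; the third by definition of a $T$-algebra). Given a $T$-algebra $(Y,\xi_Y)$ and a morphism of $T$-algebras $f\colon (TX,\mu_X)\to(Y,\xi_Y)$ with $f\circ\mu_X=f\circ T(\xi_X)$, the underlying coequalizer in $\mathcal{C}$ supplies a unique $g\colon X\to Y$ with $g\circ\xi_X=f$. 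Concretely $g=f\circ\eta_X$. To check $g$ is a morphism of $T$-algebras I compute
$$
\xi_Y\circ T(g)=\xi_Y\circ T(f)\circ T(\eta_X)=f\circ\mu_X\circ T(\eta_X)=f\circ 1_{TX}=f=g\circ\xi_X,
$$
where the second equality uses that $f$ is a $T$-algebra map, and the third uses the monad unit axiom $\mu\circ T\eta=1$. Uniqueness follows from the uniqueness in $\mathcal{C}$ together with faithfulness of $U_T$.

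The only even mildly subtle point is checking, once $g$ has been produced from the split coequalizer in $\mathcal{C}$, that it actually commutes with the $T$-algebra structures; but as shown this is a one-line consequence of $f$ being a $T$-algebra map plus the monad unit axiom. No genuine obstacle arises, and the whole argument reduces to unpacking the definitions.
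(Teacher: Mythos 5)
Your proof is correct and is essentially the standard argument that the paper simply cites from Borceux: exhibit the underlying diagram as a split (hence absolute) coequalizer in $\mathcal C$ using the unit axioms, naturality of $\eta$, and algebra associativity, and then lift the universal property to $\mathcal Alg_{T}$ by checking that the induced map $g=f\circ\eta_{X}$ is a morphism of $T$-algebras, with uniqueness coming from faithfulness of $U_{T}$. One cosmetic slip: your general statement of the splitting identities (``$qs=1_C$, $fs=qt$, $gt=1_B$'') is garbled -- the composites $fs$ and $qt$ do not even typecheck -- but the concrete identities you actually verify, namely $\xi_{X}\circ\eta_{X}=1_{X}$, $\mu_{X}\circ\eta_{TX}=1_{TX}$, $T(\xi_{X})\circ\eta_{TX}=\eta_{X}\circ\xi_{X}$ and $\xi_{X}\circ\mu_{X}=\xi_{X}\circ T(\xi_{X})$, are exactly the right ones, so the argument stands.
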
 

\begin{definition}
A coequalizer diagram
\begin{diagram}{}
$$
\shorthandoff{;:!?}
\xymatrix @!0 @C=3cm {\relax 
A \ar@<2pt>[r]^{f} \ar@<-2pt>[r]_{g} & B \ar@/_2pc/@{.>}[l]_{h} \ar[r]^{e} & C  
}
$$
\end{diagram}
in a category $\mathcal C$ is said to be reflexive if there is a map $h:B\rightarrow A$ such that $g\circ h= Id_{B}$ and $f\circ h=Id_{B}$.
\end{definition}
%The following Proposition gives an equivalence between the existence of coequalizers and the cocompletion of the category of algebras. 
\begin{prop}\label{propalgcocom}\cite{LCCA}
Let $\left(T,\mu,\eta\right)$ be a monad on a cocomplete category $\mathcal C$.
Then the following are equivalent:
\begin{enumerate}
 \item The category of T-algebras has reflexive coequalizers; 
 \item The category of T-algebras is cocomplete.
\end{enumerate}
\end{prop}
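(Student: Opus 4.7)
The implication $(b)\Rightarrow(a)$ is immediate, since reflexive coequalizers are particular coequalizers, hence colimits. For $(a)\Rightarrow(b)$, the plan rests on two ingredients. The first is Lemma \ref{lemabscoeq}, which exhibits every $T$-algebra $(X,\xi_X)$ as a canonical coequalizer of free algebras; this coequalizer is in fact reflexive, with common section $T\eta_X$, since $\mu_X\circ T\eta_X=\mathrm{id}_{TX}$ (monad unit axiom) and $T\xi_X\circ T\eta_X=T(\xi_X\circ\eta_X)=\mathrm{id}_{TX}$ (algebra unit axiom). The second is that the free functor $F_T$, being a left adjoint, preserves every colimit that exists in the cocomplete base $\mathcal C$.

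\emph{Step 1: coproducts exist in $\mathcal Alg_T$.} Given a family $(X_i,\xi_{X_i})_{i\in I}$, assemble the canonical presentations into a parallel pair
$$F_T\bigl(\textstyle\coprod_i TX_i\bigr) \;\rightrightarrows\; F_T\bigl(\textstyle\coprod_i X_i\bigr)$$
in $\mathcal Alg_T$, where the inner coproducts are formed in $\mathcal C$ and then transported by $F_T$. The sections $T\eta_{X_i}$ assemble into a common section, so the pair is reflexive; hypothesis $(a)$ supplies its coequalizer $X$. To see that $X$ realizes the coproduct, apply $\mathrm{Hom}_{\mathcal Alg_T}(-,Y)$ and use, in turn, the $F_T\dashv U_T$ adjunction, the universal property of coproducts in $\mathcal C$, the interchange of equalizers with products, and Lemma \ref{lemabscoeq} coordinatewise, to obtain
$$\mathrm{Hom}_{\mathcal Alg_T}(X,Y) \;\cong\; \prod_i \mathrm{Hom}_{\mathcal Alg_T}(X_i,Y).$$

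\emph{Step 2: coequalizers of arbitrary parallel pairs exist.} Given $f,g\colon X\to Y$ in $\mathcal Alg_T$, form the pair $[f,\mathrm{id}_Y],[g,\mathrm{id}_Y]\colon X+Y \to Y$, where the coproduct exists by Step 1. The coproduct inclusion $\mathrm{in}_Y\colon Y\to X+Y$ is a common section, so the new pair is reflexive, and its reflexive coequalizer (granted by $(a)$) is readily seen to be the coequalizer of $(f,g)$ as well.

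\emph{Step 3: cocompleteness.} It is classical that a category with small coproducts and all coequalizers admits all small colimits, the colimit of any diagram being a coequalizer of two canonical maps between coproducts indexed respectively by the morphisms and the objects of the diagram. Steps 1 and 2 therefore yield cocompleteness of $\mathcal Alg_T$. The main conceptual hurdle is to see that Step 1 is not circular: although we build general coproducts from reflexive coequalizers of coproducts of \emph{free} algebras, those auxiliary coproducts are automatic, being transported from $\mathcal C$ along the cocontinuous functor $F_T$; after that observation, the rest of the argument is straightforward hom-set bookkeeping.
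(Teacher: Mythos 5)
Your argument is correct; note that the paper itself gives no proof of this proposition, deferring entirely to the cited reference \cite{LCCA}, and what you have written is precisely the classical Linton-style argument from that source: build coproducts as reflexive coequalizers of free algebras on coproducts formed in $\mathcal C$, reduce arbitrary coequalizers to reflexive ones via the pair $[f,\mathrm{id}_Y],[g,\mathrm{id}_Y]$ on $X+Y$, and conclude by the standard ``coproducts plus coequalizers'' criterion. The only cosmetic imprecision is in Step 1: the common section is $F_T\bigl(\coprod_i\eta_{X_i}\bigr)$ (with underlying map $T(\coprod_i\eta_{X_i})$) rather than an assembly of the maps $T\eta_{X_i}$ themselves, and the second leg of the parallel pair is not $F_T$ of a map of $\mathcal C$ but the adjoint transpose of the canonical map $\coprod_i TX_i\to T\bigl(\coprod_i X_i\bigr)$; with these readings your hom-set computation goes through exactly as you sketch it.
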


%The following Proposition provides a left adjoint to the induced functor between categories of algebras.

\begin{prop}\label{propleftadj}\cite{LCCA}
Let $\left(S,\tilde{\mu},\tilde{\eta}\right)$ and $\left(T,\mu,\eta\right)$ be monads on a category $\mathcal C$.
Suppose that the category of $T$-algebras $\mathcal Alg_{T}$ has reflexive coequalizers.

Given a monad morphism $\varphi:S\rightarrow T$, the induced functor $G:\mathcal Alg_{T}\rightarrow \mathcal Alg_{S}$ has a left adjoint 
$$F:\mathcal Alg_{S}\rightarrow \mathcal Alg_{T}$$
\end{prop}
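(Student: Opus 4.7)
The plan is to construct the left adjoint $F$ explicitly as a reflexive coequalizer in $\mathcal Alg_T$, following the standard recipe for extending scalars along a monad morphism.

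First, note that $\varphi:S\to T$ induces the functor $G:\mathcal Alg_T\to\mathcal Alg_S$ sending $(C,\xi_C)$ to $(C,\xi_C\circ\varphi_C)$, and that $G$ commutes with the forgetful functors to $\mathcal C$. To build $F$, for an $S$-algebra $(A,a:SA\to A)$ I would define $F(A,a)$ as the coequalizer in $\mathcal Alg_T$ of the parallel pair
$$
\xymatrix{F_T(SA) \ar@<2pt>[r]^{\alpha} \ar@<-2pt>[r]_{\beta} & F_T(A)}
$$
where $\alpha=F_T(a)$ is the free $T$-lift of $a$, with underlying morphism $T(a):T(SA)\to T(A)$, and $\beta$ is the unique $T$-algebra morphism extending $\varphi_A:SA\to TA=U_TF_T(A)$ under the $F_T\dashv U_T$ adjunction, with underlying morphism $\mu_A\circ T(\varphi_A):T(SA)\to T(A)$.

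Next I would check that this parallel pair is reflexive, with common section $\sigma=F_T(\eta^S_A):F_T(A)\to F_T(SA)$. Indeed $\alpha\sigma=F_T(a\circ\eta^S_A)=F_T(\mathrm{id}_A)=\mathrm{id}$ by the $S$-algebra unit axiom, and $U_T(\beta\sigma)=\mu_A\circ T(\varphi_A\circ\eta^S_A)=\mu_A\circ T(\eta^T_A)=\mathrm{id}_{TA}$ by the monad-morphism axiom $\varphi\circ\eta^S=\eta^T$ together with the $T$-algebra unit law, so $\beta\sigma=\mathrm{id}$ as well. The hypothesis that $\mathcal Alg_T$ has reflexive coequalizers then yields $F(A,a)$; functoriality in $(A,a)$ follows from that of $F_T$ and the universal property.

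Finally, to establish the bijection
$$\mathcal Alg_T(F(A,a),(C,\xi_C))\;\cong\;\mathcal Alg_S((A,a),G(C,\xi_C)),$$
I would use the coequalizer property to identify the left-hand side with the set of $T$-algebra maps $\bar f:F_T(A)\to(C,\xi_C)$ equalizing $\alpha$ and $\beta$. By $F_T\dashv U_T$ such a $\bar f$ is encoded by a morphism $f:A\to C$ in $\mathcal C$, with $U_T\bar f=\xi_C\circ T(f)$. Since two $T$-algebra maps out of $F_T(SA)$ agree as soon as they agree after precomposition with $\eta^T_{SA}$, the equalization condition $\bar f\alpha=\bar f\beta$ reduces, via naturality of $\eta^T$ and $\varphi$ and the $T$-algebra unit law for $(C,\xi_C)$, to the equality $f\circ a=\xi_C\circ\varphi_C\circ S(f)$, which is exactly the condition that $f$ be a morphism of $S$-algebras $(A,a)\to G(C,\xi_C)$. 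Naturality of the bijection in both variables is immediate from the universal properties used.

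The main obstacle is the diagram chase in the last step: one must carefully unwind the equalization condition on $\bar f$ into the $S$-algebra morphism condition on $f$, making essential use of the monad-morphism axiom and the naturality of the unit $\eta^T$. Reflexivity of the coequalizer is what makes the first step legitimate under the given hypothesis; the rest is formal.
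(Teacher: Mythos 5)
Your construction is exactly the one in the paper: $F(A,a)$ is the reflexive coequalizer in $\mathcal Alg_T$ of $F_T(a)$ and the $T$-algebra map with underlying morphism $\mu_A\circ T(\varphi_A)$, with common section $F_T(\eta^S_A)$. The paper stops at giving this construction, whereas you also carry out the (correct) verification of the adjunction bijection, so your argument is a fuller version of the same proof.
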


\begin{proof}For any $S$-algebra $(X,\xi_X:SX\to X)$, the $T$-algebra $F(X,\xi_X)=(FX,\xi_{FX})$ is given by the following reflexive coequalizer in $\mathcal Alg_T$:
$$
\shorthandoff{;:!?}
\xymatrix @!0 @C=2.5cm @R=1.8cm{\relax
TSX \ar[rr]^{T\xi_X} \ar[rd]_{T\varphi_X} && TX \ar[r]^{} & FX\\
& TTX \ar[ru]_{\mu_X}  
}
$$
where the common section is given by $T\tilde{\eta}_X:TX\to TSX$. \end{proof} 
\chapter{Model category theory}\label{chModCT}

In this chapter we recall the basic theory of model categories. 
In Section \ref{SModCat}, we give the basic definitions and examples of model categories. The following Section \ref{SCgMc} is devoted to a standard method of constructing a model category, called Quillen's small object argument \cite{QuiHA}. 
It leads to the theory of cofibrantly generated model categories.
Quillen functors, their derived functors and homotopy category are studied in Section \ref{SQfHc}.
In Section \ref{SMmc} we review the basic notions and results on monoidal model categories. 
In the last section, we provide notions of realisation functor and excision in pointed model category in order to reformulate hypothesis $(d)$ of the main theorem, Theorem \ref{thMain}.  
%In the last Section \ref{SMcam} we use this context of monoidal model categories together with the monoid axiom to construct model categories for algebras and modules.
For more detailed informations on model category theory see \cite{QuiHA, HovMC,GoeMCSM, DwSpHtMc}. 

\section{Model categories}\label{SModCat} 
Quillen was the first to introduce model categories in \cite{QuiHA} and, with slightly modified axioms, in \cite{QRatHTh}.
%(pourquoi cet ordre antichronologique dans la bibliographie ?). 
The terminology has changed over the years, especially after publication of the influential books of Hovey \cite{HovMC} and Hirschhorn \cite{DwyHLFMCHC}.
%(rayer la référence to DHKS, mais ajouter une référence au livre de Hirschhorn)?. 

%The crucial problem that model categories solve is that given a category, one often has certain maps that would like to consider to be isomorphisms. Weak equivalences can always be formally inverted, but we would lose control of the morphisms in the quotient category. In a model category, there are weak equivalences, but also other classes of maps called cofibrations and fibrations. This extra structure allows us to get control of the maps obtained by formally inverting weak equivalences.

In this section we give some preliminary definitions and the definition of a model category with some basic examples.
\begin{definition}
A morphism $f:X\rightarrow X'$ is a retract of $g:Y\rightarrow Y'$ if there is a commutative diagram
\begin{diagram}{}
$$
\shorthandoff{;:!?}
\xymatrix @!0 @C=2.7cm @R=2.4cm{\relax
X \ar[r]^{i} \ar[d]_{f} & Y \ar[r]^{r} \ar[d]_{g} & X \ar[d]^{f} \\
X' \ar[r]^{i'} & Y' \ar[r]^{r'} & X'
}
$$
\end{diagram} 
in which we have $ri=Id_{X}$ and $r'i'=Id_{X'}$. 
\end{definition}

\begin{definition}
A morphism $i:A\rightarrow B$ has the left lifting property with respect to $p:X\rightarrow Y$  (resp. p has the right lifting property with respect to i) if in any commutative diagram of unbroken arrows
\begin{diagram}{}
$$
\shorthandoff{;:!?}
\xymatrix @!0 @C=2.3cm @R=2cm{\relax
A \ar[r]^{f} \ar[d]_{i} & X \ar[d]^{p} \\
B \ar[r]^{g} \ar@{-->}[ru]^{h} & Y 
}
$$
\end{diagram} 
%if there is a lift map $h:B\rightarrow X$ such that both inner diagrams commute:
there is a diagonal filler $h:B \rightarrow X$ such that $hi=f$ and $ph=g$.
\end{definition}

\begin{definition}
A (Quillen) model category consists of a category $\mathcal E$ equipped with three subcategories $cof_\mathcal E$, $we_\mathcal E$, $fib_\mathcal E$, containing all objects of $\mathcal E$ and whose morphisms are called respectively, cofibrations, weak equivalences, fibrations, such that the following five axioms are satisfied:

%A map which is both a fibration (respectively cofibration) and a weak equivalence is called an acyclic fibration (respectively cofibration).

\begin{enumerate}
 \item[QM1] $\mathcal E$ has finite limits and colimits;
 \item[QM2] (2 out of 3) For composable maps f and g, if two among f, g and fg are in $we_\mathcal E$ then so is the third;
 \item[QM3] (Retracts) Given maps $f$ and $g$ in $\mathcal E$ such that $f$ is a retract of $g$; if $g$ is a fibration, a cofibration or a weak equivalence, then so is $f$.
 \item[QM4] (Lifting) The maps in $cof_\mathcal E \cap we_\mathcal E$ have the left lifting property with respect to the maps in $fib_\mathcal E$; the maps in $cof_\mathcal E$ have the left lifting property with respect to the maps in $we_\mathcal E\cap fib_\mathcal E$;
%Acyclic cofibrations have a left lifting property with respect to fibrations and cofibrations have a right lifting property with respect to acyclic fibrations.
 \item[QM5] (Factorization) Any map in $\mathcal E$ factors as a map in $cof_\mathcal E\cap we_\mathcal E$ followed by a map in $fib_\mathcal E$, as well as a map in $cof_\mathcal E$ followed by a map in $we_\mathcal E\cap fib_\mathcal E$. 
%Any map $f$ in $\mathcal C$ can be factored in two ways:
%\begin{enumerate}
% \item[$\left( i\right)$ ] $f=pi$, where $i$ is a cofibration and $p$ is an acyclic fibration
% \item[$\left( ii\right)$ ] $f=pi$, where $i$ is an acyclic cofibration and $p$ is a fibration
%\end{enumerate}

\end{enumerate} 
The morphisms in $cof_\mathcal E \cap we_\mathcal E$ are called acyclic cofibrations and morphisms in $we_\mathcal E \cap fib_\mathcal E$ are called acyclic fibrations.
\end{definition}
%http://en.wikipedia.org/wiki/Model_category voir LLP et retracte
\begin{rem}
Quillen \cite{QuiHA} makes a difference between model categories and closed model categories. 
Nowadays, a Quillen model category is understood to fulfill the axioms \textit{QM1-QM5} of \cite{QRatHTh} which implies closedness in the sense of \cite{QuiHA}. Moreover it is often the case that in \textit{QM1}, existence of all colimits and limits is required and in QM5, 
the factorizations are supposed to be functorial. 
\end{rem}

\begin{ex}
Here are some standard examples of model categories. For more details see \cite{HovMC,DwSpHtMc,GoeMCSM}.
\begin{itemize}
 \item $\mathbf {Mod_{R}}$ category of R-modules, where R is a Frobenius ring;
 \item $\mathbf {Ch(R)}$ category of chain complexes of modules over a ring;
 \item $\mathbf {Top}$ category of topological spaces;
 \item $\mathbf {SSet}$ category of simplicial sets (cf. Section \ref{GspGrGth}).
\end{itemize}
\end{ex}

\section{Cofibrantly generated model categories}\label{SCgMc}

Proving that a particular category has a model structure is always difficult. 
There is, however, a standard method, initiated by Quillen himself \cite{QuiHA} and elaborated by Bousfield, Smith and others. 
This method is based on Quillen's small object argument and leads to the concept of a \emph{cofibrantly generated} model category.
If a model structure is cofibrantly generated, the fibrations (resp. acyclic fibrations) are completely determined by the right lifting property with respect to a \emph{set} of so-called generating acyclic cofibrations (resp. generating cofibrations).
Moreover, the factorizations can be made functorial.
Most of the model categories occuring in literature are cofibrantly generated.
\begin{definition}
Let $\mathcal C$ be a cocomplete category and I a class of maps in $\mathcal C$.
\begin{enumerate}
 \item A map is $I$-injective if it has the right lifting property with respect to the maps in $I$. The class of $I$-injective maps is denoted $I$-inj.
 %\item A map is I-projective if it has the left lifting property with respect to the maps in I. The class of I-projective maps is denoted I-proj.
 \item A map is an $I$-cofibration if it has the left lifting property with respect to $I$-injective maps. The class of $I$-cofibrations is denoted $I$-cof.
 %\item A map is an I-fibration if it has the right lifting property with respect to I-projective maps. The class of I-fibrations is denoted I-fib
  \item $I$-cell is the subcategory of $I$-cof containing those morphisms that can be obtained as (possibly tranfinite) composition of pushouts of maps in $I$.
\end{enumerate}
\end{definition}

\begin{rem}
For more details on the concept of transfinite composition, as well as on the concept of relative smallness 
(sometimes also called sequential smallness) cf. \cite{HovMC,DwyHLFMCHC, SchAMMMC}.
%Suppose that $\mathcal C$ is a model category, and I the class of cofibrations. Then, I-inj is the class of acyclic fibrations and I-cof=I.
%Dually, suppose that $\mathcal C$ is a model category, and I the class of fibrations. Then, I-proj is the class of acyclic cofibrations and I-fib=I. 
\end{rem}

%\begin{definition}
%Let $\mathcal C$ be a cocomplete category and I a set of maps in $\mathcal C$. 
%We say that $f:A\rightarrow B$ is a relative I-cell complex if there exists an ordinal $\lambda$ and a $\lambda$-sequence $X:\lambda\rightarrow \mathcal C$, such that f is the composition of X and such that for each $\beta$ such that $\beta+1< \lambda $, there is a pushout diagram:
%\begin{diagram}{}
%$$
%\shorthandoff{;:!?}
%\xymatrix @!0 @C=2.3cm @R=2cm{\relax
%C_{\beta} \ar[r] \ar[d]_{g_{\beta}} & X_{\beta} \ar[d] \\
%D_{\beta} \ar[r] & X_{\beta+1} 
%}
%$$
%\end{diagram} 
%such that $g_{\beta}\in I$.
%\end{definition}
%\begin{rem}
%A relative I-cell complex is a transfinite composition of pushouts of elements of I.
%We denote the collection of relative I-cell complexes by I-cell.
%\end{rem}

The reason for considering the theory of transfinite compositions and relative $I$-cell complexes is \emph{Quillen's small object argument}.
\begin{theoreme}{Small object argument}(\cite{HovMC,DwyHLFMCHC, SchAMMMC})

Let $\mathcal C$ be a cocomplete category and $I$ a set of maps in $\mathcal C$ whose domains are small relative to $I$-cell. 
Then there is a functorial factorization of any map f in $\mathcal C$ as $f=gh$ where g is in $I$-inj and h is in $I$-cell.  
\end{theoreme}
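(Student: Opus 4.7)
The plan is to produce the factorization by a transfinite induction that, at each stage, ``attaches a cell'' for every incoming lifting problem and then takes a colimit large enough that the domains of maps in $I$ become compact with respect to it. Fix $f : X \to Y$ in $\mathcal C$, and choose a regular cardinal $\kappa$ such that each domain of a map in $I$ is $\kappa$-small relative to $I$-cell (this is where the smallness hypothesis enters). I would construct by transfinite recursion on $\kappa$ an $X$-object $Z_\alpha$ together with a map $g_\alpha : Z_\alpha \to Y$, starting with $Z_0 = X$ and $g_0 = f$.

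At a successor stage, I would form the set $S_\alpha$ of all commutative squares of the form $(i : A \to B) \in I$ on the left and $(Z_\alpha \to Y)$ on the right, and define $Z_{\alpha+1}$ as the pushout
\begin{equation*}
\xymatrix{
\coprod_{S_\alpha} A \ar[r] \ar[d]_{\coprod i} & Z_\alpha \ar[d] \\
\coprod_{S_\alpha} B \ar[r] & Z_{\alpha+1}
}
\end{equation*}
The universal property of the pushout produces a unique induced map $g_{\alpha+1} : Z_{\alpha+1} \to Y$ extending $g_\alpha$ and solving all squares in $S_\alpha$ tautologically after post-composition with $Z_\alpha \to Z_{\alpha+1}$. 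At a limit stage $\beta$ I would set $Z_\beta = \mathrm{colim}_{\alpha<\beta} Z_\alpha$ and let $g_\beta$ be the induced map. Finally, I would take $Z_\infty = Z_\kappa$ and factor $f$ as $X \to Z_\infty \xrightarrow{g_\infty} Y$. By construction the left map is a transfinite composition of pushouts of coproducts of maps in $I$, hence in $I$-cell (using that $I$-cof, and thus $I$-cell, is closed under pushout and transfinite composition, which in turn follows from the lifting-property definition).

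The main obstacle, and the only non-formal step, is proving that $g_\infty$ is in $I$-inj. Given a lifting problem $u : A \to Z_\infty$, $v : B \to Y$ with $i : A \to B$ in $I$, the smallness hypothesis says that $u$ factors through some $Z_\alpha$ with $\alpha < \kappa$; that factored square is then one of the elements of $S_\alpha$, and by the construction of $Z_{\alpha+1}$ as a pushout, a lift $B \to Z_{\alpha+1}$ exists tautologically. Composing with $Z_{\alpha+1} \to Z_\infty$ gives the required diagonal filler, and compatibility with $g_\infty$ is automatic because $g_{\alpha+1}$ already solves the square against $Y$. Functoriality of the whole construction is then visible from the recipe: each $Z_\alpha$ and $g_\alpha$ is built from $f$ by canonical operations (indexing sets of squares, coproducts, pushouts, colimits), so a morphism of arrows in $\mathcal C$ induces compatible morphisms at every stage, and hence a morphism between the resulting factorizations.
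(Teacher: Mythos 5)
Your construction is the standard Quillen small object argument---transfinite attachment of cells indexed by all lifting squares against $(Z_\alpha\to Y)$, colimit up to a regular cardinal $\kappa$ supplied by the smallness hypothesis, and smallness used to factor any test map $A\to Z_\infty$ through some stage---which is exactly the proof in the sources the paper cites for this theorem (the paper itself gives no independent proof, only the reference). One small correction: your parenthetical justification that the composite $X\to Z_\infty$ lies in $I$-cell ``by the lifting-property definition'' is misplaced, since lifting properties only yield membership in $I$-cof; membership in $I$-cell holds because a pushout of a coproduct of maps in $I$ is itself a transfinite composition of pushouts of single maps in $I$ (attach the cells one at a time), so the whole $\kappa$-sequence is a relative $I$-cell complex.
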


\begin{definition}
A Quillen model category $\mathcal E$ is cofibrantly generated if $\mathcal E$ is cocomplete and if there exists sets $I$ (resp. $J$) of cofibrations 
(resp. acyclic cofibrations) whose domains are relatively small with respect to $I$-cell (resp. $J$-cell), such that $fib_\mathcal E=J$-inj and $we_\mathcal E\cap fib_\mathcal E=I$-inj.
\end{definition}
% A model category C is called cofibrantly generated if it is complete and cocomplete and there exists a set of cofibrations I and a set of acyclic cofibrations J such that
%\begin{enumerate}
% \item The class of fibrations is J-inj;
% \item The class of acyclic fibrations is I-inj;
% \item The domain of each map in I or J is small relative to I-cell or J-cell respectively;

%Moreover, here the cofibrations are the I-cofibrations, and the acyclic cofibrations are the J-cofibrations.
%\end{enumerate}

\begin{rem}
For a specific choice of $I$ and $J$ as in the definition of a cofibrantly generated model category, the maps in $I$ will be referred to as generating cofibrations and those in $J$ as generating acyclic cofibrations. 
In cofibrantly generated model categories, a map may be functorially factored as an acyclic cofibration followed by a fibration and as a cofibration followed by an acyclic fibration.
Moreover, any cofibration (resp. acyclic cofibration) is a retract of a morphism in $I$-cell (resp. $J$-cell).
\end{rem}

%The following definition is new; it is a kind of compromise between Hirschhorn's \cite{DwyHLFMCHC} "cellular model categories" and Barwick's \cite{BarLRModC} "tractable model categories". 

%see path object
%Let X be a T-algebra. We define a path object for X to be a T-algebra XI
%together with T-algebra maps
%X , XI  X X
%factoring the diagonal map, such that the first map is a weak equivalence and the
%second map is a fibration in the underlying category C.

%We will later apply this material to the special cases of module and algebra categories. We need to transfer model category structures to categories of algebras over monads. 
The following theorem gives two different cases where one can lift a model category on $\mathcal E$ to one on $\mathcal Alg_{T}$.
\begin{theoreme}
\cite{SchAMMMC}\label{thcalgT} 

Let $\mathcal E$ be a cofibrantly generated model category with generating set I (resp. J) of cofibrations (resp. acyclic cofibrations). Let T be a monad on $\mathcal E$ such that $\mathcal Alg_T$ is cocomplete. Let $I_T$ (resp. $J_T$) denote the image under the free functor 
$F_T:\mathcal E\to \mathcal Alg_T$ of $I$ (resp. $J$). Assume that the domains of the morphisms in $I_T$ (resp. $J_T$) are relatively small with respect to $I_{T}$-cell (resp. $J_{T}$-cell). Then $\mathcal Alg_{T}$ is a cofibrantly generated model category with generating set $I_T$ (resp. $J_T$) of cofibrations (resp. acyclic cofibrations) provided one of the following two conditions is satisfied:
\begin{enumerate}
 \item $J_T$-cell $\subset we_{\mathcal Alg_T}$ (where weak equivalences in $\mathcal Alg_T$ are those whose underlying map is a weak equivalence in $\mathcal E$);
 \item Every T-algebra admits a fibrant replacement; every fibrant T-algebra admits a path-object (cf.Definition \ref{defobjcof}).
\end{enumerate}
\end{theoreme}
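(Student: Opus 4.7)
The plan is to define weak equivalences and fibrations in $\mathcal Alg_T$ as those morphisms $f$ with $U_T(f)$ a weak equivalence (resp.\ fibration) in $\mathcal E$, and to define cofibrations by the left lifting property against acyclic fibrations. The adjunction $F_T\dashv U_T$ immediately yields the key identifications: a morphism $p$ in $\mathcal Alg_T$ has the right lifting property against $F_T(i)$ if and only if $U_T(p)$ has the right lifting property against $i$. Hence $I_T\text{-inj}$ coincides with the class of morphisms whose image under $U_T$ lies in $I\text{-inj}=we_{\mathcal E}\cap fib_{\mathcal E}$, i.e.\ the acyclic fibrations; likewise $J_T\text{-inj}$ coincides with the fibrations. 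This compatibility is the backbone of the entire proof.

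Next I would verify axioms QM1--QM3 and QM5. For QM1, cocompleteness is assumed and limits in $\mathcal Alg_T$ are created by $U_T$; the 2-of-3 and retract axioms QM2--QM3 follow at once from the definitions via $U_T$, since weak equivalences and fibrations in $\mathcal E$ satisfy them. For QM5, the smallness hypothesis on the domains of $I_T$ and $J_T$ allows an application of Quillen's small object argument, producing functorial factorizations of any morphism in $\mathcal Alg_T$ as $(I_T\text{-cell},\,I_T\text{-inj})$ and as $(J_T\text{-cell},\,J_T\text{-inj})$. One half of QM4 is essentially definitional: cofibrations lift against $I_T\text{-inj}$, i.e.\ against acyclic fibrations, by construction.

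The main obstacle is the second half of QM4 together with the second factorization in QM5: one must show that $J_T$-cof coincides with the acyclic cofibrations. The left lifting property against fibrations is immediate (every morphism in $J_T$-cof has LLP against $J_T$-inj = fibrations, and such maps automatically have LLP against acyclic fibrations as well); the real content is that every morphism in $J_T$-cell is a weak equivalence. This is exactly where the two alternative hypotheses enter. Under condition (1) this is assumed directly. Under condition (2), I would reproduce the classical path-object argument of Quillen--Schwede: given $i\colon A\to B$ in $J_T$-cell, replace $B$ by a fibrant $T$-algebra $\tilde B$, choose a path object $P\tilde B$ of $\tilde B$ in $\mathcal Alg_T$, and use the lifting property of $i$ against the fibration $P\tilde B\to \tilde B\times \tilde B$ to construct a right homotopy exhibiting $U_T(i)$ as a weak equivalence of $\mathcal E$; the general case of $J_T$-cof then follows by the retract argument built into the small object argument.

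Finally, having established that $J_T$-cof equals the acyclic cofibrations and that $I_T$-cof equals the cofibrations, the remaining lifting axiom and the full factorization axiom drop out, and $\mathcal Alg_T$ is cofibrantly generated by $I_T$ and $J_T$ as stated. The conceptually delicate point is therefore condition (2), where the path-object construction must be carried out in the non-trivial category $\mathcal Alg_T$; everything else is formal manipulation of the adjunction $F_T\dashv U_T$ together with the small object argument.
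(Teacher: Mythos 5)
Your proposal is correct and follows essentially the same route as the sources the paper relies on: the paper itself gives no proof of Theorem \ref{thcalgT}, deferring to \cite{HovMC, DwyHLFMCHC, SchAMMMC} and remarking only that condition (b) implies (a) by an idea of Quillen, namely the fibrant-replacement/path-object argument, which is exactly the pivot of your argument. The standard transfer proof you outline --- adjunction identification of $I_T$-inj and $J_T$-inj with acyclic fibrations and fibrations, the small object argument for the factorizations, the path-object step placing $J_T$-cell inside the weak equivalences, and the retract argument to finish --- is the intended proof.
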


\begin{rem}
The proof of Theorem \ref{thcalgT} is given in \cite{HovMC, DwyHLFMCHC, SchAMMMC}. 
In particular, the condition $(b)$ implies $(a)$ by an idea contained in \cite{QuiHA}. 
One way to obtain the relative smallness of the domains of the morphisms in $I_T$ (resp. $J_T$)  is to ask that $\mathcal E$ is locally finitely presentable, that the domains of the morphisms in $I$ (resp. $J$) are finitely presentable 
(this applies in particular to the example of simplicial sets) and that $T$ preserves filtered colimits (this is sometimes expressed in litterature by saying that $T$ has finite rank).
\end{rem}

We recall here the notions of fibrant and cofibrant replacement in a model category.

\begin{definition}\label{defobjcof}
Let $\varnothing$ and $\ast$ be initial and terminal objects of a model category $\mathcal E$. An object X is cofibrant if the unique map $\varnothing \rightarrow X$ is a cofibration and dually, an object X is fibrant if the unique map $X\rightarrow \ast$ is a fibration. 

More generally, a cofibrant replacement for X consists of a weak equivalence $X_c\rightarrow X$ with $X_c$ cofibrant and dually a fibrant replacement for X consists of a weak equivalence $X\rightarrow X_f$ with $X_f$ fibrant.
Moreover, such replacements always exist by the axiom QM5.
\end{definition}

\begin{lem}(\cite{HovMC}, \cite{DwyHLFMCHC}){Patching lemma of Reedy}\label{lemPatch}

Consider the following commutative cube
\begin{diagram}{}
$$
\shorthandoff{;:!?}
 \xymatrix {
    & Y'  \ar[rr] \ar[dd] && Z' \ar[dd] \\
    X' \ar[ru] \ar[rr] \ar[dd] && T' \ar[ru] \ar[dd] \\
    & Y \ar[rr]  && Z \\
     X \ar[ru] \ar[rr] && T \ar[ru] \\
  }
$$
\end{diagram}
in a model category $\mathcal E$ such that top and bottom squares are pushouts, $X \rightarrow Y$ and $X'\rightarrow Y'$ are cofibrations and the three vertical arrows $X'\rightarrow X$, $Y'\rightarrow Y$, $T'\rightarrow T$ are weak equivalences between cofibrant objects. Then the %"pushout=gluing=patching of these three vertical arrows" 
fourth vertical arrow $Z'\rightarrow Z$, is also a weak equivalence between cofibrant objects.
\end{lem}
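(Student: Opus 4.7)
The cofibrancy of $Z$ and $Z'$ is immediate: the map $T \to Z$ is a pushout of the cofibration $X \to Y$, hence itself a cofibration, so the composite $\emptyset \to T \to Z$ is a cofibration and $Z$ is cofibrant; the analogous argument on the top face gives $Z'$ cofibrant. For the weak equivalence $Z' \to Z$, my plan is a two-stage reduction following the standard ``gluing lemma'' strategy (cf.\ Hovey, Lemma 5.2.6).

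First, using QM5 I factor $X \to T$ as $X \hookrightarrow \widetilde T \stackrel{\sim}{\twoheadrightarrow} T$ and $X' \to T'$ as $X' \hookrightarrow \widetilde{T'} \stackrel{\sim}{\twoheadrightarrow} T'$; axiom QM4 then produces a compatible map $\widetilde{T'} \to \widetilde T$ fitting into a new cube. Setting $\widetilde Z := Y \sqcup_X \widetilde T$ and $\widetilde{Z'} := Y' \sqcup_{X'} \widetilde{T'}$, I will argue that the canonical maps $\widetilde Z \to Z$ and $\widetilde{Z'} \to Z'$ are weak equivalences. By 2-out-of-3 applied to the commutative square with vertices $\widetilde{Z'}, \widetilde Z, Z', Z$, it will then suffice to prove that $\widetilde{Z'} \to \widetilde Z$ is a weak equivalence.

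In this reduced situation the spans $Y \leftarrow X \to \widetilde T$ and $Y' \leftarrow X' \to \widetilde{T'}$ are Reedy cofibrant diagrams over the indexing category $\bullet \leftarrow \bullet \to \bullet$ (all objects cofibrant, both outgoing legs cofibrations), and the vertical maps furnish a levelwise weak equivalence between them. The pushout functor on such spans is a left Quillen functor for the Reedy model structure, so Ken Brown's lemma yields the desired weak equivalence $\widetilde{Z'} \to \widetilde Z$, which combined with the previous step completes the proof.

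The main obstacle is the first-stage claim that $\widetilde Z \to Z$ is a weak equivalence, i.e., that pushout along the cofibration $X \to Y$ sends the acyclic fibration $\widetilde T \to T$ to a weak equivalence. This does not follow from the model-category axioms alone since left properness of $\mathcal E$ is not assumed, and must be extracted from the cofibrancy of $X$, $Y$, $T$, $\widetilde T$. The cleanest approach is again via Ken Brown's lemma, applied this time to the functor $Y \sqcup_X (-)$ on the subcategory of cofibrations out of $X$ with cofibrant target, where acyclic fibrations between cofibrant objects are sent to weak equivalences by a direct lifting argument.
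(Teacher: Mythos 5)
The paper does not actually prove this lemma; it quotes it from Hovey and Hirschhorn, so your proposal can only be measured against the standard proofs it cites. Your outline is correct in its formal bookkeeping (cofibrancy of $Z,Z'$, the lift $\widetilde{T'}\to\widetilde{T}$ via QM4, the 2-out-of-3 reduction, and the second stage: for spans with \emph{both} legs cofibrations, the span category with its direct Reedy structure makes colim a left Quillen functor and Ken Brown's lemma finishes). The gap is exactly where you locate it, and your proposed repair does not close it. The claim that $\widetilde{Z}=Y\sqcup_X\widetilde{T}\to Y\sqcup_X T=Z$ is a weak equivalence is the pushout, along the cofibration $\widetilde{T}\to Y\sqcup_X\widetilde{T}$, of the acyclic fibration $\widetilde{T}\to T$; this is not obtainable from Ken Brown's lemma applied to $Y\sqcup_X(-)$ on $X/\mathcal E$, because the cofibrant objects of $X/\mathcal E$ are the maps $X\to W$ that are cofibrations, and $X\to T$ is precisely not one (that is the whole difficulty the lemma is designed to overcome; there is also no Ken Brown variant pairing acyclic fibrations with \emph{cofibrant} objects). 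The asserted ``direct lifting argument'' is not routine either: lifting gives a section $s\colon T\to\widetilde{T}$ and even a fibrewise homotopy $s p\simeq 1_{\widetilde T}$, but $s$ is not a map under $X$, so it induces no strict retraction $Z\to\widetilde{Z}$, and turning the homotopy into a weak equivalence statement is essentially the content of Hirschhorn's Proposition 13.1.2 (``every model category is left proper relative to cofibrant objects''), a genuine theorem with a dedicated proof due to Reedy. Indeed your unproved step is the special case of the cube in which two of the three vertical maps are identities, so as written the argument is circular at its core.

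The standard fix — and the proof in Hovey (Lemma 5.2.6) that the paper is implicitly invoking — avoids your first stage altogether: instead of the all-direct Reedy structure on the span category, give the index category $\bullet\leftarrow\bullet\to\bullet$ the Reedy structure in which the leg carrying $X\to T$ is inverse (degree-lowering) and the leg carrying $X\to Y$ is direct. For that structure a span $Y\leftarrow X\to T$ is Reedy cofibrant as soon as $X$ and $T$ are cofibrant and $X\to Y$ is a cofibration — exactly the hypotheses of the lemma — while the constant-diagram functor still preserves (acyclic) Reedy fibrations, so colim is left Quillen. Then your own second-stage argument (Ken Brown for the left Quillen colimit) applies directly to the original cube, with no factorization and no appeal to the problematic pushout-of-an-acyclic-fibration claim. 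Alternatively, keep your two-stage plan but supply an actual proof of the acyclic-fibration step along the lines of Hirschhorn 13.1.2; either way, the step you flagged as an ``obstacle'' must be proved, not asserted.
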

%Un commentaire pour la preuve pour vous, pas pour le texte; on considère la catégorie T<-X->Y comme une catégorie de Reedy avec la flèche de gauche comme +orienté, et la flèche de droite comme -orienté; les conditions ci-dessus expriment alors précisément que le cube, sans la flèche Z'->Z, représente une équivalence faible entre deux diagrammes Reedy-cofibrants, Z'->Z étant la colimite de cette transformation naturelle. Le foncteur colimite d'une transformation naturelle de diagrammes sur une cat. de Reedy, transforme équivalences entre objects Reedy-cofibrants en équivalences entre objets cofibrants dans E).

%For this we first introduce a model-theoretic notion of \emph{h-cofibration}

%Definition 2.2.? A map f:X->T between cofibrant objects is called an h-cofibration if for one (and hence any) factorization of f into a cofibration $f_1:X\to \tilde(T}$ followed by a weak $f_2:\tilde{T}\to T$ the weak equivalence $f_2$ is ``couniversal'', i.e. stable under any cobase change along maps between cofibrant objects.

%Cofibrations between cofibrant objects are h-cofibrations in the sense above, and this can be actually be used to prove Reedy's patching lemma along the lines of the following proof.
We shall need in Chapter 4 a slightly more general form of Reedy's patching Lemma where the pushouts are replaced by certain special homotopy pushouts which we call \emph{homotopical cell attachments}.

\begin{definition}\label{defcellatt}
A commutative square 
$$
\shorthandoff{;:!?}
 \xymatrix @!0 @C=2.4cm @R=2.4cm{\relax
    X \ar[r] \ar[d] & T \ar[d] \\
    Y \ar[r]  & Z \\
  }
$$
%$$
%\shorthandoff{;:!?}
% \xymatrix @!0 @C=1.7cm @R=1.7cm{\relax
%    X \ar[rr] \ar[dd] && T \ar[dd] \ar[ld] \\
%& T\cup_X Y \ar[rd] \\
%    Y \ar[rr] \ar[ru] && Z \\
%  }
%$$
is called a homotopical cell attachment if $X,Y,T,Z$ are cofibrant,
the vertical maps $X\to Y$ and $T \to Z$ are cofibrations and the comparison map $T\cup_X Y\to Z$ is a weak equivalence.
\end{definition}

\begin{rem}
If the comparison map is an isomorphism, i.e. the square is a pushout, we simply say that it is a \emph{cell attachment}. 
In other words, top and bottom squares in Reedy's patching lemma are supposed to be cell attachements. 
Observe that in a cell attachment the cofibrancy of $T \to Z$ is automatic (i.e. a property) while in a homotopical cell attachment the cofibrancy of $T \to Z$ is a requirement (i.e. part of the structure). 
\end{rem}

\begin{lem}\label{lemReedGen}
The patching lemma of Reedy remains true if top and bottom square are just supposed to be homotopical cell attachments.
\end{lem}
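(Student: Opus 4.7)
The plan is to reduce the generalized statement to the original patching lemma by inserting the strict pushouts of the top and bottom faces as intermediate objects and then finishing with the 2-out-of-3 axiom.

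First I would form the strict pushouts $P' = T'\cup_{X'}Y'$ of $Y'\leftarrow X'\rightarrow T'$ and $P = T\cup_X Y$ of $Y\leftarrow X\rightarrow T$. The universal property of pushouts gives a canonical map $P'\to P$ making the obvious square commute, and more importantly comparison maps $P'\to Z'$ and $P\to Z$ to the apices of the top and bottom faces of the cube; by the definition of a homotopical cell attachment (Definition \ref{defcellatt}), these two comparison maps are weak equivalences, and since $X'\to Y'$, $X\to Y$ are cofibrations between cofibrant objects, $P'$ and $P$ are cofibrant.

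Next I would apply the original Reedy patching lemma (Lemma \ref{lemPatch}) to the cube whose top face is the pushout square $X'\to Y'$, $X'\to T'$, $Y'\to P'$, $T'\to P'$ and whose bottom face is the analogous pushout for the unprimed data, with vertical weak equivalences $X'\to X$, $Y'\to Y$, $T'\to T$ between cofibrant objects. The conclusion is that the induced map $P'\to P$ is itself a weak equivalence between cofibrant objects. Combining this with the two comparison weak equivalences in the commutative square
$$
\xymatrix{
P' \ar[r] \ar[d]_{\sim} & Z' \ar[d] \\
P \ar[r]_{\sim} & Z
}
$$
the composite $P'\to Z$ is a weak equivalence (2-out-of-3 applied to $P'\to P\to Z$), and the same composite factors as $P'\to Z'\to Z$ with $P'\to Z'$ a weak equivalence, so a second application of 2-out-of-3 forces $Z'\to Z$ to be a weak equivalence. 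Cofibrancy of $Z'$ and $Z$ is built into the hypothesis that the top and bottom faces are homotopical cell attachments, so no separate argument is needed there.

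There is no genuine obstacle in this argument; it is purely a matter of bookkeeping. The only subtle point worth checking is that the diagram of pushouts really assembles into a cube satisfying the hypotheses of the original Lemma \ref{lemPatch}, i.e. that the vertical map $P'\to P$ fits commutatively with $Y'\to Y$ and $T'\to T$, which is immediate from the universal property of $P'$.
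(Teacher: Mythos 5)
Your argument is correct and is exactly the paper's own (very terse) proof spelled out in full: insert the strict pushouts via functoriality, apply the original patching Lemma \ref{lemPatch} to get $P'\to P$ a weak equivalence, and finish with the 2-out-of-3 property using the two comparison weak equivalences from Definition \ref{defcellatt}. Nothing to add.
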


\begin{proof}
This follows from Lemma \ref{lemPatch} and the 2 out of 3 property of weak equivalences and the fact that pushouts are functorial.
%Consider the left hand and right hand comparison maps $\alpha:X\cup_{X'}T'\to T$ and $\beta:Y\cup_{Y'}Z'\to Z$ as well as the induced map $\gamma:X\cup_{X'}T'\to Y\cup_{Y'}Z'$. Since weak equivalences between cofibrant objects are stable under cobase change along cofibrations, $\alpha$ is a weak equivalence, and also the canonical map $Z'\to Y\cup_{Y'}Z'$ is a weak equivalence. Since the latter composed with $\beta$ yields the given map $Z'\to Z$ it suffices to show that $\beta$ is a weak equivalence. A cubical diagram chase shows that $\beta$ is obtained from $\alpha$ by cobase change along $\gamma$. It follows that $\alpha$ is a ``couniversal" (i.e. stable under any cobase change along maps between cofibrant objects) weak equivalence. Hence $\beta$ is a weak equivalence as required. 
\end{proof}

\begin{lem}(\cite{HovMC}, \cite{DwyHLFMCHC}){Telescope lemma of Reedy}\label{lemTeles}

Consider the following commuting diagram of (possibly transfinite) sequences of composable maps
\begin{diagram}{}
$$
\shorthandoff{;:!?}
\xymatrix @!0 @C=2.7cm @R=2.4cm{\relax
A_{0} \ar[r]^{i_{0}} \ar[d]_{f_{0}} & A_{1} \ar[r]^{i_{1}} \ar[d]_{f_{1}} & A_{2} \ar[r]^{i_{2}} \ar[d]_{f_{2}} &.... \\
B_{0} \ar[r]^{j_{0}} & B_{1} \ar[r]^{j_{1}} & B_{2} \ar[r]^{j_{2}} &....
}
$$
\end{diagram} 
where each $f_n$ is a weak equivalence, each $i_n$ and $j_n$ is a cofibration and each $A_i$ and
$B_i$ are cofibrant. Then the colimit of this diagram is a weak equivalence between cofibrant objects.

\end{lem}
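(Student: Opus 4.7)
The cofibrancy claim is immediate: each of $\mathrm{colim}\, A_n$ and $\mathrm{colim}\, B_n$ arises as a transfinite composition of cofibrations starting from a cofibrant object ($A_0$, respectively $B_0$), so the canonical map from the initial object to each colimit factors through $A_0$ (resp.\ $B_0$) and is a composition of cofibrations, hence a cofibration.

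For the weak equivalence, the plan is to factor the natural transformation $f_\bullet : A_\bullet \to B_\bullet$ compatibly levelwise as an acyclic cofibration followed by an acyclic fibration. Specifically, I construct by transfinite recursion a sequence $C_\bullet$ of cofibrations between cofibrant objects, together with factorizations $A_n \xrightarrow{u_n} C_n \xrightarrow{p_n} B_n$ of each $f_n$ with $u_n$ an acyclic cofibration and $p_n$ an acyclic fibration: factor $f_0$ via axiom QM5 to obtain $C_0$; at a successor stage, form the pushout $D_{n+1} = C_n \cup_{A_n} A_{n+1}$ and factor the induced map $D_{n+1} \to B_{n+1}$ as an acyclic cofibration $D_{n+1} \to C_{n+1}$ followed by an acyclic fibration $C_{n+1} \to B_{n+1}$; at a limit ordinal take colimits. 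By construction, each $C_n \to C_{n+1}$ is a cofibration (a pushout of $A_n \to A_{n+1}$ composed with an acyclic cofibration), and each $u_{n+1}$ is an acyclic cofibration (a pushout of $u_n$ composed with the acyclic cofibration $D_{n+1} \to C_{n+1}$).

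The induced map $\mathrm{colim}\, u_n : \mathrm{colim}\, A_n \to \mathrm{colim}\, C_n$ is then an acyclic cofibration, since acyclic cofibrations are characterized by the left lifting property against fibrations and this property is preserved by pushouts and transfinite compositions. By the factorization $\mathrm{colim}\, f_n = (\mathrm{colim}\, p_n) \circ (\mathrm{colim}\, u_n)$ and the 2-out-of-3 rule, it remains to show that $\mathrm{colim}\, p_n$ is a weak equivalence. For this, one applies the patching lemma (Lemma \ref{lemPatch}) iteratively at successor ordinals: each square relating $p_n$ and $p_{n+1}$ fits into a cube built from partial colimits, and a transfinite induction combined with a continuity argument at limit ordinals yields the conclusion.

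The main obstacle is precisely this last step, namely showing that $\mathrm{colim}\, p_n$ is a weak equivalence; it echoes the original statement but in the setting of acyclic fibrations and requires careful bookkeeping at limit ordinals. A conceptually clean way to address it is to apply Ken Brown's lemma to the colimit functor viewed as a left Quillen functor from the Reedy model structure on $\lambda$-diagrams to $\mathcal{E}$, exploiting that the levelwise acyclic cofibration case has already been handled. Alternatively, a direct transfinite induction combining Lemma \ref{lemPatch} at successor ordinals with the continuity of sequential colimits of cofibrations at limit ordinals completes the argument.
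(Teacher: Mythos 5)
The paper itself gives no proof of Lemma \ref{lemTeles} (it is quoted from \cite{HovMC} and \cite{DwyHLFMCHC}), so your attempt has to stand on its own. Its first half is essentially correct: the cofibrancy claim is fine, and the recursive construction of the tower $C_\bullet$ works (at a successor stage one should note that $A_{n+1}\to D_{n+1}$ is a pushout of $u_n$, hence an acyclic cofibration, so that $D_{n+1}\to B_{n+1}$ is a weak equivalence by 2-out-of-3 and the QM5 factorization can indeed be taken of the form acyclic cofibration followed by acyclic fibration; at a limit ordinal, merely taking colimits makes $p_\beta$ a weak equivalence but not necessarily a fibration, so one should re-factor there if the invariant is to be kept). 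Likewise, $\mathrm{colim}\,u$ is an acyclic cofibration, since it is a transfinite composition of pushouts of the maps $D_{n+1}\to C_{n+1}$ and of $u_0$ — a decomposition you assert rather than exhibit, but which is standard.

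The genuine gap is exactly where you place it: nothing in the proposal proves that $\mathrm{colim}\,p$ is a weak equivalence, and that is where the entire homotopical content of the lemma lives. Your reduction replaces the levelwise weak equivalence $f_\bullet$ by another levelwise weak equivalence $p_\bullet$ between towers of cofibrations of cofibrant objects, i.e.\ by another instance of the statement being proved. Of the two completions you offer, (b) fails: Lemma \ref{lemPatch} concerns pushout squares and gives nothing about passage to a sequential or transfinite colimit, and ``continuity of colimits'' at limit stages is a purely categorical fact with no homotopical force, so the induction never gets past the limit stage, which is the only nontrivial one. Completion (a) is legitimate but makes your construction redundant and is not really a completion of your argument: once you allow the facts that $\mathrm{colim}:\mathcal E^{\lambda}\to\mathcal E$ is left Quillen for the Reedy structure (the constant functor preserves levelwise fibrations and acyclic fibrations, which are the Reedy ones since $\lambda$ is a direct category) and Brown's Lemma in the form of Corollary \ref{corQfWE}(a), you can apply them directly to $f_\bullet$, because $A_\bullet$ and $B_\bullet$ are already Reedy cofibrant by hypothesis — this is precisely the standard proof in the cited references. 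If you insist on finishing your elementary route, the missing ingredient is a Ken-Brown-type argument carried out at the level of towers (e.g.\ factoring $(p,\mathrm{id}):C_\bullet\sqcup B_\bullet\to B_\bullet$ Reedy-wise and showing that the two inclusions into the middle tower are Reedy cofibrations and levelwise weak equivalences whose relative latching maps are acyclic cofibrations, so that the already-proved acyclic-cofibration case applies); none of this is in the proposal, so as it stands the proof is incomplete.
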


\section{Quillen functors and homotopy category}\label{SQfHc}

In this section we recall some basics about Quillen adjunctions, Quillen equivalences and the homotopy category of a model category.
%In this section, we define and study morphisms of model categories. We call such morphisms Quillen functors. Then we introduce the notion of the homotopy category of a model category and we see that Quillen functors induce adjoint functors between the homotopy categories. We call these induced adjoint functors total derived functors.

\begin{definition}
Let $\mathcal C$ and $\mathcal D$ be two model categories and $F:\mathcal C \rightleftarrows \mathcal D:G$ an adjoint pair, with $F$ the left adjoint and $G$ the right adjoint. We say that: 
\begin{enumerate}
\item A functor $F:\mathcal C\rightarrow \mathcal D$ is a left Quillen functor if $F$ preserves cofibrations and acyclic cofibrations;
\item A functor $G:\mathcal D\rightarrow \mathcal C$ is a right Quillen functor if $G$ preserves fibrations and acyclic fibrations.
\end{enumerate}
\end{definition}

\begin{definition}
We say that $\left( F,G\right) $ is a Quillen adjunction if $F$ is a left Quillen functor or, equivalently, if $G$ is a right Quillen functor.
\end{definition}

\begin{lem}\cite{HovMC}{Brown's Lemma}\label{lemBrowns}

In any model category the following properties hold:
\begin{enumerate}
 \item Any morphism between cofibrant objects factors as a cofibration followed by a retraction of an acyclic cofibration;
 \item Any morphism between fibrant objects factors as a section of an acyclic fibration followed by a fibration.
\end{enumerate}
\end{lem}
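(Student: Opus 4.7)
The plan for part (1) is the classical mapping-cylinder-style construction using the fold map. Given $f : A \to B$ between cofibrant objects, I would form the coproduct $A \sqcup B$ (which exists by QM1) and consider the map $(f, \mathrm{id}_B) : A \sqcup B \to B$. By the factorization axiom QM5, this map factors as
\[
A \sqcup B \xrightarrow{\,j\,} C \xrightarrow{\,p\,} B
\]
where $j$ is a cofibration and $p$ is an acyclic fibration. Let $i_A : A \to C$ and $i_B : B \to C$ denote the composites of $j$ with the two canonical coproduct inclusions.

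The next step is to verify that the coproduct inclusions $A \to A \sqcup B$ and $B \to A \sqcup B$ are themselves cofibrations. This is because each is the pushout of $\varnothing \to B$ (respectively $\varnothing \to A$) along $\varnothing \to A$ (respectively $\varnothing \to B$), both of which are cofibrations by cofibrancy of $A$ and $B$; cofibrations being stable under pushout (an easy consequence of QM4), the inclusions are cofibrations. Consequently $i_A$ and $i_B$ are both cofibrations as composites of cofibrations.

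Now $p \circ i_B = \mathrm{id}_B$, so $p$ is a retraction of $i_B$. Since $p$ is a weak equivalence and $\mathrm{id}_B$ is a weak equivalence, the 2-out-of-3 axiom QM2 implies $i_B$ is a weak equivalence, hence $i_B$ is an acyclic cofibration. Since $p \circ i_A = f$, we have realized the desired factorization $f = p \circ i_A$ with $i_A$ a cofibration and $p$ a retraction of the acyclic cofibration $i_B$. Part (2) follows by the dual argument applied in the opposite model category: factor $(g, \mathrm{id}_A) : B \to A \times B$ backwards using a fibration preceded by an acyclic cofibration, exchanging the roles of limits/colimits and cofibrations/fibrations.

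No real obstacle is expected; the only mildly subtle point is remembering that cofibrancy of both $A$ and $B$ is needed precisely to ensure that the coproduct inclusions are cofibrations, which is what makes $i_A$ a cofibration and lets the 2-out-of-3 argument upgrade $i_B$ from a cofibration to an acyclic cofibration. Everything else is a direct application of QM2, QM4, and QM5.
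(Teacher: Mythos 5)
Your proof is correct and is exactly the standard argument: the paper states Brown's Lemma with only a citation to Hovey, and your factorization of the fold-type map $(f,\mathrm{id}_B)\colon A\sqcup B\to B$ into a cofibration followed by an acyclic fibration, with the 2-out-of-3 upgrade of $i_B$ to an acyclic cofibration, is precisely the proof given in that reference. The only slip is cosmetic: in part (2), for $g\colon B\to A$ the map to factor is $(g,\mathrm{id}_B)\colon B\to A\times B$ (not $(g,\mathrm{id}_A)$), after which your dual argument in the opposite model category goes through verbatim.
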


\begin{cor}\cite{HovMC}\label{corQfWE}
\begin{enumerate}
 \item  Any functor between model categories that takes acyclic cofibrations to weak equivalences (e.g. a left Quillen functor) takes weak equivalences between cofibrant objects to weak equivalences;
 \item  Any functor between model categories that takes acyclic fibrations to weak equivalences (e.g. a right Quillen functor) takes weak equivalences between fibrant objects to weak equivalences.
\end{enumerate}
\end{cor}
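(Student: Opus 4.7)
The plan is to deduce both parts directly from Brown's Lemma (Lemma \ref{lemBrowns}) via repeated use of the 2-out-of-3 property of weak equivalences. Since the statement explicitly mentions ``e.g. a left Quillen functor'' in (a) and ``e.g. a right Quillen functor'' in (b), I only need to verify the abstract preservation statement; the parenthetical follows immediately from the defining property of a left (resp.\ right) Quillen functor. I will spell out the argument for part (a); part (b) is entirely dual.

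First I would take a weak equivalence $f \colon X \to Y$ between cofibrant objects in $\mathcal C$ and invoke the first half of Brown's Lemma to factor it as $f = p \circ i$, where $i \colon X \to Z$ is a cofibration and $p \colon Z \to Y$ is a retraction of some acyclic cofibration $s \colon Y \to Z$, so that $p \circ s = \mathrm{id}_Y$. Observe that $Z$ is automatically cofibrant: the composite $\varnothing \to X \to Z$ is a cofibration because both factors are. Applying 2-out-of-3 to the identity $p \circ s = \mathrm{id}_Y$ (in which $s$ is a weak equivalence) shows that $p$ is a weak equivalence, and then a second application to $f = p \circ i$ shows that $i$ is a weak equivalence, hence an acyclic cofibration.

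Now I apply the functor $F$. By hypothesis $F$ sends the acyclic cofibrations $i$ and $s$ to weak equivalences, so $F(i)$ and $F(s)$ are weak equivalences. The relation $F(p) \circ F(s) = F(p \circ s) = F(\mathrm{id}_Y) = \mathrm{id}_{F(Y)}$ combined with 2-out-of-3 forces $F(p)$ to be a weak equivalence as well. Hence $F(f) = F(p) \circ F(i)$ is a composite of two weak equivalences and therefore itself a weak equivalence, proving (a).

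For part (b) I would dualise: factor a weak equivalence between fibrant objects as a section of an acyclic fibration followed by a fibration using the second half of Brown's Lemma, then repeat the same 2-out-of-3 bookkeeping with $G$ sending acyclic fibrations to weak equivalences. The argument is essentially symbol-for-symbol dual. There is no real obstacle here: once Brown's Lemma is granted, the proof reduces to careful tracking of the 2-out-of-3 property, and the slight subtlety worth flagging is simply that the intermediate object $Z$ (or its dual in (b)) must be verified to be cofibrant (resp.\ fibrant) so that Brown's Lemma and its outputs stay within the class of objects on which $F$ (resp.\ $G$) is controlled.
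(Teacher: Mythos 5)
Your proof is correct and follows exactly the intended route: the paper gives no proof of this corollary (it cites \cite{HovMC}), but it states Brown's Lemma \ref{lemBrowns} immediately beforehand precisely so that the corollary follows by the standard argument you give, namely factoring the weak equivalence via Brown's Lemma and applying the 2-out-of-3 axiom to the retraction identity $p\circ s=\mathrm{id}$ before and after applying $F$. Your extra check that the intermediate object is cofibrant is harmless (it is only really needed for Hovey's sharper variant where $F$ is assumed to send only acyclic cofibrations \emph{between cofibrant objects} to weak equivalences), and the dualization for part (b) is fine.
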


%By Brown's Lemma any left Quillen functor F preserves weak equivalences between cofibrant objects and any right Quillen functor preserves weak equivalences between fibrant objects.

%Le cor. suit du lemme par multiple application de 2/3. Dans le lemme les assertions (a) et (b) sont duales. (a) p.ex. est démontrée en considérant le coproduit de la source et du but du morphisme; je vous conseille de regarder dans le livre de Hovey. Mais il n'est pas nécessaire d'inclure une preuve; par contre vous devriez l'avoir vue avant la soutenance ...
\begin{definition}
%Let $\mathcal C$ and $\mathcal D$ be two model categories and $F:\mathcal C \rightleftarrows \mathcal D:G$ an adjoint pair that defines a Quillen adjunction, with $F$ the left adjoint and $G$ the right adjoint.
A Quillen adjunction is a Quillen equivalence if for all cofibrant objects $X$ in $\mathcal C$ and all fibrant objects Y in $\mathcal D$, a morphism $X\rightarrow GY$ is a weak equivalence in $\mathcal C$ if and only if the adjoint morphism $FX\rightarrow Y$ is a weak equivalence in $\mathcal D$. 
\end{definition}
%\begin{rem}
%More generally, a cofibrant replacement or a cofibrant model for Z consists in giving a weak equivalence $Y\rightarrow Z$ where Y is a cofibrant object.
%A fibrant replacement or a fibrant model for Z consists in giving a weak equivalence $Z\rightarrow Y$  where Y is a fibrant object.
%\end{rem}
\begin{definition}\cite{GoeMCSM, DwSpHtMc}
The homotopy category of $\mathcal E$ is a category $Ho\left( \mathcal E\right)$ with same objects as $\mathcal E$, and with Hom-sets given by
$$Ho(\mathcal E)(X,Y)=\mathcal E(X_c,Y_f)/ \sim $$
where $\sim$ denotes Quillen's left, resp. right homotopy (which coincide here).
%Let $\mathcal E$ be a model category. The homotopy category $Ho\left( \mathcal E\right)$ is the category obtained from $\mathcal E$ by inverting the weak equivalences.
%Precisely, $Ho\left( \mathcal C\right)$ is characterized by a functor $\zeta:\mathcal C\rightarrow Ho\left( \mathcal C\right)$ which takes weak equivalences to isomorphisms and by the following universal property: if $F:\mathcal C\rightarrow \mathcal D$ is any functor taking weak equivalences to isomorphisms, then there is a unique functor $G:Ho\left( \mathcal C\right) \rightarrow \mathcal D$ making commutative the following diagram:
%\begin{diagram}{}
%$$
%\shorthandoff{;:!?}
%\xymatrix @!0 @C=2.3cm @R=2cm{\relax
%\mathcal C \ar[r]^{\zeta} \ar[d]_{F} & Ho\left( \mathcal C\right) \ar[ld]^{G} \\
%\mathcal D
%}
%$$
%\end{diagram} 
\end{definition}

Any Quillen adjunction $F:\mathcal C \rightleftarrows \mathcal D:G$ induces a derived adjunction 
$$LF:Ho\left( \mathcal C\right)\rightleftarrows Ho\left( \mathcal D\right):RG$$ between the homotopy categories (once a cofibrant replacement functor for $\mathcal C$ and a fibrant replacement functor for $\mathcal D$ have been chosen). It can be shown that a Quillen adjunction $(F,G)$ is a Quillen equivalence if and only if the derived adjunction $(LF,RG)$ is an ordinary equivalence of categories.

The following theorem gives an interpretation of the homotopy category $Ho(\mathcal E)$ of a Quillen model category $\mathcal E$ in terms of a universal property.

\begin{theoreme}\cite{GoeMCSM, DwSpHtMc}\label{thHcl}

The homotopy category $Ho(\mathcal E)$ of a Quillen model category $\mathcal E$ is the localization $\gamma:\mathcal E \rightarrow Ho(\mathcal E)$ of $\mathcal E$ with respect to $we_\mathcal E$. Moreover, a morphism of $\mathcal E$ belongs to $we_\mathcal E$ if and only if $\gamma(f)$ is an isomorphism.
%Let $\mathcal C$ be a model category and $\mathcal W \subseteq \mathcal C$ the class of weak equivalences. Then the functor $F:\mathcal C\rightarrow Ho\left( \mathcal C\right) $ is a localization of $\mathcal C$  with respect to $\mathcal W$.
\end{theoreme}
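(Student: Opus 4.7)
The plan is to exhibit an explicit model for $Ho(\mathcal E)$ as the category $\pi\mathcal E_{cf}$ whose objects are the fibrant-cofibrant objects of $\mathcal E$ and whose morphisms are homotopy classes of maps, and then verify that this category together with the canonical functor $\gamma$ enjoys the universal property of the localization $\mathcal E\left[we_\mathcal E^{-1}\right]$. The first ingredient I need is a good theory of left and right homotopy. Using the factorization axiom QM5, I would construct cylinder objects $X\sqcup X \to \mathrm{Cyl}(X) \xrightarrow{\sim} X$ and path objects $Y \xrightarrow{\sim} Y^I \to Y\times Y$. Standard arguments then show that left and right homotopy coincide on $\mathcal E(X,Y)$ when $X$ is cofibrant and $Y$ is fibrant, that the resulting relation is an equivalence relation, and that composition descends to homotopy classes; this yields a well-defined category $\pi\mathcal E_{cf}$.

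The second step is a Whitehead-type theorem: any weak equivalence between fibrant-cofibrant objects admits a two-sided homotopy inverse. This follows from the lifting axiom QM4 combined with Brown's lemma (Lemma \ref{lemBrowns}), which allows me to split the map into an acyclic cofibration and a retraction of an acyclic fibration, each of which can be inverted up to homotopy via the lifting property. Choosing functorial cofibrant and fibrant replacements then gives a functor $\gamma:\mathcal E\to\pi\mathcal E_{cf}$ sending every weak equivalence to an isomorphism; one checks that its value on a morphism, constructed via a lift of $f$ along the replacements, is well-defined up to homotopy.

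For the universal property I would take any functor $F:\mathcal E\to\mathcal D$ inverting $we_\mathcal E$ and first show that $F$ identifies homotopic maps: for any cylinder $\mathrm{Cyl}(X)$, the two inclusions $X\to \mathrm{Cyl}(X)$ are weak equivalences (by 2-out-of-3 applied to the weak equivalence $\mathrm{Cyl}(X)\to X$), so $F$ sends them to a common isomorphism, forcing $F(f)=F(g)$ whenever $f\simeq g$. Together with naturality of the replacements, this produces the unique factorization $\bar F$ through $\pi\mathcal E_{cf}\simeq Ho(\mathcal E)$. For the final clause, if $\gamma(f)$ is an isomorphism I replace $f$ by a weakly equivalent map $f':X'\to Y'$ between fibrant-cofibrant objects; then $f'$ is a homotopy equivalence, and a homotopy inverse $g$ gives $gf'\simeq 1$ through a path object $(Y')^I$ whose two projections are weak equivalences, so two successive applications of 2-out-of-3 force $gf'$, and then symmetrically $f'g$, and finally $f'$ itself, to lie in $we_\mathcal E$.

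The main obstacle is the technical apparatus of cylinder and path objects together with the verification that composition is well-defined on homotopy classes; this is elementary but requires careful bookkeeping of which factorizations are used. More conceptually, the only substantive input beyond the axioms is the Whitehead theorem for model categories, whose proof is the one place where Brown's lemma and the lifting axiom must be combined nontrivially. Once that is in hand, everything else is formal manipulation using functorial replacements and the 2-out-of-3 axiom.
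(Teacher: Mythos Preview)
The paper does not give its own proof of this theorem: it is stated with a citation to \cite{GoeMCSM, DwSpHtMc} and no argument is supplied. Your outline is precisely the standard proof found in those references (Dwyer--Spali\'nski \S4--5, Goerss--Jardine II.1), so there is nothing substantive to compare.

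One small point worth tightening: in your final clause you deduce $f'\in we_\mathcal E$ from $gf'\in we_\mathcal E$ and $f'g\in we_\mathcal E$ by ``two successive applications of 2-out-of-3''. That step is not literally 2-out-of-3 (which only handles two composable arrows, not a two-sided inverse); it is the 2-out-of-6 property, or equivalently the argument in the cited references where one first factors $f'$ as a cofibration followed by an acyclic fibration and then uses the homotopy inverse together with QM4 to exhibit the cofibration part as a retract of a weak equivalence. The conclusion is correct, but the justification as written is slightly too quick.
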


%\begin{rem}
%By Theorem \ref{thHcl}, we remark that the homotopy category depends only on the class of weak equivalences. We observe that in a model category the weak equivalences carry the fundamental homotopy theoretic information, while the cofibrations, fibrations and the axioms they satisfy are mostly tools for making various constructions. Therefore, when constructing a model category, it is most important to focus on choosing the class of weak equivalences while choosing fibrations and cofibrations is a secondary issue.
%\end{rem}

\section{Monoidal model categories}\label{SMmc}

We review in this section the basic notions and results on monoidal model categories.
The definition of a monoidal model category involves constraints on the compatibility of the model structure with the closed symmetric monoidal structure.
The compatibility is expressed by pushout and unit axioms given below.
These conditions suffice to ensure that the homotopy category inherits a closed symmetric monoidal structure compatible with the localization functor. 

\begin{definition}\label{defmonmodc}\cite{HovMC,SchAMMMC}
A monoidal model category $\mathcal E$ is a category fullfilling:
\begin{enumerate}
 \item $\mathcal E$ is a closed symmetric monoidal category;
 \item $\mathcal E$ is a model category;
 \item For any pair of cofibrations $f:X\rightarrow Y $ and $g:X'\rightarrow Y'$, the induced map $$f\Box g: \left( X\otimes Y'\right) \bigsqcup_{X\otimes X'} \left( Y\otimes X'\right)\rightarrow Y\otimes Y' $$ given by the pushout diagram:
\begin{diagram}{}\label{diagppa}
$$
\shorthandoff{;:!?}
\xymatrix @C=3cm @R=2cm {\relax
X\otimes X' \ar[r]^{f\otimes X'} \ar[d]_{X\otimes g} & Y\otimes X' \ar[d]^{j_{0}} \ar@/^1pc/[rdd]^{Y\otimes g}\\
X\otimes Y' \ar[r]^{j_{1}} \ar@/_1pc/[rrd]_{f\otimes Y'} & X\otimes Y'\bigsqcup_{X\otimes X'} Y\otimes X' \ar@{.>}[rd]^{f\Box g}\\
&& Y\otimes Y'
}
$$
\end{diagram}
is also a cofibration. If in addition one of the maps f or g is a weak equivalence, then so is the map $f\Box g$;
 \item There exists a cofibrant replacement of the unit $I_{c}\rightarrow I$ such that for any cofibrant object X, the map $I_{c}\otimes X\rightarrow I\otimes X\simeq X$ is a weak equivalence.
\end{enumerate}
\end{definition}

\begin{rem}
The condition $(c)$ is called the pushout-product axiom of Hovey. The condition $(d)$ is automatically satisfied if the unit $I$ is cofibrant.
%In \cite{HovMC} Hovey has pointed out that an extra condition is needed to ensure that the monoidal structure on the model category induces a monoidal structure on the homotopy category. The pushout-product axiom guarantees that for cofibrant objects the tensor product is an invariant of the weak equivalence type, so it passes to a product on the homotopy category. 
%However, if the unit of the tensor product is not cofibrant, then it do not necessary represent a unit on the homotopy category level. The following additional requirement, called the unit axiom solves this problem: 
%The latter is of course automatic if the unit is already cofibrant.
\end{rem}

Some of the examples of monoidal model categories are:
\begin{ex}
~\\
\begin{enumerate}
 \item[(i)] The category of unbounded chain complexes of R-modules, for a commutative ring R, $Ch\left(R \right) $ is a monoidal model category;
 \item[(ii)] The model category of simplicial sets $\mathbf{SSet}$ forms a monoidal model category;
 \item[(iii)] The model category of pointed simplicial sets $\mathbf{SSet}_{*}$ forms a monoidal model category;
 \item[(iv)] The model categories of k-spaces and compactly generated spaces \textbf{K} and \textbf{T} are monoidal model categories.   
\end{enumerate}
\end{ex}
 The model category of topological spaces $\mathbf{Top}$ is not a monoidal model category since it is not closed, i.e. there are no internal hom's without some extra-conditions.

The following proposition provides a cofibrantly generated model structure on the category of modules over a monoid.

\begin{prop}\label{propmodmod}(\cite{BMDCAO}, Proposition 2.7 (a))
Let $\mathcal E$ be a cofibrantly generated monoidal model category. 

Let $M$ be a well-pointed monoid in $\mathcal E$ i.e. a monoid whose unit: $I \rightarrow M$ is a cofibration in $\mathcal E$.

Then there is a cofibrantly generated model structure on the category of left (right) $M$-modules $Mod_{M}$, where a map is a weak equivalence or a fibration if and only if it is a weak equivalence or a fibration in $\mathcal E$.

\end{prop}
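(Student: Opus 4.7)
The plan is to apply the lifting theorem \ref{thcalgT} to the $\mathcal E$-monad $T = -\otimes M$ supplied by Lemma \ref{lememon}, whose category of algebras is precisely $Mod_M$. Since $\mathcal E$ is closed monoidal, $T$ is itself a left adjoint (with right adjoint $\underline{\mathcal E}(M,-)$) and hence preserves every colimit; by a standard monadicity argument the forgetful functor $U_T:Mod_M \to \mathcal E$ then \emph{creates} all colimits. In particular $Mod_M$ is cocomplete (compare Proposition \ref{propalgcocom}), and $U_T$ carries any $I_T$- or $J_T$-cell complex in $Mod_M$ to a cell complex in $\mathcal E$ of the same underlying shape. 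Combined with the adjunction isomorphism $Mod_M(A \otimes M, -) \cong \mathcal E(A, U_T(-))$, smallness of the domains $A$ of the generating (acyclic) cofibrations of $\mathcal E$ transfers at once to smallness of $A \otimes M$ in $Mod_M$, so the smallness hypothesis of Theorem \ref{thcalgT} is met.

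The real work is to verify condition $(a)$ of Theorem \ref{thcalgT}, namely $J_T\text{-cell} \subset we_{Mod_M}$. Since $U_T$ preserves colimits, and since the class of acyclic cofibrations of $\mathcal E$ coincides with $J\text{-cof}$ and is therefore stable under pushout and transfinite composition, it suffices to prove that $j \otimes M : A \otimes M \to B \otimes M$ is an acyclic cofibration of $\mathcal E$ whenever $j:A \to B$ is. This is exactly where well-pointedness enters: the pushout-product axiom applied to the acyclic cofibration $j$ and the cofibration $I \to M$ produces an acyclic cofibration
$$j \Box (I \to M)\,:\,(A \otimes M) \cup_A B \longrightarrow B \otimes M.$$
On the other hand, $j \otimes M$ factors through this pushout as
$$A \otimes M \;\longrightarrow\; (A \otimes M) \cup_A B \;\xrightarrow{\;j \Box (I \to M)\;}\; B \otimes M,$$
where the first arrow is obtained by pushing out the acyclic cofibration $j$ along $A \cong A \otimes I \to A \otimes M$, and is therefore itself an acyclic cofibration. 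Composing two acyclic cofibrations yields the desired conclusion.

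Granting this key factorization, all hypotheses of Theorem \ref{thcalgT}$(a)$ are satisfied, and we obtain the cofibrantly generated transferred model structure on $Mod_M$, with generating (acyclic) cofibrations $I_T$ (resp.\ $J_T$) and with fibrations and weak equivalences created by $U_T$, which is exactly the content of the proposition (the right module case is identical by symmetry). The only non-formal step is the factorization argument for $j \otimes M$; every other verification (cocompleteness, smallness, preservation under $U_T$, stability of acyclic cofibrations under pushout and transfinite composition) is purely formal, and the well-pointedness hypothesis on $M$ is used \emph{only} through the pushout-product axiom in that single step.
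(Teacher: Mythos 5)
The paper itself gives no proof of this proposition (it is quoted from \cite{BMDCAO}), so there is nothing to compare line by line; your transfer argument via Theorem \ref{thcalgT} is the standard proof of the cited result, and its essential point is handled correctly. In particular, the factorization
$$A\otimes M \;\longrightarrow\; (A\otimes M)\cup_{A} B \;\xrightarrow{\;j\Box(I\to M)\;}\; B\otimes M,$$
with the first map a cobase change of $j$ along $A\cong A\otimes I\to A\otimes M$ and the second the pushout-product of $j$ with the unit $I\to M$, is exactly the right way to exploit well-pointedness: it shows that $j\otimes M$ is an acyclic cofibration (and, by the same argument, that $i\otimes M$ is a cofibration for $i\in I$) without assuming that $M$, or even $I$, is cofibrant. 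Combined with the fact that $U_T$ creates colimits and that the acyclic cofibrations of $\mathcal E$ (being $J$-cof) are closed under cobase change and transfinite composition, this yields $U_T(J_T\text{-cell})\subset cof_{\mathcal E}\cap we_{\mathcal E}$, i.e. hypothesis $(a)$ of Theorem \ref{thcalgT}, which is indeed the only non-formal input.

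The one place where you are too quick is the smallness hypothesis of Theorem \ref{thcalgT}. It is not true that $U_T$ sends a relative $I_T$-cell complex to an $I$-cell complex ``of the same underlying shape'': the underlying maps are transfinite compositions of pushouts of the maps $i\otimes M$, $i\in I$, and these are cofibrations of $\mathcal E$ but not, in general, relative $I$-cell complexes. Since the paper's definition of cofibrant generation only guarantees smallness of the domains of $I$ (resp. $J$) relative to $I$-cell (resp. $J$-cell), the adjunction $Mod_M(A\otimes M,-)\cong\mathcal E(A,U_T(-))$ does not transfer smallness ``at once'': you still need the domains to be small relative to this larger class of maps, for instance relative to all cofibrations of $\mathcal E$, or an additional assumption of the type discussed in the remark following Theorem \ref{thcalgT} (local presentability of $\mathcal E$ and small/presentable domains; note that $-\otimes M$ automatically preserves filtered colimits since it is a left adjoint). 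The cited reference avoids the issue by assuming from the outset that the relevant domains are small with respect to a sufficiently large class of maps. This is a standard and repairable point, but as written your smallness sentence is not justified by the hypotheses; the rest of the argument is correct.
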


We recall the definition of the monoid axiom which has an important role in lifting the model category structure to monoids and modules.
\begin{definition}\cite{SchAMMMC}\label{defmonax}
A monoidal model category $\mathcal E$ satisfies the monoid axiom if every map in $$\left(\left\lbrace cof_{\mathcal E} \cap we_\mathcal E \right\rbrace \otimes \mathcal E \right)-cell$$ is a weak equivalence. 
\end{definition}

\begin{rem}
Schwede-Shipley \cite{SchAMMMC} showed that the monoid axiom implies the existence of a transferred model structure on the category of monoids in $\mathcal E$, as well as a transferred model structure on the category of modules over a general (non necessarily well-pointed) monoid. 
\end{rem}

\begin{definition}
A $\mathcal E$-model category $\mathcal C$ over a monoidal model category $\mathcal E$ is a category fullfilling:
\begin{enumerate}
 \item The category $\mathcal C$ is enriched, tensored and cotensored over $\mathcal E$; 
 \item $\mathcal C$ is a model category;
 \item For any pair of cofibrations $f:X\rightarrow Y $ in $\mathcal E$ and $g:X'\rightarrow Y'$ in $\mathcal C$, the induced map $$f\Box g: \left( X\otimes Y'\right) \bigsqcup_{X\otimes X'} \left( Y\otimes X'\right)\rightarrow Y\otimes Y' $$ given by the pushout diagram
\begin{diagram}{}\label{diagppa}
$$
\shorthandoff{;:!?}
\xymatrix @C=3cm @R=2cm {\relax
X\otimes X' \ar[r]^{f\otimes X'} \ar[d]_{X\otimes g} & Y\otimes X' \ar[d]^{j_{0}} \ar@/^1pc/[rdd]^{Y\otimes g}\\
X\otimes Y' \ar[r]^{j_{1}} \ar@/_1pc/[rrd]_{f\otimes Y'} & X\otimes Y'\bigsqcup_{X\otimes X'} Y\otimes X' \ar@{.>}[rd]^{f\Box g}\\
&& Y\otimes Y'
}
$$
\end{diagram}
is a cofibration in $\mathcal C$. If in addition one of the maps f or g is a weak equivalence, then so is the map $f\Box g$;
 \item There exists a cofibrant replacement of the unit $I_{c}\rightarrow I$ in $\mathcal E$ such that for any cofibrant object X of $\mathcal C$, the map $I_{c}\otimes X\rightarrow I\otimes X\simeq X$ is a weak equivalence in $\mathcal C$.
\end{enumerate}
\end{definition}

For $\mathcal E=SSet$ these are precisely Quillen's simplicial model categories. 
%The following theorem provides a cofibrantly generated model structure on the category of modules and on the category of algebras in different cases of ordinary and commutative monoids.
%\begin{theoreme}(\cite{SchAMMMC})\label{thmodAM}
%Let $\mathcal C$ be a monoidal model category that is cofibrantly generated. Suppose that every object in $\mathcal C$ is small relative to the whole category and that $\mathcal C$ satisfies the monoid axiom.
%\begin{enumerate}
% \item For a monoid R in $\mathcal C$, the category of left R-modules is a cofibrantly generated model category;
% \item For a commutative monoid R in $\mathcal C$, the category of R-modules is a cofibrantly generated, monoidal model category satisfying the monoid axiom.
% \item For a commutative monoid R in $\mathcal C$, the category of R-algebras is a cofibrantly generated model category. Furthermore, every cofibration of R-algebras whose source is cofibrant as an R-module is also a cofibration of R-modules. In particular, if the unit I of the tensor product is cofibrant in C, then every cofibrant R-algebra is also cofibrant as an R-module.
%\end{enumerate}
%\end{theoreme}

%Since the unit of a monad $\eta: I\rightarrow T\left( I\right) $ is a cofibration, a monoid $T\left( I\right) $ is well pointed.
%Therefore, the category $Mod_{T\left( I\right)}$ admits a model structure.
\section{Realisation and excision in model categories}\label{SRealMModCat}

This final section of Chapter \ref{chModCT} aims to reformulate hypothesis $(d)$ of our main theorem (cf. Intorduction) so as to make it easier to check. 
In order to do so we need quite a bit of additional material on model 
categories. The leading idea is well known in category theory: extend properties of free $T$-algebras to all $T$-algebras by means of their canonical presentation, cf. Lemma \ref{lemabscoeq}. In the model-theoretical 
context, we focus on $T$-algebras freely generated by cofibrant objects of $\mathcal E$ and want to extend their properties to all cofibrant $T$-algebras, provided a transferred model structure on $\mathcal Alg_T$ exists.
This can be done by prolonging the canonical presentation of a $T$-algebra to a simplicial ''resolution``, often called bar resolution.

\begin{definition}
The bar resolution $\mathcal B.(A)$ of a $T$-algebra $A$ is a simplicial object in $\mathcal Alg_T$ which in degree $n$ is defined by the formula $\mathcal B_n(A)=(F_TU_T)^{n+1}(A)$. 

The simplicial face operators are defined by
$$\partial_i=(F_TU_T)^{n-i} \varepsilon_{(F_TU_T)^iA}:\mathcal B_n(A)\to \mathcal B_{n-1}(A) \text{ (for } 
0\leq i\leq n \text{)}$$ and the simplicial degeneracy operators are defined by
$$s_i=(F_TU_T)^{n-1}F_T\eta_{U_T(F_TU_T)^iA}:\mathcal B_n(A)\to\mathcal B_{n+1}(A) \text{ (for } 0\leq i\leq n\text{)}$$
\end{definition}
Here $\varepsilon_A:F_TU_T(A)\to A$ denotes the counit of the adjunction $F_T:\mathcal E \leftrightarrows \mathcal Alg_T:U_T$ which coincides on the underlying object $U_T(A)$ with the map $\xi_A:T(A)\to A$ defining the 
$T$-algebra structure of $A$.

Lemma \ref{lemabscoeq} implies that the underlying simplicial object $U_T \mathcal B.(A)$ in $\mathcal E$ is split over $U_T(A)$, i.e. admits an extra simplicial 
degeneracy in each degree prolonging the split coequalizer of Diagram \ref{diagcoeqtalg} to the left. This implies (cf. e.g. \cite{MayGeoIterLoopSp}) that 
$U_T\mathcal B.(A)$ contains $U_T(A)$ as a simplicial deformation retract. We now assume that the monoidal model category $\mathcal E$ has a \emph{standard 
system of simplices} $C:\Delta\to \mathcal E$ in the sense of Berger-Moerdijk, cf. the appendix of \cite{BerBoardVogtRes}. This allows us to realise the simplicial object $U_T\mathcal B.(A)$ in $\mathcal E$. We denote its 
realization by $\mathcal B(A)=|U_T(\mathcal B.(A))|_C$. It follows from Lemma A.7 in \cite{BerBoardVogtRes} that $\mathcal B(A)$ contains $A$ as a deformation retract with respect to the 
interval given by the $1$-truncation $C_0 \rightrightarrows C_1\rightarrow C_0$ of the cosimplicial object $C$. In particular, the canonical map $\mathcal B(A)\to U_T(A)$ is a weak equivalence in $\mathcal E$.

We now axiomatise Segal's \cite{SECCT} notion of "good simplicial space" as follows.

\begin{definition}\label{defsimpobj}
A simplicial object $X_.$ in a model category $\mathcal E$ is good if all objects $X_n$ are cofibrant in 
$\mathcal E$ and all degeneracy operators $s_i:X_n\to X_{n+1},\,0\leq i\leq n,$ are cofibrations in $\mathcal E$.

A (colimit-preserving) realisation functor for simplicial objects is good if any degree-wise weak equivalence between good simplicial objects in $\mathcal E$ 
realises to a weak equivalence in $\mathcal E$.
\end{definition}

Observe that in a model category in which the cofibrations are precisely the monomorphisms (like for instance in the Quillen model category of simplicial sets)
all simplicial objects are good, since all objects are cofibrant and degeneracy operators always act as split monomorphisms. It is well known that the canonical realisation functor for simplicial objects in simplicial sets is good. 
The canonical realisation functor for simplicial objects in topological spaces is also good, cf. \cite{MayGeoIterLoopSp}.

The main technical result of this section reads then as follows, where for simplicity we call a cofibration \emph{strong} 
if domain and codomain are cofibrant:

\begin{prop}
Let $T$ be a monad on a pointed model category $\mathcal E$ such that the unit $\eta_X:X\to T(X)$ is a cofibration for each cofibrant object 
$X$ and such that $T$ preserves the zero-object. 
Assume furthermore that $\mathcal E$ has a good realisation functor and that for each strong cofibration $X\to Y$ in $\mathcal E$, the image $T(X)\to T(Y)$ is a strong cofibration and the induced map $T(Y)/T(X)\to T(Y/X)$ is weak equivalence.

Then, for any map of $T$-algebras $f:W\to W'$ for which $U_T(f)$ is a strong cofibration and $U_T(W'/W)$ is cofibrant, the induced map $U_T(W')/U_T(W)\to U_T(W'/W)$ is a weak equivalence in $\mathcal E$.
\end{prop}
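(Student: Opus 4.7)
The plan is to use the bar resolution of this section to reduce the claim to the case of free $T$-algebras, where the hypothesis on $T(Y)/T(X) \to T(Y/X)$ applies directly. Write $X = U_T(W)$, $Y = U_T(W')$ and $Z = U_T(W'/W)$; the task is to show that the natural comparison $g\colon Y/X \to Z$ is a weak equivalence in $\mathcal E$.

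First I would form the bar resolutions $\mathcal B.(W)$ and $\mathcal B.(W')$ as simplicial $T$-algebras. Their underlying simplicial objects $U_T\mathcal B.(W)$ and $U_T\mathcal B.(W')$ have degree-$n$ terms $T^{n+1}X$ and $T^{n+1}Y$. Iterating the hypotheses (cofibrancy of $\eta$ at cofibrant objects and preservation of strong cofibrations by $T$) shows that both are good in the sense of Definition \ref{defsimpobj}, and the extra degeneracy supplied by $\eta$ makes each a simplicial deformation retract over $X$, respectively $Y$. Goodness of the realization functor then gives weak equivalences $|U_T\mathcal B.(W)| \xrightarrow{\sim} X$ and $|U_T\mathcal B.(W')| \xrightarrow{\sim} Y$.

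The heart of the argument introduces two auxiliary simplicial objects in $\mathcal E$: the degree-wise $\mathcal E$-cokernel $A_\bullet = U_T\mathcal B.(W')/U_T\mathcal B.(W)$ with $(A_\bullet)_n = T^{n+1}Y/T^{n+1}X$, and the underlying of the degree-wise $\mathcal Alg_T$-cokernel $B_\bullet = U_T\bigl(\mathcal B.(W')/\mathcal B.(W)\bigr)$ with $(B_\bullet)_n = T(T^n Y/T^n X)$ (using that $F_T$ preserves colimits and $T(0) = 0$). The hypothesis applied to the strong cofibration $T^n X \to T^n Y$ yields a degree-wise weak equivalence $A_\bullet \xrightarrow{\sim} B_\bullet$; both $A_\bullet$ and $B_\bullet$ are good, the requisite cofibrancy and cofibration degeneracies propagating through pushouts along strong cofibrations. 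By naturality of $\eta$, $A_\bullet$ inherits an extra degeneracy over $Y/X$, so $|A_\bullet| \xrightarrow{\sim} Y/X$.

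The principal obstacle is then to identify $|B_\bullet|$ with $Z$. The naive comparison $B_\bullet \to U_T\mathcal B.(W'/W)$ induced by the simplicial $T$-algebra map $\mathcal B.(W')/\mathcal B.(W) \to \mathcal B.(W'/W)$ is \emph{not} a degree-wise weak equivalence: in degree $n$ it factors through $T^{n+1}(g)$, precisely the map we are trying to establish, so this route is circular. The cleanest remedy is to show that $B_\bullet$ itself carries an extra degeneracy over $Z$, with candidate $s_{-1} = T(e_n)$ where $e_n\colon T^n Y/T^n X \to T^{n+1}Y/T^{n+1}X$ is induced by $\eta_{T^n Y}$ and descends from the extra degeneracy on $U_T\mathcal B.(W')$ by naturality; verifying the simplicial identities and compatibility with the augmentation is the main technical step. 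Granted this, goodness of the realization functor yields $|B_\bullet| \xrightarrow{\sim} Z$, and combining all the weak equivalences in the commuting chain $Y/X \simeq |A_\bullet| \simeq |B_\bullet| \simeq Z$ identifies the composite with $g$, so $g$ is a weak equivalence by $2$-out-of-$3$.
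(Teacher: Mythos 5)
Your overall scaffolding --- bar resolutions, goodness, a good realisation functor, and a final two-out-of-three --- is the same as the paper's, and two of your steps are sound (and even stated more cleanly than in the paper): in your notation $X=U_T(W)$, $Y=U_T(W')$, $Z=U_T(W'/W)$, the map $A_\bullet\to B_\bullet$ is degree-wise a weak equivalence by a single application of the hypothesis to the strong cofibration $T^nX\to T^nY$, and the contraction of $A_\bullet$ onto $Y/X$ descends from the outermost $\eta$ by naturality (the paper itself footnotes that this retract claim is not completely general). The genuine gap is your step $|B_\bullet|\simeq Z$. The candidate $s_{-1}=T(e_n)$ is not an extra degeneracy: since $e_n$ is induced by $\eta_{T^nY}$, the map $T(e_n)=U_TF_T(e_n)$ is precisely the underlying map of a \emph{genuine} degeneracy of the quotient simplicial $T$-algebra $\mathcal B_.(W')/\mathcal B_.(W)$ (the one inserting $\eta$ in the outermost position inside $F_T$), so it satisfies the ordinary simplicial identities rather than the contraction identities. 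Test the case $W=0$: there $B_\bullet=U_T\mathcal B_.(W')$, whose contraction is the outermost $\eta_{T^{n+1}Y}$ --- a non-algebra map --- while your $T(\eta_{T^nY})$ is just a degeneracy; and the outermost $\eta$ does \emph{not} descend to $B_\bullet$ in general, since descending it would require factoring $\eta_{T^{n+1}Y/T^{n+1}X}$ through the comparison map $T^{n+1}Y/T^{n+1}X\to T(T^nY/T^nX)$. Moreover there is no natural map $Z\to B_0=T(Y/X)$ at the augmentation level ($\eta_Z$ lands in $TZ$), so the mechanism ``extra degeneracy $\Rightarrow$ simplicial deformation retract after realisation'' (Lemma A.7 of \cite{BerBoardVogtRes}), on which this whole section rests, cannot even be set up for $B_\bullet$ over $Z$. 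Note also that if a genuine degeneracy sufficed, the proposition would follow with no hypothesis on $T(Y)/T(X)\to T(Y/X)$ at all, which cannot be right: establishing $|B_\bullet|\simeq Z$ is essentially the statement being proved, so your remedy begs the question no less than the route you discard.

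The paper closes this gap differently: it keeps the honest bar resolution $\mathcal B_.(W'/W)$ of the quotient algebra in play. Its underlying simplicial object $T^{\bullet+1}Z$ is good (this is where cofibrancy of $Z$ enters) and carries the standard extra degeneracy --- the outermost $\eta$, including $\eta_Z\colon Z\to TZ$ at level $-1$ --- so its realisation maps to $Z$ by a weak equivalence. The bridge is then the canonical map $U_T\mathcal B_.(W')/U_T\mathcal B_.(W)\to U_T\mathcal B_.(W'/W)$, which the paper asserts is a degree-wise weak equivalence by iterating the hypothesis on $T(Y)/T(X)\to T(Y/X)$ (its ``easy induction''), and which then realises to a weak equivalence by goodness; two-out-of-three with the retraction onto $Y/X$ finishes. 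Your circularity worry concerns the comparison out of $B_\bullet$; the paper's comparison starts from the quotient $A_\bullet$ and leans on the stated hypothesis iteratively. So the missing idea in your write-up is this comparison with $\mathcal B_.(W'/W)$ --- that is where a legitimate contraction onto $Z$ lives --- rather than a manufactured contraction of $B_\bullet$.
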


\begin{proof}
The bar resolution applied to the sequence 
$W\to W'\to W'/W$ defines a sequence $\mathcal B_.(W)\to \mathcal B_.(W')\to\mathcal B_.(W'/W)$ whose underlying sequence is a sequence of good simplicial objects in $\mathcal E$,
as follows from the assumptions made on $T$ and on $f$.
The quotient $U_T\mathcal B_.(W')/U_T\mathcal B_.(W)$ in $\mathcal E$ is also good and contains the constant simplicial object $U_T(W')/U_T(W)$ as a simplicial deformation retract
\footnote{This is actually not true in general, but it holds in the case we are interested in e.g. in the category of $\Gamma$-spaces (with values in simplicial sets).}.
Since the realisation functor commutes with quotients we thus get a weak equivalence $\mathcal B(W')/\mathcal B(W)\to U_T(W')/U_T(W)$.

The canonical map $U_T\mathcal B_.(W')/U_T\mathcal B_.(W)\to U_T\mathcal B_.(W'/W)$ is degree-wise of the form $T^{n+1}(Y)/T^{n+1}(X)\to T^{n+1}(Y/X)$ for some strong cofibration $X \to Y$ and hence a degree-wise weak equivalence by an easy induction.
Since both simplicial objects are good and the realisation functor is also good, we get by realisation a weak equivalence $\mathcal B(W')/\mathcal B(W)\to\mathcal B(W'/W)$. 

The weak equivalences $\mathcal B(W')/\mathcal B(W)\to\mathcal B(W'/W), \mathcal B(W'/W)\to U_T(W'/W)$ and $\mathcal B(W')/\mathcal B(W)\to U_T(W')/U_T(W)$ together with the 2 out of 3 property of weak equivalences finally give the required weak equivalence $U_T(W')/U_T(W)\to U_T(W'/W).$
\end{proof}

In order to extend the result of the preceding proposition to general free cell attachments in $\mathcal Alg_T$ we need to impose a further condition on the pointed model category $\mathcal E$.

\begin{definition}\label{defexc}
A pointed model category $\mathcal E$ satisfies \emph{excision} if for any map $f:Y_1\to Y_2$ of strong cofibrations $X\to Y_1$ and $X\to Y_2$
$$
\xymatrix @C=1.5cm @R=1.2cm{\relax 
Y_1 \ar[rr]^{f}  && Y_2  \\ 
& X \ar[lu] \ar[ru]
}
$$
$f:Y_1\to Y_2$ is a weak equivalence if and only if $f/X:Y_1/X\to Y_2/X$ is a weak equivalence.
\end{definition}

Observe that the "only if" part is true in any pointed model category. "Excision" holds typically in $Top_*$ \emph{after} Bousfield localisation with respect to a generalised homology theory.

\begin{prop}\label{propForgfuncRE}
Let $\mathcal E$ be the pointed model category with good realisation functor and with excision. Let $T$ be a monad which preserves the zero-object and whose unit is a cofibration at each cofibrant object. Then the following are equivalent:
\begin{enumerate}
 \item The forgetful functor functor takes free cell attachments in $\mathcal Alg_T$ (as described below) to homotopical cell attachments in $\mathcal E$ (cf. Definition \ref{defcellatt}).
 \item The forgetful functor takes free cell extensions in $\mathcal Alg_T$ to cofibrations in $\mathcal E$ and for any strong cofibration $X\to Y$ in $\mathcal E$, the image $T(X)\to T(Y)$ is a strong cofibration and the induced map $T(Y)/T(X)\to T(Y/X)$ is a weak equivalence.
\end{enumerate}
\end{prop}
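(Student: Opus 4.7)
The strategy is to deduce both directions from the preceding proposition combined with excision. The key observation is that a free cell attachment along $X \to Y$ based at $W$ in $\mathcal Alg_T$ has cofiber $F_T(Y/X)$ in $\mathcal Alg_T$ (since $F_T$ preserves colimits), whose underlying object is $T(Y/X)$. Comparing the pushout $U_TW \cup_{TX} TY$ in $\mathcal E$ with $U_TW'$ reduces, after excision, to comparing their respective quotients by $U_TW$, namely $TY/TX$ and $U_TW'/U_TW$; the former maps to $T(Y/X)$ via the comparison map $T(Y)/T(X)\to T(Y/X)$, and the latter maps to $U_T(W'/W)=T(Y/X)$ via the preceding proposition.

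For the implication \textit{(a)} $\Rightarrow$ \textit{(b)}, I would first note that the definition of homotopical cell attachment bakes in that vertical maps are cofibrations, so $U_T$ takes free cell extensions to cofibrations automatically. For the second clause, given a strong cofibration $X\to Y$ in $\mathcal E$, I apply \textit{(a)} to the free cell attachment with base $W=0=F_T(0)$, yielding on underlying objects the square with corners $TX, TY, 0, T(Y/X)$. Homotopical cell attachment forces $TX\to TY$ to be a cofibration between cofibrant objects (hence a strong cofibration) and the comparison map $TY/TX\to T(Y/X)$ to be a weak equivalence, which is exactly what is wanted.

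For the harder implication \textit{(b)} $\Rightarrow$ \textit{(a)}, consider a free cell attachment
\[
\xymatrix{F_TX \ar[r]\ar[d] & W \ar[d] \\ F_TY \ar[r] & W'}
\]
with $X\to Y$ a cofibration and $W$ cofibrant in $\mathcal Alg_T$. By induction on the cell structure of $W$ using \textit{(b)}, $U_TW$ is cofibrant in $\mathcal E$. Hypothesis \textit{(b)} ensures $TX\to TY$ and $U_TW\to U_TW'$ are strong cofibrations, so in particular $U_TW'$ is cofibrant and the pushout $Z:=U_TW\cup_{TX}TY$ sits in a cofibration $U_TW\to Z$ between cofibrant objects. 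It remains to show that the comparison map $Z\to U_TW'$ is a weak equivalence. Passing to cofibers by $U_TW$ gives the map $TY/TX\to U_TW'/U_TW$. The second hypothesis of \textit{(b)} gives a weak equivalence $TY/TX\to T(Y/X)$, and the preceding proposition (applicable because $U_T(W'/W)=T(Y/X)$ is cofibrant, which again follows from \textit{(b)}) gives a weak equivalence $U_TW'/U_TW\to T(Y/X)$. By 2 out of 3, the map $TY/TX\to U_TW'/U_TW$ is a weak equivalence; then excision delivers the required weak equivalence $Z\to U_TW'$.

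The main obstacle is the bookkeeping of cofibrancy: one has to verify that $U_TW$, $U_TW'$, $TX$, $TY$, and $T(Y/X)$ are all cofibrant in $\mathcal E$ under hypothesis \textit{(b)}, and in particular that the preceding proposition applies, i.e.\ that $U_T(f)$ is a strong cofibration and $U_T(W'/W)$ is cofibrant. All of these reduce, via the identification $W'/W\cong F_T(Y/X)$ and a cellular induction on $W$, to direct consequences of \textit{(b)} together with the assumption that $T$ preserves the zero object and sends strong cofibrations to strong cofibrations.
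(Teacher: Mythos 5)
Your proof is correct and takes essentially the same route as the paper: \textit{(a)}$\Rightarrow$\textit{(b)} by specializing to the zero base $W=F_T(0)$, and \textit{(b)}$\Rightarrow$\textit{(a)} via excision on the two cofibrations under $U_T(W)$, the preceding proposition applied to $W\to W'$ (using $W'/W\cong F_T(Y/X)$), and 2 out of 3 after identifying the composite on quotients with the canonical map $T(Y)/T(X)\to T(Y/X)$. The only divergence is bookkeeping: the paper's definition of a free cell attachment assumes the cofibrancy of $U_T(W)$ outright, so your cellular induction establishing it is unnecessary, though harmless.
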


\begin{proof}
A \textit{free cell attachment} is by definition a pushout diagram in $\mathcal Alg_T$ of the form
$$
\shorthandoff{;:!?}
 \xymatrix @!0 @C=2.4cm @R=2.4cm{\relax
    F_T(X) \ar[r] \ar[d] & W \ar[d] \\
    F_T(Y) \ar[r]  & W' \\
  }
$$
for any strong cofibration $X\to Y$ in $\mathcal E$ where we assume furthermore that $U_T(W)$ is cofibrant. We call the induced map $W\to W'$ a \textit{free cell extension}.

If the forgetful functor $U_T$ takes this pushout to a homotopical cell attachment in $\mathcal E$, then by definition (cf. Definition \ref{defcellatt}) the 
underlying map $U_T(W)\to U_T(W')$ is a strong cofibration in $\mathcal E$, hence $(a)$ implies the first half of $(b)$. If we take $W$ to be the zero-object $*$ of $\mathcal Alg_T$, then $W'$ may be 
identified with $F_T(Y)/F_T(X)=F_T(Y/X)$. Moreover, the pushout of $U_TF_T(X)\to U_TF_T(Y)$ along $U_TF_T(X)\to U_T(*)=*$ yields the quotient 
$U_TF_T(Y)/U_TF_T(X)$, whence (since $T=U_TF_T$) a weak equivalence $T(Y)/T(X)\to T(Y/X)$ so that $(a)$ implies also the second half of $(b)$.

Now assume that $(b)$ holds and consider the free cell attachment above. We have to show that its image under $U_T$ is a homotopical cell attachment in $\mathcal E$, 
i.e. that the comparison map $U_T(W)\cup_{TX}T(Y)\to U_T(W')$ is a weak equivalence in $\mathcal E$. This comparison map is a map of cofibrations under $U_T(W)$. 
By excision, it is thus equivalent to show that the quotient map $(U_T(W)\cup_{TX}T(Y))/U_T(W)\to U_T(W')/U_T(W)$ is a weak equivalence.

Since the free cell attachment $W\to W'$ has an underlying cofibration, and since the quotient $W'/W$ in $\mathcal Alg_T$ is isomorphic to $F_T(Y)/F_T(X)=F_T(Y/X)$ and hence has an underlying cofibrant object $T(Y/X)$, the 
preceding proposition gives a canonical weak equivalence $U_T(W')/U_T(W)\to U_T(W'/W)$.
By the 2 out of 3 property of weak equivalences, it suffices thus to show that the aforementioned quotient map $(U_T(W)\cup_{TX}T(Y))/U_T(W)\to U_T(W')/U_T(W)$ composed with $U_T(W')/U_T(W)\to U_T(W'/W)$ is a weak equivalence. This composite map may be identified with the canonical map $T(Y)/T(X)\to T(Y/X)$, which is a weak 
equivalence by assumption.
\end{proof}  

%\begin{definition}\label{defcatstcofgen}
%A cofibrantly generated Quillen model category $\mathcal E$ is called strongly cofibrantly generated if the generating cofibrations can be chosen to have cofibrant domains.
%\end{definition}

%This definition represents one half of the definition of tractable model category of Barwick. 

\chapter{Tensorial strength}

This chapter is devoted to some ``strong'' constructions obtained by using the notion of tensorial strength. This concept has been introduced by Anders Kock \cite{KockSF,KockMS}. Our main contribution consists in rephrasing the main constructions of Kock from a 2-categorical view point. This has the advantage to emphasize the relative character of Kock's construction which originally has only been applied to enriched monads and not to enriched functors. The 2-categorical view point illustrates very clearly the correspondence betwen strength and enrichment.

In Section \ref{SStr}, we define the notions of a strong functor, strong natural transformation and strong monad. In the following section, Section \ref{SSaEnr}, we study the correspondence between the tensorial strength and the enrichment. This allows us to closely relate strong and enriched functors as well as strong and enriched natural transformations. In Section \ref{S2CTh} we obtain a 2-isomorphism between the 2-category of strong functors and natural transformations and the 2-category of $\mathcal E$-functors and $\mathcal E$-natural transformations. The consequence is an equivalence between strong and enriched monads. In Section \ref{SSmDayC} we use the context of Day convolution to construct strong monads on the category $\mathcal E^{\mathcal A}$ of functors from $\mathcal A$ to $\mathcal E$.

\section{Strong...}\label{SStr}

In this section, using the concept of a tensorial strength, we introduce the notion of a strong functor, followed by the notions of strong natural transformation and strong monad \cite{KockSF, GoubLRMT}.

\subsection{Strong functors}

We start by transposing the notion of a functor to the strong context.
\begin{definition}
Let $\mathcal E$ be a closed symmetric monoidal category. Let $\mathcal A$ and $\mathcal B$ be two $\mathcal E$-categories tensored over $\mathcal E$. 

A strong functor $\left(T,\sigma\right)$ consists in giving:
\begin{enumerate}
\item A functor $T:\mathcal A\rightarrow\mathcal B$;
\item For every object X in $\mathcal E$ and A in $\mathcal A$, a tensorial strength $$\sigma_{X,A}: X\otimes TA\rightarrow T\left(X\otimes A\right)$$
natural in both variables, such that the following diagrams commute: 
\end{enumerate}
\begin{diagram}{Unit axiom}
\label{uniff}
$$
\shorthandoff{;:!?}
\xymatrix @!0 @C=3cm @R=1.5cm{\relax 
 I\otimes TA \ar[rr]^{\sigma_{I,A}} \ar[rd]_{l_{TA}} && T\left(I\otimes A\right) \ar[ld]^{T\left( l_{A}\right) } \\ 
& TA 
}
$$ 
\end{diagram}
\begin{diagram}{Associativity axiom}
\label{diagassff}
$$
\shorthandoff{;:!?}
\xymatrix @!0 @C=5cm @R=2.5cm{\relax 
 (X\otimes Y)\otimes TA \ar[r]^{a_{X,Y,A}} \ar[d]_{\sigma_{X\otimes Y,A}} & X\otimes(Y\otimes TA) \ar[r]^{X\otimes\sigma_{Y,A}}  & X\otimes T\left(Y\otimes A\right) \ar[d]^{\sigma_{X,Y\otimes A}} \\ 
T\left((X\otimes Y)\otimes A\right)\ar[rr]^{Ta_{X,Y,A}} && T\left(X\otimes (Y\otimes A)\right) 
}
$$
\end{diagram}
\end{definition}
\begin{rem}~\\
\begin{enumerate}
 \item One can assume that the tensor is strictly associative. 
Indeed, by the coherence theorem of Maclane (cf. \cite{MaclCWM}), Diagram \ref{diagassff} can be replaced by the following one: 
\begin{diagram}{}\label{diagasssimplff}
$$
\shorthandoff{;:!?}
\xymatrix @!0 @C=3cm @R=1.5cm{\relax 
 X\otimes Y\otimes TA \ar[rr]^{X\otimes\sigma_{Y,A}} \ar[rd]_{\sigma_{X\otimes Y,A}} && X\otimes T\left(Y\otimes A\right) \ar[ld]^{\sigma_{X,Y\otimes A}} \\ 
& T\left(X\otimes Y\otimes A\right) 
}
$$
\end{diagram}
 \item A strong functor T consists in giving a functor $T_{0}:\mathcal A_{0}\rightarrow \mathcal B_{0}$
on the underlying categories equipped with a tensorial strength and satisfying corresponding axioms.
\end{enumerate}
\end{rem}

There is a dual notion of tensorial strength, where T ``acts'' on the left. Moreover, we could define a dual strong functor.
\begin{definition}
Let $\mathcal E$ be a closed symmetric monoidal category. Let $\mathcal A$ and $\mathcal B$ be two $\mathcal E$-categories tensored over $\mathcal E$ and a functor $T:\mathcal A\rightarrow\mathcal B$. 

A dual tensorial strength $$\sigma'_{A,X}: TA\otimes X \rightarrow T\left(A\otimes X\right)$$ is given by the following commutative diagram
\begin{diagram}{}
$$
\shorthandoff{;:!?}
\xymatrix @C=3cm @R=2.5cm{\relax
TA\otimes X \ar[r]^{\sigma'_{A,X}} \ar[d]_{a_{TA,X}} & T\left(A\otimes X\right)  \\
X\otimes TA \ar[r]^{\sigma_{X,A}} & T\left(X\otimes A\right) \ar[u]_{T(a_{X,A})} 
}
$$
\end{diagram}
\end{definition}

By composing two strong functors, we acquire another strong functor.
\begin{definition}
\label{deffcomp}
Let $\mathcal E$ be a closed symmetric monoidal category and $\mathcal A$, $\mathcal B$ and $\mathcal C$ three $\mathcal E$-categories tensored over $\mathcal E$.
Let $\left(T_{1},\sigma_{1}\right)$ and $\left(T_{2},\sigma_{2}\right)$ be two strong functors such that $T_{1}:\mathcal A\rightarrow \mathcal B$ and $T_{2}:\mathcal B\rightarrow \mathcal C$.

Composition of two strong functors is a strong functor $\left(T_{2}T_{1},\sigma_{2,1}\right)$ where the tensorial strength is given by the following commutative diagram
\begin{diagram}{}
$$
\shorthandoff{;:!?}
\xymatrix @!0 @C=3cm @R=1.5cm{\relax 
 X\otimes T_{2}T_{1}A \ar[rr]^{\sigma_{2,1}} \ar[rd]_{\sigma_{2}} && T_{2}T_{1}\left(X\otimes A\right)  \\ 
&  T_{2}\left(X\otimes T_{1}A\right) \ar[ru]_{T_{2}\left(\sigma_{1}\right) }  
}
$$ 
\end{diagram}
\end{definition}

\subsection{Strong natural transformations}

Analogously, we generalize the notion of a natural transformation to the strong context. 

\begin{definition}\label{defstnatt}
Let $\mathcal E$ be a closed symmetric monoidal category and $\mathcal A$ and $\mathcal B$ two $\mathcal E$-categories tensored over $\mathcal E$. Let $\left(T_{1},\sigma_{1}\right)$,$\left(T_{2},\sigma_{2}\right)$ be two strong functors such that $T_{1},T_{2}:\mathcal A\rightarrow \mathcal B$.

A strong natural transformation $\Psi:T_{1}\Rightarrow T_{2}$ is given by the following commutative diagram: 
\begin{diagram}{}
$$
\shorthandoff{;:!?}
\xymatrix @C=3cm @R=2.5cm{\relax
X\otimes T_{1}A \ar[r]^{\sigma_{1}} \ar[d]_{X\otimes \Psi_{A}} & T_{1}\left(X\otimes A\right) \ar[d]^{\Psi_{X\otimes A}}\\
X\otimes T_{2}A \ar[r]^{\sigma_{2}} & T_{2}\left(X\otimes A\right) 
}
$$
\end{diagram}
\end{definition}

\begin{rem}
A strong natural transformation consists in giving an ordinary natural transformation $\Psi:T_{1}\Rightarrow T_{2}$ satisfying a property of compatibility with the tensorial strength.
\end{rem}

\subsection{Strong monads}

Similarly, we generalize the notion of a monad.

\begin{definition}\label{defStMon}
Let $\mathcal E$ be a closed symmetric monoidal category. 
A strong monad $\left(T,\mu,\eta, \sigma\right)$ in a category $\mathcal E$ consists in giving:
\begin{enumerate}
\item A monad $\left(T,\mu,\eta\right)$ in a category $\mathcal E$;
\item A tensorial strength $\sigma_{A,B}: A\otimes TB\rightarrow T\left(A\otimes B\right)$ 
natural in both variables, such that the following four diagrams commute:
\end{enumerate}
\begin{diagram}{Unit condition for $\sigma$}
$$
\shorthandoff{;:!?}
\xymatrix @!0 @C=3cm @R=1.5cm{\relax 
 I\otimes TA \ar[rr]^{\sigma_{I,A}} \ar[rd]_{l_{TA}} && T\left(I\otimes A\right) \ar[ld]^{T\left( l_{A}\right) } \\ 
& TA 
}
$$ 
\end{diagram}
\begin{diagram}{Associativity condition for $\sigma$}\label{diagassbeta}
$$
\shorthandoff{;:!?}
\xymatrix @!0 @C=5cm @R=2.5cm{\relax 
 (A\otimes B)\otimes TC \ar[r]^{a_{A,B,C}} \ar[d]_{\sigma_{A\otimes B,C}} & A\otimes(B\otimes TC) \ar[r]^{A\otimes\sigma_{B,C}}  & A\otimes T\left(B\otimes C\right) \ar[d]^{\sigma_{A,B\otimes C}} \\ 
T\left((A\otimes B)\otimes C\right)\ar[rr]^{Ta_{A,B,C}} && T\left(A\otimes (B\otimes C)\right) 
}
$$
\end{diagram}
\begin{diagram}{Strong naturality condition for $\eta$}\label{diagsteta}
$$
\shorthandoff{;:!?}
\xymatrix @!0 @C=3cm @R=1.5cm{\relax 
 A\otimes TB \ar[rr]^{\sigma_{A,B}} && T\left(A\otimes B\right)  \\ 
& A\otimes B \ar[lu]^{A\otimes \eta_{B}} \ar[ru]_{\eta_{A\otimes B}} 
}
$$ 
\end{diagram}\label{diagstmu}
\begin{diagram}{Strong naturality condition for $\mu$}
$$
\shorthandoff{;:!?}
\xymatrix @!0 @C=4.7cm @R=2.3cm{\relax
A\otimes T^{2}B \ar[r]^{\sigma_{A,TB}} \ar[d]_{A\otimes \mu_{B}} & T\left(A\otimes TB\right) \ar[r]^{T\left(\sigma_{A,B}\right)} & T^{2}\left(A\otimes B\right) \ar[d]^{\mu_{A\otimes B}}\\
A\otimes TB \ar[rr]^{\sigma_{A,B}} && T\left(A\otimes B\right) 
}
$$
\end{diagram}
\end{definition}

\begin{rem}~\\
\begin{enumerate}
 \item Tabareau \cite{TabMRCLT} defines left and right strong monads. Indeed, in the context of Tabareau, Definition \ref{defStMon} corresponds to the right strong monad and a left strong monad corresponds to a monad with a dual tensorial strength $\sigma'_{A,B}:TA\otimes B\rightarrow T(A\otimes B)$ and satisfying the commutativity conditions of dual diagrams.
Furthermore, when the monoidal category $\mathcal E$ is symmetric, a right strong monad admits automatically a dual tensorial strength, which makes the monad strong at right and left: $$\sigma'_{A,B}=T\left(a_{B,A}\right)\circ \sigma_{B,A}\circ a_{TA,B}$$ 
 \item One can assume that the tensor is strictly associatif. Indeed, by the coherence theorem of Maclane (cf. \cite{MaclCWM}), one can replace Diagram \ref{diagassbeta} by the following one: 
\begin{diagram}{}\label{diagasssimpl}
$$
\shorthandoff{;:!?}
\xymatrix @!0 @C=3cm @R=1.5cm{\relax 
 A\otimes B\otimes TC \ar[rr]^{A\otimes\sigma_{B,C}} \ar[rd]_{\sigma_{A\otimes B,C}} && A\otimes T\left(B\otimes C\right) \ar[ld]^{\sigma_{A,B\otimes C}} \\ 
& T\left(A\otimes B\otimes C\right) 
}
$$
\end{diagram}
We will use this simplification, i.e. Diagram \ref{diagasssimpl} instead of Diagram \ref{diagassbeta} for the rest of the thesis.
 \item The unit and the associativity axioms simply translate the fact that we require for T to be a strong functor.
On the other hand, the strong naturality conditions for $\eta$ and $\mu$ translate the fact that we require for $\eta$ and $\mu$ to be strong natural transformations.

\end{enumerate}
\end{rem}

We can require in addition for the tensorial strength to be an isomorphism. Then we have the following definition of a very strong monad.
 
\begin{definition}
A very strong monad $\left(T,\mu,\eta,\sigma \right)$ in the category $\mathcal E$ consists in giving a strong monad $\left(T,\mu,\eta,\sigma\right)$ such that the tensorial strength $$\sigma_{X,Y}: X\otimes TY \xrightarrow{\cong} T\left(X\otimes Y\right)$$ is an isomorphism for X, Y in $\mathcal E$. 
\end{definition}

\begin{rem}
Strong monads and morphisms of strong monads in a monoidal category $\mathcal E$ constitute a category, written $StMonads\left(\mathcal E\right) $.

Similarly, very strong monads and morphisms of very strong monads in a monoidal category $\mathcal E$ constitute a category, written $VStMonads\left(\mathcal E\right) $.
\end{rem}

\begin{rem}
We have the following inclusion of categories:
$$VStMonads\left(\mathcal E\right)\subset StMonads\left(\mathcal E\right) \subset Monads\left(\mathcal E\right) $$ 
\end{rem}

\begin{prop}(cf. \cite{BMDCAO}, Proposition1.9)
Let $\mathcal E$ be a monoidal category.
There is a correspondence between:
\begin{enumerate}
 \item The category of very strong monads $VStMonads\left(\mathcal E\right)$; 
 \item The category of monoids $Monoids\left(\mathcal E\right)$.
\end{enumerate}
More precisely, the functor which associates to a monoid its induced strong monad is fully faithful and its essential image consists of the very strong monads. 
\end{prop}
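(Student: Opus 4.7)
The plan is to construct an explicit inverse functor $\mathcal N : VStMonads(\mathcal E) \to Monoids(\mathcal E)$ to the functor $\mathcal M$ provided by Lemma \ref{lememon}, and to verify that the essential image of $\mathcal M$ consists exactly of the very strong monads.

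First I would observe that $\mathcal M$ already takes values in $VStMonads(\mathcal E)$: modulo coherence, the tensorial strength of the monad $(-\otimes M,\eta,\mu)$ is the associator $a_{X,Y,M} : (X\otimes Y)\otimes M \to X\otimes (Y\otimes M)$, which is an isomorphism. The unit and associativity axioms for $\sigma$ reduce to the triangle and pentagon of $\mathcal E$, while strong naturality of $\eta$ and $\mu$ translates directly into the unit and associativity axioms of the monoid $M$ after unfolding the definitions of Lemma \ref{lememon}.

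Next I would define $\mathcal N$ on objects by $\mathcal N(T,\mu,\eta,\sigma) = T(I)$ with unit $n = \eta_I : I \to T(I)$ and multiplication
$$m : T(I) \otimes T(I) \xrightarrow{\sigma_{T(I),I}} T(T(I)\otimes I) \xrightarrow{T(r_{T(I)})} T^2(I) \xrightarrow{\mu_I} T(I).$$
The monoid axioms are verified using the four Kock axioms for $T$: the unit laws of the monoid follow from strong naturality of $\eta$ (Diagram \ref{diagsteta}) together with the monad unit triangles, while associativity of $m$ comes from combining strong naturality of $\mu$, the associativity axiom for $\sigma$, and associativity of $\mu$.

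The key step is then to establish a natural isomorphism $\mathcal M \circ \mathcal N \cong \mathrm{Id}$ on very strong monads. For such a $T$, the composite $X\otimes T(I) \xrightarrow{\sigma_{X,I}} T(X\otimes I) \xrightarrow{T(r_X)} T(X)$ is an isomorphism precisely because $T$ is very strong, and it is natural in $X$. I would then check that this isomorphism matches the monad structure on $-\otimes T(I)$ induced by $\mathcal N(T)$ with the original structure on $T$: compatibility with units reduces to Diagram \ref{diagsteta} and naturality of $\eta$, while compatibility with multiplications follows from Diagram \ref{diagassbeta} combined with the strong naturality of $\mu$. The reverse composition $\mathcal N\circ\mathcal M \cong \mathrm{Id}_{Monoids(\mathcal E)}$ is immediate: evaluating $-\otimes M$ at $I$ yields $I\otimes M \cong M$ via $l_M$, and this iso transports $(n,m)$ to itself.

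Finally, for full faithfulness, any monoid morphism $f: M\to M'$ yields the strong-monad morphism $-\otimes f$, and conversely any morphism of strong monads $\phi : -\otimes M \to -\otimes M'$ is determined by its value at $I$, since strong naturality forces $\phi_X = X\otimes \phi_I$ under the identification $X \cong X\otimes I$. The main technical obstacle will be the associativity of the multiplication $m$ defined in $\mathcal N$: this is where all three of the ``$\sigma$-associativity'', ``strong $\mu$-naturality'' and ``$\mu$-associativity'' axioms come together, and one must chase a fairly large diagram using the coherence of $\mathcal E$ to close it. Once this verification is in hand, the remaining diagrams are a matter of routine naturality and coherence bookkeeping.
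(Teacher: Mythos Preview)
Your proposal is correct and follows the same underlying strategy as the paper: associate to a very strong monad $T$ the monoid $T(I)$ and use the isomorphism $X\otimes T(I)\xrightarrow{\sigma_{X,I}}T(X\otimes I)\xrightarrow{T(r_X)}T(X)$ coming from the invertible strength to identify $T$ with the monad induced by this monoid. The paper's proof is only a two-line sketch that forward-references Proposition~\ref{propmonTI} (and implicitly Lemma~\ref{lemMonStr}) for the monoid structure on $T(I)$, whereas you spell out the multiplication explicitly and outline the verification of all axioms and of full faithfulness; your version is thus a self-contained expansion of the paper's argument rather than a different route.
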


\begin{proof}~\\
In the category of very strong monads $VStMonads\left(\mathcal E\right)$ the tensorial strength is an isomorphism.
In particular we have: $$X\otimes T\left(I \right)\cong T\left(X\otimes I \right) \cong T\left( X\right)$$
Hence for every monad T, we obtain an object $T\left(I \right)$ in $\mathcal E$, which has a structure of a monoid (see Proposition \ref{propmonTI}). 

On the other hand, since every monoid M in $\mathcal E$ induces a monad $-\otimes M$ in $\mathcal E$, the category of monoids $Monoids\left(\mathcal E\right)$ induces the category of very strong monads $VStMonads\left(\mathcal E\right)$. 
\end{proof}

%\begin{definition}
%Let $\mathcal E$ be a monoidal model category.

%A $A_{\infty}$-strong monad $\left(T,\mu,\eta,\sigma \right)$ in the category $\mathcal E$ consists in giving a strong monad $\left(T,\mu,\eta,\sigma\right)$ such that the tensorial strength $$\sigma_{X,Y}: X\otimes TY \rightarrow T\left(X\otimes Y\right)$$ is a weak equivalence for X,Y cofibrant objects in $\mathcal E$. 

%\end{definition}

\section{Strength and enrichment}\label{SSaEnr}

In this section, we establish a correspondence between the tensorial strength and the enrichment. This leads us naturally to study the relations between strong functors and enriched functors, as well as between strong natural transformations and enriched natural transformations. 

\subsection{Correspondence between strength and enrichment}

In order to relate the tensorial strength and the enrichment, we define a tensorial strength associated to an enrichment and vice-versa.

\begin{definition}
Let $\mathcal E$ be a closed symmetric monoidal category. Let $\mathcal A$ and $\mathcal B$ be two categories tensored over $\mathcal E$ and let $\left( T,\varphi\right) :\mathcal A \rightarrow \mathcal B$ be a $\mathcal E$-functor where $\varphi_{A,B}:\underline{\mathcal A}\left(A,B\right)\rightarrow\underline{\mathcal B}\left(TA,TB\right)$ denotes the enrichment. 

We define a tensorial strength $\sigma_{X,A}: X\otimes TA\rightarrow T\left(X\otimes A\right)$ by the following commutative diagram:
\begin{diagram}{}
\label{famillesigma}
$$
\shorthandoff{;:!?}
\xymatrix @!0 @C=7cm @R=2.8cm{\relax
X\otimes TA \ar[r]^{\sigma_{X,A}} \ar[d]_{\gamma_{A}\otimes TA} & T\left(X\otimes A\right)\\
\underline{\mathcal A}\left(A,X\otimes A \right)\otimes TA \ar[r]^{\varphi_{A,X\otimes A}\otimes TA} & \underline{\mathcal B}\left(TA,T\left( X\otimes A\right)\right)\otimes TA\ar[u]_{ev_{TA}}\\
}
$$
\end{diagram}   
\end{definition}
\begin{rem}
By adjunction, the diagram \ref{famillesigma} is equivalent to:
\begin{diagram}{}
$$
\shorthandoff{;:!?}
\xymatrix @!0 @C=6cm @R=2.3cm{\relax
X \ar[r]^{\widehat{\sigma}_{X,A}} \ar[d]_{\gamma_{A}} & \underline{\mathcal B}\left(TA,T\left(X\otimes A\right)\right) \\
\underline{\mathcal A}\left(A,X\otimes A \right) \ar[r]^{\varphi_{A,X\otimes A}} & \underline{\mathcal B}\left(TA,T\left( X\otimes A\right)\right)\ar@{=}[u]\\
}
$$
\end{diagram}
Hence, we have $\widehat{\sigma}=\varphi\circ\gamma$.  
\end{rem}

\begin{definition}
Let $\mathcal E$ be a closed symmetric monoidal category. Let $\mathcal A$ and $\mathcal B$ be two categories tensored over $\mathcal E$ and let $T:\mathcal A \rightarrow \mathcal B$ be a functor.

To a tensorial strength $\sigma_{X,A}: X\otimes TA\rightarrow T\left(X\otimes A\right)$ we associate an enrichment $\varphi_{A,B}:\underline{\mathcal A}\left(A,B\right)\rightarrow\underline{\mathcal B}\left(TA,TB\right)$ by the following commutative diagram:
\begin{diagram}{}
\label{famillefi}
$$
\shorthandoff{;:!?}
\xymatrix @!0 @C=7cm @R=2.8cm{\relax
\underline{\mathcal A}\left(A,B\right) \ar[r]^{\varphi_{A,B}} \ar[d]_{\gamma_{TA}} & \underline{\mathcal B}\left(TA,TB\right)\\
\underline{\mathcal B}\left(TA,\underline{\mathcal A}\left(A,B\right)\otimes TA\right) \ar[r]^{\underline{\mathcal B}\left(TA,\sigma_{\underline{\mathcal A}\left(A,B\right),A}\right) } & \underline{\mathcal B}\left(TA,T\left(\underline{\mathcal A}\left(A,B\right)\otimes A\right)\right)\ar[u]_{\underline{\mathcal B}\left(TA,T\left(ev\right)\right)}\\
}
$$
\end{diagram}   
\end{definition}
\begin{rem}
By adjunction, the diagram \ref{famillefi} is equivalent to:
\begin{diagram}{}
$$
\shorthandoff{;:!?}
\xymatrix @!0 @C=6cm @R=2.3cm{\relax
\underline{\mathcal A}\left(A,B\right)\otimes TA \ar[r]^{\widehat{\varphi}_{A,B}} \ar@{=}[d] & TB\\
\underline{\mathcal A}\left(A,B\right)\otimes TA \ar[r]^{\sigma_{\underline{\mathcal A}\left(A,B\right),A}} & T\left(\underline{\mathcal A}\left(A,B\right)\otimes A\right) \ar[u]_{T\left(ev_{A}\right)}\\
}
$$
\end{diagram}
Hence we have $\widehat{\varphi}=T\left(ev\right)\circ\sigma$.  
\end{rem}

The following lemma provides a correspondence between an enrichment and a tensorial strength.
   
\begin{lem}\label{lemCStEnr}
There is a canonical correspondence between:
\begin{enumerate}
\item An enrichment of the functor T: $$\varphi_{A,B}:\underline{\mathcal A}\left(A,B\right)\rightarrow\underline{\mathcal B}\left(TA,TB\right)$$
\item A tensorial strength for the functor T: $$\sigma_{X,A}: X\otimes TA\rightarrow T\left(X\otimes A\right)$$;
\end{enumerate}
i.e. the two constructions are mutually inverse.
\end{lem}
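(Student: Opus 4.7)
The plan is to show that the two assignments of Lemma \ref{lemCStEnr} are mutually inverse by reducing everything to the triangle identities of the tensor-cotensor adjunction $-\otimes A\dashv \underline{\mathcal A}(A,-)$, whose unit is $\gamma_A$ and whose counit is $ev_A$. Before doing either round trip, I would record the two adjoint forms already displayed in the remarks following the defining diagrams, namely $\widehat{\sigma}_{X,A}=\varphi_{A,X\otimes A}\circ\gamma_A$ and $\widehat{\varphi}_{A,B}=T(ev_A)\circ\sigma_{\underline{\mathcal A}(A,B),A}$. Arguing at the level of adjoints will make the computations shorter.

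For the composite $\varphi\rightsquigarrow\sigma\rightsquigarrow\varphi'$, I would show $\widehat{\varphi'}=\widehat{\varphi}$. Substituting the definition of $\sigma_{\underline{\mathcal A}(A,B),A}$ into $\widehat{\varphi'}_{A,B}=T(ev_A)\circ\sigma_{\underline{\mathcal A}(A,B),A}$ produces
$$\widehat{\varphi'}_{A,B}=T(ev_A)\circ ev_{TA}\circ(\varphi_{A,\underline{\mathcal A}(A,B)\otimes A}\otimes TA)\circ(\gamma_A\otimes TA).$$
I would then slide $T(ev_A)$ past $ev_{TA}$ using the naturality of the counit $ev_{TA}$ of the adjunction $-\otimes TA\dashv\underline{\mathcal B}(TA,-)$, then apply naturality of $\varphi$ in its second variable to the morphism $ev_A:\underline{\mathcal A}(A,B)\otimes A\to B$, which rewrites $\underline{\mathcal B}(TA,T(ev_A))\circ\varphi_{A,\underline{\mathcal A}(A,B)\otimes A}$ as $\varphi_{A,B}\circ\underline{\mathcal A}(A,ev_A)$. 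Finally the triangle identity $\underline{\mathcal A}(A,ev_A)\circ\gamma_A=\mathrm{id}$ collapses the remaining composite to $ev_{TA}\circ(\varphi_{A,B}\otimes TA)=\widehat{\varphi}_{A,B}$.

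For the composite $\sigma\rightsquigarrow\varphi\rightsquigarrow\sigma'$, I would show $\sigma'=\sigma$ directly. Unfolding the definition of $\sigma'_{X,A}$ and using $ev_{TA}\circ(\varphi_{A,X\otimes A}\otimes TA)=T(ev_A)\circ\sigma_{\underline{\mathcal A}(A,X\otimes A),A}$ gives
$$\sigma'_{X,A}=T(ev_A)\circ\sigma_{\underline{\mathcal A}(A,X\otimes A),A}\circ(\gamma_A\otimes TA).$$
Naturality of $\sigma$ in its first variable, applied to $\gamma_A:X\to\underline{\mathcal A}(A,X\otimes A)$, rewrites the last two factors as $T(\gamma_A\otimes A)\circ\sigma_{X,A}$, and then the other triangle identity $ev_A\circ(\gamma_A\otimes A)=\mathrm{id}_{X\otimes A}$ combined with the functoriality of $T$ produces $\sigma'_{X,A}=\sigma_{X,A}$.

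No genuine obstacle is anticipated; the calculation is essentially formal once the adjoint forms are used. The main bookkeeping task, and the place where a careless writeup most easily goes wrong, is to keep track of the two different instances of $\gamma_A$ (one landing in $\underline{\mathcal A}(A,X\otimes A)$, one in $\underline{\mathcal A}(A,\underline{\mathcal A}(A,B)\otimes A)$) and the two different evaluation counits ($ev_A$ of $\mathcal A$ and $ev_{TA}$ of $\mathcal B$) and to verify that the naturality square of $\varphi$ used in the first round trip is a legitimate consequence of $\varphi$ being a natural transformation in the ordinary sense, not yet a $\mathcal E$-natural one.
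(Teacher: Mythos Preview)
Your proposal is correct and follows essentially the same route as the paper: both directions are reduced, via the adjoint forms $\widehat{\sigma}=\varphi\circ\gamma$ and $\widehat{\varphi}=T(ev)\circ\sigma$, to an application of naturality (of $\sigma$ in one direction, of $\varphi$ in the other) followed by a triangle identity of the tensor--hom adjunction. The paper carries this out by drawing the expanded diagrams and transposing step by step, whereas you work directly with the adjoint formulas, but the substance is identical; your caveat about naturality of $\varphi$ is well placed, and the paper likewise invokes it without further comment.
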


\begin{proof}

First, we prove that enrichment of the functor T determins the tensorial strength.

We need to prove the commutativity of the following diagram

$$
\shorthandoff{;:!?}
\xymatrix @R=2cm {\relax
X\otimes TA \ar[rr]^{\sigma_{X,A}} \ar[d]_{\gamma_{A}\otimes 1} && T\left(X\otimes A\right)\\
\underline{\mathcal A}\left(A,X\otimes A \right)\otimes TA \ar[d]_{\gamma_{TA}\otimes 1} && \underline{\mathcal B}\left(TA,T\left(X\otimes A\right)\right)\otimes TA \ar[u]_{ev} \\
\underline{\mathcal B}\left(TA,\underline{\mathcal A}\left(A,X\otimes A \right)\otimes TA \right)\otimes TA \ar[rr]^{\underline{\mathcal B}\left(TA,\sigma\right)\otimes 1} && \underline{\mathcal B}\left(TA,T\left(\underline{\mathcal A}\left(A,X\otimes A \right)\otimes A\right)\right)\otimes TA \ar[u]_{\underline{\mathcal B}\left(TA,T(ev)\right)\otimes1}\\
}
$$
By adjunction, this diagram is equivalent to the following one
$$
\shorthandoff{;:!?}
\xymatrix @R=1.8cm {\relax
X \ar[rr]^{\widehat{\sigma}_{X,A}} \ar[d]_{\gamma_{A}} && \underline{\mathcal B}\left(TA, T\left(X\otimes A\right)\right) \\
\underline{\mathcal A}\left(A,X\otimes A \right)\ar[d]_{\gamma_{TA}} && \underline{\mathcal B}\left(TA,T\left(X\otimes A\right)\right) \ar@{=}[u] \\
\underline{\mathcal B}\left(TA,\underline{\mathcal A}\left(A,X\otimes A \right)\otimes TA \right) \ar[rr]^{\underline{\mathcal B}\left(TA,\sigma\right)} && \underline{\mathcal B}\left(TA,T\left(\underline{\mathcal A}\left(A,X\otimes A \right)\otimes A\right)\right) \ar[u]_{\underline{\mathcal B}\left(TA,T(ev)\right)}\\
}
$$ 
Once again, by adjunction and using the fact that the adjoint of the morphism $\gamma_{TA}\circ\gamma_{A}$ is $\gamma_{A}\otimes TA$, we obtain
$$
\shorthandoff{;:!?}
\xymatrix @R=2cm @C=2cm {\relax
X\otimes TA \ar[rr]^{\sigma_{X,A}} \ar[d]_{\gamma_{A}\otimes 1} && T\left(X\otimes A\right)\\
\underline{\mathcal A}\left(A,X\otimes A \right)\otimes TA \ar[rr]^{\sigma_{\underline{\mathcal A}\left(A,X\otimes A \right),A}} && T\left(\underline{\mathcal A}\left(A,X\otimes A \right)\otimes A\right) \ar[u]_{T\left(ev\right)} \\
}
$$
Using the naturality of $\sigma$, this diagram is equivalent to
$$
\shorthandoff{;:!?}
\xymatrix @R=2cm @C=2cm {\relax
X\otimes TA \ar[rr]^{\sigma_{X,A}} \ar[d]_{\sigma_{X,A}} && T\left(X\otimes A\right)\\
T\left(X\otimes A\right) \ar[rr]^{T\left(\gamma_{A}\otimes A\right)} && T\left(\underline{\mathcal A}\left(A,X\otimes A \right)\otimes A\right) \ar[u]_{T\left(ev\right)} \\
}
$$
But $\hat{\gamma}_{A}=1_{X\otimes A}$, hence $ T\left(ev\right)\circ T\left(\gamma_{A}\otimes A\right)= T\left( \widehat{\gamma}_{A}\right) =1_{T\left(X\otimes A\right)}$.

Therefore, this diagram commutes and the family $\textit{(a)}$ determins $\textit{(b)}$.

It remains to prove that the tensorial strength determins the enrichment of the functor T.

Extending definitions
$$
\shorthandoff{;:!?}
\xymatrix @R=2cm @C=2cm {\relax
\underline{\mathcal A}\left(A,B\right) \ar[r]^{\varphi_{A,B}} \ar[d]_{\gamma_{TA}} & \underline{\mathcal B}\left(TA,TB\right)\\
\underline{\mathcal B}\left(TA,\underline{\mathcal A}\left(A,B\right)\otimes TA\right) \ar[d]_{\underline{\mathcal B}\left(TA,\gamma_{A}\otimes TA\right)} & \underline{\mathcal B}\left(TA,T\left(\underline{\mathcal A}\left(A,B\right)\otimes A\right)\right)\ar[u]_{\underline{\mathcal B}\left(TA,T\left(ev\right)\right)}\\
\underline{\mathcal B}\left(TA,\underline{\mathcal A}\left(A,\underline{\mathcal A}\left(A,B\right)\otimes A\right)\otimes TA\right)\ar[r]^{\underline{\mathcal B}\left(TA,\varphi\otimes TA\right)} & \underline{\mathcal B}\left(TA,\underline{\mathcal B}\left(TA,T\left(\underline{\mathcal A}\left(A,B\right)\otimes A\right)\right) \otimes TA\right) \ar[u]_{\underline{\mathcal B}\left(TA,ev\right)} \\ 
}
$$
By adjunction, this diagram is equivalent to
$$
\shorthandoff{;:!?}
\xymatrix @C=3cm @R=1.8cm {\relax
\underline{\mathcal A}\left(A,B\right)\otimes TA \ar[r]^{\widehat{\varphi}_{A,B}} \ar@{=}[d] & TB\\
\underline{\mathcal A}\left(A,B\right)\otimes TA \ar[d]_{\gamma_{A}\otimes TA} & T\left(\underline{\mathcal A}\left(A,B\right)\otimes A\right) \ar[u]_{T\left(ev\right)}\\
\underline{\mathcal A}\left(A,\underline{\mathcal A}\left(A,B\right)\otimes A\right)\otimes TA\ar[r]^{\varphi\otimes TA} & \underline{\mathcal B}\left(TA,T\left(\underline{\mathcal A}\left(A,B\right)\otimes A\right)\right) \otimes TA \ar[u]_{ev} \\ 
}
$$
Once again, by adjunction and using the fact that the adjoint of the morphism $T\left(ev\right)\circ ev$  is $\underline{\mathcal B}\left(1,T\left(ev\right)\right)$, we have
$$
\shorthandoff{;:!?}
\xymatrix @C=2.8cm @R=2cm{\relax
\underline{\mathcal A}\left(A,B\right) \ar[r]^{\varphi_{A,B}} \ar[d]_{\gamma_{A}} & \underline{\mathcal B}\left(TA,TB\right)\\
\underline{\mathcal A}\left(A,\underline{\mathcal A}\left(A,B\right)\otimes A\right) \ar[r]^{\varphi_{A,\underline{\mathcal A}\left(A,B\right)\otimes A}} & \underline{\mathcal B}\left(TA,T\left(\underline{\mathcal A}\left(A,B\right)\otimes A\right)\right) \ar[u]_{\underline{\mathcal B}\left(1,T\left(ev\right)\right)}\\
}
$$
By naturality of $\varphi$, this diagram is equivalent to
$$
\shorthandoff{;:!?}
\xymatrix @C=3.5cm @R=2cm{\relax
\underline{\mathcal A}\left(A,B\right) \ar[r]^{\varphi_{A,B}} \ar[d]_{\gamma_{A}} & \underline{\mathcal B}\left(TA,TB\right)\\
\underline{\mathcal A}\left(A,\underline{\mathcal A}\left(A,B\right)\otimes A\right) \ar[r]^{\underline{\mathcal A}\left(1,ev\right)} & \underline{\mathcal A}\left(A,B\right) \ar[u]_{\varphi_{A,B}}\\
}
$$
But one has $\widehat{ev}=\underline{\mathcal A}\left(A,ev\right)\circ\gamma_{A}=1_{\underline{\mathcal A}\left(A,B\right)}$.

Therefore, this diagram commutes and the family $\textit{(b)}$ determins $\textit{(a)}$ i.e. the two constructions are mutually inverse.
\end{proof}

\subsection{Strong and enriched functors} 

Once we have the correspondence between a tensorial strength and an enrichment (Lemma \ref{lemCStEnr}), we can closely relate strong and enriched functors.

\begin{prop}\label{propfonf-e}
Let $\mathcal E$ be a closed symmetric monoidal category. Given two categories $\mathcal A$ and $\mathcal B$ tensored over $\mathcal E$ and a functor $T:\mathcal A\rightarrow\mathcal B$, the following conditions are equivalent:
\begin{enumerate}
 \item A functor T extends to a strong functor $\left(T,\sigma \right)$;
 \item A functor T extends to a $\mathcal E$-functor $\left(T,\varphi \right)$.  
\end{enumerate}
\end{prop}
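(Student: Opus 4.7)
The plan is to invoke Lemma \ref{lemCStEnr}, which already furnishes a canonical bijection between families of tensorial strengths $\sigma_{X,A}:X\otimes TA\to T(X\otimes A)$ and enrichments $\varphi_{A,B}:\underline{\mathcal A}(A,B)\to\underline{\mathcal B}(TA,TB)$ for a bare functor $T$. The proposition will then follow by checking that this bijection identifies the two axioms for a strong functor (unit and associativity of $\sigma$) with the two axioms for an $\mathcal E$-functor (compatibility of $\varphi$ with $j$ and with composition $c$). In other words, one has to verify that under the transformations $\widehat{\sigma}=\varphi\circ\gamma$ and $\widehat{\varphi}=T(ev)\circ\sigma$, each axiom on one side is logically equivalent to the corresponding axiom on the other side.

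For the unit axiom, I would start with the diagram that expresses $\varphi_{A,A}\circ j_A=j_{TA}$ and translate it by adjunction into a statement about $\sigma_{I,A}$. Using the defining formula $\widehat{\sigma}=\varphi\circ\gamma$ together with the fact that $j_A$ is obtained from the left unit $l_A$ via the tensor-cotensor adjunction (i.e.\ $j_A=\widehat{l_A}=\underline{\mathcal A}(A,l_A)\circ\gamma_A^{I}$), the square collapses to the commutative triangle $T(l_A)\circ\sigma_{I,A}=l_{TA}$, which is precisely the unit axiom for the strong functor. Running the argument in reverse establishes the converse. This part is essentially a clean adjunction computation and should not cause difficulty.

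The main work lies in the associativity/composition correspondence. Here I would take the composition axiom of Definition \ref{deffonenr} for $\varphi$, precompose it along the adjoint of $\gamma$, and use $\widehat{\sigma}=\varphi\circ\gamma$ twice to rewrite the two sides. Concretely, one has to show that the square expressing $\varphi\circ c=c\circ(\varphi\otimes\varphi)$ on the internal hom objects $\underline{\mathcal A}(B,C)\otimes\underline{\mathcal A}(A,B)$ becomes, after tensoring with $TA$ and evaluating, the hexagon in Diagram \ref{diagasssimplff} with $X=\underline{\mathcal A}(B,C)$, $Y=\underline{\mathcal A}(A,B)$, and with $ev$ used to pass from the ``universal'' instance back to the general object $X\otimes Y\otimes TA$. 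The hexagon for general $X,Y$ then follows from the universal instance by naturality of $\sigma$, exploiting the fact that every map $X\to\underline{\mathcal A}(A,B)$ classifies a morphism $X\otimes A\to B$ via the tensor-cotensor adjunction.

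The reverse implication proceeds symmetrically: starting from the associativity hexagon for $\sigma$, one applies $T(ev)$ on the right and $\gamma$ on the left, uses $\widehat{\varphi}=T(ev)\circ\sigma$, and invokes naturality of $\varphi$ together with the definition of the composition morphism $c$ via a double evaluation (as recalled in Definition \ref{defcomp}) to recover the composition square for $\varphi$. The main obstacle I anticipate is bookkeeping: both axioms involve three objects and many adjoint transpositions, so it is easy to get lost between the ``external'' form (diagrams with $\otimes TA$ on the outside) and the ``internal'' form (diagrams living in $\underline{\mathcal B}(TA,-)$). A systematic use of the formulas $\widehat{\sigma}=\varphi\circ\gamma$, $\widehat{\varphi}=T(ev)\circ\sigma$, and $\widehat{c}=ev\circ(1\otimes ev)$, combined with naturality in the ``dummy'' variable that carries the universal morphism, should make each step mechanical once the correct adjoint transposition is taken.
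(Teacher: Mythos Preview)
Your proposal is correct and follows essentially the same route as the paper: invoke Lemma~\ref{lemCStEnr} for the bijection, then show via adjunction that the unit axiom for $\sigma$ corresponds to the unit axiom for $\varphi$, and the associativity axiom for $\sigma$ corresponds to the composition axiom for $\varphi$, using the formulas $\widehat{\sigma}=\varphi\circ\gamma$ and $\widehat{\varphi}=T(ev)\circ\sigma$ throughout. The only minor difference is that the paper isolates an auxiliary identity (Lemma~\ref{lemgam}, expressing $\gamma_C$ as a composite $c\circ(\gamma_{B\otimes C}\otimes\gamma_C)$) to handle the passage from the composition axiom for $\varphi$ to the associativity axiom for $\sigma$ at general $X,Y$; this is precisely the step you describe as ``the hexagon for general $X,Y$ follows from the universal instance by naturality,'' so your plan and the paper's execution agree.
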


\begin{proof}
First, we prove that $\textit{(a)}$ implies $\textit{(b)}$.

More precisely, if the tensorial strength $\sigma_{X,A}$ satisfies the unit and the associativity axioms, then the enrichment $\varphi_{A,B}$ satisfies the unit and the composition axioms. First, we prove that $\varphi_{A,B}$ satisfies the unit axiom
$$
\shorthandoff{;:!?}
\xymatrix @!0 @C=3cm @R=1.5cm{\relax 
I \ar[rd]_{j_A} \ar[rr]^{j_{TA}}  && \underline{\mathcal B}\left(TA,TA\right)  \\ 
& \underline{\mathcal A}\left(A,A\right)  \ar[ru]_{\varphi_{AA}} 
}
$$
By adjunction, this diagram is equivalent to the following one
$$
\shorthandoff{;:!?}
\xymatrix @C=3cm @R=1.8cm{\relax 
I\otimes TA \ar[r]^{l_{TA}} \ar[d]_{j_{A}\otimes TA}  & TA   \\ 
\underline{\mathcal A}\left(A,A\right)\otimes TA  \ar[r]^{\varphi_{A,A}\otimes TA} & \underline{\mathcal B}\left(TA,TA\right)\otimes TA \ar[u]_{ev} 
}
$$
But, we know that $ev \circ \varphi\otimes 1=\hat{\varphi}=T(ev)\circ \sigma$.

Hence, this diagram is equivalent to
$$
\shorthandoff{;:!?}
\xymatrix @C=4cm @R=2.8cm{\relax 
I\otimes TA \ar[r]^{l_{TA}} \ar[d]_{j_{A}\otimes TA} & TA \\  
\underline{\mathcal A}\left(A,A\right)\otimes TA \ar[r]^{\sigma_{\underline{\mathcal A}\left(A,A\right),TA}} & T\left(\underline{\mathcal A}\left(A,A\right)\otimes A\right) \ar[u]_{T(ev)}
}
$$
By naturality of $\sigma$
$$
\shorthandoff{;:!?}
\xymatrix @C=4cm @R=2.8cm{\relax 
I\otimes TA \ar[r]^{l_{TA}} \ar[d]_{\sigma_{I,A}}  & TA \\ 
T\left(I\otimes A \right) \ar[r]^{T\left(j_{A}\otimes A\right)} & T\left(\underline{\mathcal A}\left(A,A\right)\otimes A\right) \ar[u]_{T(ev)}
}
$$
But $\hat{j}_{A}=l_{A}$, hence $T\left(ev\right)\circ T\left(j_{A}\otimes A\right)=T\left(l_{A}\right)$.

Therefore, we have the following diagram
$$
\shorthandoff{;:!?}
\xymatrix @!0 @C=3cm @R=1.5cm{\relax 
 I\otimes TA \ar[rr]^{\sigma_{I,A}} \ar[rd]_{l_{TA}} && T\left(I\otimes A\right) \ar[ld]^{T\left( l_{A}\right) } \\ 
& TA 
}
$$ 
which clearly commutes by the unit axiom of $\sigma$.

Secondly, we prove that $\varphi_{A,B}$ satisfies the composition axiom
$$
\xymatrix @C=3cm @R=3cm {\relax
\underline{\mathcal A}\left(B,C\right)\otimes \underline{\mathcal A}\left(A,B\right) \ar[r]^{c_{ABC}} \ar[d]_{\varphi_{B,C}\otimes \varphi_{A,B}} & \underline{\mathcal A}\left(A,C\right) \ar[d]^{\varphi_{A,C}} \\
\underline{\mathcal B}\left(TB,TC\right)\otimes \underline{\mathcal B}\left(TA,TB\right) \ar[r]^{c_{TATBTC}} & \underline{\mathcal B}\left(TA,TC\right)
}
$$
By adjunction, this diagram is equivalent to the following one
$$
\xymatrix @C=2.7cm @R=2.2cm {\relax
\underline{\mathcal A}\left(B,C\right)\otimes \underline{\mathcal A}\left(A,B\right)\otimes TA \ar[r]^{c\otimes TA} \ar[d]_{\varphi_{B,C}\otimes \varphi_{A,B}\otimes TA} & \underline{\mathcal A}\left(A,C\right)\otimes TA \ar[d]^{\varphi_{A,C}\otimes TA} \\
\underline{\mathcal B}\left(TB,TC\right)\otimes \underline{\mathcal B}\left(TA,TB\right)\otimes TA \ar[d]_{c\otimes TA} & \underline{\mathcal B}\left(TA,TC\right)\otimes TA \ar[d]^{ev} \\
\underline{\mathcal B}\left(TA,TC\right)\otimes TA \ar[r]^{ev} & TC
}
$$
But $\hat{\varphi}=T(ev)\circ \sigma$, hence $ev \circ \varphi\otimes 1=T(ev) \circ \sigma$.

Therefore, the previous diagram is equivalent to the following one
$$
\xymatrix @C=2.7cm @R=2.2cm {\relax
\underline{\mathcal A}\left(B,C\right)\otimes \underline{\mathcal A}\left(A,B\right)\otimes TA \ar[r]^{c\otimes TA} \ar[d]_{\varphi_{B,C}\otimes \varphi_{A,B}\otimes TA} & \underline{\mathcal A}\left(A,C\right)\otimes TA \ar[d]^{\sigma_{\underline{\mathcal A}\left(A,C\right),A}} \\
\underline{\mathcal B}\left(TB,TC\right)\otimes \underline{\mathcal B}\left(TA,TB\right)\otimes TA \ar[d]_{c\otimes TA} & T\left( \underline{\mathcal A}\left(A,C\right)\otimes A\right)  \ar[d]^{T(ev)} \\
\underline{\mathcal B}\left(TA,TC\right)\otimes TA \ar[r]^{ev} & TC
}
$$ 
By naturality of $\sigma$
$$
\xymatrix @C=2.7cm @R=2cm {\relax
\underline{\mathcal A}\left(B,C\right)\otimes \underline{\mathcal A}\left(A,B\right)\otimes TA \ar[r]^{\sigma_{\underline{\mathcal A}\left(B,C\right)\otimes \underline{\mathcal A}\left(A,B\right),A}} \ar[d]_{\varphi_{B,C}\otimes \varphi_{A,B}\otimes TA} & T\left( \underline{\mathcal A}\left(B,C\right)\otimes \underline{\mathcal A}\left(A,B\right)\otimes A\right)  \ar[d]^{T\left( c\otimes A\right) } \\
\underline{\mathcal B}\left(TB,TC\right)\otimes \underline{\mathcal B}\left(TA,TB\right)\otimes TA \ar[d]_{c\otimes TA} & T\left(\underline{\mathcal A}\left(A,C\right)\otimes A\right) \ar[d]^{T(ev)} \\
\underline{\mathcal B}\left(TA,TC\right)\otimes TA \ar[r]^{ev} & TC
}
$$
By definition of composition $ev\circ c\otimes 1=ev\circ 1\otimes ev$, we have 

$$
\xymatrix @C=2.7cm @R=2cm {\relax
\underline{\mathcal A}\left(B,C\right)\otimes \underline{\mathcal A}\left(A,B\right)\otimes TA \ar[r]^{\sigma_{\underline{\mathcal A}\left(B,C\right)\otimes \underline{\mathcal A}\left(A,B\right),A}} \ar[d]_{\varphi_{B,C}\otimes \varphi_{A,B}\otimes TA} & T\left( \underline{\mathcal A}\left(B,C\right)\otimes \underline{\mathcal A}\left(A,B\right)\otimes A\right)  \ar[d]^{T\left( \underline{\mathcal A}\left(B,C\right) \otimes ev\right) } \\
\underline{\mathcal B}\left(TB,TC\right)\otimes \underline{\mathcal B}\left(TA,TB\right)\otimes TA \ar[d]_{\underline{\mathcal B}\left(TB,TC\right)\otimes ev} & T\left(\underline{\mathcal A}\left(B,C\right)\otimes B\right) \ar[d]^{T(ev)} \\
\underline{\mathcal B}\left(TB,TC\right)\otimes TB \ar[r]^{ev} & TC
}
$$
Once again, using the equality $\hat{\varphi}=T(ev)\circ \sigma$
$$
\xymatrix @C=2.7cm @R=2cm {\relax
\underline{\mathcal A}\left(B,C\right)\otimes \underline{\mathcal A}\left(A,B\right)\otimes TA \ar[r]^{\sigma_{\underline{\mathcal A}\left(B,C\right)\otimes \underline{\mathcal A}\left(A,B\right),A}} \ar[d]_{\varphi_{B,C}\otimes \underline{\mathcal A}\left(A,B\right)\otimes TA} & T\left( \underline{\mathcal A}\left(B,C\right)\otimes \underline{\mathcal A}\left(A,B\right)\otimes A\right)  \ar[d]^{T\left( \underline{\mathcal A}\left(B,C\right) \otimes ev\right) } \\
\underline{\mathcal B}\left(TB,TC\right)\otimes \underline{\mathcal A}\left(A,B\right)\otimes TA \ar[d]_{\underline{\mathcal B}\left(TB,TC\right)\otimes \sigma} & T\left(\underline{\mathcal A}\left(B,C\right)\otimes B\right) \ar[dd]^{T(ev)} \\
\underline{\mathcal B}\left(TB,TC\right)\otimes T\left( \underline{\mathcal A}\left(A,B\right)\otimes A\right) \ar[d]_{\underline{\mathcal B}\left(TB,TC\right)\otimes T(ev)} \\ 
\underline{\mathcal B}\left(TB,TC\right)\otimes TB \ar[r]^{ev} & TC
}
$$
Then, by bifunctoriality of tensor product
$$
\xymatrix @C=2.7cm @R=2cm {\relax
\underline{\mathcal A}\left(B,C\right)\otimes \underline{\mathcal A}\left(A,B\right)\otimes TA \ar[r]^{\sigma_{\underline{\mathcal A}\left(B,C\right)\otimes \underline{\mathcal A}\left(A,B\right),A}} \ar[d]_{\underline{\mathcal A}\left(B,C\right)\otimes \sigma} & T\left( \underline{\mathcal A}\left(B,C\right)\otimes \underline{\mathcal A}\left(A,B\right)\otimes A\right)  \ar[d]^{T\left( \underline{\mathcal A}\left(B,C\right) \otimes ev\right) } \\
\underline{\mathcal A}\left(B,C\right)\otimes T\left( \underline{\mathcal A}\left(A,B\right)\otimes A\right) \ar[d]_{\underline{\mathcal A}\left(B,C\right)\otimes T(ev)} & T\left(\underline{\mathcal A}\left(B,C\right)\otimes B\right) \ar[dd]^{T(ev)} \\
\underline{\mathcal A}\left(B,C\right)\otimes TB \ar[d]_{\varphi_{B,C} \otimes TB} \\ 
\underline{\mathcal B}\left(TB,TC\right)\otimes TB \ar[r]^{ev} & TC
}
$$
Using the equality $ev \circ \varphi\otimes 1=T(ev) \circ \sigma$ once again
$$
\xymatrix @C=2.7cm @R=2cm {\relax
\underline{\mathcal A}\left(B,C\right)\otimes \underline{\mathcal A}\left(A,B\right)\otimes TA \ar[r]^{\sigma_{\underline{\mathcal A}\left(B,C\right)\otimes \underline{\mathcal A}\left(A,B\right),A}} \ar[d]_{\underline{\mathcal A}\left(B,C\right)\otimes \sigma} & T\left( \underline{\mathcal A}\left(B,C\right)\otimes \underline{\mathcal A}\left(A,B\right)\otimes A\right)  \ar[d]^{T\left( \underline{\mathcal A}\left(B,C\right) \otimes ev\right) } \\
\underline{\mathcal A}\left(B,C\right)\otimes T\left( \underline{\mathcal A}\left(A,B\right)\otimes A\right) \ar[d]_{\underline{\mathcal A}\left(B,C\right)\otimes T(ev)} & T\left(\underline{\mathcal A}\left(B,C\right)\otimes B\right) \ar[dd]^{T(ev)} \\
\underline{\mathcal A}\left(B,C\right)\otimes TB \ar[d]_{\sigma_{\underline{\mathcal A}\left(B,C\right),B}} \\ 
T\left( \underline{\mathcal A}\left(B,C\right)\otimes B\right) \ar[r]^{T(ev)} & TC
}
$$
Finally, by naturality of $\sigma$
$$
\xymatrix @C=2.7cm @R=2cm {\relax
\underline{\mathcal A}\left(B,C\right)\otimes \underline{\mathcal A}\left(A,B\right)\otimes TA \ar[r]^{\sigma_{\underline{\mathcal A}\left(B,C\right)\otimes \underline{\mathcal A}\left(A,B\right),A}} \ar[d]_{\underline{\mathcal A}\left(B,C\right)\otimes \sigma} & T\left( \underline{\mathcal A}\left(B,C\right)\otimes \underline{\mathcal A}\left(A,B\right)\otimes A\right)  \ar[dd]^{T\left( \underline{\mathcal A}\left(B,C\right) \otimes ev\right) } \\
\underline{\mathcal A}\left(B,C\right)\otimes T\left( \underline{\mathcal A}\left(A,B\right)\otimes A\right) \ar[d]_{\sigma} \\
T\left( \underline{\mathcal A}\left(B,C\right)\otimes \underline{\mathcal A}\left(A,B\right)\otimes A\right) \ar[r]^{T\left( \underline{\mathcal A}\left(B,C\right)\otimes ev\right)} & T\left(\underline{\mathcal A}\left(B,C\right)\otimes B\right)
}
$$
Therefore, we have the following diagram
$$
\shorthandoff{;:!?}
\xymatrix @!0 @C=4cm @R=2cm{\relax 
 \underline{\mathcal A}\left(B,C\right) \otimes \underline{\mathcal A}\left(A,B\right) \otimes TA \ar[rr]^{\underline{\mathcal A}\left(B,C\right)\otimes\sigma} \ar[rd]_{\sigma} && \underline{\mathcal A}\left(B,C\right)\otimes T\left(\underline{\mathcal A}\left(A,B\right)\otimes A\right) \ar[ld]^{\sigma} \\ 
& T\left(\underline{\mathcal A}\left(B,C\right)\otimes \underline{\mathcal A}\left(A,B\right)\otimes A\right) 
}
$$
which clearly commutes by the associativity axiom of $\sigma$.

Hence $\varphi_{A,B}$ satisfies the composition axiom.

It remains to prove that $\textit{(b)}$ implies $\textit{(a)}$. First, we prove that $\sigma_{A,B}$ satisfies the unit axiom

$$
\shorthandoff{;:!?}
\xymatrix @!0 @C=3cm @R=1.5cm{\relax 
 I\otimes TA \ar[rr]^{l_{TA}} \ar[rd]_{\sigma_{I,A}} &&  TA  \\ 
& T\left(I\otimes A\right) \ar[ru]_{T\left( l_{A}\right) }
}
$$ 
By definition of $\sigma$
$$
\shorthandoff{;:!?}
\xymatrix @C=3.5cm @R=1.5cm{\relax 
 I\otimes TA \ar[d]_{\gamma_{A}\otimes A} \ar[r]^{l_{TA}} & TA \\
 \underline{\mathcal A}\left(A,I\otimes A\right) \otimes TA \ar[d]_{\varphi_{A,I\otimes A}\otimes TA} \\
 \underline{\mathcal B}\left(TA,T\left( I\otimes A\right)\right)  \otimes TA \ar[r]^{ev} &  T\left(I\otimes A\right) \ar[uu]_{T\left( l_{A}\right) } 
}
$$
By naturality of $ev$
$$
\shorthandoff{;:!?}
\xymatrix @C=3.5cm @R=1.5cm{\relax 
 I\otimes TA \ar[d]_{\gamma_{A}\otimes A} \ar[r]^{l_{TA}} & TA \\
 \underline{\mathcal A}\left(A,I\otimes A\right) \otimes TA \ar[d]_{\varphi_{A,I\otimes A}\otimes TA} \\
 \underline{\mathcal B}\left(TA,T\left( I\otimes A\right)\right)  \otimes TA \ar[r]^{\underline{\mathcal B}\left(TA,T\left(l_{A}\right)\right) \otimes TA} &   \underline{\mathcal B}\left(TA,TA \right)\otimes TA  \ar[uu]_{ev} 
}
$$
Then by adjunction
$$
\shorthandoff{;:!?}
\xymatrix @C=4.5cm @R=1.5cm{\relax 
 I \ar[d]_{\gamma_{A}} \ar[r]^{j_{TA}} & \underline{\mathcal B}\left(TA,TA\right) \\
 \underline{\mathcal A}\left(A,I\otimes A\right) \ar[d]_{\varphi_{A,I\otimes A}} \\
 \underline{\mathcal B}\left(TA,T\left( I\otimes A\right)\right) \ar[r]^{\underline{\mathcal B}\left(TA,T\left(l_{A}\right)\right)} & \underline{\mathcal B}\left(TA,TA \right)  \ar@{=}[uu] 
}
$$
By naturality of $\varphi$
$$
\shorthandoff{;:!?}
\xymatrix @C=4.8cm @R=2cm{\relax 
 I \ar[d]_{\gamma_{A}} \ar[r]^{j_{TA}} & \underline{\mathcal B}\left(TA,TA\right) \\
 \underline{\mathcal A}\left(A,I\otimes A\right) \ar[r]^{ \underline{\mathcal A}\left(A,l_{A}\right)} & \underline{\mathcal A}\left(A,A \right) \ar[u]_{\varphi_{A,A}}
}
$$
Furthermore, we have $\hat{j}_{A}=l_{A}=ev\circ j_{A}\otimes 1$. Hence this diagram is equivalent to
$$
\shorthandoff{;:!?}
\xymatrix @C=4.5cm @R=1.5cm{\relax 
 I \ar[d]_{\gamma_{A}} \ar[r]^{j_{TA}} & \underline{\mathcal B}\left(TA,TA\right) \\
 \underline{\mathcal A}\left(A,I\otimes A\right) \ar[d]_{ \underline{\mathcal A}\left(A,j_{A}\otimes A\right)} \\
 \underline{\mathcal A}\left(A, \underline{\mathcal A}\left(A,A \right)\otimes A \right) \ar[r]^{ \underline{\mathcal A}\left(A,ev \right)} & \underline{\mathcal A}\left(A,A \right) \ar[uu]_{\varphi_{A,A}}
}
$$
Finally, by naturality of $\gamma$
$$
\shorthandoff{;:!?}
\xymatrix @C=4.5cm @R=1.5cm{\relax 
 I \ar[d]_{j_{A}} \ar[r]^{j_{TA}} & \underline{\mathcal B}\left(TA,TA\right) \\
 \underline{\mathcal A}\left(A,A\right) \ar[d]_{\gamma_{A}} \\
 \underline{\mathcal A}\left(A, \underline{\mathcal A}\left(A,A \right)\otimes A \right) \ar[r]^{ \underline{\mathcal A}\left(A,ev \right)} & \underline{\mathcal A}\left(A,A \right) \ar[uu]_{\varphi_{A,A}}
}
$$
But we have $ \underline{\mathcal A}\left(A,ev\right)\circ \gamma_{A}=1_{\underline{\mathcal A}\left(A,A\right)}$.
Hence, we have the following diagram
$$
\shorthandoff{;:!?}
\xymatrix @!0 @C=3cm @R=1.5cm{\relax 
I \ar[rd]_{j_A} \ar[rr]^{j_{TA}}  && \underline{\mathcal B}\left(TA,TA\right)  \\ 
& \underline{\mathcal A}\left(A,A\right)  \ar[ru]_{\varphi_{A,A}} 
}
$$
which clearly commutes by the unit axiom of $\varphi$.

%Therefore, $\sigma_{X,A}$ satisfies the unit axiom.

Finally, we prove that $\sigma_{A,B}$ satisfies the associativity axiom
$$
\shorthandoff{;:!?}
\xymatrix @!0 @C=3cm @R=1.5cm{\relax 
 X\otimes Y\otimes TA \ar[rr]^{\sigma_{X\otimes Y,A}} \ar[rd]_{1\otimes\sigma_{Y,A}} && T\left(X\otimes Y\otimes A\right) \\ 
& X\otimes T\left(Y\otimes A\right) \ar[ru]_{\sigma_{X,Y\otimes A}}  
}
$$
Extending definitions
$$
\shorthandoff{;:!?}
\xymatrix @C=2.5cm @R=1.5cm{\relax 
X\otimes Y\otimes TA \ar[r]^{\sigma_{X\otimes Y,A}} \ar[d]_{1\otimes\gamma_{A}\otimes 1} & T\left(X\otimes Y\otimes A\right) \\
X\otimes \underline{\mathcal A}\left(A,Y\otimes A\right)\otimes TA \ar[d]_{1\otimes \varphi\otimes 1} \\
X\otimes \underline{\mathcal B}\left(TA,T\left( Y\otimes A\right)\right) \otimes TA \ar[d]_{1\otimes ev} & \underline{\mathcal B}\left(T\left( Y\otimes A\right),T\left(X\otimes Y\otimes A\right)\right) \otimes T\left( Y\otimes A\right) \ar[uu]_{ev} \\
X\otimes T\left( Y\otimes A\right) \ar[r]^{\gamma_{Y\otimes A}\otimes 1} & \underline{\mathcal A}\left( Y\otimes A,X\otimes Y\otimes A\right) \otimes T\left( Y\otimes A\right) \ar[u]_{\varphi\otimes 1} 
}
$$
By bifunctoriality of tensor product
$$
\shorthandoff{;:!?}
\xymatrix @!0 @C=4.5cm @R=2.5cm{\relax 
X\otimes Y\otimes TA \ar[rr]^{\sigma_{X\otimes Y,A}} \ar[d]_{\gamma_{Y\otimes A}\otimes\gamma_{A}\otimes 1} && T\left(X\otimes Y\otimes A\right) \\
\underline{\mathcal A}\left( Y\otimes A,X\otimes Y\otimes A\right) \otimes \underline{\mathcal A}\left( A,Y\otimes A\right) \otimes TA \ar[rd]_{\varphi \otimes \varphi \otimes 1} && \underline{\mathcal B}\left(T\left( Y\otimes A\right),T\left(X\otimes Y\otimes A\right)\right) \otimes T\left( Y\otimes A\right) \ar[u]_{ev} \\
 & \underline{\mathcal B}\left( T(Y\otimes A),T(X\otimes Y\otimes A)\right) \otimes \underline{\mathcal B}\left( TA,T\left( Y\otimes A\right)\right)\otimes TA \ar[ru]_{1\otimes ev} 
}
$$
Furthermore, by definition of composition $ev\circ1\otimes ev=ev\circ c\otimes 1$, we have
$$
\shorthandoff{;:!?}
\xymatrix @!0 @C=4.5cm @R=2.5cm{\relax 
X\otimes Y\otimes TA \ar[rr]^{\sigma_{X\otimes Y,A}} \ar[d]_{\gamma_{Y\otimes A}\otimes\gamma_{A}\otimes 1} && T\left(X\otimes Y\otimes A\right) \\
\underline{\mathcal A}\left( Y\otimes A,X\otimes Y\otimes A\right) \otimes \underline{\mathcal A}\left( A,Y\otimes A\right) \otimes TA \ar[rd]_{\varphi \otimes \varphi \otimes 1} && \underline{\mathcal B}\left(TA,T\left(X\otimes Y\otimes A\right)\right) \otimes TA \ar[u]_{ev} \\
 & \underline{\mathcal B}\left( T(Y\otimes A),T(X\otimes Y\otimes A)\right) \otimes \underline{\mathcal B}\left( TA,T\left( Y\otimes A\right)\right)\otimes TA \ar[ru]_{c\otimes 1} 
}
$$
By the composition axiom of $\varphi$, we have $c\circ \varphi\otimes \varphi=\varphi \circ c$
$$
\shorthandoff{;:!?}
\xymatrix @!0 @C=4.5cm @R=2.5cm{\relax 
X\otimes Y\otimes TA \ar[rr]^{\sigma_{X\otimes Y,A}} \ar[d]_{\gamma_{Y\otimes A}\otimes\gamma_{A}\otimes 1} && T\left(X\otimes Y\otimes A\right) \\
\underline{\mathcal A}\left( Y\otimes A,X\otimes Y\otimes A\right) \otimes \underline{\mathcal A}\left( A,Y\otimes A\right) \otimes TA \ar[rd]_{c \otimes 1} && \underline{\mathcal B}\left(TA,T\left(X\otimes Y\otimes A\right)\right) \otimes TA \ar[u]_{ev} \\
 & \underline{\mathcal A}\left( A,X\otimes Y\otimes A\right) \otimes TA \ar[ru]_{\varphi \otimes 1} 
}
$$
To conclude this proof, we need the following result
\begin{lem}
\label{lemgam}
Let $\mathcal E$ be a closed symmetric monoidal category. Let $\mathcal A$ and $\mathcal B$ be two categories tensored over $\mathcal E$.
The following diagram is commutative
$$
\shorthandoff{;:!?}
\xymatrix @!0 @C=3.5cm @R=2cm{\relax 
 A\otimes B \ar[rr]^{\gamma_{C}} \ar[rd]_{\gamma_{B\otimes C}\otimes\gamma_{C}} && \underline{\mathcal A}\left( C,A\otimes B\otimes C\right)\\ 
& \underline{\mathcal A}\left( B\otimes C,A\otimes B\otimes C\right)\otimes \underline{\mathcal A}\left( C,B\otimes C\right) \ar[ru]_{c}
}
$$
\end{lem}
\begin{proof} %(of Lemma \ref{lemgam})

By adjunction, this diagram is equivalent to
$$
\shorthandoff{;:!?}
\xymatrix @C=3cm @R=2cm{\relax 
 A\otimes B\otimes C \ar@{=}[r] \ar[d]_{\gamma_{B\otimes C}\otimes\gamma_{C}\otimes 1} & A\otimes B\otimes C \\ 
 \underline{\mathcal A}\left( B\otimes C,A\otimes B\otimes C\right)\otimes \underline{\mathcal A}\left( C,B\otimes C\right)\otimes C \ar[r]^{c\otimes 1} & \underline{\mathcal A}\left( C,A\otimes B\otimes C\right) \ar[u]_{ev}
}
$$ 
By definition of composition $ev\circ c\otimes 1=ev\circ 1\otimes ev$, we have
$$
\shorthandoff{;:!?}
\xymatrix @C=2.8cm @R=2cm{\relax 
 A\otimes B\otimes C \ar[d]_{\gamma_{B\otimes C}\otimes 1\otimes 1} &  \underline{\mathcal A}\left( B\otimes C,A\otimes B\otimes C\right)\otimes B\otimes C \ar[l]_{ev}\\ 
 \underline{\mathcal A}\left( B\otimes C,A\otimes B\otimes C\right)\otimes B \otimes C \ar[r]^{1\otimes \gamma_{C}\otimes 1} & \underline{\mathcal A}\left( B\otimes C,A\otimes B\otimes C\right)\otimes \underline{\mathcal A}\left( C,B\otimes C\right)\otimes C \ar[u]_{1\otimes ev}
}
$$
But we have $\hat{\gamma}=ev\circ \gamma \otimes 1=1$, hence this diagram clearly commutes. 
\end{proof}
Going back to the proof of Proposition \ref{propfonf-e}.
By Lemma \ref{lemgam}, the diagram is equivalent to
$$
\shorthandoff{;:!?}
\xymatrix @C=4cm @R=2cm{\relax 
X\otimes Y\otimes TA \ar[r]^{\sigma_{X\otimes Y,A}} \ar[d]_{\gamma_{A}\otimes 1} & T\left(X\otimes Y\otimes A\right) \\
\underline{\mathcal A}\left( A,X\otimes Y\otimes A\right) \otimes TA \ar[r]^{\varphi\otimes 1} & \underline{\mathcal B}\left(TA,T\left(X\otimes Y\otimes A\right)\right) \otimes TA \ar[u]_{ev}
}
$$
which commutes by definition of $\sigma_{X,A}$.
\end{proof}

The following lemma provides the correspondence between the composite strength and the composite enrichment.

\begin{lem}
Let $\mathcal E$ be a closed symmetric monoidal category. Given three $\mathcal E$-categories $\mathcal A$, $\mathcal B$ and $\mathcal C$ tensored over $\mathcal E$, let $T_{1}:\mathcal A\rightarrow \mathcal B$ and $T_{2}:\mathcal B\rightarrow \mathcal C$ be two functors. 

The composite strength 
$$\sigma_{2,1}:A\otimes T_{2}T_{1}B\xrightarrow{\sigma_{2}} T_{2}\left( A\otimes T_{1}B\right) \xrightarrow{T_{2}\left( \sigma_{1}\right) } T_{2}T_{1}\left( A\otimes B\right)$$
corresponds to the composite enrichment
$$\varphi_{2}\circ \varphi_{1}:\underline{\mathcal A}\left( A,B\right)\xrightarrow{\varphi_{1}}\underline{\mathcal B}\left( T_{1}A,T_{1}B\right)\xrightarrow{\varphi_{2}}\underline{\mathcal C}\left( T_{2}T_{1}A,T_{2}T_{1}B\right)$$
\end{lem}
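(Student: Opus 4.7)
My plan is to exploit the mutual-inversion from Lemma \ref{lemCStEnr}, specifically the identity $\widehat{\varphi}=T(ev)\circ\sigma$ that converts an enrichment into (the adjoint of) its associated strength. The strength associated to $\varphi_{2}\circ\varphi_{1}$ is uniquely characterised by its adjoint, so it suffices to verify that the adjoint of $\sigma_{2,1}=T_{2}(\sigma_{1})\circ\sigma_{2}$ coincides with the adjoint of the strength associated to $\varphi_{2}\circ\varphi_{1}$; both live in $\underline{\mathcal C}\bigl(T_{2}T_{1}A,T_{2}T_{1}(X\otimes A)\bigr)$, so this is a single diagram chase.

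Concretely, I would start from the expression $T_{2}T_{1}(ev)\circ\sigma_{2,1}$ (which, by Lemma \ref{lemCStEnr}, should coincide with $ev\circ((\varphi_{2}\circ\varphi_{1})\otimes T_{2}T_{1}A)$) and rewrite it as $T_{2}\bigl(T_{1}(ev)\circ\sigma_{1}\bigr)\circ\sigma_{2}$. Applying the identity $\widehat{\varphi_{1}}=T_{1}(ev)\circ\sigma_{1}=ev\circ(\varphi_{1}\otimes T_{1}A)$ inside the outer $T_{2}$, this becomes $T_{2}(ev)\circ T_{2}(\varphi_{1}\otimes T_{1}A)\circ\sigma_{2}$. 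The decisive step is then to commute $T_{2}(\varphi_{1}\otimes T_{1}A)$ past $\sigma_{2}$ using naturality of $\sigma_{2}$ in its first argument (applied to the morphism $\varphi_{1}:\underline{\mathcal A}(A,B)\to \underline{\mathcal B}(T_{1}A,T_{1}B)$), which yields $T_{2}(ev)\circ\sigma_{2}\circ(\varphi_{1}\otimes T_{2}T_{1}A)$. A second application of the identity $\widehat{\varphi_{2}}=T_{2}(ev)\circ\sigma_{2}=ev\circ(\varphi_{2}\otimes T_{2}Y)$ reduces this to $ev\circ(\varphi_{2}\otimes T_{2}T_{1}A)\circ(\varphi_{1}\otimes T_{2}T_{1}A)=ev\circ((\varphi_{2}\circ\varphi_{1})\otimes T_{2}T_{1}A)$, which is the adjoint of the strength associated to $\varphi_{2}\circ\varphi_{1}$, concluding the proof.

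I do not expect any genuine obstacle: the argument is a short chain of rewrites built on (i) the characterisation $\widehat{\varphi}=T(ev)\circ\sigma$ applied separately to $T_{1}$ and $T_{2}$, and (ii) naturality of $\sigma_{2}$ in its first variable. The only point requiring some care is keeping track of which $ev$ lives in which $\mathcal E$-category (the hom of $\mathcal A$, of $\mathcal B$, and of $\mathcal C$ all appear), and to make sure the naturality of $\sigma_{2}$ is invoked with respect to the right morphism; once the notation is fixed these are routine. Alternatively, one could dualise and start from the formula $\widehat{\sigma}=\varphi\circ\gamma$, running essentially the same calculation backwards, but the $T(ev)\circ\sigma$ formulation gives a cleaner and shorter argument.
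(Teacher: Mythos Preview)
Your proposal is correct and follows essentially the same approach as the paper: both arguments reduce to showing $\widehat{\varphi_{2}\circ\varphi_{1}}=T_{2}T_{1}(ev)\circ T_{2}(\sigma_{1})\circ\sigma_{2}$, and both do so by applying the identity $\widehat{\varphi}=T(ev)\circ\sigma$ separately for $T_{1}$ and $T_{2}$ together with the naturality of $\sigma_{2}$. The only difference is presentational---the paper organises the same rewrites as a pair of commutative squares, while you give a linear chain of equalities---but the mathematical content is identical.
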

\begin{proof}
%First, given two $\mathcal E$-functors  $\left(T_{1},\varphi_{1}\right)$, $\left(T_{2},\varphi_{2}\right)$, a composition functor is a $\mathcal E$-functor $(T_{2}\circ T_{1},\varphi_{2}\circ\varphi_{1})$.
%Also, by the definition \ref{deffcomp}, we know that given two strong functors $\left(T_{1},\sigma_{1}\right), \left(T_{2},\sigma_{2}\right)$, a composition functor $\left(T_{2}\circ T_{1} , \sigma_{2,1}\right)$ is a strong functor.
To prove that the two composits correspond mutually, we need to see that $$\widehat{\varphi_{2}\circ\varphi_{1}}=T_{2}T_{1}(ev)\circ T_{2}(\sigma_{1})\circ \sigma_{2} $$
We want to prove the commutativity of the following diagram
$$
\shorthandoff{;:!?}
\xymatrix @C=3.5cm @R=2cm{\relax
\underline{\mathcal A}\left(A,B\right)\otimes T_{2}T_{1}A \ar[r]^{\varphi_{1}\otimes T_{2}T_{1}A} \ar[d]_{\sigma_{2}} & \underline{\mathcal B}\left(T_{1}A,T_{1}B\right)\otimes T_{2}T_{1}A \ar[d]^{\varphi_{2}\otimes T_{2}T_{1}A} \\
T_{2}\left(\underline{\mathcal A}\left(A,B\right)\otimes T_{1}A\right) \ar[d]_{T_{2}(\sigma_{1})} & \underline{\mathcal C}\left(T_{2}T_{1}A,T_{2}T_{1}B\right)\otimes T_{2}T_{1}A \ar[d]^{ev}\\
T_{2}T_{1}\left(\underline{\mathcal A}\left(A,B\right)\otimes A\right) \ar[r]^{T_{2}T_{1}(ev)} & T_{2}T_{1}B 
}
$$
But we have $ev\circ\varphi_{2}\otimes 1=\hat{\varphi}_{2}$ and $\hat{\varphi}_{2}=T_{2}(ev)\circ \sigma_{2}$, hence
$$
\shorthandoff{;:!?}
\xymatrix @C=3.5cm @R=2cm{\relax
\underline{\mathcal A}\left(A,B\right)\otimes T_{2}T_{1}A \ar[r]^{\varphi_{1}\otimes T_{2}T_{1}A} \ar[d]_{\sigma_{2}} & \underline{\mathcal B}\left(T_{1}A,T_{1}B\right)\otimes T_{2}T_{1}A \ar[d]^{\sigma_{2}} \\
T_{2}\left(\underline{\mathcal A}\left(A,B\right)\otimes T_{1}A\right) \ar[r]^{T_{2}\left(\varphi_{1}\otimes T_{2}T_{1}A\right)} \ar[d]_{T_{2}(\sigma_{1})} & T_{2}\left( \underline{\mathcal B}\left(T_{1}A,T_{1}B\right)\otimes T_{1}A\right)  \ar[d]^{T_{2}(ev)}\\
T_{2}T_{1}\left(\underline{\mathcal A}\left(A,B\right)\otimes A\right) \ar[r]^{T_{2}T_{1}(ev)} & T_{2}T_{1}B 
}
$$

The upper diagram commutes by naturality of $\sigma_{2}$. Observing the lower diagram, on the one side we have $$T_{2}(ev)\circ T_{2}(\varphi_{1}\otimes 1)=T_{2}(ev\circ \varphi_{1}\otimes 1)=T_{2}(\hat{\varphi}_{1})$$ and on the other $$T_{2}T_{1}(ev)\circ T_{2}(\sigma_{1})= T_{2}(T_{1}(ev)\circ \sigma_{1})=T_{2}(\hat{\varphi}_{1})$$ Hence, the lower diagram also commutes.  
\end{proof}

\subsection{Strong and enriched natural transformations}

Once we have an equivalence between strong and enriched functors (Proposition \ref{propfonf-e}), the natural way is to relate strong and enriched natural transformations.

%The following proposition gives an equivalence between strong and enriched natural transformations.

\begin{prop}\label{proptranf-e}
Let $\mathcal E$ be a closed symmetric monoidal category. Let $\mathcal A$ and $\mathcal B$ be two $\mathcal E$-categories and $T_{1},T_{2}:\mathcal A\rightarrow\mathcal B$ two $\mathcal E$-functors. 
Given a natural transformation $\Psi:T_{1}\Rightarrow T_{2}$, the following conditions are equivalent:
\begin{enumerate}
 \item The natural transformation $\Psi$ extends to a strong natural transformation;
 \item The natural transformation $\Psi$ extends to a $\mathcal E$-natural transformation.
\end{enumerate}
\end{prop}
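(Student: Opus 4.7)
The plan is to imitate the strategy used in the proof of Proposition \ref{propfonf-e}: translate each condition into an equation in $\mathcal E$, then pass between them using the correspondence $\widehat\sigma=\varphi\circ\gamma$ and $\widehat\varphi=T(ev)\circ\sigma$ supplied by Lemma \ref{lemCStEnr}, together with the ordinary naturality of $\Psi$ and the identity $\widehat\gamma_{A}=1_{X\otimes A}$. No new conceptual ingredient is needed; what remains is a careful chase.

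First I would rewrite the $\mathcal E$-naturality square of Definition \ref{deftnatcomp} in its adjoint form. Tensoring with $T_{1}A$ and postcomposing with $ev:\underline{\mathcal B}(T_{1}A,T_{2}B)\otimes T_{1}A\to T_{2}B$ converts the two paths $\underline{\mathcal B}(1,\Psi_{B})\circ\varphi_{1}$ and $\underline{\mathcal B}(\Psi_{A},1)\circ\varphi_{2}$ into, respectively,
$$\Psi_{B}\circ\widehat\varphi_{1}=\Psi_{B}\circ T_{1}(ev)\circ\sigma_{1}\qquad\text{and}\qquad\widehat\varphi_{2}\circ(1\otimes\Psi_{A})=T_{2}(ev)\circ\sigma_{2}\circ(1\otimes\Psi_{A}).$$
So $\mathcal E$-naturality is equivalent to the equality of these two composites in $\mathcal E$.

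For $(a)\Rightarrow(b)$: assuming the strong naturality square of Definition \ref{defstnatt} commutes for every pair of objects, apply it with $X=\underline{\mathcal A}(A,B)$ to obtain $\Psi_{\underline{\mathcal A}(A,B)\otimes A}\circ\sigma_{1}=\sigma_{2}\circ(1\otimes\Psi_{A})$. Post-composing with $T_{2}(ev)$ and invoking the ordinary naturality of $\Psi$ with respect to the morphism $ev:\underline{\mathcal A}(A,B)\otimes A\to B$ turns the left-hand side into $\Psi_{B}\circ T_{1}(ev)\circ\sigma_{1}$, whence the two composites displayed above agree. Taking adjoints back yields exactly the $\mathcal E$-naturality square.

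For $(b)\Rightarrow(a)$: start from the adjoint equality $\Psi_{B}\circ T_{1}(ev)\circ\sigma_{1}=T_{2}(ev)\circ\sigma_{2}\circ(1\otimes\Psi_{A})$ valid for all $A,B$. Given an arbitrary $X$, specialize to $B=X\otimes A$ and precompose with $\gamma_{A}\otimes T_{1}A:X\otimes T_{1}A\to\underline{\mathcal A}(A,X\otimes A)\otimes T_{1}A$. On both sides, naturality of $\sigma_{i}$ in the first variable lets one pull $\gamma_{A}$ inside as $T_{i}(\gamma_{A}\otimes A)$, and then $T_{i}(ev)\circ T_{i}(\gamma_{A}\otimes A)=T_{i}(\widehat\gamma_{A})=T_{i}(1_{X\otimes A})$ collapses the composite. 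What survives is precisely $\Psi_{X\otimes A}\circ\sigma_{1}=\sigma_{2}\circ(X\otimes\Psi_{A})$, i.e. strong naturality. The main obstacle is purely bookkeeping—tracking correctly which instance of $\sigma$ or of $\widehat\varphi$ is being used at each step so that the naturality of $\sigma$ and of $\Psi$ are invoked in the right variables—so the argument mirrors the diagram chases already carried out in the proofs of Lemma \ref{lemCStEnr} and Proposition \ref{propfonf-e}.
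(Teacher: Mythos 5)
Your proposal is correct and follows essentially the same route as the paper: both implications are the same adjunction chase built on the correspondence $\widehat{\varphi}=T(ev)\circ\sigma$ and $\widehat{\sigma}=\varphi\circ\gamma$ of Lemma \ref{lemCStEnr}, with ordinary naturality of $\Psi$ at $ev$ handling $(a)\Rightarrow(b)$ via the instance $X=\underline{\mathcal A}(A,B)$ of strong naturality. The only cosmetic difference is in $(b)\Rightarrow(a)$: you precompose the adjoint form of $\mathcal E$-naturality at $B=X\otimes A$ with $\gamma_{A}\otimes T_{1}A$ and collapse using naturality of $\sigma$ together with $\widehat{\gamma}_{A}=1_{X\otimes A}$, whereas the paper runs the same computation through $\widehat{\sigma}=\varphi\circ\gamma$ and invokes the $\mathcal E$-naturality square directly — the two are the same argument read in adjoint form.
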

\begin{proof}
We need to prove that the following diagram commutes
$$
\shorthandoff{;:!?}
\xymatrix @C=3.5cm @R=2cm{\relax
\underline{\mathcal A}\left(A,B\right) \ar[r]^{\varphi_{1}} \ar[d]_{\varphi_{2}} & \underline{\mathcal B}\left(T_{1}A,T_{1}B\right) \ar[d]^{\underline{\mathcal B}\left(T_{1}A,\Psi_{B}\right)} \\
\underline{\mathcal B}\left(T_{2}A,T_{2}B\right) \ar[r]^{\underline{\mathcal B}\left(\Psi_{A},T_{2}B\right)} & \underline{\mathcal B}\left(T_{1}A,T_{2}B\right) 
}
$$
By adjunction, this diagram is equivalent to 
$$
\shorthandoff{;:!?}
\xymatrix @C=4cm @R=2cm{\relax
\underline{\mathcal A}\left(A,B\right)\otimes T_{1}A \ar[r]^{\hat{\varphi}_{1}} \ar[d]_{\underline{\mathcal A}\left(A,B\right)\otimes \Psi_{A}} & T_{1}B \ar[d]^{\Psi_{B}} \\
\underline{\mathcal A}\left(A,B\right)\otimes T_{2}A \ar[r]^{\hat{\varphi}_{2}} & T_{2}B
}
$$
But we have $\hat{\varphi}=T(ev)\circ \sigma$, hence
$$
\shorthandoff{;:!?}
\xymatrix @C=2.5cm @R=2cm{\relax
\underline{\mathcal A}\left(A,B\right)\otimes T_{1}A \ar[r]^{\sigma_{1}} \ar[d]_{\underline{\mathcal A}\left(A,B\right)\otimes \Psi_{A}} & T_{1}\left(\underline{\mathcal A}\left(A,B\right)\otimes A\right) \ar[r]^{T_{1}(ev)} & T_{1}B \ar[d]^{\Psi_{B}} \\
\underline{\mathcal A}\left(A,B\right)\otimes T_{2}A \ar[r]^{\sigma_{2}} & T_{2}\left(\underline{\mathcal A}\left(A,B\right)\otimes A\right) \ar[r]^{T_{2}(ev)} & T_{2}B
}
$$
By naturality of $\Psi$
$$
\shorthandoff{;:!?}
\xymatrix @C=2cm @R=2cm{\relax
\underline{\mathcal A}\left(A,B\right)\otimes T_{1}A \ar[r]^{\sigma_{1}} \ar[d]_{\underline{\mathcal A}\left(A,B\right)\otimes \Psi_{A}} & T_{1}\left(\underline{\mathcal A}\left(A,B\right)\otimes A\right) \ar[r]^{\Psi} & T_{2}\left(\underline{\mathcal A}\left(A,B\right)\otimes A\right) \ar[d]^{T_{2}(ev)} \\
\underline{\mathcal A}\left(A,B\right)\otimes T_{2}A \ar[r]^{\sigma_{2}} & T_{2}\left(\underline{\mathcal A}\left(A,B\right)\otimes A\right) \ar[r]^{T_{2}(ev)} & T_{2}B
}
$$
But this diagram is equivalent to the following diagram
$$
\shorthandoff{;:!?}
\xymatrix @C=4cm @R=2cm{\relax
\underline{\mathcal A}\left(A,B\right)\otimes T_{1}A \ar[r]^{\sigma_{1}} \ar[d]_{\underline{\mathcal A}\left(A,B\right)\otimes \Psi_{A}} & T_{1}\left(\underline{\mathcal A}\left(A,B\right)\otimes A\right) \ar[d]_{\Psi} \\
\underline{\mathcal A}\left(A,B\right)\otimes T_{2}A \ar[r]^{\sigma_{2}} & T_{2}\left(\underline{\mathcal A}\left(A,B\right)\otimes A\right)
}
$$
which is commutative, since $\Psi$ is a strong natural transformation.

It remains to prove that $\Psi$ extends to a strong natural transformation. We have to prove that the following diagram commutes
$$
\shorthandoff{;:!?}
\xymatrix @C=3cm @R=2cm{\relax
X\otimes T_{1}A \ar[r]^{\sigma_{1}} \ar[d]_{X\otimes \Psi_{A}} & T_{1}\left(X\otimes A\right) \ar[d]^{\Psi_{X\otimes A}}\\
X\otimes T_{2}A \ar[r]^{\sigma_{2}} & T_{2}\left(X\otimes A\right) 
}
$$
By adjunction, this diagram is equivalent to 
$$
\shorthandoff{;:!?}
\xymatrix @C=4cm @R=2cm{\relax
X \ar[r]^{\hat{\sigma}_{1}} \ar[d]_{\hat{\sigma}_{2}} & \underline{\mathcal B}\left(T_{1}A,T_{1}\left(X\otimes A\right)\right) \ar[d]^{\underline{\mathcal B}\left(T_{1}A,\Psi_{X\otimes A}\right)}\\
\underline{\mathcal B}\left(T_{2}A,T_{2}\left(X\otimes A\right)\right) \ar[r]^{\underline{\mathcal B}\left(\Psi_{A},T_{2}\left(X\otimes A\right)\right)} & \underline{\mathcal B}\left(T_{1}A,T_{2}\left(X\otimes A\right)\right) 
}
$$
But we have $\hat{\sigma}=\varphi \circ \gamma$, hence
$$
\shorthandoff{;:!?}
\xymatrix @C=1.7cm @R=1.4cm{\relax
X \ar[r]^{\gamma_{A}}  \ar[d]_{\gamma_{A}} & \underline{\mathcal A}\left(A,X\otimes A\right) \ar[r]^{\varphi_{1}} & \underline{\mathcal B}\left(T_{1}A,T_{1}\left(X\otimes A\right)\right) \ar[dd]^{\underline{\mathcal B}\left(T_{1}A,\Psi_{X\otimes A}\right)}\\
\underline{\mathcal A}\left(A,X\otimes A\right) \ar[d]_{\varphi_{2}}\\
\underline{\mathcal B}\left(T_{2}A,T_{2}\left(X\otimes A\right)\right) \ar[rr]^{\underline{\mathcal B}\left(\Psi_{A},T_{2}\left(X\otimes A\right)\right)} && \underline{\mathcal B}\left(T_{1}A,T_{2}\left(X\otimes A\right)\right) 
}
$$
and this diagram is equivalent to the following one
$$
\shorthandoff{;:!?}
\xymatrix @C=4cm @R=2cm{\relax
\underline{\mathcal A}\left(A,X\otimes A\right) \ar[r]^{\varphi_{1}} \ar[d]_{\varphi_{2}} & \underline{\mathcal B}\left(T_{1}A,T_{1}\left(X\otimes A\right)\right) \ar[d]^{\underline{\mathcal B}\left(T_{1}A,\Psi_{X\otimes A}\right)}\\
\underline{\mathcal B}\left(T_{2}A,T_{2}\left(X\otimes A\right)\right) \ar[r]^{\underline{\mathcal B}\left(\Psi_{A},T_{2}\left(X\otimes A\right)\right)} & \underline{\mathcal B}\left(T_{1}A,T_{2}\left(X\otimes A\right)\right) 
}
$$
which is commutative, since $\Psi$ is a $\mathcal E$-natural transformation.
\end{proof}

%Necesaire?

\begin{lem}\label{lemcomptranf-e}

Let $\mathcal E$ be a closed symmetric monoidal category and let $\mathcal A$ and $\mathcal B$ be two tensored $\mathcal E$-categories. Given three $\mathcal E$-functors $T_{1},T_{2}, T_{3}:\mathcal A\rightarrow\mathcal B$, let $\alpha: T_{1}\Rightarrow T_{2}$ and $\beta: T_{2}\Rightarrow T_{3}$ be two $\mathcal E$-natural transformations.

Then the composite $\beta\circ\alpha$ of two $\mathcal E$-natural transformations coressponds to the composite $\beta\circ\alpha$ of two strong natural transformations.
\end{lem}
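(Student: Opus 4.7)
The plan is to reduce the claim to Proposition \ref{proptranf-e} together with the observation that in both the strong and the enriched framework, the vertical composition of natural transformations is defined component-wise on the underlying natural transformation. Concretely, for any object $A$ in $\mathcal{A}$ both composites have the same component $(\beta\circ\alpha)_A=\beta_A\circ\alpha_A$ in $\mathcal{B}$, so the only thing to verify is that this common underlying natural transformation is again strong (equivalently, $\mathcal{E}$-natural), so that the bijection of Proposition \ref{proptranf-e} identifies the two composites.

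First I would record that the correspondence between strong and $\mathcal{E}$-natural transformations constructed in Proposition \ref{proptranf-e} is the identity on the underlying natural transformation: it merely changes the point of view on the compatibility data. Hence the statement of the lemma amounts to checking that the vertical composite of two strong natural transformations is strong, or equivalently that the vertical composite of two $\mathcal{E}$-natural transformations is $\mathcal{E}$-natural.

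Second, I would prove strong compatibility by stacking the strong naturality squares of $\alpha$ and $\beta$ from Definition \ref{defstnatt}:
$$
\xymatrix @C=2.5cm @R=1.4cm{
X\otimes T_1 A \ar[r]^{\sigma_1} \ar[d]_{X\otimes \alpha_A} & T_1(X\otimes A) \ar[d]^{\alpha_{X\otimes A}} \\
X\otimes T_2 A \ar[r]^{\sigma_2} \ar[d]_{X\otimes \beta_A} & T_2(X\otimes A) \ar[d]^{\beta_{X\otimes A}} \\
X\otimes T_3 A \ar[r]^{\sigma_3} & T_3(X\otimes A)
}
$$
Both inner squares commute by hypothesis, hence so does the outer rectangle, which is exactly the strong naturality square for $\beta\circ\alpha$ with components $X\otimes(\beta_A\circ\alpha_A)$ on the left and $(\beta\circ\alpha)_{X\otimes A}$ on the right. (An analogous pasting argument works for the $\mathcal{E}$-naturality diagram in the form of Definition \ref{deftnatcomp}, but by Proposition \ref{proptranf-e} either verification suffices.)

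Finally, I would conclude by combining the two observations: Proposition \ref{proptranf-e} gives a bijection between the strong and the $\mathcal{E}$-natural transformation structures extending a given underlying natural transformation; the pasting argument above shows that this common underlying natural transformation $\beta\circ\alpha$ admits such a structure; and since vertical composition is component-wise on both sides, the structure obtained by composing the strong structures coincides, under the bijection, with the structure obtained by composing the $\mathcal{E}$-natural structures. There is no real obstacle here beyond making precise that the correspondence of Proposition \ref{proptranf-e} fixes the underlying natural transformation, so that compatibility with composition is automatic.
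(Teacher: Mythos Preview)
Your proof is correct and in fact cleaner than the paper's own argument. Both arrive at the same conclusion, but by different routes.

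You observe that being strong (Definition \ref{defstnatt}) and being $\mathcal E$-natural (Definition \ref{deftnatcomp}) are \emph{properties} of an ordinary natural transformation, not extra structure, and that the correspondence of Proposition \ref{proptranf-e} is the identity on the underlying transformation. Since vertical composition is componentwise in both frameworks, the two composites share the same underlying natural transformation, and the simple pasting of the two strong-naturality squares shows this common transformation is strong. The lemma then follows immediately.

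The paper instead proceeds by an explicit adjunction computation: it writes down the strong-naturality square for $\beta\circ\alpha$, passes to the adjoint form using $\hat\sigma=\varphi\circ\gamma$, and then reduces commutativity to the separate $\mathcal E$-naturality conditions $\underline{\mathcal B}(1,\alpha)\circ\varphi_1=\underline{\mathcal B}(\alpha,1)\circ\varphi_2$ and $\underline{\mathcal B}(\beta,1)\circ\varphi_3=\underline{\mathcal B}(1,\beta)\circ\varphi_2$, eventually collapsing the diagram to a trivially commuting bifunctoriality square. This is a direct verification rather than an appeal to Proposition \ref{proptranf-e}, so it is longer but more self-contained. Your approach buys conceptual clarity and brevity; the paper's approach buys an explicit unwinding of the correspondence, which is perhaps useful pedagogically but not logically necessary once Proposition \ref{proptranf-e} is available.
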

\begin{proof}
%Automatique: juste montrer que $\alpha,\beta$ strong implique $\beta\circ\alpha$ strong et $\alpha,\beta$ enrichis implique $\beta\circ\alpha$ enrichi.
Consider three $\mathcal E$-functors $\left(T_{1},\varphi_{1}\right)$, $\left(T_{2},\varphi_{2}\right)$, $\left(T_{1},\varphi_{3}\right)$. Using Definition \ref{deftnatcomp}, the composite $\beta\circ\alpha:T_{1}\Rightarrow T_{3}$ of two $\mathcal E$-natural transformations is a $\mathcal E$-natural transformation given by the following commutative diagram
\begin{diagram}{}\label{diagcomptne}
$$
\shorthandoff{;:!?}
\xymatrix @C=4cm @R=2cm{\relax
\underline{\mathcal A}\left(A,B\right) \ar[r]^{\varphi_{1}} \ar[d]_{\varphi_{3}} & \underline{\mathcal B}\left(T_{1}A,T_{1}B\right) \ar[d]^{\underline{\mathcal B}\left(\alpha^{-1}_{A},\alpha_{B}\right)} \\
\underline{\mathcal B}\left(T_{3}A,T_{3}B\right) \ar[r]^{\underline{\mathcal B}\left(\beta_{A},\beta_{B}^{-1}\right)} & \underline{\mathcal B}\left(T_{2}A,T_{2}B\right) 
}
$$
\end{diagram}
Similarly, consider three strong functors $\left(T_{1},\sigma_{1}\right)$, $\left(T_{2},\sigma_{2}\right)$, $\left(T_{1},\sigma_{3}\right)$. Using Definition \ref{defstnatt} the composite $\beta\circ\alpha:T_{1}\Rightarrow T_{3}$ of two strong natural transformations is a strong natural transformation given by the following commutative diagram
\begin{diagram}{}\label{diagcomptnf}
$$
\shorthandoff{;:!?}
\xymatrix @C=4cm @R=2cm{\relax
X\otimes T_{1}A \ar[r]^{\sigma_{1}} \ar[d]_{X\otimes\left(\beta\circ\alpha\right)_{A}} & T_{1}\left(X\otimes A\right) \ar[d]^{\left(\beta\circ\alpha\right)_{X\otimes A}}\\
X\otimes T_{3}A \ar[r]^{\sigma_{3}} & T_{3}\left(X\otimes A\right) 
}
$$
\end{diagram}

We prove that Diagram \ref{diagcomptnf} commutes using the commutativity of Diagram \ref{diagcomptne}.

By adjunction, Diagram \ref{diagcomptnf} is equivalent to 
$$
\shorthandoff{;:!?}
\xymatrix @C=3.5cm @R=2.3cm{\relax
X \ar[r]^{\hat{\sigma}_{1}} \ar[d]_{\hat{\sigma}_{3}} & \underline{\mathcal B}\left(T_{1}A,T_{1}\left(X\otimes A\right)\right) \ar[d]^{\underline{\mathcal B}\left(1,\left(\beta\circ\alpha\right)_{X\otimes A}\right)}\\
\underline{\mathcal B}\left(T_{3}A,T_{3}\left(X\otimes A\right)\right) \ar[r]^{\underline{\mathcal B}\left(\left(\beta\circ\alpha\right)_{A},1\right)} & \underline{\mathcal B}\left(T_{1}A,T_{3}\left(X\otimes A\right)\right) 
}
$$ 
But we have $\hat{\sigma}=\varphi\circ\gamma$, hence
$$
\shorthandoff{;:!?}
\xymatrix @C=2cm @R=1.3cm{\relax
X \ar[r]^{\gamma_{A}} \ar[d]_{\gamma_{A}} & \underline{\mathcal A}\left(A,X\otimes A\right) \ar[r]^{\varphi_{1}} & \underline{\mathcal B}\left(T_{1}A,T_{1}\left(X\otimes A\right)\right) \ar[dd]^{\underline{\mathcal B}\left(1,\left(\beta\circ\alpha\right)_{X\otimes A}\right)}\\
\underline{\mathcal A}\left(A,X\otimes A\right) \ar[d]_{\varphi_{3}}\\
\underline{\mathcal B}\left(T_{3}A,T_{3}\left(X\otimes A\right)\right) \ar[rr]^{\underline{\mathcal B}\left(\left(\beta\circ\alpha\right)_{A},1\right)} && \underline{\mathcal B}\left(T_{1}A,T_{3}\left(X\otimes A\right)\right) 
}
$$
But this diagram is equivalent to the following one
$$
\shorthandoff{;:!?}
\xymatrix @C=3cm @R=1.8cm{\relax
\underline{\mathcal A}\left(A,X\otimes A\right) \ar[r]^{\varphi_{1}} \ar[d]_{\varphi_{3}} & \underline{\mathcal B}\left(T_{1}A,T_{1}\left(X\otimes A\right)\right) \ar[d]^{\underline{\mathcal B}\left(1,\alpha_{X\otimes A}\right)}\\
\underline{\mathcal B}\left(T_{3}A,T_{3}\left(X\otimes A\right)\right) \ar[d]_{\underline{\mathcal B}\left(\beta_{A},1\right)} & \underline{\mathcal B}\left(T_{1}A,T_{2}\left(X\otimes A\right)\right) \ar[d]^{\underline{\mathcal B}\left(1,\beta_{X\otimes A}\right)}\\ 
\underline{\mathcal B}\left(T_{2}A,T_{3}\left(X\otimes A\right)\right) \ar[r]^{\underline{\mathcal B}\left(\alpha_{A},1\right)} & \underline{\mathcal B}\left(T_{1}A,T_{3}\left(X\otimes A\right)\right) 
}
$$ 
Since $\alpha$ and $\beta$ are $\mathcal E$-natural transformations, we have $\underline{\mathcal B}\left(1,\alpha \right)\circ \varphi_{1}= \underline{\mathcal B}\left(\alpha,1 \right)\circ \varphi_{2}$ and $\underline{\mathcal B}\left(\beta,1\right)\circ \varphi_{3}= \underline{\mathcal B}\left(1,\beta \right)\circ \varphi_{2}$.
Hence
$$
\shorthandoff{;:!?}
\xymatrix @C=3cm @R=1.8cm{\relax
\underline{\mathcal A}\left(A,X\otimes A\right) \ar[r]^{\varphi_{2}} \ar[d]_{\varphi_{2}} & \underline{\mathcal B}\left(T_{2}A,T_{2}\left(X\otimes A\right)\right) \ar[d]^{\underline{\mathcal B}\left(\alpha_{A},1\right)}\\
\underline{\mathcal B}\left(T_{2}A,T_{2}\left(X\otimes A\right)\right) \ar[d]_{\underline{\mathcal B}\left(1,\beta_{X\otimes A}\right)} & \underline{\mathcal B}\left(T_{1}A,T_{2}\left(X\otimes A\right)\right) \ar[d]^{\underline{\mathcal B}\left(1,\beta_{X\otimes A}\right)}\\ 
\underline{\mathcal B}\left(T_{2}A,T_{3}\left(X\otimes A\right)\right) \ar[r]^{\underline{\mathcal B}\left(\alpha_{A},1\right)} & \underline{\mathcal B}\left(T_{1}A,T_{3}\left(X\otimes A\right)\right) 
}
$$ 
But this diagram is equivalent to the following one
$$
\shorthandoff{;:!?}
\xymatrix @C=3cm @R=2.2cm{\relax
\underline{\mathcal B}\left(T_{2}A,T_{2}\left(X\otimes A\right)\right) \ar[r]^{\underline{\mathcal B}\left(\alpha_{A},1\right)} \ar[d]_{\underline{\mathcal B}\left(1,\beta_{X\otimes A}\right)} & \underline{\mathcal B}\left(T_{1}A,T_{2}\left(X\otimes A\right)\right) \ar[d]^{\underline{\mathcal B}\left(1,\beta_{X\otimes A}\right)}\\ 
\underline{\mathcal B}\left(T_{2}A,T_{3}\left(X\otimes A\right)\right) \ar[r]^{\underline{\mathcal B}\left(\alpha_{A},1\right)} & \underline{\mathcal B}\left(T_{1}A,T_{3}\left(X\otimes A\right)\right) 
}
$$
which clearly commutes.
\end{proof}

\section{The canonical 2-isomorphism between $\mathbf{StrongCat}$ and $\mathcal E\mathbf{-Cat}$}\label{S2CTh}

In this section, we state the main theorem of this chapter, Theorem \ref{th2-cat} which shows that there is a 2-isomorphism between the 2-category of strong functors and natural transformations and the 2-category of $\mathcal E$-functors and $\mathcal E$-natural transformations. It is followed by Corollary \ref{cormons-e} which relates strong and enriched monads.

In order to do that we will need somme background on the language of 2-categories.

\subsection{2-Categories}

While a category has just objects and arrows, the category of categories and functors can be provided with additional devices, namely natural transformations between functors. This leads to the richer notion of a 2-category, where besides objects and arrows one gives also 2-cells between the arrows. Analogously, there are corresponding enrichments of the notions of functor, natural transformation, adjoint functors and so on.

In this subsection, we give basic definition and examples of 2-categories \cite{KelBCECT,KelRE2C}.

\begin{definition} \label{def2cat}
A 2-category $\mathcal C$ consists in giving:
\begin{enumerate}
\item A class $\mathcal C_{0}$, whose objects are called 0-cells;
\item For every pair of objects $\left(X,Y\right) $ in $\mathcal C$, a category $\underline{\mathcal C}\left(X,Y\right)$ whose 
\begin{itemize}
 \item Objects $f:X\rightarrow Y$ are called 1-cells;
 \item Morphisms $\alpha:f\Rightarrow g$ are called 2-cells; their composition $\circ_{1}$ is called vertical composition.
\end{itemize}
\item For every triple of objects $\left(X,Y,Z \right) $ in $\mathcal C$, a functor of horizontal composition
 $$\circ_{0}:\underline{\mathcal C}\left(Y,Z\right)\times \underline{\mathcal C}\left(X,Y\right)\rightarrow \underline{\mathcal C}\left(X,Z\right) ; $$ 
\item For every object $X$ in $\mathcal C$, a functor of horizontal identity
 $$j_{X}: I\rightarrow \underline{\mathcal C}\left(X,X\right) $$
such that the following coherence diagrams commute
\begin{diagram}{}
$$
\xymatrix {\relax
\left( \underline{\mathcal C}\left(Z,T\right) \times \underline{\mathcal C}\left(Y,Z\right)\right)\times \underline{\mathcal C}\left(X,Y\right) \ar[rr]^{\alpha} \ar[d]_{\circ_{0}\times 1} && \underline{\mathcal C}\left(Z,T\right)\times\left( \underline{\mathcal C}\left(Y,Z\right)\times \underline{\mathcal C}\left(X,Y\right)\right) \ar[d]^{1\times \circ_{0}}\\
\underline{\mathcal C}\left(Y,T\right)\times\underline{\mathcal C}\left(X,Y\right) \ar[rd]_{\circ_{0}} && \underline{\mathcal C}\left(Z,T\right)\times\underline{\mathcal C}\left(X,Z\right) \ar[ld]^{\circ_{0}} \\
& \underline{\mathcal C}\left(X,T\right) 
}
$$
\end{diagram}
\begin{diagram}{}
$$
\xymatrix @C=2cm @R=1.5cm{\relax  
I\times \underline{\mathcal C}\left(X,Y\right) \ar[r]^l \ar[d]_{j_{Y}\times 1} & \underline{\mathcal C}\left(X,Y\right) \ar[d]_{Id} & \underline{\mathcal C}\left(X,Y\right)\times I \ar[l]_r \ar[d]^{1\times j_{X}}\\
\underline{\mathcal C}\left(Y,Y\right)\times \underline{\mathcal C}\left(X,Y\right) \ar[r]^{\circ_{0}} & \underline{\mathcal C}\left(X,Y\right) & \underline{\mathcal C}\left(X,Y\right)\times \underline{\mathcal C}\left(X,X\right) \ar[l]_{\circ_{0}} 
}
$$
\end{diagram}
\end{enumerate}
\end{definition}

One can see a 2-category as an enriched category over a category of categories, called $\mathbf{Cat}$.

\begin{ex}~\\
\begin{itemize}
 \item One can extend $\mathbf{Cat}$ to a 2-category $\mathbf{Cat}_{2}$ where 0-cells are categories, 1-cells are functors and 2-cells are natural transformations between functors;
 \item One can also define the 2-category $\mathcal E\mathbf{-Cat}_{2}$ of $\mathcal E$-categories, $\mathcal E$-functors and $\mathcal E$-natural transformations. When $\mathcal E=Set$ the 2-category $\mathcal E\mathbf{-Cat}_{2}$ coincides with $\mathbf{Cat}_{2}$.
 %\item When the category $\mathcal E$ is symmetric monoidal, the category of $\mathcal E$-categories and $\mathcal E$-functors is symmetric monoidal, written $\mathcal E\mathbf{-Cat}$.
\end{itemize}
%the category ... is provided with the structure of a ...
\end{ex}

%\begin{lem}
%Let $\mathcal E$ be a monoidal category. Small $\mathcal E$-categories, $\mathcal E$-functors and $\mathcal E$-natural transformations constitute a $2$-category, written $\mathcal E$-Cat.
%\end{lem}
%\begin{definition}
%A 2-category enriched over $\mathcal E$ is a category enriched over a symmetric monoidal category $\mathcal E\mathbf{-Cat}$.
%\end{definition}

%\begin{rem}
%An enriched 2-category can be seen as a 2-category where the set of 2-cells between 1-objects is replaced with an object in $\mathcal E$. 
%\end{rem}

\begin{rem}
There exists a 2-functor $\varphi$:
$$\varphi: \mathcal E-Cat_{2}\rightarrow Cat_{2}$$
$$\underline{\mathcal C}\longrightarrow \underline{\mathcal C}_{0}  $$
$$F:\underline{\mathcal C}\rightarrow \underline{\mathcal D}\longrightarrow F_{0}:\underline{\mathcal C}_{0}\rightarrow \underline{\mathcal D}_{0}  $$
$$\alpha:F\Rightarrow G \longrightarrow \alpha_{0}:F_{0}\Rightarrow G_{0}  $$
The functor $\varphi$ associates to a $\mathcal E$-category $\underline{\mathcal C}$ its underlying category $\underline{\mathcal C}_{0}$, to a $\mathcal E$-functor F its underlying functor $F_{0}$, to a $\mathcal E$-natural tranformation $\alpha$ a natural transformation $\alpha_{0}$ between underlying functors.  
\end{rem} 

We have the corresponding definition of a monad in a 2-category.
\begin{definition}
Let $\mathcal E$ be a 2-category. 
A monad $(T, \mu, \eta )$ on an object $\mathcal C$ of $\mathcal E$ consists in giving:
\begin{enumerate}
\item 1-cell $T:\mathcal C\rightarrow \mathcal C$;
\item 2-cell $\eta:Id_{\mathcal C}\Rightarrow T$ called the unit of a monad; 
\item 2-cell $\mu:TT\Rightarrow T$ called the multiplication of a monad;

such that the following diagrams commute
\end{enumerate}
\begin{diagram}{}
$$
\shorthandoff{;:!?}
\xymatrix @!0 @C=2.5cm @R=2.3cm{\relax
T\ar[r]^{\eta T} \ar@{=}[rd] & TT \ar[d]^{\mu} & T \ar[l]_{T\eta} \ar@{=}[ld] \\
& T
}
$$
\end{diagram} 
\begin{diagram}{}
$$
\shorthandoff{;:!?}
\xymatrix @!0 @C=3cm @R=2.3cm{\relax
TTT \ar[r]^{\mu T} \ar[d]_{T\mu} & TT \ar[d]^{\mu} \\
TT \ar[r]^{\mu} & T
}
$$
\end{diagram}
\end{definition}

\begin{rem}
A monad in $\mathcal E=\mathbf{Cat_{2}}$ gives the usual definition of a monad in a category.
\end{rem}

\subsection{2-Isomorphism between $\mathbf{StrongCat}$ and $\mathcal E\mathbf{-Cat}$}

\begin{lem}
Strong functors and strong natural transformations constitute the 1-cells and 2-cells of a 2-category of $\mathcal E$-tensored categories, written \textbf{StrongCat}.
\end{lem}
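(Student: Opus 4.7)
The plan is to set up $\mathbf{StrongCat}$ with tensored $\mathcal{E}$-categories as $0$-cells, strong functors $(T,\sigma)$ as $1$-cells, and strong natural transformations as $2$-cells, and then to check that every structural operation (identity, vertical composition, horizontal composition) preserves the strong structure. Once this is done, the $2$-categorical axioms (associativity, units, interchange) are inherited from the underlying $2$-category $\mathbf{Cat}_2$, since being strong is a property of an underlying natural transformation and the operations on tensorial strengths are canonical.

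First I would dispose of the categorical structure on each hom. For fixed tensored $\mathcal{E}$-categories $\mathcal{A},\mathcal{B}$, the identity natural transformation $\mathrm{id}_T:T\Rightarrow T$ is trivially strong with respect to $\sigma$, and Lemma \ref{lemcomptranf-e} shows that the vertical composite of two strong natural transformations is strong. Associativity and unit laws of vertical composition are immediate from $\mathbf{Cat}_2$. Next, for each tensored $\mathcal{E}$-category $\mathcal{A}$ the identity functor $\mathrm{Id}_\mathcal{A}$ carries the identity strength $\sigma_{X,A}=\mathrm{id}_{X\otimes A}$, trivially satisfying the unit and associativity axioms. Composition of $1$-cells is given by Definition \ref{deffcomp}, and I would verify that the composite strength $\sigma_{2,1}$ satisfies the axioms of a strong functor: the unit axiom follows from the unit axioms of $\sigma_1,\sigma_2$ combined with functoriality of $T_2$, while the associativity axiom reduces, via naturality of $\sigma_2$, to the associativity axioms of $\sigma_1$ and $\sigma_2$.

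The main obstacle is horizontal composition of $2$-cells. Given strong natural transformations $\alpha:(T_1,\sigma_1)\Rightarrow (T_1',\sigma_1')$ from $\mathcal{A}$ to $\mathcal{B}$ and $\beta:(T_2,\sigma_2)\Rightarrow (T_2',\sigma_2')$ from $\mathcal{B}$ to $\mathcal{C}$, define $(\beta*\alpha)_A=\beta_{T_1'A}\circ T_2(\alpha_A)=T_2'(\alpha_A)\circ \beta_{T_1 A}$ as in $\mathbf{Cat}_2$. I must verify that the square
$$
\xymatrix @C=2cm{
X\otimes T_2 T_1 A \ar[r]^{\sigma_{2,1}} \ar[d]_{X\otimes(\beta*\alpha)_A} & T_2 T_1(X\otimes A) \ar[d]^{(\beta*\alpha)_{X\otimes A}}\\
X\otimes T_2'T_1'A \ar[r]^{\sigma_{2,1}'} & T_2'T_1'(X\otimes A)
}
$$
commutes. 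Expanding $\sigma_{2,1}=T_2(\sigma_1)\circ \sigma_2$ and $(\beta*\alpha)_A=\beta_{T_1'A}\circ T_2(\alpha_A)$, the diagram splits into three subdiagrams: one using strongness of $\beta$ applied to the object $X$ and to $T_1'A$, one using $T_2$ applied to the strongness square of $\alpha$, and one using ordinary naturality of $\beta$. Chasing these, using the two equivalent forms of the Godement formula for the middle step, yields commutativity.

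Finally I would check the $2$-categorical coherence: associativity and unit laws for horizontal composition follow from those in $\mathbf{Cat}_2$ together with the obvious identities $\sigma_{3,(2,1)}=\sigma_{(3,2),1}$ and $\sigma_{\mathrm{Id},T}=\sigma_T=\sigma_{T,\mathrm{Id}}$ among composite strengths. The interchange law $(\beta'\circ\beta)*(\alpha'\circ\alpha)=(\beta'*\alpha')\circ(\beta*\alpha)$ is already valid in $\mathbf{Cat}_2$ and lifts because both sides are now known to be strong. This concludes the verification that $\mathbf{StrongCat}$ is a $2$-category.
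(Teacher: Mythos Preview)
Your proposal is correct and follows essentially the same approach as the paper: verify that strongness is preserved by the various compositions and then inherit the $2$-categorical coherence axioms from $\mathbf{Cat}_2$. The paper's own proof is a three-line sketch invoking Definition~\ref{deffcomp} for $1$-cells and asserting the rest is routine, whereas you have carried out that routine in detail (including the horizontal composition of $2$-cells, which the paper does not spell out). One minor remark: your citation of Lemma~\ref{lemcomptranf-e} for vertical composition is slightly roundabout, since that lemma is phrased as a correspondence with $\mathcal E$-natural transformations; the direct argument (simply stacking the two strongness squares) is more elementary and matches the paper's ``it is clear that strong natural transformations compose''.
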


\begin{proof}
Since by composing two strong functors we acquire another strong functor (see Definition \ref{deffcomp}), the properties involving 1-cells have been checked.
It is clear that strong natural transformations compose. Axioms of Definition \ref{def2cat} are satisfied by arguments similar to arguments used in the case of a 2-category of categories, functors and natural transformations.   
\end{proof}

We state the theorem: 
\begin{theoreme}\label{th2-cat}
The following 2-categories of tensored $\mathcal E$-categories are 2-isomorphic:
\begin{enumerate}
 \item The 2-category of strong functors and strong natural transformations of tensored $\mathcal E$-categories, $\mathbf{StrongCat}$; 
 \item The 2-category of $\mathcal E$-functors and $\mathcal E$-natural transformations of tensored $\mathcal E$-categories, $\mathcal E\mathbf{-Cat}$.
\end{enumerate}
\end{theoreme}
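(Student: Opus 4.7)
The plan is to assemble the 2-isomorphism by using the material already developed in Sections \ref{SStr} and \ref{SSaEnr} as building blocks, and then check the axioms of a 2-functor on each piece of structure. Both 2-categories manifestly have the same class of 0-cells (tensored $\mathcal E$-categories), so the assignment on objects can be taken to be the identity. I would then define a 2-functor $\Phi:\mathbf{StrongCat}\to\mathcal E\mathbf{-Cat}$ by sending a strong functor $(T,\sigma)$ to the enriched functor $(T,\varphi)$ with $\widehat\varphi = T(\mathrm{ev})\circ\sigma$, and by sending a strong natural transformation $\Psi$ to itself regarded as an $\mathcal E$-natural transformation via Proposition \ref{proptranf-e}. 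A 2-functor $\Psi:\mathcal E\mathbf{-Cat}\to\mathbf{StrongCat}$ is defined symmetrically by $\widehat\sigma=\varphi\circ\gamma$.

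The key step is that Lemma \ref{lemCStEnr} already proves these two constructions on morphisms are mutually inverse on the level of underlying data, and Proposition \ref{propfonf-e} guarantees that the unit/associativity axioms on $\sigma$ translate exactly to the unit/composition axioms on $\varphi$. For 2-cells, Proposition \ref{proptranf-e} shows that the property of being strong and the property of being $\mathcal E$-natural pick out the same subset of ordinary natural transformations between the underlying functors, so the bijection on 2-cells is again automatic.

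What remains is the verification that $\Phi$ and $\Psi$ are genuinely 2-functorial, i.e.\ respect identities, horizontal composition of 1-cells, and both vertical and horizontal composition of 2-cells. The identities pose no difficulty, since the identity functor trivially carries both the identity strength and identity enrichment, which correspond to each other under the bijection. Compatibility with horizontal composition of 1-cells is precisely the content of the lemma preceding Subsection~2.3, which states that the composite strength $\sigma_{2,1}=T_2(\sigma_1)\circ\sigma_2$ corresponds via Lemma \ref{lemCStEnr} to the composite enrichment $\varphi_2\circ\varphi_1$. Compatibility with vertical composition of 2-cells is Lemma \ref{lemcomptranf-e}.

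The only item not explicitly treated in the preceding sections is compatibility with horizontal (whisker) composition of 2-cells, and this is what I expect to be the main (though still routine) obstacle. I would handle it by reducing, as usual for a 2-category, to the two cases $F\ast\alpha$ and $\alpha\ast G$ where $\alpha$ is a 2-cell and $F,G$ are 1-cells. In each case one has to show that if the enriched whiskering is defined by $\varphi_F\circ\underline{\mathcal B}(1,\alpha)$ (respectively $\underline{\mathcal B}(\alpha,1)\circ\varphi_G$), then its mate under the adjunction $\widehat{(-)}=(-)\circ\gamma$ coincides with the corresponding strong whiskering $F(\alpha_{X\otimes A})\circ\sigma^F_{X,A}$ (respectively $\sigma^{G}_{X,A}\circ(X\otimes\alpha_A)$). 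This amounts to a direct diagram chase using $\widehat\varphi=T(\mathrm{ev})\circ\sigma$ together with naturality of $\alpha$, entirely analogous in spirit to the computations in the proof of Proposition \ref{proptranf-e}. Once this is done, $\Phi$ and $\Psi$ are 2-functors that are mutually inverse on every layer of structure, and the 2-isomorphism is established.
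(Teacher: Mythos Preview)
Your proposal is correct and follows essentially the same approach as the paper: both assemble the 2-isomorphism by invoking Proposition~\ref{propfonf-e} for the bijection on 1-cells, Proposition~\ref{proptranf-e} for the bijection on 2-cells, the lemma on composite strengths for horizontal composition of 1-cells, and Lemma~\ref{lemcomptranf-e} for vertical composition of 2-cells. Your treatment is in fact more careful than the paper's, which does not explicitly address the preservation of horizontal composition of 2-cells; your reduction to whiskerings and the diagram chase via $\widehat\varphi=T(\mathrm{ev})\circ\sigma$ is the right way to fill that gap.
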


\begin{proof}
By Proposition \ref{propfonf-e}, strong functors $(T,\sigma)$ are exactly $\mathcal E$-functors $(T,\varphi)$.
By Proposition \ref{proptranf-e}, strong natural transformations are exactly $\mathcal E$-natural transformations.
By Lemma \ref{lemcomptranf-e}, the composite of strong natural transformations corresponds to the composite of $\mathcal E$-natural transformations.
Therefore, the 2-category $\mathbf{StrongCat}$ is 2-isomorphic to the 2-category $\mathcal E\mathbf{-Cat}$. 
\end{proof}

%The following corollary is a direct consequence of Theorem \ref{th2-cat}. %It is also one of the crucial partial results we need to state the main theorem of Chapter \ref{ChMthEcon}, Theorem \ref{thMain}.

\begin{cor}\label{cormons-e} 
Given a monad $\left(T,\mu,\eta\right)$ in a category $\mathcal C$, the following conditions are equivalent:
\begin{enumerate}
 \item The monad $\left(T,\mu,\eta\right)$ extends to a strong monad $\left(T,\mu,\eta,\sigma \right)$;
 \item The monad $\left(T,\mu,\eta\right)$ extends to a $\mathcal E$-monad $\left(T,\mu,\eta,\varphi\right)$.   
\end{enumerate}
\end{cor}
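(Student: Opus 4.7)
The plan is to derive this corollary directly from the 2-categorical isomorphism of Theorem \ref{th2-cat}. The key conceptual observation is that both a strong monad and an $\mathcal E$-monad are instances of the general notion of a monad internal to a 2-category: a strong monad on $\mathcal C$ is precisely a monad on the object $\mathcal C$ in the 2-category $\mathbf{StrongCat}$, while an $\mathcal E$-monad on $\mathcal C$ is precisely a monad on $\mathcal C$ in the 2-category $\mathcal E\mathbf{-Cat}$. Indeed, unwinding Definition \ref{defStMon}, the datum of a strong monad consists of a 1-cell $T\colon\mathcal C\to\mathcal C$ in $\mathbf{StrongCat}$ (a strong endofunctor) together with 2-cells $\eta\colon\mathrm{Id}_{\mathcal C}\Rightarrow T$ and $\mu\colon TT\Rightarrow T$ in $\mathbf{StrongCat}$ (strong natural transformations, by the strong naturality axioms for $\eta$ and $\mu$) satisfying the usual unit and associativity diagrams. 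Symmetrically, an $\mathcal E$-monad is the same package with ``strong'' replaced by ``$\mathcal E$-enriched''.

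Given this reformulation, the corollary is immediate from Theorem \ref{th2-cat}. First I would invoke Proposition \ref{propfonf-e} to see that the strong functor structure $(T,\sigma)$ on the underlying functor $T$ corresponds bijectively to an $\mathcal E$-functor structure $(T,\varphi)$, with $\widehat{\varphi}=T(\mathrm{ev})\circ\sigma$ and $\widehat{\sigma}=\varphi\circ\gamma$. Next I would apply Proposition \ref{proptranf-e} to the natural transformations $\eta$ and $\mu$: the strong naturality diagrams (Diagrams \ref{diagsteta} and \ref{diagstmu}) for $\eta$ and $\mu$ correspond, under the same bijection, to the $\mathcal E$-naturality conditions of Definition \ref{deftnatcomp}. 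Finally, since the 2-isomorphism of Theorem \ref{th2-cat} preserves horizontal and vertical composition of 2-cells (by Lemma \ref{lemcomptranf-e} together with the compatibility of strength composition with enrichment composition), the monad axioms (the unit diagram involving $\eta T$, $T\eta$, and the associativity pentagon involving $\mu T$, $T\mu$) translate verbatim between the two contexts.

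Concretely, writing this out as a single sentence: a strong monad structure on $(T,\mu,\eta)$ is a lift of this monad through the 2-functor $U\colon \mathbf{StrongCat}\to\mathbf{Cat}$ that forgets the strength, an $\mathcal E$-monad structure is a lift through $U'\colon\mathcal E\mathbf{-Cat}\to\mathbf{Cat}$ that forgets the enrichment, and Theorem \ref{th2-cat} provides a 2-isomorphism $\Phi\colon\mathbf{StrongCat}\xrightarrow{\cong}\mathcal E\mathbf{-Cat}$ with $U'\circ\Phi=U$; hence lifts on one side are in bijection with lifts on the other.

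I do not foresee a serious obstacle: all the non-trivial computational content has already been absorbed into Propositions \ref{propfonf-e} and \ref{proptranf-e} and Lemma \ref{lemcomptranf-e}. The only point requiring mild care is bookkeeping the correspondence between the two formulations of the monad axioms, but since these axioms are expressed purely in terms of 1-cells and 2-cells (horizontal and vertical composition) of the ambient 2-category, the 2-isomorphism transports them automatically.
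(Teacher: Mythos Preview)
Your proposal is correct and follows essentially the same approach as the paper: the paper's proof consists of the single observation that since $\mathbf{StrongCat}$ and $\mathcal E\mathbf{-Cat}$ are 2-isomorphic by Theorem~\ref{th2-cat}, strong monads correspond to $\mathcal E$-monads (with the accompanying remark that a strong monad is precisely a monad in the 2-category $\mathbf{StrongCat}$). Your version simply unpacks this in more detail, citing Propositions~\ref{propfonf-e}, \ref{proptranf-e} and Lemma~\ref{lemcomptranf-e} explicitly, which is perfectly fine.
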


\begin{rem}
A strong monad $\left(T,\mu,\eta,\sigma \right)$ is a monad $\left(T,\mu,\eta \right)$ in the 2-category $\mathbf{StrongCat}$ of strong functors and strong natural transformations of tensored $\mathcal E$-categories.
\end{rem}

\begin{proof}
Since by Theorem \ref{th2-cat}, the 2-category $\mathbf{StrongCat}$ is 2-isomorphic to the 2-category $\mathcal E\mathbf{-Cat}$, strong monads  $\left(T,\mu,\eta,\sigma \right)$ correspond to $\mathcal E$-monads $\left(T,\mu,\eta,\varphi\right)$. 
\end{proof}

\begin{rem}
We give a detail on the passage from a $\mathcal E$-monad $\left(T,\mu,\eta,\varphi\right)$ to a strong monad $\left(T,\mu,\eta,\sigma \right)$. 

By Theorem \ref{th2-cat}, $\mathcal E$-functors and $\mathcal E$-natural transformations of $\mathcal E$-tensored categories correspond to strong functors and strong natural transformations of $\mathcal E$-tensored categories.

In particular, a functor $T:\mathcal E\rightarrow \mathcal E$ is a $\mathcal E$-functor if and only if $(T,\sigma)$ is a strong functor i.e. T has a tensorial strength $\sigma_{A,B}:A\otimes TB\rightarrow T(A\otimes B)$ and the axioms given by the commutativity of the following diagrams are satisfied: 
\begin{diagram}{Unit condition for $\sigma$}
$$
\shorthandoff{;:!?}
\xymatrix @!0 @C=3cm @R=1.5cm{\relax 
 I\otimes TA \ar[rr]^{\sigma_{I,A}} \ar[rd]_{l_{TA}} && T\left(I\otimes A\right) \ar[ld]^{T\left( l_{A}\right) } \\ 
& TA 
}
$$ 
\end{diagram} 
\begin{diagram}{Associativity condition for $\sigma$}
$$
\shorthandoff{;:!?}
\xymatrix @!0 @C=3cm @R=1.5cm{\relax 
 A\otimes B\otimes TC \ar[rr]^{A\otimes\sigma_{B,C}} \ar[rd]_{\sigma_{A\otimes B,C}} && A\otimes T\left(B\otimes C\right) \ar[ld]^{\sigma_{A,B\otimes C}} \\ 
& T\left(A\otimes B\otimes C\right) 
}
$$
\end{diagram}
A natural transformation $\eta:Id_{\mathcal E}\rightarrow T$ is a $\mathcal E$-natural transformation if and only if $\eta$ is a strong natural transformation, with $T_{1}=Id_{\mathcal E}, T_{2}=T, \sigma_{1}=Id$ and $\sigma_{2}=\sigma $ i.e. $\eta$ is given by the commutative diagram
\begin{diagram}{Strong naturality of $\eta$}
$$
\shorthandoff{;:!?}
\xymatrix @!0 @C=3cm @R=1.5cm{\relax 
 A\otimes TB \ar[rr]^{\sigma_{A,B}} && T\left(A\otimes B\right)  \\ 
& A \otimes B \ar[lu]^{1\otimes \eta_{B}} \ar[ru]_{\eta_{A\otimes B}} 
}
$$ 
\end{diagram}
Finally, the natural transformation $\mu:TT\rightarrow T$ is a $\mathcal E$-natural transformation if and only if $\mu$ is a strong natural transformation, with $T_{1}=TT, T_{2}=T, \sigma_{1}=T_{2}(\sigma_{1})\circ\sigma_{2}$ and $\sigma_{2}=\sigma$ i.e. $\mu$ is given by the commutative diagram
\begin{diagram}{Strong naturality of $\mu$}
$$
\shorthandoff{;:!?}
\xymatrix @!0 @C=4.7cm @R=2.3cm{\relax
A \otimes T^{2}B \ar[r]^{\sigma_{A,TB}} \ar[d]_{1\otimes \mu_{B}} & T\left(A\otimes TB\right) \ar[r]^{T\left(\sigma_{A,B}\right)} & T^{2}\left(A\otimes B\right) \ar[d]^{\mu_{A\otimes B}}\\
A\otimes TB \ar[rr]^{\sigma_{A,B}} && T\left(A\otimes B\right) 
}
$$
\end{diagram}
Therefore, we obtain exactly the diagrams defining a strong monad $\left(T,\mu,\eta, \sigma \right)$.
\end{rem}

There exists a notion of a monoidal monad $\left(T,\mu,\eta,\lambda \right) $ in a category $\mathcal E$. It is a monad in $\mathcal E$ which is equipped with a natural transformation $$\lambda_{A,B}: TA\otimes TB\rightarrow T\left(A\otimes B\right)$$ and satisfying some corresponding axioms. These axioms simply translate the fact that we require for T to be a monoidal functor (to satisfy some unit and associativity conditions) and for $\mu$ and $\eta$ to be monoidal transformations. 

Furthermore, one can require for a monoidal monad $\left(T,\mu,\eta,\lambda \right) $ to be symmetric. 
\begin{definition}
%Let $\mathcal E$ be a monoidal category.
A monoidal monad $\left(T,\mu,\eta,\lambda\right)$ in a symmetric monoidal category $\mathcal E$ is called symmetric if the following diagram
\begin{diagram}{Symmetry condition}
$$
\shorthandoff{;:!?}
\xymatrix @C=3cm @R=2cm{\relax
TA\otimes TB \ar[r]^{\lambda_{A,B}} \ar[d]_{a_{TA,TB}} & T\left(A\otimes B\right) \ar[d]^{T\left(a_{A,B}\right)} \\
TB\otimes TA \ar[r]^{\lambda_{B,A}} & T\left(B\otimes A\right) 
}
$$
\end{diagram}
commutes.
\end{definition}

\begin{rem}
A symmetric monoidal monad $\left(T,\mu,\eta, \lambda \right)$ is a monad $\left(T,\mu,\eta \right)$ in the 2-category $\mathbf{SMonCat}$ of symmetric monoidal functors and symmetric monoidal transformations of tensored $\mathcal E$-categories.
\end{rem}

There is a correspondence between symmetric monoidal monads and some particular strong monads. These particular strong monads are the commutative strong monads.
%We give the definition of a strong commutative monad, followed by the Proposition \ref{propSTMMon} which relates symmetric monoidal and strong commutative monads.

\begin{definition}
A strong monad $\left(T,\mu,\eta,\sigma \right)$ in a symmetric monoidal category $\mathcal E$ is called commutative if the following diagram
\begin{diagram}{Commutativity condition}
$$
\shorthandoff{;:!?}
\xymatrix @!0 @C=4.7cm @R=2.3cm{\relax
TA\otimes TB \ar[r]^{\sigma'_{A,TB}} \ar[d]_{\sigma_{TA,B}} & T\left(A\otimes TB\right) \ar[r]^{T\left(\sigma_{A,B}\right)} & T^{2}\left(A\otimes B\right) \ar[d]^{\mu_{A\otimes B}}\\
T\left( TA\otimes B\right) \ar[r]^{T(\sigma'_{A,B})} & T^{2}\left(A\otimes B\right) \ar[r]^{\mu_{A\otimes B}} & T\left(A\otimes B\right) 
}
$$
\end{diagram}
commutes, where the dual tensorial strength $\sigma'_{A,B}: TA\otimes B\longrightarrow T\left(A\otimes B\right)$ is induced by the symmetry of $\mathcal E$ (see Remark following Definition \ref{defStMon}).  
\end{definition}

\begin{prop}{(\cite{KockSF,GoubLRMT})}\label{propSTMMon}
For a symmetric monoidal category $\mathcal E$, there is a canonical bijection between commutative strong monads and symmetric monoidal monads.
\end{prop}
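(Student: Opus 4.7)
The plan is to construct explicit maps in each direction and show they are mutually inverse. Given a commutative strong monad $(T,\mu,\eta,\sigma)$ on the symmetric monoidal category $\mathcal E$, define
$$\lambda_{A,B}=\mu_{A\otimes B}\circ T(\sigma_{A,B})\circ \sigma'_{A,TB}\colon TA\otimes TB\longrightarrow T(A\otimes B),$$
where $\sigma'$ is the dual strength induced by the symmetry $s$ of $\mathcal E$ as in the remark following Definition \ref{defStMon}. The commutativity axiom guarantees that this also equals $\mu_{A\otimes B}\circ T(\sigma'_{A,B})\circ \sigma_{TA,B}$, giving two interchangeable expressions for $\lambda$. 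In the opposite direction, given a symmetric monoidal monad $(T,\mu,\eta,\lambda)$, recover the strength by
$$\sigma_{A,B}=\lambda_{A,B}\circ(\eta_A\otimes TB)\colon A\otimes TB\longrightarrow TA\otimes TB\longrightarrow T(A\otimes B),$$
and the dual strength $\sigma'$ symmetrically.

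The first step is to check well-definedness of each construction. For the forward passage, the monoidal functor axioms of $(T,\lambda)$ (associativity and unit of $\lambda$) will follow from the unit and associativity axioms of the strength together with associativity of $\mu$ and the monad identities; the monoidal naturality of $\eta$ and $\mu$ translates precisely into the strong naturality Diagrams \ref{diagsteta} and \ref{diagstmu}; the symmetry axiom for $\lambda$ falls out of the two equivalent expressions for $\lambda$ combined with the way $\sigma'$ is obtained from $\sigma$ through $s$. For the backward passage, the unit and associativity axioms for $\sigma$ follow from the corresponding axioms for $\lambda$ precomposed with $\eta$, using that $\eta$ is a monoidal transformation; the strong naturality of $\mu$ uses that $\mu$ is a monoidal transformation; and the commutativity condition for the reconstructed strong monad becomes the symmetry axiom of $\lambda$ after simplification by the monad identity $\mu\circ T\eta=\mathrm{id}$.

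Once both constructions are known to produce the right kind of structure, I would verify they are mutually inverse. Starting from $\sigma$ and building $\lambda$ as above, the rebuilt strength $\lambda\circ(\eta_A\otimes TB)$ reduces to $\sigma_{A,B}$ by inserting the naturality square for $\eta$ against $\sigma'_{A,TB}$ and using $\mu_{A\otimes B}\circ T\eta_{A\otimes B}=\mathrm{id}$. Starting from $\lambda$ and rebuilding it from the associated $\sigma,\sigma'$, the formula $\mu\circ T(\sigma)\circ \sigma'=\mu\circ T(\lambda\circ(\eta\otimes T\cdot))\circ\lambda\circ(T\cdot\otimes\eta)$ collapses back to $\lambda$ using the associativity of $\lambda$ as a monoidal transformation, the unit laws, and again $\mu\circ T\eta=\mathrm{id}$.

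The main obstacle will be the diagrammatic identification between commutativity of the strong monad and the symmetry axiom of $\lambda$: one must thread the symmetry $s$ of $\mathcal E$ through the definition of $\sigma'$ carefully, and show that the two composites appearing in the commutativity diagram correspond, after precomposition with $\eta\otimes\eta$, to the two composites in the symmetry diagram for $\lambda$. The rest of the verification consists of routine diagram chases, organised by the observation (cf.\ Theorem \ref{th2-cat} and Corollary \ref{cormons-e}) that both structures are monads in suitable $2$-categories of tensored $\mathcal E$-categories, so the correspondence is $2$-categorically canonical.
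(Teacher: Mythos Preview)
The paper gives no proof of this proposition at all: it is stated with a bare citation to Kock \cite{KockSF} and Goubault-Larrecq \cite{GoubLRMT}, and the text moves on immediately. So there is nothing to compare your argument against in the paper itself.

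That said, your proposal is precisely the standard construction from the cited references. The formulas $\lambda=\mu\circ T\sigma\circ\sigma'$ and $\sigma=\lambda\circ(\eta\otimes\mathrm{id})$ are Kock's, and the identification of the commutativity condition with the symmetry axiom for $\lambda$ is exactly the crux of his argument. Your outline is correct and complete in spirit; the only thing to watch is that in the backward direction one also needs to check that the resulting strong monad is \emph{commutative} (not merely strong), which you mention but could state more explicitly: it is the symmetry of $\lambda$ together with the monoidal naturality of $\mu$ that forces the two composites $\mu\circ T\sigma'\circ\sigma$ and $\mu\circ T\sigma\circ\sigma'$ to agree.
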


\section{Strong monads and Day convolution}\label{SSmDayC}

In this section we use Day convolution to construct a strong monad on the category $\mathcal E^{\mathcal A}$ of functors $\mathcal A\rightarrow \mathcal E$ where $\mathcal A$ and $\mathcal E$ are supposed to be symmetric monoidal.

\begin{lem}\label{lemMonAlgT}
Consider categories $\mathcal E$ and $\mathcal A$ where $\mathcal A$ is supposed to be small. Let T be a monad on $\mathcal E$.
Then $T$ induces by postcomposition a monad on the category $\mathcal E^{\mathcal A}$ of functors $\mathcal A\rightarrow \mathcal E$.
In fact, we have the following diagram
\begin{diagram}{}
$$
\shorthandoff{;:!?}
\xymatrix @!0 @C=2cm @R=2cm{\relax
\left( \mathcal E^{T}\right)^{\mathcal A} \ar[rd]_{\left(U_{T}\right)^{\mathcal A} } \ar[rr]^{\omega} && \left( \mathcal E^{\mathcal A}\right)^{T \circ -} \ar[ld]^{U} \\
 & \mathcal E^{\mathcal A} 
}
$$
\end{diagram}
in which the functor $\omega$ induces an isomorphism between the category of functors $\mathcal A\rightarrow \mathcal E^{T}$ and the category of $\left( T \circ -\right) $-algebras on $\mathcal E^{\mathcal A}$.  
\end{lem}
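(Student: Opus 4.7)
The plan is to proceed in two steps: first to construct the postcomposition monad $T\circ -$ on $\mathcal E^{\mathcal A}$, and then to exhibit the comparison functor $\omega$ together with its inverse.

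For the first step, I would define the endofunctor $T\circ -:\mathcal E^{\mathcal A}\to \mathcal E^{\mathcal A}$ to act on objects by $F\mapsto T\circ F$ and on natural transformations by whiskering: $\alpha\mapsto T\alpha$. The monad data are obtained by whiskering the natural transformations $\eta$ and $\mu$ of $T$: for each $F:\mathcal A\to \mathcal E$ one sets $(\tilde\eta_F)_a=\eta_{F(a)}$ and $(\tilde\mu_F)_a=\mu_{F(a)}$. Naturality in $a$ follows from naturality of $\eta$ and $\mu$, and the unit/associativity axioms of the monad $(T\circ -,\tilde\mu,\tilde\eta)$ hold objectwise in $\mathcal A$ because they hold for $(T,\mu,\eta)$ in $\mathcal E$.

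For the second step, I would define $\omega$ on objects by sending a functor $F\in (\mathcal E^T)^{\mathcal A}$ (which assigns to each $a$ a $T$-algebra $(F(a),\xi_{F(a)})$ and to each $f:a\to b$ a $T$-algebra morphism $F(f)$) to the pair $\omega(F)=(U_T\circ F,\bar\xi)$, where $\bar\xi:T\circ U_T\circ F\Rightarrow U_T\circ F$ has components $\bar\xi_a=\xi_{F(a)}$. Naturality of $\bar\xi$ in $a$ is precisely the $T$-algebra morphism condition $F(f)\circ\xi_{F(a)}=\xi_{F(b)}\circ T(F(f))$, and the two $(T\circ -)$-algebra axioms for $\bar\xi$ hold componentwise from the $T$-algebra axioms of each $F(a)$. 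In the reverse direction, given $(G,\xi)$ with $G:\mathcal A\to\mathcal E$ and $\xi:T\circ G\Rightarrow G$ a $(T\circ -)$-algebra structure, the assignment $a\mapsto (G(a),\xi_a)$ yields a functor $\mathcal A\to \mathcal E^T$: the componentwise $T$-algebra axioms follow from the $(T\circ -)$-algebra axioms, and for each $f:a\to b$ the map $G(f)$ is a $T$-algebra morphism by naturality of $\xi$ applied to $f$. These two constructions are mutually inverse on objects; the extension to morphisms (natural transformations of functors into $\mathcal E^T$ on one side, maps of $(T\circ -)$-algebras on the other) amounts to the tautology that in both cases a morphism is a natural family $\alpha_a:F(a)\to F'(a)$ commuting with the algebra structures.

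Commutativity of the triangle $U\circ\omega=(U_T)^{\mathcal A}$ is immediate from the definitions: both composites forget the algebra structures and return the underlying functor $U_T\circ F:\mathcal A\to\mathcal E$. There is no genuine obstacle here; the whole argument is a bookkeeping exercise matching objectwise data of functors valued in $\mathcal E^T$ with "global" natural data of $(T\circ -)$-algebras on $\mathcal E^{\mathcal A}$. The only point that deserves explicit mention is the identification of naturality of $\bar\xi$ with the $T$-algebra morphism condition satisfied by the $F(f)$, since this is the single place where the two presentations genuinely translate into each other.
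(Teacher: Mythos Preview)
Your proposal is correct and follows essentially the same approach as the paper: both identify a $(T\circ-)$-algebra structure $\xi:TF\Rightarrow F$ with a lift $\overline{F}:\mathcal A\to\mathcal E^T$ of $F$ through $U_T$. The paper's proof is much terser, simply asserting that the data of such an algebra ``amounts to a lifting'', whereas you spell out the monad structure on $T\circ-$, the functor $\omega$ and its inverse, and the key observation that naturality of $\xi$ in $a$ is exactly the $T$-algebra morphism condition on each $F(f)$.
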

\begin{proof}
By definition, a $\left( T \circ -\right) $-algebra on $\mathcal E^{\mathcal A}$ consists in giving:
\begin{itemize}
 \item A functor $F:\mathcal A\rightarrow \mathcal E$;
 \item A natural transformation $\xi_{F}:TF\Rightarrow F$.
 \end{itemize}
satisfying the usual axioms of an algebra over a monad. This amounts to a lifting $\overline{F}$ of the functor $F$ to the category of $T$-algebras such that the following diagram
\begin{diagram}{}
$$
\shorthandoff{;:!?}
\xymatrix @!0 @C=2cm @R=2cm{\relax
& \mathcal E^{T} \ar[rd]^{U_{T}}\\
\mathcal A \ar[rr]^{F} \ar[ru]^{\overline{F}}  && \mathcal E 
}
$$
\end{diagram}
commutes.
\end{proof}

%If we provide $\mathcal A$ and $\mathcal E$ with the symmetric monoidal structure, we can equally obtain the symmetric monoidal structure on the category $\mathcal E^{\mathcal A}$ using the Day convolution.

%The Kan extensions are the deeper form of the basic constructions of adjoints. They give an adjunction which cannot be deduced in its general form from the classical adjoint existence theorems.

\begin{definition}
Consider two functors $F:\mathcal A\rightarrow \mathcal B$ and $G:\mathcal A \rightarrow \mathcal C$ . 

The left Kan extension of G along F,
\begin{diagram}{}
$$
\shorthandoff{;:!?}
\xymatrix @!0 @C=2cm @R=2cm{\relax 
\mathcal{A} \ar[rr]^{F} \ar[rd]_{G} && \mathcal{B} \ar@{.>}[ld]^{K} \\ 
& \mathcal{C}   
}
$$
\end{diagram}
is a pair $\left(K,\alpha \right)$ where $K:\mathcal B\rightarrow \mathcal C$ is a functor and $\alpha: G\Rightarrow K\circ F$ is a natural transformation satisfying the universal property: 

if $\left(H,\beta \right)$ is another pair where $H:\mathcal B\rightarrow \mathcal C$ is a functor and $\beta: G\Rightarrow H\circ F$ is a natural transformation, there exists a unique natural transformations $\gamma:K\Rightarrow H$ such that the following diagram commutes 
\begin{diagram}{}
$$
\shorthandoff{;:!?}
\xymatrix @!0 @C=2cm @R=2cm{\relax 
G \ar[rr]^{\beta} \ar[rd]_{\alpha} && HF \\ 
& KF  \ar@{.>}[ru]_{\gamma_{F}} 
}
$$
\end{diagram}
We write $Lan_{F}G$ for the left Kan extension. 
There is a dual notion of right Kan extension, written $Ran_{F}G$.
\end{definition}

\begin{definition}\label{defDayconv}
For symmetric monoidal categories $\mathcal A$ and $\mathcal E$, with $\mathcal A$ small, the functor category $\mathcal E^{\mathcal A}$ carries a symmetric monoidal structure $\mathcal E^\mathcal A \times \mathcal E^\mathcal A\to \mathcal E^\mathcal A:(F,G) \rightarrow F\otimes G$, where $F\otimes G$ is defined by \emph{Day convolution}, i.e. by the following diagram 
%(diagramme 2.28 rajouter ici une 2-cellule verticale dans le triangle gauche) 

\begin{diagram}{}
$$
\shorthandoff{;:!?}
\xymatrix @!0 @C=4.5cm @R=3cm{\relax
\mathcal A \times \mathcal A \ar[rr]^{F \times G} \ar[d]_{\otimes_{\mathcal A} } \ar@{}[rd]|{\Downarrow} \ar[rrd]^{F\hat{\otimes} G} && \mathcal E \times \mathcal E \ar[d]^{\otimes_{\mathcal E} } \\
\mathcal A  \ar[rr]^{F \otimes G} && \mathcal E 
}
$$
\end{diagram}
in which $F\otimes G$ is the left Kan extension of $F\hat{\otimes}G$ along $\otimes_\mathcal A$.
\end{definition}
%Strong monads on $\mathcal E$ induce by Day convolution strong monads on $\mathcal E^{\mathcal A}$. We have the following proposition:

\begin{prop}\label{PropStDay}
Let $\mathcal A$ and $\mathcal E$ be symmetric monoidal categories with $\mathcal A$ small.
Then any strong monad $T$ on $\mathcal E$ induces by postcomposition a strong monad $T\!\circ\!-$ on $\mathcal E^\mathcal A$ with respect to the symmetric monoidal structure given by Day convolution.
\end{prop}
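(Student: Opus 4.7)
The plan is to construct the tensorial strength on $T\circ -$ by exploiting the universal property of the left Kan extension that defines Day convolution, together with the strength $\sigma$ of $T$ on $\mathcal E$. For $F,G\in\mathcal E^{\mathcal A}$, the pointwise components of $\sigma$ assemble into a natural transformation
$$\widetilde\sigma_{F,G}:F\hat\otimes(T\circ G)\Longrightarrow T\circ(F\hat\otimes G),\qquad\widetilde\sigma_{F,G}(A,B)=\sigma_{F(A),G(B)},$$
of functors $\mathcal A\times\mathcal A\to\mathcal E$, where the naturality in $(A,B)$ is exactly the bifunctorial naturality of $\sigma$ on $\mathcal E$. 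Postcomposing $T$ with the universal arrow $F\hat\otimes G\Rightarrow(F\otimes G)\circ\otimes_{\mathcal A}$ of the left Kan extension produces a natural transformation $F\hat\otimes(T\circ G)\Rightarrow(T\circ(F\otimes G))\circ\otimes_{\mathcal A}$; by the universal property of
$$F\otimes(T\circ G)=\mathrm{Lan}_{\otimes_{\mathcal A}}\bigl(F\hat\otimes(T\circ G)\bigr),$$
this factors uniquely through the desired strength $\sigma^{(\mathcal A)}_{F,G}:F\otimes(T\circ G)\to T\circ(F\otimes G)$. Naturality in $F$ and $G$ follows from the functoriality of this construction in the diagram defining the Kan extension.

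Next I would verify the four axioms of Definition \ref{defStMon}. Since each axiom is an equality between natural transformations whose source is an iterated Day convolution, and each Day convolution is itself a left Kan extension, its universal property reduces each verification to checking the corresponding equality after precomposition with the Kan extension unit, i.e.\ on the pointwise expressions $F(A)\otimes_{\mathcal E}G(B)$ (respectively $F(A)\otimes_{\mathcal E}G(B)\otimes_{\mathcal E}H(C)$ for associativity). On these pointwise values the axioms collapse to the corresponding axioms of the strong monad $\bigl(T,\mu,\eta,\sigma\bigr)$ on $\mathcal E$, evaluated at the objects $F(A),G(B),H(C)$, together with the coherence of the associator of $\otimes_{\mathcal E}$.

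The main obstacle is the associativity axiom, in which a triple Day convolution appears and one must identify the two evident iterated Kan extensions of $F\hat\otimes G\hat\otimes(T\circ H)$ with a single left Kan extension along the triple tensor $\otimes_{\mathcal A}:\mathcal A^{3}\to\mathcal A$. This identification is precisely the statement that Day convolution is associative, which in turn follows from the associativity of $\otimes_{\mathcal A}$ and $\otimes_{\mathcal E}$. Once both sides of the associativity square for $\sigma^{(\mathcal A)}$ are rewritten as natural transformations out of $\mathrm{Lan}_{\otimes_{\mathcal A}^{(3)}}(F\hat\otimes G\hat\otimes(T\circ H))$, uniqueness of factorization reduces the square to the associativity axiom of $\sigma$ on $\mathcal E$ applied to the triple $(F(A),G(B),H(C))$.

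Finally, the unit axiom for $\sigma^{(\mathcal A)}$ reduces, by the same universal-property argument, to the unit axiom for $\sigma$; and the strong naturality of $\eta$ and $\mu$ with respect to $\sigma^{(\mathcal A)}$ reduces pointwise to the corresponding strong naturality of $\eta_{F(A)\otimes_{\mathcal E}G(B)}$ and $\mu_{F(A)\otimes_{\mathcal E}G(B)}$ for $T$ on $\mathcal E$, which holds by hypothesis. Combined with Lemma \ref{lemMonAlgT}, which already supplies the underlying monad structure on $T\circ -$, this produces the required strong monad on $\mathcal E^{\mathcal A}$.
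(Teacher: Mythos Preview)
Your proposal is correct and follows essentially the same approach as the paper: both construct the strength on $T\circ-$ by first forming the pointwise transformation $F\hat\otimes(T\circ G)\Rightarrow T\circ(F\hat\otimes G)$ from the strength of $T$ and then invoking the universal property of the left Kan extension defining Day convolution. Your argument is in fact more complete, since the paper's proof only sketches the construction of the strength via the Kan-extension diagram and does not spell out the verification of the four strong-monad axioms, whereas you indicate how each axiom reduces pointwise to the corresponding axiom for $T$ on $\mathcal E$.
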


\begin{proof}
In order to prove the existence of a tensorial strength for the induced monad $T\!\circ\!-$ on $\mathcal E^\mathcal A$ we use the canonical natural transformation $F\hat{\otimes}TG \Rightarrow T(F\hat{\otimes} G)$ which is induced by the tensorial strength of $T$. Indeed, the following diagram
%(diagramme 2.29, rajouter ici une 2-cellule verticale dans le rectangle). 

%First we make some precisions about notations. Let $F,G:\mathcal A\rightarrow \mathcal E$ be two functors, we denote by $F\hat{\otimes} G$ the composite 
%$$F\hat{\otimes} G:\mathcal A \times \mathcal A \xrightarrow{F \times G} \mathcal E \times \mathcal E \xrightarrow{\otimes_{\mathcal E }} \mathcal E  $$
%Moreover, $F \times G$ is the Left Kan extension of $F\hat{\otimes} G$ along the functor $\otimes_{\mathcal A}:\mathcal A \times \mathcal A\rightarrow \mathcal A$. 

%By hypothesis, T is a strong monad on $\mathcal E$, therefore T is a strong functor.
%In order to prove the existence of $$F\hat{\otimes} TG\Rightarrow T\left(F\hat{\otimes} G \right), $$ it suffices to prove the existence of $$_{\otimes_{\mathcal E}}\left( F \times TG\right)\Rightarrow T\left(_{\otimes_{\mathcal E}}F \times G \right) $$ 
%We have the following diagram 
\begin{diagram}{}\label{diagstmEA}
$$
\shorthandoff{;:!?}
\xymatrix @!0 @C=4.5cm @R=3cm{\relax
\mathcal A \times \mathcal A \ar[r]^{F \times G}  \ar[rd]^{F \times TG} \ar@/_6pc/[rrd]_{F \hat{\otimes} TG} \ar@/^4pc/[rr]^{F \hat{\otimes} G} & \mathcal E \times \mathcal E \ar[d]^{Id \times T } \ar@{}[rd]|{\Uparrow} \ar[r]^{\otimes \mathcal E} & \mathcal E \ar[d]^{T}\\
& \mathcal E \times \mathcal E  \ar[r]^{\otimes \mathcal E} & \mathcal E 
}
$$
\end{diagram}
yields the required natural transformation $F\otimes TG \Rightarrow T(F\otimes G)$ by functoriality and unversality of left Kan extensions.
%The tensorial strength $F \otimes TG\Rightarrow T\left(F\otimes G \right) $ is obtained by the fonctoriality of the left Kan extension from the transformation $F\hat{\otimes} TG\Rightarrow T\left(F\hat{\otimes} G \right)$.
\end{proof}

The following corollary is a direct consequence of Proposition \ref{PropStDay}.

\begin{cor}
Let $\mathcal E$ be a symmetric monoidal category and $\mathcal A$ a small symmetric monoidal category. 
If T is a strong monad on $\mathcal E$, then the category of  $\left( T \circ -\right) $-algebras on $\mathcal E^{\mathcal A}$ is enriched over $\mathcal E^{\mathcal A}$.
\end{cor}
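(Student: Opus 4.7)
The plan is to chain together the results established earlier in the chapter, using the equivalence between strength and enrichment as the central bridge. The strategy proceeds in three steps.

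First, I would apply Proposition \ref{PropStDay} to conclude that $T \circ -$ is a strong monad on the functor category $\mathcal E^{\mathcal A}$, where the symmetric monoidal structure on $\mathcal E^{\mathcal A}$ is given by Day convolution. This supplies a tensorial strength $\sigma^\mathcal A_{F,G} : F \otimes (T \circ G) \to T \circ (F \otimes G)$ inheriting the unit, associativity, and strong-naturality of $\eta$ and $\mu$ from the corresponding data on $\mathcal E$, via the universal property of left Kan extensions used in the proof of Proposition \ref{PropStDay}.

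Second, I would invoke Corollary \ref{cormons-e}, which states that a strong monad is the same thing as an enriched monad. Since the ambient category here is $\mathcal E^{\mathcal A}$ equipped with its Day convolution symmetric monoidal structure, the corollary (together with Theorem \ref{th2-cat}) promotes the strong monad $T \circ -$ to a $\mathcal E^{\mathcal A}$-enriched monad $(T\circ -, \mu, \eta, \varphi)$ on the $\mathcal E^{\mathcal A}$-category $\mathcal E^{\mathcal A}$ (note that $\mathcal E^{\mathcal A}$ is closed symmetric monoidal and hence enriched, tensored and cotensored over itself, by the lemma of Borceux cited after Definition \ref{deftnatcomp}).

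Third, I would use the standard fact, mentioned in the introduction and coming from the general theory of enriched monads (cf. \cite{KelBCECT}), that the Eilenberg--Moore category of an enriched monad on an enriched, tensored and cotensored base is itself canonically enriched (and in fact cotensored) over that base. Applied to the $\mathcal E^{\mathcal A}$-enriched monad $T \circ -$, this yields a canonical $\mathcal E^{\mathcal A}$-enrichment of $\mathcal{A}lg_{T\circ -}$. Finally, combining with Lemma \ref{lemMonAlgT} which identifies $\mathcal{A}lg_{T \circ -}$ with the functor category $(\mathcal E^T)^{\mathcal A}$, we obtain the desired $\mathcal E^{\mathcal A}$-enrichment on the category of $(T \circ -)$-algebras. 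The main technical point to verify is that the enriched hom-objects produced from the cotensor construction are computed correctly in $\mathcal E^{\mathcal A}$, but this is a purely formal consequence of the enriched adjunction $F_T \dashv U_T$ and poses no real obstacle once the preceding two steps are in place.
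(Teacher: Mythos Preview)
Your proposal is correct and follows essentially the same three-step chain as the paper: Proposition~\ref{PropStDay} gives the strong monad $T\circ-$ on $\mathcal E^{\mathcal A}$, Corollary~\ref{cormons-e} converts strength to enrichment, and then the enrichment of the Eilenberg--Moore category follows. The only differences are that the paper invokes its own Proposition~\ref{propenr} for the third step rather than appealing to Kelly's general theory, and your final appeal to Lemma~\ref{lemMonAlgT} is unnecessary since the corollary concerns $(T\circ-)$-algebras directly, not their identification with $(\mathcal E^T)^{\mathcal A}$.
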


\begin{proof}
By Proposition \ref{PropStDay}, the strong monad $T$ induces a strong monad $\left( T \circ -\right) $ on $\mathcal E^{\mathcal A}$.
By Corollary \ref{cormons-e}, there is an equivalence between strong and enriched monads.
Therefore, by Proposition \ref{propenr}, the category $\left( \mathcal E^{\mathcal A}\right)^{T \circ -}$ is enriched over $\mathcal E^{\mathcal A}$.
\end{proof}

\chapter{Morita theory in enriched context}\label{ChMthEcon}

This chapter is devoted to homotopical Morita theorem which provides, in the context of a strong monad $T$ on a monoidal model category $\mathcal E$, a Quillen equivalence between the category of $T$-algebras and the category of modules over a monoid in $\mathcal E$.
It is organized as follows: in Section \ref{SMT}, we give some classical results in Morita theory. 
In Section \ref{STaM}, we discover that in the context of a strong monad, the category of $T$-algebras is enriched, tensored and cotensored over $\mathcal E$. Moreover, the enrichment of $\mathcal Alg_{T}$, provides $T\left(I\right)$ with the structure of a monoid in $\mathcal E$. 
In Section \ref{SMth}, we state homotopical Morita theorem, Theorem \ref{thMain}. In Section \ref{GspGrGth}, we provide basic definitions and results of the category of $\Gamma$-spaces. 
In Section \ref{sAsmPpax}, we prove that the Bousfiled-Friedlander's stable model structure of $\Gamma$-spaces is a monoidal model category. Finally, in Section \ref{sRecResSS} we apply our homotopical Morita theorem for $\mathcal E=\Gamma$-spaces and we recover a theorem of Stefan Schwede \cite{SchSHAT}.

\section{A glimpse at classical Morita theory}\label{SMT}

In 1958, Morita \cite{MorDM} established a result on the equivalences between module categories via the functor $Hom\left(P,-\right)$, with P a projective generator. Gabriel \cite{GDCA} proved that there is an equivalence between an abelian category and a module category over a ring via the functor $Hom\left(P,-\right)$. Few years later, Gabriel and Popescu \cite{GPCCA} studied the localization of module categories over a ring.

The term Morita theory is now used for results concerning equivalences of various kinds of module categories. In particular, Morita theory was studied for categories of regular algebras, for derived categories, for stable model categories.

In this section, we provide some results of classical Morita theory in the most general direction.

The following result, known as Gabriel's theorem gives a characterization of module categories among abelian categories. 

\begin{theoreme}{(Gabriel,\cite{GDCA}) }\label{thG}
 Let $Ab$ be an abelian category and P an object in $Ab$ with a morphism of rings: $\varphi: R\rightarrow Hom\left(P,P\right)$.
The following are equivalent:
\begin{enumerate}
 \item The functor 
\begin{eqnarray*}
F:Ab & \rightarrow & Mod_{R} \\
X & \rightarrow & Hom_{Ab}\left(P,X\right)
\end{eqnarray*}
is an equivalence between categories;
 \item The object P is a projective generator in $Ab$ and $\varphi$ is an isomorphism i.e. $R\cong Hom\left(P,P\right)$.
\end{enumerate}
Furthermore, all equivalences between $Ab$ and $Mod_{R}$ are of this form up to isomorphism. 
\end{theoreme}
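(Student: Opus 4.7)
The plan is to prove the two implications and then the uniqueness statement, using the fact that $\mathrm{Hom}_{Ab}(P,-)$ is an exact, faithful functor landing in $\mathrm{Mod}_R$ whenever $P$ is a projective generator equipped with $\varphi$, and that any object of $Ab$ then admits a presentation by coproducts of copies of $P$. The non-trivial direction is $(b)\Rightarrow(a)$, for which I would produce an explicit quasi-inverse and check it really is one via these presentations.

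\textbf{Step 1: $(b)\Rightarrow(a)$, constructing a left adjoint.} Assuming $Ab$ has the coproducts needed (in the intended setting $Ab$ is cocomplete), I would define $G:\mathrm{Mod}_R\to Ab$ as the left Kan extension along $R\mapsto P$ of the tensor functor, i.e.\ first set $G(R^{(I)})=P^{(I)}$ on free modules, then extend to an arbitrary $M$ by choosing a free presentation $R^{(I)}\to R^{(J)}\to M\to 0$ and defining $G(M)$ as the cokernel of the induced map $P^{(I)}\to P^{(J)}$. Naturality and well-definedness follow from the universal property of free modules together with the identification $\mathrm{Hom}_{Ab}(P,P^{(J)})\cong R^{(J)}$ (which uses $R\cong\mathrm{Hom}(P,P)$). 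An adjunction $G\dashv F$ results, with the unit $\eta_M:M\to FG(M)$ being an isomorphism on free modules by construction and for general $M$ by a diagram chase using right exactness of $G$ and left exactness of $F$ (left exactness of $F$ is automatic; here we use nothing beyond the hom-out property).

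\textbf{Step 2: Verifying the counit is an isomorphism.} The core of the proof is showing $\varepsilon_X:GF(X)\to X$ is an isomorphism for all $X\in Ab$. Because $P$ is a generator, there is an epimorphism $P^{(J)}\twoheadrightarrow X$; applying the same to its kernel gives a presentation $P^{(I)}\to P^{(J)}\to X\to 0$. Projectivity of $P$ means $F$ is exact, so $F$ applied to this presentation gives a presentation of $FX$ by free $R$-modules; applying $G$ recovers the original sequence (up to canonical iso), so $\varepsilon_X$ is an isomorphism by the five lemma. Combined with Step 1 this proves $F$ is an equivalence.

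\textbf{Step 3: $(a)\Rightarrow(b)$.} If $F$ is an equivalence, then $F(P)=\mathrm{Hom}(P,P)$ corresponds, via $\varphi$, to an object isomorphic to $R\in\mathrm{Mod}_R$; but $R$ is a projective generator of $\mathrm{Mod}_R$, and any equivalence of abelian categories preserves projectivity (preservation of epimorphisms) and the generator property (faithfulness of the Hom-functor). Hence $P$ is a projective generator of $Ab$. The map $\varphi:R\to\mathrm{Hom}(P,P)=F(P)$ must be an isomorphism of $R$-modules because it is the image under $F$ of the identity of $P$ composed with the canonical action; equivalently, applying $F^{-1}$ to the identity $R\to R$ and using fullness-faithfulness forces $\varphi$ to be bijective and a ring map.

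\textbf{Step 4: The uniqueness clause and main obstacle.} For an arbitrary equivalence $\Phi:Ab\to \mathrm{Mod}_R$, set $P:=\Phi^{-1}(R)$ and observe the natural chain $\mathrm{Hom}_{Ab}(P,X)\cong\mathrm{Hom}_{\mathrm{Mod}_R}(\Phi P,\Phi X)\cong\mathrm{Hom}_{\mathrm{Mod}_R}(R,\Phi X)\cong \Phi X$, which yields $\Phi\cong\mathrm{Hom}_{Ab}(P,-)$; the ring map $\varphi$ is the one induced on endomorphisms by $\Phi^{-1}$. I expect the main obstacle to be Step 2: ensuring that the left adjoint $G$ is well-defined on \emph{all} $R$-modules (not only finitely presented ones) requires that $Ab$ admit arbitrary small coproducts and cokernels, and verifying independence of the chosen presentation together with right-exactness of $G$ demands care. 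Once those formalities are in place, the interplay of projectivity (giving exactness of $F$) and the generator property (producing the presentation by copies of $P$) forces $\varepsilon_X$ to be an isomorphism by a routine five-lemma argument.
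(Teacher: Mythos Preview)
The paper does not actually prove this theorem: it is stated with attribution to Gabriel \cite{GDCA} and immediately followed by a corollary, with no proof given. So there is nothing in the paper to compare your argument against.

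That said, your outline is the standard one and is essentially correct. One point deserves more care than you give it: in Step~1 you use the identification $\mathrm{Hom}_{Ab}(P,P^{(J)})\cong R^{(J)}$, which amounts to assuming that $\mathrm{Hom}_{Ab}(P,-)$ commutes with coproducts, i.e.\ that $P$ is small (compact). This is not a consequence of ``projective generator'' alone in an arbitrary cocomplete abelian category, and the statement as recorded in the paper omits it. In the direction $(a)\Rightarrow(b)$ smallness follows automatically (an equivalence preserves coproducts), but in the direction $(b)\Rightarrow(a)$ you genuinely need it for your construction of $G$ and for the unit to be an isomorphism on free modules of infinite rank. You flag the cocompleteness issue in Step~4; the smallness of $P$ is the other hidden hypothesis you should make explicit.
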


The following corollary is a direct consequence of Gabriel's theorem.
\begin{cor}
Let R and S be two rings and $P_{S}$ an object in $Mod_{S}$ with the morphism of rings: $\varphi: R\rightarrow Hom_{S}\left(P_{S},P_{S}\right)$.
The following are equivalent:
\begin{enumerate}
 \item The functor
\begin{eqnarray*}
F:Mod_{S} & \rightarrow & Mod_{R}\\
X & \rightarrow & Hom_{S}\left(P_{S},X\right)
\end{eqnarray*}
is an equivalence between categories;
 \item The object P is a projective generator in $Mod_{S}$ and $\varphi$ is an isomorphism i.e. $R\cong Hom_{S}\left(P_{S},P_{S}\right)$.
\end{enumerate}
Furthermore, all equivalences between $Mod_{S}$ and $Mod_{R}$ are of this form up to isomorphism.
\end{cor}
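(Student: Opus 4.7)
The plan is to derive this corollary as a direct specialization of Gabriel's theorem (Theorem \ref{thG}), exploiting the fact that the category of right $S$-modules $Mod_S$ is itself an abelian category to which the hypotheses of that theorem apply.

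First I would observe that $Mod_{S}$ is an abelian category (this is classical), so Theorem \ref{thG} can be invoked with $Ab := Mod_{S}$ and with the chosen object $P := P_{S}$ together with the given ring morphism $\varphi: R\to Hom_{S}(P_{S},P_{S})$. Here it is important to note that $Hom_{Ab}(P,X)$ in the statement of Gabriel's theorem coincides with $Hom_{S}(P_{S},X)$ when $Ab = Mod_{S}$, so the functor $F$ in condition $(a)$ of the corollary is literally the functor $X\mapsto Hom_{Ab}(P,X)$ appearing in Theorem \ref{thG}. Consequently, the equivalence $(a)\Leftrightarrow (b)$ of the corollary is immediately the equivalence $(a)\Leftrightarrow (b)$ of Theorem \ref{thG}.

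For the final assertion (that every equivalence $Mod_{S}\simeq Mod_{R}$ arises in this way), I would again appeal to the corresponding ``furthermore'' clause of Theorem \ref{thG}: any equivalence $G:Mod_{S}\to Mod_{R}$ must, by Gabriel's theorem applied to the abelian category $Mod_{S}$, be of the form $X\mapsto Hom_{S}(P,X)$ for some projective generator $P$ of $Mod_{S}$ with $R\cong Hom_{S}(P,P)$; this $P$ automatically carries the structure of a right $S$-module since it lives in $Mod_{S}$, and the corresponding ring morphism $\varphi$ is provided by the isomorphism $R\cong Hom_{S}(P,P)$.

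There is essentially no obstacle here beyond bookkeeping: the corollary is a direct instantiation, and the only point worth checking carefully is that the notion of ``projective generator'' in $Mod_{S}$ used in condition $(b)$ agrees with the notion used in the hypotheses of Gabriel's theorem, and that $Hom_{Mod_{S}}(-,-)$ is the same as $Hom_{S}(-,-)$. Both identifications are tautological, so the proof amounts to little more than citing Theorem \ref{thG} with $Ab = Mod_{S}$.
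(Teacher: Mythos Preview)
Your proposal is correct and matches the paper's approach exactly: the paper simply states that the corollary is a direct consequence of Gabriel's theorem (Theorem \ref{thG}) and gives no further argument, so your instantiation $Ab := Mod_{S}$ is precisely what is intended.
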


%We recall the classical Morita theorem which characterizes equivalences between module categories which arise from suitable bimodules via either Hom functors or via tensor functors.

%\begin{theoreme}{(Morita,\cite{MorDM})}
%Let R and S be two rings and $_{R}P_{S}$ a $R-S$ bimodule.
%The following are equivalent:
%\begin{enumerate}
% \item The functor $P\otimes_{S}-:_{S}Mod\rightarrow _{R}Mod$ is an equivalence between categories $_{S}Mod$ and $_{R}Mod$;
% \item The functor $-\otimes_{R}P:Mod_{R}\rightarrow Mod_{S}$ is an equivalence between categories $Mod_{R}$ and $Mod_{S}$;
% \item The functor $Hom_{R}\left(P,-\right):_{R}Mod\rightarrow _{S}Mod$ is an equivalence between categories $_{R}Mod$ and $_{S}Mod$;
% \item The functor $Hom_{S}\left(P,-\right):Mod_{S}\rightarrow Mod_{R}$ is an equivalence between categories $Mod_{S}$ and $Mod_{R}$;
% \item The object $_{R}P$ is a projective generator in $_{R}Mod$ and the ring $S\cong Hom_{R}\left(P,P\right)^{op}$;
% \item The object $P_{S}$ is a projective generator in $Mod_{S}$ and the ring $R\cong Hom_{S}\left(P,P\right)$.
%\end{enumerate}
%\end{theoreme}

\section{Algebras over strong monads}\label{STaM}

%Using the context of strong monad $\left(T,\mu,\eta, \sigma \right)$ on $\mathcal E$, we reveal some additional properties on the category of T-algebras. In particular, the tensorial strength gives the enrichment on the category of T-algebras. Moreover, the category of T-algebras becomes a $\mathcal E$-tensored and cotensored category.
%Consequently, the enrichment of $\mathbf{Alg_{T}}$, provides the object $T\left(I\right)$ with a structure of a monoid.

\subsection{Enrichment}

Using the concept of enriched monad $T$, we prove that the category of $T$-algebras is enriched (cf. \cite{BungeRFCCA, LintRFSAR}).

\begin{prop}\label{propenr}
Let $\mathcal E$ be a closed symmetric monoidal category with equalizers and $\left(T,\mu,\eta, \varphi \right)$ an enriched monad over $\mathcal E$. 
Then the category $Alg_{T}$ of $T$-algebras is canonically enriched over $\mathcal E$. 
Moreover, the $\mathcal E$-object $\underline{\mathcal Alg}_{T}\left( X,Y\right)$ is given by the equalizer
\begin{diagram}{}
$$
\xymatrix {\relax
&&& \underline{\mathcal E}\left( TX,TY\right) \ar[rd]^{\underline{\mathcal E}\left( TX,\xi_{Y}\right)} \\
\underline{\mathcal Alg}_{T}\left( X,Y\right) \ar@{^{(}->}[rr]^{i} && \underline{\mathcal E}\left( X,Y\right) \ar[ru]^{\varphi_{T}} \ar[rr]_{\underline{\mathcal E}\left(\xi_{X},Y\right)} && \underline{\mathcal E}\left( TX,Y\right) \\
& I \ar@{-->}[lu]^{\exists!\psi} \ar[ru]_{f}  \\
}
$$
\end{diagram}
\end{prop}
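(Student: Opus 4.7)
The plan is to construct the enriched hom-object as the displayed equalizer and then transport the composition and unit of $\mathcal E$ along the equalizer inclusions $i:\underline{\mathcal Alg}_T(X,Y)\hookrightarrow\underline{\mathcal E}(X,Y)$, verifying that the two parallel maps agree after precomposition.

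\medskip

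First I would interpret the equalizer at the level of global sections: applying $\mathcal E(I,-)$ (which preserves equalizers) and using the adjunctions $\mathcal E(I,\underline{\mathcal E}(A,B))\cong\mathcal E(A,B)$ together with the correspondence $\widehat{\varphi_T}=\mu\circ T(ev)$ established in Chapter~3, the elements $f:I\to\underline{\mathcal Alg}_T(X,Y)$ correspond precisely to morphisms $f:X\to Y$ in $\mathcal E$ satisfying $\xi_Y\circ T(f)=f\circ\xi_X$, i.e.\ to morphisms of $T$-algebras. This shows the proposed hom-object is the right candidate and that the underlying ordinary category of $\underline{\mathcal Alg}_T$ is the usual $\mathcal Alg_T$.

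\medskip

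Next I would define the composition and unit morphisms of $\underline{\mathcal Alg}_T$ by lifting the ones of $\underline{\mathcal E}$ through the universal property of the equalizer. For the unit $j_X:I\to\underline{\mathcal Alg}_T(X,X)$ one checks that $I\xrightarrow{j_X}\underline{\mathcal E}(X,X)$ equalizes the two parallel maps: both composites send $j_X$ to $\underline{\mathcal E}(\xi_X,X)\circ j_X=\widehat{\xi_X}=\underline{\mathcal E}(TX,\xi_X)\circ j_{TX}$, using the unit axiom of $\varphi_T$ (so that $\varphi_T\circ j_X=j_{TX}$). For the composition, consider
$$\underline{\mathcal Alg}_T(Y,Z)\otimes\underline{\mathcal Alg}_T(X,Y)\xrightarrow{i\otimes i}\underline{\mathcal E}(Y,Z)\otimes\underline{\mathcal E}(X,Y)\xrightarrow{c}\underline{\mathcal E}(X,Z).$$
To lift this to $\underline{\mathcal Alg}_T(X,Z)$ I must verify that the two composites into $\underline{\mathcal E}(TX,Z)$ coincide. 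By adjunction this reduces to a pointwise identity: for an $X\xrightarrow{f}Y\xrightarrow{g}Z$ such that $g$ and $f$ lie in $\underline{\mathcal Alg}_T$, one has
$$\xi_Z\circ T(g\circ f)=\xi_Z\circ T(g)\circ T(f)=g\circ\xi_Y\circ T(f)=g\circ f\circ\xi_X,$$
using the composition axiom of $\varphi_T$ (i.e.\ $\varphi_T$ is multiplicative) to identify $T(g\circ f)$ with $T(g)\circ T(f)$, and then applying the equalizer conditions for $g$ and for $f$ successively. The universal property of the equalizer then yields the desired $c_{XYZ}:\underline{\mathcal Alg}_T(Y,Z)\otimes\underline{\mathcal Alg}_T(X,Y)\to\underline{\mathcal Alg}_T(X,Z)$.

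\medskip

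Finally, since $i:\underline{\mathcal Alg}_T(X,Y)\hookrightarrow\underline{\mathcal E}(X,Y)$ is a monomorphism compatible with composition and units, the associativity and unit coherence diagrams for $\underline{\mathcal Alg}_T$ follow from those of $\underline{\mathcal E}$ by postcomposing with the monomorphism $i$ (a diagram commutes in $\underline{\mathcal Alg}_T$ iff it commutes after $i$). The main technical obstacle is the lifting of the composition, which boils down to the diagram chase above; it is precisely at this point that one needs both the enrichment $\varphi_T$ of the monad and the compatibility of $\xi$ with $T$, so the full strength of ``enriched monad'' (equivalently, strong monad, by Corollary~\ref{cormons-e}) is used.
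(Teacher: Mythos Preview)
Your overall strategy---define the hom-object by the equalizer, lift $j$ and $c$ from $\mathcal E$ via the universal property, and deduce the coherence axioms from the monicity of $i$---is exactly the paper's approach, and your treatment of the unit and of the coherence axioms is fine. The gap is in the verification that composition lifts.

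You write that ``by adjunction this reduces to a pointwise identity'' and then compute with actual morphisms $f:X\to Y$ and $g:Y\to Z$. But the thing you must prove is an equality of two morphisms
\[
\underline{\mathcal Alg}_T(Y,Z)\otimes\underline{\mathcal Alg}_T(X,Y)\longrightarrow\underline{\mathcal E}(TX,Z)
\]
in $\mathcal E$, and in a general closed symmetric monoidal category this cannot be checked on global elements $I\to\underline{\mathcal Alg}_T(Y,Z)$, $I\to\underline{\mathcal Alg}_T(X,Y)$: such pairs do not jointly generate the tensor product. Your displayed chain $\xi_Z\circ T(g\circ f)=\cdots=g\circ f\circ\xi_X$ is the \emph{underlying} statement that algebra morphisms compose; it uses only the ordinary functoriality $T(g\circ f)=T(g)\circ T(f)$, not the enriched composition axiom for $\varphi_T$. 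What is actually needed is an internal diagram chase: starting from $\underline{\mathcal E}(TX,\xi_Z)\circ\varphi_T\circ c$, use the \emph{enriched} composition axiom $\varphi_T\circ c=c\circ(\varphi_T\otimes\varphi_T)$, then naturality of $c$ in its outer variables to bring $\underline{\mathcal E}(TX,\xi_Z)$ and $\underline{\mathcal E}(\xi_X,Z)$ inside, and finally the equalizer identities $\underline{\mathcal E}(TY,\xi_Z)\circ\varphi_T\circ i=\underline{\mathcal E}(\xi_Y,Z)\circ i$ (and the analogous one on the other factor), tensored with the appropriate $i$. This is precisely where the full force of the \emph{enriched} monad structure enters, and it cannot be shortcut by an external calculation. (Incidentally, the formula you quote from Chapter~3 is $\widehat{\varphi}=T(ev)\circ\sigma$, not $\mu\circ T(ev)$.)
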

\begin{proof}
From the morphism of $T$-algebras, given by the following commutative diagram 
%\begin{diagram}{}
$$
\shorthandoff{;:!?}
\xymatrix @!0 @C=3cm @R=2.3cm{\relax
TX \ar[r]^{T\left(f \right)} \ar[d]_{\xi_{X}} & TY \ar[d]^{\xi_{Y}} \\
X \ar[r]^{f} & Y
}
$$
%\end{diagram}
we obtain the equalizer
%\begin{diagram}{}
$$
\shorthandoff{;:!?}
\xymatrix @!0 @C=3cm @R=2.3cm{\relax
&&& \underline{\mathcal E}\left( TX,TY\right) \ar[rd]^{\underline{\mathcal E}\left( TX,\xi_{Y}\right)} \\
\underline{\mathcal Alg}_{T}\left( X,Y\right) \ar@{^{(}->}[rr]^{i} && \underline{\mathcal E}\left( X,Y\right) \ar[ru]^{\varphi_{T}} \ar[rr]_{\underline{\mathcal E}\left(\xi_{X},Y\right)} && \underline{\mathcal E}\left( TX,Y\right) \\
& I \ar@{-->}[lu]^{\exists!\psi} \ar[ru]_{f}  \\
}
$$
%\end{diagram}
So, we define the $\mathcal E$-object $\underline{\mathcal Alg}_{T}\left( X,Y\right)$ to be the equalizer of the above diagram and we will note $U_{T}:\underline{\mathcal Alg}_{T}\left( X,Y\right)\rightarrow \underline{\mathcal E}\left( X,Y\right)$ to be the $\mathcal E$-morphism canonically associated with this equalizer.

First, we prove that there exists a unit morphism i.e. we prove that there are two morphisms which equalize the unit morphism $j_{X}:I\rightarrow \underline{\mathcal E}\left( X,X\right)$ in order to obtain the universal property of the equalizer.

We need to prove that the following diagram commutes
%\begin{diagram}{}
$$
\shorthandoff{;:!?}
\xymatrix @!0 @C=4cm @R=2cm{\relax
& \underline{\mathcal E}\left( X,X\right) \ar[r]^{\varphi_{T}} & \underline{\mathcal E}\left( TX,TX\right) \ar[dd]^{\underline{\mathcal E}\left( TX,\xi_{Y}\right)} \\
I \ar[rd]_{j_{X}} \ar[ru]^{j_{X}}\\
& \underline{\mathcal E}\left( X,X\right) \ar[r]^{\underline{\mathcal E}\left(\xi_{X}, X \right)} & \underline{\mathcal E}\left( TX,X\right)\\ 
}
$$
%\end{diagram}
Since $\left(T,\mu,\eta, \varphi \right)$ is an enriched monad, T is a $\mathcal E$-functor, so $\varphi_{T}\circ j_{X}= j_{TX}$. Hence we have
%\begin{diagram}{}
$$
\shorthandoff{;:!?}
\xymatrix @!0 @C=5cm @R=4cm{\relax
I \ar[r]^{j_{TX}} \ar[d]_{j_{X}} & \underline{\mathcal E}\left( TX,TX\right) \ar[d]^{\underline{\mathcal E}\left( TX,\xi_{Y}\right)} \\
\underline{\mathcal E}\left( X,X\right) \ar[r]^{\underline{\mathcal E}\left(\xi_{X}, X \right)} & \underline{\mathcal E}\left( TX,X\right)\\ 
}
$$
%\end{diagram}
But $\underline{\mathcal E}\left(I, \underline{\mathcal E}\left(TX,X\right)\right)\cong \underline{\mathcal E}\left(I\otimes TX,X\right)\cong \underline{\mathcal E}\left(TX,X\right)=\xi_{X}$ and we have that $\underline{\mathcal E}\left(TX,\xi_{X}\right) \circ j_{TX}=\xi_{X}$ and $\underline{\mathcal E}\left(\xi_{X},X\right) \circ j_{X}=\xi_{X}$.
Therefore, by the universal property of the equalizer there exists one unique morphism $$j^{\mathcal Alg_{T}}_{X}:I\rightarrow \underline{\mathcal Alg}_{T}\left( X,X\right)$$ such that $j_{X}= U_{T}\circ j^{\mathcal Alg_{T}}_{X}$.

Second, we prove the existence of the composition morphism i.e. we prove that there are two morphisms which equalize $$\underline{\mathcal Alg}_{T}\left(Y,Z\right)\otimes \underline{\mathcal Alg}_{T}\left( X,Y\right) \xrightarrow{U_{T}\otimes U_{T}} \underline{\mathcal E}\left( Y,Z\right)\otimes \underline{\mathcal E}\left( X,Y\right)\xrightarrow{c_{\mathcal E}} \underline{\mathcal E}\left( X,Z\right)$$ in order to obtain the universal property of the equalizer
\small{
\begin{diagram}{}
$$
\shorthandoff{;:!?}
\xymatrix  @C=2.6pc @R=3.3pc {\relax
&&& \underline{\mathcal E}\left( TX,TZ\right) \ar[rd]^{\underline{\mathcal E}\left( TX,\xi_{Z}\right)} \\
\underline{\mathcal Alg}_{T}\left( Y,Z\right)\otimes \underline{\mathcal Alg}_{T}\left( X,Y\right) \ar@{-->}[rd]_{\exists!c_{\mathcal Alg_{T}}} \ar[r]^{U_{T}\otimes U_{T}} & \underline{\mathcal E}\left( Y,Z\right)\otimes\underline{\mathcal E}\left( X,Y\right) \ar[r]^{c_{\mathcal E}} & \underline{\mathcal E}\left( X,Z\right) \ar[ru]^{\varphi_{T}} \ar[rr]_{\underline{\mathcal E}\left(\xi_{X},Z\right)} && \underline{\mathcal E}\left( TX,Z\right) \\
& \underline{\mathcal Alg}_{T}\left( X,Z\right) \ar[ru]_{U_{T}}  \\
}
$$
\end{diagram}
}
We need to prove that the following diagram commutes
%\begin{diagram}{}
$$
\shorthandoff{;:!?}
\xymatrix @!0 @C=7cm @R=2cm{\relax
\underline{\mathcal Alg}_{T}\left(Y,Z\right)\otimes \underline{\mathcal Alg}_{T}\left( X,Y\right) \ar[r]^{U_{T}\otimes U_{T}} \ar[d]_{U_{T}\otimes U_{T}} & \underline{\mathcal E}\left( Y,Z\right)\otimes \underline{\mathcal E}\left( X,Y\right) \ar[d]^{c} \\
\underline{\mathcal E}\left( Y,Z\right)\otimes \underline{\mathcal E}\left( X,Y\right) \ar[d]_{c} & \underline{\mathcal E}\left(X,Z\right) \ar[dd]^{\underline{\mathcal E}\left(\xi_{X},Z\right)} \\
\underline{\mathcal E}\left(X,Z\right) \ar[d]_{\varphi_{T}} \\
\underline{\mathcal E}\left( TX,TZ\right) \ar[r]^{\underline{\mathcal E}\left( TX,\xi_{Z}\right)} & \underline{\mathcal E}\left( TX,Z\right)
}
$$
%\end{diagram}
Since $T$ is a $\mathcal E$-functor, by the composition axiom $c\circ \varphi_{T}\otimes \varphi_{T}=\varphi_{T}\circ c$ we have 
%\begin{diagram}{}
$$
\shorthandoff{;:!?}
\xymatrix @!0 @C=7cm @R=2cm{\relax
\underline{\mathcal Alg}_{T}\left(Y,Z\right)\otimes \underline{\mathcal Alg}_{T}\left( X,Y\right) \ar[r]^{U_{T}\otimes U_{T}} \ar[d]_{U_{T}\otimes U_{T}} & \underline{\mathcal E}\left( Y,Z\right)\otimes \underline{\mathcal E}\left( X,Y\right) \ar[d]^{c} \\
\underline{\mathcal E}\left( Y,Z\right)\otimes \underline{\mathcal E}\left( X,Y\right) \ar[d]_{\varphi_{T}\otimes \varphi_{T}} & \underline{\mathcal E}\left( X,Z\right) \ar[dd]^{\underline{\mathcal E}\left(\xi_{X},Z\right)} \\
\underline{\mathcal E}\left( TY,TZ\right)\otimes \underline{\mathcal E}\left( TX,TY\right) \ar[d]_{c}\\
\underline{\mathcal E}\left( TX,TZ\right) \ar[r]^{\underline{\mathcal E}\left( TX,\xi_{Z}\right)} & \underline{\mathcal E}\left( TX,Z\right)
}
$$
%\end{diagram}
By naturality of the composition
%\begin{diagram}{} 
$$
\shorthandoff{;:!?}
\xymatrix @!0 @C=7cm @R=2cm{\relax
\underline{\mathcal Alg}_{T}\left(Y,Z\right)\otimes \underline{\mathcal Alg}_{T}\left( X,Y\right) \ar[r]^{U_{T}\otimes U_{T}} \ar[d]_{U_{T}\otimes U_{T}} & \underline{\mathcal E}\left( Y,Z\right)\otimes \underline{\mathcal E}\left( X,Y\right) \ar[d]^{1\otimes \underline{\mathcal E}\left(\xi_{X}, Y \right) } \\
\underline{\mathcal E}\left( Y,Z\right)\otimes \underline{\mathcal E}\left( X,Y\right) \ar[d]_{\varphi_{T}\otimes \varphi_{T}} & \underline{\mathcal E}\left( Y,Z\right)\otimes \underline{\mathcal E}\left( TX,Y\right) \ar[dd]^{c} \\
\underline{\mathcal E}\left( TY,TZ\right)\otimes \underline{\mathcal E}\left( TX,TY\right) \ar[d]_{\underline{\mathcal E}\left(TY, \xi_{Z} \right)\otimes 1} \\
\underline{\mathcal E}\left( TY,Z\right)\otimes \underline{\mathcal E}\left( TX,TY\right) \ar[r]^{c} & \underline{\mathcal E}\left( TX,Z\right)
}
$$
%\end{diagram}
Since $\underline{\mathcal Alg}_{T}\left(X,Y\right)$ is the equalizer of $\left(\underline{\mathcal E}\left(TX, \xi_{Y} \right)\circ\varphi_{T}, \underline{\mathcal E}\left(\xi_{X}, Y \right)\right) $, we have $$\underline{\mathcal E}\left(TX, \xi_{Y} \right)\circ\varphi_{T}\circ U_{T}= \underline{\mathcal E}\left(\xi_{X}, Y \right)\circ U_{T}$$ 
Tensoring with $U_{T}:\underline{\mathcal Alg}_{T}\left(Y,Z\right)\rightarrow \underline{\mathcal E}\left(Y,Z\right)$, we have $$Id\otimes\underline{\mathcal E}\left(TX, \xi_{Y} \right)\circ Id\otimes\varphi_{T}\circ U_{T}\otimes U_{T}= Id\otimes\underline{\mathcal E}\left(\xi_{X}, Y \right)\circ U_{T}\otimes U_{T}$$

Hence, the previous diagram is equivalent to the following one
%\begin{diagram}{}
$$
\shorthandoff{;:!?}
\xymatrix @!0 @C=7cm @R=2cm{\relax
\underline{\mathcal Alg}_{T}\left(Y,Z\right)\otimes \underline{\mathcal Alg}_{T}\left( X,Y\right) \ar[r]^{U_{T}\otimes U_{T}} \ar[d]_{U_{T}\otimes U_{T}} & \underline{\mathcal E}\left( Y,Z\right)\otimes \underline{\mathcal E}\left( X,Y\right) \ar[d]^{1 \otimes\varphi_{T}} \\
\underline{\mathcal E}\left( Y,Z\right)\otimes \underline{\mathcal E}\left( X,Y\right) \ar[d]_{\varphi_{T}\otimes \varphi_{T}} &  \underline{\mathcal E}\left( Y,Z\right)\otimes \underline{\mathcal E}\left( TX,TY\right) \ar[d]^{1 \otimes \underline{\mathcal E}\left(TX, \xi_{Y}\right)} \\
\underline{\mathcal E}\left( TY,TZ\right)\otimes \underline{\mathcal E}\left( TX,TY\right) \ar[d]_{\underline{\mathcal E}\left(TY, \xi_{Z} \right)\otimes 1} & \underline{\mathcal E}\left( Y,Z\right)\otimes \underline{\mathcal E}\left( TX,Y\right) \ar[d]^{c}\\
\underline{\mathcal E}\left( TY,Z\right)\otimes \underline{\mathcal E}\left( TX,TY\right) \ar[r]^{c} & \underline{\mathcal E}\left( TX,Z\right)
}
$$
%\end{diagram}
Similarly, tensoring the equalizer $\underline{\mathcal Alg}_{T}\left(Y,Z\right)$ with 
$$\underline{\mathcal Alg}_{T}\left(X,Y\right)\xrightarrow{U_{T}} \underline{\mathcal E}\left(X,Y\right) \xrightarrow{\varphi_{T}} \underline{\mathcal E}\left(TX,TY\right)$$ we have $$\underline{\mathcal E}\left(TY, \xi_{Z}\right)\otimes Id \circ \varphi_{T}\otimes\varphi_{T}\circ U_{T}\otimes U_{T}= \underline{\mathcal E}\left(\xi_{Y}, Z \right)\otimes \varphi_{T}\circ U_{T}\otimes U_{T}$$
Hence, this diagram is equivalent to the following one
%\begin{diagram}{}
$$
\shorthandoff{;:!?}
\xymatrix @!0 @C=7cm @R=2cm{\relax
\underline{\mathcal Alg}_{T}\left(Y,Z\right)\otimes \underline{\mathcal Alg}_{T}\left( X,Y\right) \ar[r]^{U_{T}\otimes U_{T}} \ar[d]_{U_{T}\otimes U_{T}} & \underline{\mathcal E}\left( Y,Z\right)\otimes \underline{\mathcal E}\left( X,Y\right) \ar[d]^{1\otimes\varphi_{T}} \\
\underline{\mathcal E}\left( Y,Z\right)\otimes \underline{\mathcal E}\left( X,Y\right) \ar[dd]_{\underline{\mathcal E}\left(\xi_{Y}, Z\right)\otimes \varphi_{T}} &  \underline{\mathcal E}\left( Y,Z\right)\otimes \underline{\mathcal E}\left( TX,TY\right) \ar[d]^{1\otimes \underline{\mathcal E}\left(TX, \xi_{Y}\right)} \\
& \underline{\mathcal E}\left( Y,Z\right)\otimes \underline{\mathcal E}\left( TX,Y\right) \ar[d]^{c}\\
\underline{\mathcal E}\left( TY,Z\right)\otimes \underline{\mathcal E}\left( TX,TY\right) \ar[r]^{c} & \underline{\mathcal E}\left( TX,Z\right)
}
$$ 
%\end{diagram}
But we have $c\circ Id\otimes\underline{\mathcal E}\left(TX, \xi_{Y}\right)= c\circ \underline{\mathcal E}\left(\xi_{Y}, Z\right)\otimes Id $.

Therefore, we obtain the following diagram
%\begin{diagram}{}
$$
\shorthandoff{;:!?}
\xymatrix @!0 @C=7cm @R=2cm{\relax
\underline{\mathcal Alg}_{T}\left(Y,Z\right)\otimes \underline{\mathcal Alg}_{T}\left( X,Y\right) \ar[r]^{U_{T}\otimes U_{T}} \ar[d]_{U_{T}\otimes U_{T}} & \underline{\mathcal E}\left( Y,Z\right)\otimes \underline{\mathcal E}\left( X,Y\right) \ar[d]^{1\otimes\varphi_{T}} \\
\underline{\mathcal E}\left( Y,Z\right)\otimes \underline{\mathcal E}\left( X,Y\right) \ar[dd]_{\underline{\mathcal E}\left(\xi_{Y}, Z\right)\otimes \varphi_{T}} &  \underline{\mathcal E}\left( Y,Z\right)\otimes \underline{\mathcal E}\left( TX,TY\right) \ar[d]^{\underline{\mathcal E}\left(\xi_{Y}, Z\right)\otimes 1} \\
& \underline{\mathcal E}\left( TY,Z\right)\otimes \underline{\mathcal E}\left( TX,TY\right) \ar[d]^{c}\\
\underline{\mathcal E}\left( TY,Z\right)\otimes \underline{\mathcal E}\left( TX,TY\right) \ar[r]^{c} & \underline{\mathcal E}\left( TX,Z\right)
}
$$ 
%\end{diagram}
which is clearly commutative.

Therefore, by the universal property of the equalizer, there exists one unique morphism $$c_{\mathcal Alg_{T}}:\underline{\mathcal Alg}_{T}\left(Y,Z\right)\otimes \underline{\mathcal Alg}_{T}\left( X,Y\right)\rightarrow \underline{\mathcal Alg}_{T}\left( X,Z\right)$$ such that $c_{\mathcal E}\circ U_{T}\otimes U_{T}=U_{T}\circ c_{\mathcal Alg_{T}}$.  

It remains to prove that the coherence axioms are satisfied.
Since the category $\mathcal E$ is enriched over itself, the coherence axioms are satisfied for $\mathcal E$. Using the fact that each $U_{T}^{\left(X,Y\right)}:\underline{\mathcal Alg}_{T}\left(X,Y\right)\rightarrow \underline{\mathcal E}\left(X,Y\right)$ is a monomorphism, commutativity of the diagrams for $\mathcal E$ extends by $U_{T}$ to the commutativity of the external diagrams, providing the coherence axioms for $Alg_{T}$.

Thus, the internal object $\underline{\mathcal Alg}_{T}\left(X,Y\right)$, the unit and the composition morphism together with their coherence axioms provide the category $\mathcal Alg_{T}$ with the $\mathcal E$-category structure.     
\end{proof}

It follows then naturally that the adjunction between $T$-algebras and the corresponding monoidal category will be enriched as well (cf. \cite{BungeRFCCA, LintRFSAR}).
 
\begin{prop}\label{propenradj} 
Let $\mathcal E$ be a closed symmetric monoidal category with equalizers and $\left(T,\mu,\eta, \varphi \right)$ a $\mathcal E$-monad. Then:
\begin{enumerate}
 \item The forgetful functor $U_{T}:\mathcal Alg_{T}\rightarrow \mathcal E$ is a $\mathcal E$-functor;
 \item The free functor $F_{T}:\mathcal E\rightarrow \mathcal Alg_{T} $ is a $\mathcal E$-functor;
 \item The pair $\left( F_{T},U_{T}\right)$ forms a $\mathcal E$-adjunction i.e. there is an isomorphism in $\mathcal E$
$$ \underline{Alg}_{T} \left(F_{T}X,Y \right)\cong \underline{\mathcal E}\left(X,U_{T}Y \right)   $$ 
which is $\mathcal E$-natural in X and Y;
 \item $\mathcal Alg_T$ is canonically $\mathcal E$-cotensored (compatibly with $U_T$).

%The functor $F_{T}$ denotes the left $\mathcal E$-adjoint and $U_{T}$ the right $\mathcal E$-adjoint  
\end{enumerate}
\end{prop}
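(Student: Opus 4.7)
The plan is to leverage the equalizer construction of $\underline{\mathcal{Alg}}_T(X,Y)$ from Proposition \ref{propenr}, combined with Proposition \ref{propprescot}, which reduces existence of a left $\mathcal{E}$-adjoint to preservation of cotensors and existence of an underlying left adjoint. Accordingly I would treat the four parts in the order $(a),(d),(c),(b)$.

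For $(a)$, the enrichment morphism of $U_T$ is already given by the canonical map $U_T^{X,Y}:\underline{\mathcal{Alg}}_T(X,Y)\to\underline{\mathcal{E}}(X,Y)$ produced by the defining equalizer. The compatibilities $U_T\circ j^{\mathcal{Alg}_T}=j^{\mathcal{E}}$ and $U_T\circ c^{\mathcal{Alg}_T}=c^{\mathcal{E}}\circ(U_T\otimes U_T)$ hold by the very definition of $j^{\mathcal{Alg}_T}$ and $c^{\mathcal{Alg}_T}$ obtained via the universal property of the equalizer, so the $\mathcal{E}$-functor axioms for $U_T$ follow by pulling back along the monomorphism $U_T^{X,Y}$ the corresponding axioms valid in $\mathcal{E}$.

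For $(d)$, given $Z\in\mathcal{E}$ and $(Y,\xi_Y)\in\mathcal{Alg}_T$, I would set $U_T(Y^Z):=\underline{\mathcal{E}}(Z,Y)$ and equip it with a structure morphism $\xi_{Y^Z}:T\underline{\mathcal{E}}(Z,Y)\to\underline{\mathcal{E}}(Z,Y)$ obtained as the transpose of
$$T\underline{\mathcal{E}}(Z,Y)\otimes Z\xrightarrow{\sigma'}T(\underline{\mathcal{E}}(Z,Y)\otimes Z)\xrightarrow{T(ev)}T(Y)\xrightarrow{\xi_Y}Y,$$
where $\sigma'$ is the dual strength coming from the symmetry of $\mathcal{E}$. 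The algebra unit axiom $\xi_{Y^Z}\circ\eta=\mathrm{id}$ reduces, after transposition, to the unit axiom for $\sigma$, the strong naturality of $\eta$, and the unit axiom for $(Y,\xi_Y)$; similarly the associativity axiom reduces to the associativity of $\sigma$, the strong naturality of $\mu$, and the associativity axiom for $(Y,\xi_Y)$. To verify the universal property $\mathcal{Alg}_T(W,Y^Z)\cong\mathcal{E}(Z,\underline{\mathcal{Alg}}_T(W,Y))$, I would identify both sides with the set of maps $g:W\otimes Z\to Y$ in $\mathcal{E}$ satisfying $\xi_Y\circ T(g)\circ\sigma'_{W,Z}=g\circ(\xi_W\otimes Z)$: the left hand side via the tensor--cotensor adjunction of $\mathcal{E}$ together with the criterion for a map $W\to\underline{\mathcal{E}}(Z,Y)$ to respect the two algebra structures, and the right hand side via the same adjunction composed with the universal property of the equalizer defining $\underline{\mathcal{Alg}}_T(W,Y)$. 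A short chase using the relation $\sigma'=T(s)\circ\sigma\circ s$ shows the two resulting conditions coincide.

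Once $(a)$ and $(d)$ are in place, parts $(c)$ and $(b)$ follow together from Proposition \ref{propprescot}: the $\mathcal{E}$-functor $U_T$ preserves cotensors on the nose since $U_T(Y^Z)=\underline{\mathcal{E}}(Z,U_TY)=(U_TY)^Z$ by construction, and its underlying functor admits the ordinary left adjoint $F_T$. The proposition thus yields a left $\mathcal{E}$-adjoint to $U_T$, which by uniqueness of adjoints must coincide with $F_T$. This equips $F_T$ with the structure of an $\mathcal{E}$-functor, settling $(b)$, and provides the $\mathcal{E}$-natural isomorphism $\underline{\mathcal{Alg}}_T(F_TX,Y)\cong\underline{\mathcal{E}}(X,U_TY)$, settling $(c)$. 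The main obstacle will be the verification in $(d)$ that $\xi_{Y^Z}$ is a $T$-algebra structure: this is the only step where all four axioms of a strong monad (Definition \ref{defStMon}) enter essentially at once, and one must handle carefully the passage between $\sigma$ and $\sigma'$ via the symmetry in order to apply the strong naturality of $\eta$ and $\mu$.
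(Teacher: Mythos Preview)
Your argument is correct, and parts $(a)$ and $(d)$ are essentially what the paper does (the paper's treatment of $(d)$ is in fact sketchier than yours: it only writes down the map $T(Y^A)\to T(Y)^A$ induced by the dual strength and asserts that this endows $Y^A$ with a $T$-algebra structure, without checking the algebra axioms or the universal property).

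Where you genuinely diverge is in $(b)$ and $(c)$. The paper proceeds by direct construction: it builds the enrichment $\varphi_{F_T}:\underline{\mathcal E}(X,Y)\to\underline{\mathcal{Alg}}_T(F_TX,F_TY)$ by hand, factoring $\varphi_T$ through the equalizer defining $\underline{\mathcal{Alg}}_T(TX,TY)$ using the $\mathcal E$-naturality of $\mu$; it then verifies separately that the unit $\eta$ and counit $\varepsilon$ of the ordinary adjunction are $\mathcal E$-natural, again by reducing everything to the equalizer description. Your route, invoking Proposition~\ref{propprescot} once $(a)$ and $(d)$ are in place, is shorter and more conceptual: it packages all those diagram chases into the single observation that $U_T$ preserves cotensors on the nose. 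The price is that your argument depends on having a usable criterion like Proposition~\ref{propprescot} already available, whereas the paper's hands-on verification is entirely self-contained. Either way works; yours is the cleaner one.
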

\begin{proof}~\\
\begin{enumerate}
 \item Since $\left(T,\mu,\eta, \varphi \right)$ is a $\mathcal E$-monad, by Proposition \ref{propenr}, the category $\mathcal Alg_{T}$ is enriched over $\mathcal E$. 
It is obvious that for every pair of objects $\left( X,Y\right)$ of $\mathcal E$ there exists a morphism $\varphi_{U_{T}}:\underline{\mathcal Alg}_{T}\left(X,Y\right) \rightarrow \underline{\mathcal E}\left(U_{T}X,U_{T}Y\right)$ in $\mathcal E$.
We saw in the proof of Proposition \ref{propenr}, that the unit morpism in $\mathcal Alg_{T}$ is given by the universal property of the equalizer i.e. there is an unique morphism $j^{\mathcal Alg_{T}}_{X}:I\rightarrow \underline{\mathcal Alg}_{T}\left( X,X\right)$ such that %$j_{X}= U_{T}\circ j^{\mathcal Alg_{T}}_{X}$.
the following diagram
%\begin{diagram}{}
$$
\shorthandoff{;:!?}
\xymatrix @!0 @C=3cm @R=2cm{\relax
\underline{\mathcal Alg}_{T}\left(X,X\right) \ar[rr]^{\varphi_{U_{T}}} && \underline{\mathcal E}\left(U_{T}X,U_{T}X\right) \\
& I \ar[lu]^{j_{X}} \ar[ru]_{j_{U_{T}X}}
}
$$   
%\end{diagram}
commutes. But this diagram corresponds exactly to the unit axiom of the functor $U_{T}$.

Similarly, the composition morphism in $\mathcal Alg_{T}$ is given by the universal property of the equalizer i.e. there is an unique morphism $$c_{\mathcal Alg_{T}}:\underline{\mathcal Alg}_{T}\left(Y,Z\right)\otimes \underline{\mathcal Alg}_{T}\left( X,Y\right)\rightarrow \underline{\mathcal Alg}_{T}\left( X,Z\right)$$ such that %$c_{\mathcal E}\circ U_{T}\otimes U_{T}=U_{T}\circ c_{\mathcal Alg_{T}}$.
the following diagram
\begin{diagram}{}
$$
\shorthandoff{;:!?}
\xymatrix @!0 @C=7cm @R=3cm{\relax
\underline{\mathcal Alg}_{T}\left( Y,Z\right)\otimes \underline{\mathcal Alg}_{T}\left( X,Y\right) \ar[d]_{c_{\mathcal Alg_{T}}} \ar[r]^{\varphi_{U_{T}}\otimes \varphi_{U_{T}}} & \underline{\mathcal E}\left( U_{T}Y,U_{T}Z\right)\otimes\underline{\mathcal E}\left(U_{T}X,U_{T}Y\right) \ar[d]^{c_{\mathcal E}} \\ 
\underline{\mathcal Alg}_{T}\left( X,Z\right) \ar[r]^{\varphi_{U_{T}}} & \underline{\mathcal E}\left( U_{T}X,U_{T}Z\right)  \\
}
$$
\end{diagram}
commutes. But this diagram corresponds exactly to the composition axiom of the functor $U_{T}$.
 \item 
We need to prove that there exists the enrichment morphism of the functor $F_{T}$
$$\varphi_{F_{T}}:\underline{\mathcal E}\left(X,Y\right)\rightarrow \underline{\mathcal Alg}_{T}\left(F_{T}X,F_{T}Y\right) $$
We prove that the following diagram commutes
$$
\shorthandoff{;:!?}
\xymatrix @!0 @C=7cm @R=3cm{\relax
\underline{\mathcal E}\left( X,Y\right) \ar[d]_{\varphi_{T}} \ar[r]^{\varphi_{T}} & \underline{\mathcal E}\left( TX, TY\right) \ar[dd]^{\underline{\mathcal E}\left( \mu_{X},TY\right)} \\
\underline{\mathcal E}\left( TX, TY\right) \ar[d]_{\varphi_{T}}\\  
\underline{\mathcal E}\left( TTX, TTY\right) \ar[r]^{\underline{\mathcal E}\left( TTX,\mu_{Y}\right)} & \underline{\mathcal E}\left( TTX,TY\right)  \\
}
$$
Since $\left(T,\mu,\eta, \varphi \right)$ is a $\mathcal E$-monad, $\mu$ is a $\mathcal E$-natural transformation. 
It is given by a family of morphisms $\mu_{X}:TTX\rightarrow TX $ indexed by the objects of $\mathcal E$ such that the following diagram commutes
\begin{diagram}{}
$$
\shorthandoff{;:!?}
\xymatrix @!0 @C=7cm @R=3cm{\relax
\underline{\mathcal E}\left( X,Y\right) \ar[d]_{\varphi_{TT}} \ar[r]^{\varphi_{T}} & \underline{\mathcal E}\left( TX, TY\right) \ar[d]^{\underline{\mathcal E}\left( \mu_{X},TY\right)} \\  
\underline{\mathcal E}\left( TTX, TTY\right) \ar[r]^{\underline{\mathcal E}\left( TTX,\mu_{Y}\right)} & \underline{\mathcal E}\left( TTX,TY\right)  \\
}
$$
\end{diagram}
But $\varphi_{TT}=\varphi_{T}\circ\varphi_{T}$, and the previous diagram commutes.
Thus, by the universal property of the equalizer 
\begin{diagram}{}
$$
\shorthandoff{;:!?}
\xymatrix @!0 @C=3cm @R=2.3cm{\relax
&&& \underline{\mathcal E}\left( TTX,TTY\right) \ar[rd]^{\underline{\mathcal E}\left( TTX,\mu_{Y}\right)} \\
\underline{\mathcal Alg}_{T}\left( F_{T}X,F_{T}Y\right) \ar[rr]^{U_{T}} && \underline{\mathcal E}\left( TX,TY\right) \ar[ru]^{\varphi_{T}} \ar[rr]_{\underline{\mathcal E}\left(\mu_{X},TY\right)} && \underline{\mathcal E}\left( TTX,TY\right) \\
& \underline{\mathcal E}\left( X,Y\right) \ar@{-->}[lu]^{\exists!\varphi_{F_{T}}} \ar[ru]_{\varphi_{T}}  \\
}
$$
\end{diagram}
there exists an unique morphism $\varphi_{F_{T}}$ such that $\varphi_{U_{T}}\circ \varphi_{F_{T}}=\varphi_{T}$.
%Therefore, the functor $F_{T}:\mathcal E\rightarrow \mathcal Alg_{T} $ has a canonical structure of a $\mathcal E$-functor.
 \item
Since functors $\left( F_{T},U_{T}\right)$ form a classical adjunction, we need to prove that the unit and the counit of the adjunction are $\mathcal E$-natural transformations.
The unit of the adjunction $\eta:Id_{\mathcal E}\rightarrow U_{T}F_{T}$ is exactly the unit of a $\mathcal E$-monad $\left(T,\mu,\eta, \varphi \right)$. 
It remains to verify that the counit of the adjunction $\epsilon:F_{T}U_{T}\rightarrow Id_{\mathcal Alg_{T}}$ is a $\mathcal E$-natural transformation. We have to prove the commutativity of the following diagram
$$
\shorthandoff{;:!?}
\xymatrix @!0 @C=4cm @R=2cm{\relax
\underline{\mathcal Alg}_{T}\left( X,Y\right) \ar[rd]_{F_{T}U_{T}} \ar[rr]^{\underline{\mathcal Alg}_{T}\left( \epsilon,1\right)} && \underline{\mathcal Alg}_{T}\left( TX,Y\right)\\  
& \underline{\mathcal Alg}_{T}\left( TX,TY\right) \ar[ru]_{\underline{\mathcal Alg}_{T}\left( 1,\epsilon\right)} \\
}
$$  
Since $U_{T}$ is an equalizer, it is equivalent to prove that the following diagram
$$
\shorthandoff{;:!?}
\xymatrix @!0 @C=5cm @R=3cm{\relax
\underline{\mathcal Alg}_{T}\left( X,Y\right) \ar[d]_{U_{T}} \ar[r]^{\underline{\mathcal Alg}_{T}\left(\epsilon,1\right)} & \underline{\mathcal Alg}_{T}\left( TX,Y\right) \ar[r]^{U_{T}} & \underline{\mathcal E}\left( TX, Y\right)  \\  
\underline{\mathcal E}\left( X, Y\right) \ar[r]^{F_{T}} & \underline{\mathcal Alg}_{T}\left( TX,TY\right) \ar[r]^{\underline{\mathcal Alg}_{T}\left(1, \epsilon\right)} & \underline{\mathcal Alg}_{T}\left( TX,Y\right) \ar[u]_{U_{T}} \\
}
$$
commutes, which is equivalent to
$$
\shorthandoff{;:!?}
\xymatrix @!0 @C=5cm @R=3cm{\relax
\underline{\mathcal Alg}_{T}\left( X,Y\right) \ar[d]_{U_{T}} \ar[r]^{\underline{\mathcal Alg}_{T}\left(\epsilon,1\right)} & \underline{\mathcal Alg}_{T}\left( TX,Y\right) \ar[r]^{U_{T}} & \underline{\mathcal E}\left( TX, Y\right)  \\  
\underline{\mathcal E}\left( X, Y\right) \ar[r]^{F_{T}} & \underline{\mathcal Alg}_{T}\left( TX,TY\right) \ar[r]^{U_{T}} & \underline{\mathcal E}\left( TX,TY\right) \ar[u]_{\underline{\mathcal E}\left( 1,\epsilon \right)} \\
}
$$
Since $U_{T}\circ F_{T}=T$, we have
$$
\shorthandoff{;:!?}
\xymatrix @!0 @C=5cm @R=3cm{\relax
\underline{\mathcal Alg}_{T}\left( X,Y\right) \ar[d]_{U_{T}} \ar[r]^{\underline{\mathcal Alg}_{T}\left(\epsilon,1\right)} & \underline{\mathcal Alg}_{T}\left( TX,Y\right) \ar[r]^{U_{T}} & \underline{\mathcal E}\left( TX, Y\right)  \\  
\underline{\mathcal E}\left( X, Y\right) \ar[rr]^{T} && \underline{\mathcal E}\left( TX,TY\right) \ar[u]_{\underline{\mathcal E}\left( 1,\epsilon \right)} \\
}
$$
Since $U_{T}$ equalizes $$\underline{\mathcal E}\left( X, Y\right)\xrightarrow{\underline{\mathcal E}\left( \epsilon, 1\right)} \underline{\mathcal E}\left( TX, Y\right)$$ and $$\underline{\mathcal E}\left( X, Y\right)\xrightarrow{T} \underline{\mathcal E}\left( TX, TY\right) \xrightarrow{\underline{\mathcal E}\left( 1, \epsilon \right)} \underline{\mathcal E}\left( TX, Y\right)$$ we have the following diagram
$$
\shorthandoff{;:!?}
\xymatrix @!0 @C=5cm @R=3cm{\relax
\underline{\mathcal Alg}_{T}\left( X,Y\right) \ar[d]_{U_{T}} \ar[r]^{\underline{\mathcal Alg}_{T}\left(\epsilon,1\right)} & \underline{\mathcal Alg}_{T}\left( TX,Y\right) \ar[d]^{U_{T}}  \\  
\underline{\mathcal E}\left( X, Y\right) \ar[r]^{\underline{\mathcal E}\left( \epsilon,1 \right)} & \underline{\mathcal E}\left( TX, Y\right)  \\
}
$$
which clearly commutes.
\item
In order to prove that the category $\mathcal Alg_{T}$ is cotensored over $\mathcal E$ we will use the dual tensorial strength. More precisely, by the dual tensorial strength we have $$T(Y^A)\otimes A\xrightarrow{\sigma'_{Y^{A},A}} T(Y^A\otimes A) \xrightarrow{T(ev)} T(Y)$$ 
which corresponds by adjunction to $T(Y^A)\rightarrow T(Y)^A$, which is natural in $Y$ and $A$. 
If $Y$ is a $T$-algebra, then this provides $Y^A$ with a structure of $T$-algebra, for every object $A$ in $\mathcal E$. 
This gives us exactly the cotensor.
\end{enumerate}
\end{proof}

\subsection{Tensors}

Using the concept of strong monad we prove that the category $\mathcal Alg_{T}$ is $\mathcal E$-tensored as well.   
%\begin{prop}\label{propindlim}
% Let $\mathcal C$
%\end{prop}

\begin{prop}\label{proptens}
Let $\mathcal E$ be a closed symmetric monoidal category with equalizers and $\left(T,\mu,\eta, \sigma \right)$ a strong monad on $\mathcal E$. 
If the category $\mathcal Alg_{T}$ of $T$-algebras has reflexive coequalizers, then the category $\mathcal Alg_{T}$ is enriched, tensored and cotensored over $\mathcal E$. 
The tensors are given by the following reflexive coequalizer diagrams in $\mathcal Alg_T$
\begin{diagram}{}
$$
\shorthandoff{;:!?}
\xymatrix @!0 @C=3cm @R=1.8cm{\relax
T\left( Z\otimes TX\right) \ar[rr]^{T\left(Z\otimes \xi_{X} \right) } \ar[rd]_{T\sigma} && T\left( Z\otimes X \right) \ar[r]^{\xi} & Z\otimes X\\
& TT\left( Z\otimes X\right) \ar[ru]_{\mu}  
}
$$
\end{diagram}
with X an object in $\mathcal Alg_{T}$ and Z an object in $\mathcal E$.
\end{prop}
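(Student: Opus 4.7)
The plan is to build on what Propositions \ref{propenr} and \ref{propenradj} already give us: $\mathcal Alg_T$ is enriched and cotensored over $\mathcal E$, and the adjunction $F_T\dashv U_T$ is an $\mathcal E$-adjunction. The only new content is the construction of tensors via the displayed reflexive coequalizer. I would first treat free $T$-algebras, where the formula is forced by adjunction: for any $Y\in\mathcal E$ and any $T$-algebra $W$,
$$\underline{\mathcal Alg}_T(F_T(Z\otimes Y),W)\cong \underline{\mathcal E}(Z\otimes Y,U_TW)\cong \underline{\mathcal E}(Z,\underline{\mathcal E}(Y,U_TW))\cong \underline{\mathcal E}(Z,\underline{\mathcal Alg}_T(F_TY,W)),$$
so $Z\otimes F_T(Y):=F_T(Z\otimes Y)$ serves as tensor for free $T$-algebras, $\mathcal E$-naturally in $Y$ and $W$.

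Next, I would extend to a general $T$-algebra $X$ by means of its canonical presentation (Lemma \ref{lemabscoeq}): the diagram $F_TTX\rightrightarrows F_TX\to X$ is a reflexive coequalizer in $\mathcal Alg_T$ (the two maps being $F_T(\xi_X)$ and $\varepsilon_{F_TX}$, with common section $F_T\eta_{UX}$). Since a hypothetical $-\otimes X$ must be a left $\mathcal E$-adjoint and hence preserve colimits, $Z\otimes X$ is forced to be the coequalizer of $Z\otimes F_TTX\rightrightarrows Z\otimes F_TX$, which by the free-algebra formula becomes $F_T(Z\otimes TX)\rightrightarrows F_T(Z\otimes X)$. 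The first parallel arrow is unambiguously $F_T(Z\otimes\xi_X)$. To identify the second one I would use the tensorial strength: under the natural isomorphism $Z\otimes F_T(-)\cong F_T(Z\otimes-)$, the counit $\varepsilon_{F_TX}$ tensored with $Z$ becomes the composite $T(Z\otimes TX)\xrightarrow{T\sigma_{Z,X}}TT(Z\otimes X)\xrightarrow{\mu_{Z\otimes X}}T(Z\otimes X)$, which is precisely the second parallel arrow displayed in the proposition. This identification is the main computation; it amounts to checking one coherence diagram that mixes the strength $\sigma$ with the monad multiplication $\mu$, using strong naturality of $\mu$ (Diagram \ref{diagstmu}).

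Reflexivity of the resulting fork is verified by taking the common section to be $T(Z\otimes\eta_X):T(Z\otimes X)\to T(Z\otimes TX)$. Indeed, $T(Z\otimes\xi_X)\circ T(Z\otimes\eta_X)=T(Z\otimes(\xi_X\circ\eta_X))=\mathrm{id}$ by the unit axiom for the $T$-algebra $X$, while $\mu_{Z\otimes X}\circ T(\sigma_{Z,X})\circ T(Z\otimes\eta_X)=\mu_{Z\otimes X}\circ T(\eta_{Z\otimes X})=\mathrm{id}$ by strong naturality of $\eta$ (Diagram \ref{diagsteta}) followed by the monad unit axiom. Hence the coequalizer exists in $\mathcal Alg_T$ by the hypothesis on reflexive coequalizers.

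Finally, I would verify the universal property: applying $\mathcal Alg_T(-,W)$ (which transforms reflexive coequalizers into equalizers) to the defining fork and using the adjunction isomorphism on the free-algebra pieces yields an equalizer of sets naturally isomorphic to $\mathcal E(Z,\underline{\mathcal Alg}_T(X,W))$ — where the equalizer description of $\underline{\mathcal Alg}_T(X,W)$ from Proposition \ref{propenr} enters. This produces the required bijection $\mathcal Alg_T(Z\otimes X,W)\cong \mathcal E(Z,\underline{\mathcal Alg}_T(X,W))$, and the same argument carried out internally (replacing $\mathcal E(I,-)$ by the internal hom and using that $\underline{\mathcal E}(Z,-)$ preserves equalizers) upgrades it to a $\mathcal E$-natural isomorphism, so that $-\otimes X$ is a genuine left $\mathcal E$-adjoint to $\underline{\mathcal Alg}_T(X,-)$. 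The unit and associativity coherences of Definition \ref{defcatenrtens} then follow formally from the corresponding coherences in $\mathcal E$ and the $\mathcal E$-naturality. The delicate step throughout remains Step 4: identifying $\varepsilon_{F_TX}\otimes Z$ with $\mu\circ T(\sigma)$, which is where the strong-monad structure (as opposed to a plain monad) is essential.
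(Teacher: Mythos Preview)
Your proposal is correct and follows essentially the same route as the paper: reduce to Propositions \ref{propenr} and \ref{propenradj} for enrichment and cotensors, build the tensor from the canonical free presentation of $X$, identify the second parallel arrow via the strength and strong naturality of $\mu$, and verify the adjunction by turning the defining coequalizer into the equalizer that computes $\underline{\mathcal Alg}_T(X,W)$. Your version is a bit more explicit than the paper's on two points the paper leaves implicit: the identification $Z\otimes F_T(Y)\cong F_T(Z\otimes Y)$ for free algebras, and the verification that $T(Z\otimes\eta_X)$ is a common section (the paper simply asserts reflexivity); conversely the paper motivates the construction by first tensoring the underlying split coequalizer in $\mathcal E$ with $Z$ and then mapping down via $\sigma$.
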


\begin{proof}
By Propositions \ref{propenr} and \ref{propenradj}, the only thing that remains to be shown is the existence of tensors. 
We begin by giving a heuristic argument how to construct these tensors in $\mathcal Alg_T$. 
%Since, the category of T-algebras $\mathcal Alg_{T}$ has coequalizers, by Proposition \ref{propalgcocom}, the category $\mathcal Alg_{T}$ is cocomplete. It follows that $\mathcal Alg_{T}$ has tensors.
Indeed, by Lemma \ref{lemabscoeq}, for every $T$-algebra $X$, the following diagram is a coequalizer in $\mathcal Alg_{T}$
$$
\shorthandoff{;:!?}
\xymatrix @!0 @C=3cm {\relax
 TTX \ar@<2pt>[r]^{\mu_{X}} \ar@<-2pt>[r]_{T\left(\xi_{X} \right) } & TX \ar[r]^{\xi_{X}} & X 
}
$$
It is transformed by the forgetful functor $U_{T}:Alg_{T}\longrightarrow \mathcal E $ into a split coequalizer in $\mathcal E$ (which is preserved as a coequalizer by any functor).

Applying the functor $Z\otimes -$, we obtain a coequalizer in $\mathcal E $
$$
\shorthandoff{;:!?}
\xymatrix @!0 @C=4cm {\relax
 Z\otimes TTX \ar@<2pt>[r]^{Z\otimes \mu_{X}} \ar@<-2pt>[r]_{Z\otimes T\left(\xi_{X} \right) } & Z\otimes TX \ar[r]^{\xi_{X}} & Z\otimes X 
}
$$
We have the following diagram
$$
\shorthandoff{;:!?}
\xymatrix @!0 @C=3cm @R=2cm{\relax
Z\otimes TTX \ar@<2pt>[rr]^{Z\otimes \mu_{X}} \ar@<-2pt>[rr]_{Z\otimes T\left(\xi_{X} \right) } \ar[d]_{\sigma_{Z,TX}} && Z\otimes TX \ar[rd]^{\xi_{X}} \ar[d]^{\sigma_{Z,X}} \\
T\left(Z\otimes TX \right) \ar[rr]^{T\left(Z\otimes \xi_{X}\right) } \ar[rd]_{T\sigma_{Z,X}} && T\left(Z\otimes X \right) \ar[r]^{\xi_{Z\otimes X}} & Z\otimes X \\
& TT\left(Z\otimes TX \right) \ar[ru]_{\mu_{Z\otimes X}} 
}
$$
where the internal diagram
$$
\shorthandoff{;:!?}
\xymatrix @!0 @C=4.7cm @R=2.3cm{\relax
Z\otimes TTX \ar[r]^{Z\otimes T\left(\xi_{X} \right) } \ar[d]_{\sigma_{Z,TX}} & Z\otimes TX \ar[d]^{\sigma_{Z,X}}\\
T\left(Z\otimes TX\right) \ar[r]^{T\left(Z\otimes \xi_{X} \right) } & T\left(Z\otimes X\right) 
}
$$
commutes by naturality of the tensorial strength $\sigma$.

The other internal diagram
$$
\shorthandoff{;:!?}
\xymatrix @!0 @C=4.7cm @R=2.3cm{\relax
Z\otimes TTX \ar[rr]^{Z\otimes \mu_{X}} \ar[d]_{\sigma_{Z,TX}} && Z\otimes TX \ar[d]^{\sigma_{Z,X}}\\
T\left(Z\otimes TX\right) \ar[r]^{T\sigma_{Z,X} } & TT\left(Z\otimes X\right) \ar[r]^{\mu_{Z\otimes X} }  &  T\left(Z\otimes X\right) 
}
$$
commutes, since $\mu$ is a strong natural transformation.
Therefore, it is natural to define the tensor $Z\otimes X$ in $\mathcal Alg_T$ by the following diagram
$$
\shorthandoff{;:!?}
\xymatrix @!0 @C=3cm @R=1.8cm{\relax
T\left( Z\otimes TX\right) \ar[rr]^{T\left(Z\otimes \xi_{X} \right) } \ar[rd]_{T\sigma} && T\left( Z\otimes X \right) \ar[r]^{\xi} & Z\otimes X\\
& TT\left( Z\otimes X\right) \ar[ru]_{\mu}  
}
$$
which is a reflexive coequalizer in $\mathcal Alg_{T}$. 
If the monad $T$ happens to preserve reflexive coequalizers, the latter can be calculated in $\mathcal E$ and the argument above shows that we get indeed the correct tensor. 
In general, in order to validate our definition, we have to verify that our tensors fulfill the following adjunction relation
$$\underline{\mathcal Alg}_{T}\left(Z\otimes X,Y\right)\cong \underline{\mathcal E}\left(Z,\underline{\mathcal Alg}_{T}\left(X,Y\right)\right)$$

We have $\underline{\mathcal Alg}_{T}\left(Z\otimes X,Y\right)\cong \underline{\mathcal Alg}_{T}\left(Coeq\left\lbrace T(Z\otimes TX)\rightrightarrows T(Z\otimes X) \right\rbrace  ,Y\right)$
$$\cong Eq\left(\underline{\mathcal Alg}_{T}\left( T(Z\otimes X),Y\right) \rightrightarrows\underline{\mathcal Alg}_{T}\left( T(Z\otimes TX),Y\right)  \right) $$

By Proposition \ref{propenradj} (c), functors $\left( F_{T},U_{T}\right)$ form a $\mathcal E$-adjunction 
$$\underline{\mathcal Alg}_{T}\left(F_{T}X,Y \right)\cong \underline{\mathcal E}\left(X,U_{T}Y \right) $$ 

Therefore, $$Eq \left\lbrace  \underline{\mathcal Alg}_{T}\left( T(Z\otimes X),Y\right) \rightrightarrows \underline{\mathcal Alg}_{T}\left( T(Z\otimes TX),Y\right) \right\rbrace$$ 
$$\cong Eq \left\lbrace \underline{\mathcal E} \left( Z\otimes X,Y\right) \rightrightarrows \underline{\mathcal E} \left( Z\otimes TX,Y\right) \right\rbrace$$ $$\cong \underline{\mathcal E}\left(Z, Eq \left\lbrace \underline{\mathcal E} \left( X,Y\right)\rightrightarrows \underline{\mathcal E} \left( TX,Y\right) \right\rbrace \right) $$

Since, $\underline{\mathcal Alg}_{T} \left(X,Y \right)$ is the equalizer of $\underline{\mathcal E} \left( X,Y\right)\rightrightarrows \underline{\mathcal E} \left( TX,Y\right)$, we have 
$$\underline{\mathcal E}\left(Z, Eq \left\lbrace \underline{\mathcal E} \left( X,Y\right)\rightrightarrows \underline{\mathcal E}\left( TX,Y\right) \right\rbrace \right)  \cong \underline{\mathcal E} \left( Z,\underline{\mathcal Alg}_{T} \left(X,Y \right) \right).$$
%Reste a souligner les E et Algt
\end{proof}

\subsection{The endomorphism monoid}

We study the object $T\left(I\right) $ in closed symmetric monoidal category $\mathcal E$, which in general does not have the structure of monoid in $\mathcal E$. 
We prove here that if $\left(T,\mu,\eta, \varphi \right)$ is a $\mathcal E$-monad, this will be the case.

\begin{prop}\label{propmonTI}
Let $\left(T,\mu,\eta, \varphi \right)$ be a $\mathcal E$-monad. Then the object $T\left( I\right) $ has a structure of a monoid, namely it may be identified with $\underline{\mathcal Alg}_{T}\left(T\left(I \right),T\left(I \right)\right)$.
\end{prop}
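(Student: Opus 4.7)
The plan is to exploit the enriched adjunction $F_T\dashv U_T$ together with the well-known fact that, in any $\mathcal{E}$-category $\mathcal{C}$, every endomorphism object $\underline{\mathcal{C}}(A,A)$ is canonically a monoid in $\mathcal{E}$ (with multiplication $c_{AAA}$ and unit $j_A$, the monoid axioms being precisely the associativity and unit coherence axioms of an enriched category applied to the single object $A$).

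First I would apply this general fact to $\mathcal{C}=\mathcal{Alg}_T$ (which is $\mathcal{E}$-enriched by Proposition~\ref{propenr}, since the strong monad $T$ corresponds to an $\mathcal{E}$-monad by Corollary~\ref{cormons-e}) and to the free $T$-algebra $A=F_T(I)=T(I)$. This yields a canonical monoid structure on the endomorphism object $\underline{\mathcal{Alg}}_T(T(I),T(I))$.

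Next I would identify this endomorphism object with $T(I)$ itself. By Proposition~\ref{propenradj}(c), the free/forgetful pair $(F_T,U_T)$ is an $\mathcal{E}$-adjunction, hence
\[
\underline{\mathcal{Alg}}_T\bigl(F_T(I),F_T(I)\bigr)\;\cong\;\underline{\mathcal{E}}\bigl(I,U_TF_T(I)\bigr)\;=\;\underline{\mathcal{E}}\bigl(I,T(I)\bigr).
\]
The second ingredient is the canonical isomorphism $i_{T(I)}:T(I)\xrightarrow{\cong}\underline{\mathcal{E}}(I,T(I))$ (dual to $r_{T(I)}:T(I)\otimes I\xrightarrow{\cong}T(I)$, cf.\ the remark following Definition~\ref{defev}), which is natural in $T(I)$. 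Composing these two isomorphisms transports the monoid structure of $\underline{\mathcal{Alg}}_T(T(I),T(I))$ along an isomorphism in $\mathcal{E}$, producing a monoid structure on $T(I)$; since any monoid structure pulled back along an isomorphism still satisfies the monoid axioms, no further verification is required.

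Essentially no step is genuinely hard: the main substance has already been packaged into Proposition~\ref{propenr} (enrichment of $\mathcal{Alg}_T$) and Proposition~\ref{propenradj} (enriched adjointness of $F_T\dashv U_T$), while the fact that $\underline{\mathcal{E}}(I,-)\cong \mathrm{Id}$ is formal from the closed monoidal structure. If one wished to make the structure maps explicit, the unit $n:I\to T(I)$ would be $\eta_I:I\to T(I)$ (corresponding under the adjunction to $j_{T(I)}:I\to \underline{\mathcal{Alg}}_T(T(I),T(I))$), and the multiplication $m:T(I)\otimes T(I)\to T(I)$ would be the composite $T(I)\otimes T(I)\xrightarrow{\sigma_{T(I),I}}T(T(I)\otimes I)\cong T(T(I))\xrightarrow{\mu_I}T(I)$, which one recognises as the transpose, under the enriched adjunction, of the composition morphism $c_{T(I),T(I),T(I)}$ in $\underline{\mathcal{Alg}}_T(T(I),T(I))$. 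This description will be exactly what is needed later for the monad morphism $\lambda:-\otimes T(I)\to T$ used in the homotopical Morita theorem.
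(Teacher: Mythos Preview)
Your proof is correct and follows essentially the same approach as the paper: both identify $T(I)\cong\underline{\mathcal E}(I,T(I))\cong\underline{\mathcal Alg}_T(T(I),T(I))$ via the enriched adjunction of Proposition~\ref{propenradj}(c), and then invoke the general fact that endomorphism objects in an $\mathcal E$-category carry a canonical monoid structure. Your additional explicit description of the unit and multiplication is in fact the content of the paper's subsequent Lemma~\ref{lemMonStr}, so you have anticipated that as well.
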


\begin{proof}
In fact, we have $$T\left( I\right)\cong \underline{\mathcal E}\left(I,T\left(I \right)\right)\cong \underline{\mathcal Alg}_{T}\left(T\left(I \right),T\left(I \right)\right)$$ where the second isomorphism is obtained by adjunction. 

%By adjunction, $\underline{\mathcal E}\left(I,T\left(I \right)\right) \cong \underline{\mathcal Alg}_{T}\left(T\left(I \right),T\left(I \right)\right)$.
Since $\left(T,\mu,\eta, \varphi \right)$ is a $\mathcal E$-monad, by Proposition \ref{propenr} the category $Alg_{T}$ is a $\mathcal E$-category. 
The enriched endomorphism object $\underline{\mathcal Alg}_{T}\left(T\left(I \right),T\left(I \right)\right) \cong T(I)$ of $\mathcal E$ has indeed a structure of monoid in $\mathcal E$.

The unit morphism $I\rightarrow \underline{\mathcal Alg}_{T}\left(T\left(I \right),T\left(I \right)\right)$ is given by the unit morphism in the category $Alg_{T}$.
The multiplication morphism $$\underline{\mathcal Alg}_{T}\left(T\left(I \right),T\left(I \right)\right) \otimes \underline{\mathcal Alg}_{T}\left(T\left(I \right),T\left(I \right)\right) \rightarrow \underline{\mathcal Alg}_{T}\left(T\left(I \right),T\left(I \right)\right)$$ is given by the composition in the $\mathcal E$-category $Alg_{T}$.   
\end{proof}

\begin{lem}\label{lemMonStr}
The multiplication of the endomorphism monoid $T(I)$ of a $\mathcal{E}$-monad may be deduced from its tensorial strength through the formula 
$$T(I)\otimes T(I) \xrightarrow{\sigma_{T(I),I}} T(T(I)\otimes I) \xrightarrow{T(r)} T(T(I)) \xrightarrow{\mu} T(I)$$ 
\end{lem}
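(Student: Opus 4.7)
The plan is to identify both $m := \mu_I \circ T(r_{T(I)}) \circ \sigma_{T(I),I}$ and the monoid multiplication $c$ on $T(I) \cong \underline{\mathcal Alg}_T(F_T I, F_T I)$ by comparing their adjoints under the tensor-hom adjunction $-\otimes T(I) \dashv \underline{\mathcal E}(T(I), -)$, both of which I claim equal
$$\alpha := \underline{\mathcal E}(T(I), \mu_I) \circ \varphi_{I, T(I)}: T(I) \to \underline{\mathcal E}(T(I), T(I)),$$
where $\varphi$ denotes the $\mathcal E$-enrichment of $T$ corresponding to $\sigma$ under Corollary \ref{cormons-e}.

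The adjoint of the monoid multiplication is $\alpha$. Indeed, by Proposition \ref{propenradj}(c) the enriched adjunction $F_T \dashv U_T$ identifies an element $x: I \to T(I)$ with the algebra endomorphism $\tilde x: F_T I \to F_T I$ whose underlying map is $\mu_I \circ T(x)$. For any enriched endomorphism object, the adjoint of composition sends an element to ``left-composition with itself''; applied here and evaluated through the regular monomorphism $\varphi_{U_T}$ of Proposition \ref{propenr}, the adjoint of $c$ sends $x$ to $\mu_I \circ T(x) = \alpha(x)$, which is $\alpha$ as a morphism.

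The adjoint of $m$ is also $\alpha$. By Lemma \ref{lemCStEnr} the adjoint of $\sigma_{T(I), I}$ under $- \otimes T(I) \dashv \underline{\mathcal E}(T(I), -)$ is $\varphi_{I, T(I) \otimes I} \circ \gamma_I: T(I) \to \underline{\mathcal E}(T(I), T(T(I) \otimes I))$. Postcomposing with $\underline{\mathcal E}(T(I), T(r_{T(I)}))$ and then with $\underline{\mathcal E}(T(I), \mu_I)$, invoking naturality of $\varphi$ in its second argument, and using the identity $\underline{\mathcal E}(I, r_{T(I)}) \circ \gamma_I = \mathrm{id}_{T(I)}$ (a consequence of $\widehat\gamma = \mathrm{id}$ noted in the remark following Definition \ref{defev}), the adjoint of $m$ collapses to $\underline{\mathcal E}(T(I), \mu_I) \circ \varphi_{I, T(I)} = \alpha$. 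Hence $c = m$.

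The main obstacle is the diagram chase in the last paragraph, in particular the coherent handling of the canonical isomorphism $\underline{\mathcal E}(I, T(I)) \cong T(I)$ and the interaction of $\gamma_I$ with the right unitor $r_{T(I)}$. These manipulations are routine applications of the enriched formalism developed in the preceding sections, but must be carried out carefully to keep track of which adjunction is being invoked at each step.
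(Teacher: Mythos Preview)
Your proposal is correct but takes a genuinely different route from the paper. The paper's proof is a two-sentence conceptual observation: the composite $\mu_I\circ T(r)\circ\sigma_{T(I),I}$ is exactly the right $T(I)$-module action that the free $T$-algebra $T(I)=F_T I$ acquires (via what will be the monad morphism $\lambda$ of Proposition~\ref{propmorsm}), and for any monoid its right regular module action is its multiplication. Your approach instead works internally with the tensor--hom adjunction $-\otimes T(I)\dashv\underline{\mathcal E}(T(I),-)$, computing the transpose of both the endomorphism-monoid multiplication $c$ and the strength-formula $m$ and identifying each with $\alpha=\underline{\mathcal E}(T(I),\mu_I)\circ\varphi_{I,T(I)}$. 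What you gain is an explicit verification where the paper only gestures; what the paper's approach buys is brevity and a module-theoretic picture that foreshadows the functor $G:\mathcal Alg_T\to Mod_{T(I)}$ of the main theorem.

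One caution on your first paragraph: you argue $\widehat c=\alpha$ by evaluating at global elements $x:I\to T(I)$ and concluding ``which is $\alpha$ as a morphism''. In a general closed symmetric monoidal category, agreement on global elements does not determine a morphism, so as written this step is not a proof. The claim is nonetheless true and can be checked internally: writing the enriched adjunction isomorphism $\underline{\mathcal Alg}_T(F_TI,F_TI)\xrightarrow{\sim}\underline{\mathcal E}(I,T(I))$ as $\underline{\mathcal E}(\eta_I,1)\circ U_T$, the monic $\theta=U_T\circ\psi:T(I)\hookrightarrow\underline{\mathcal E}(T(I),T(I))$ satisfies $\underline{\mathcal E}(\eta_I,1)\circ\theta=i_{T(I)}$; combining this with naturality of composition and the relation $i_{T(I)}\circ ev=c_{I,T(I),T(I)}\circ(1\otimes i_{T(I)})$ from the remark after Definition~\ref{defcomp} yields $ev\circ(\theta\otimes 1)=c$, i.e.\ $\widehat c=\theta=\alpha$. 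Your second paragraph (the transpose of $m$) is clean.
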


\begin{proof}
The composed arrow corresponds to the structure of a right $T(I)$-module $T(I)$. Obviously, this corrsponds to the multiplication.
\end{proof}

Before proving that $\lambda:-\otimes T\left(I\right)\rightarrow T $ is a morphism of strong monads (Proposition \ref{propmorsm}), we will need the following result.
\begin{lem}\label{lemstmon}
Let $\mathcal E$ be a monoidal category and suppose that M is a monoid in $\mathcal E$.

Then the endofunctor $-\otimes M:\mathcal E\rightarrow \mathcal E$ has a canonical structure of a strong monad and the tensorial strength is given by the associativity isomorphism in $\mathcal E$.  
\end{lem}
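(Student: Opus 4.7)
The plan is to take the monad structure on $-\otimes M$ already produced by Lemma \ref{lememon} and equip it with a tensorial strength, then verify the four diagrams of Definition \ref{defStMon}. For cofibrant clarity, assume the tensor is strictified (by Mac Lane's coherence theorem), so $-\otimes M$ sends $X$ to $X\otimes M$, and the candidate strength
$$\sigma_{X,Y}\colon X\otimes(Y\otimes M)\longrightarrow (X\otimes Y)\otimes M$$
is simply the (inverse of the) associativity isomorphism $a^{-1}_{X,Y,M}$, which we may take to be the identity after strictification.

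First I would check the two axioms that make $(T,\sigma)=(-\otimes M,a^{-1}_{-,-,M})$ a strong functor. The unit axiom of $\sigma$ asks that the triangle with sides $l_{X\otimes M}$, $\sigma_{I,X}=a^{-1}_{I,X,M}$, and $l_{X}\otimes M$ commutes; this is exactly Mac Lane's triangle/unit axiom for $(\mathcal E,\otimes,I)$. The associativity axiom of $\sigma$ asks that the two ways of rebracketing $X\otimes Y\otimes(Z\otimes M)$ down to $(X\otimes Y\otimes Z)\otimes M$ coincide, which is precisely the pentagon. After strictification both diagrams become tautologies, so no honest calculation is required.

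Next I would verify the two ``strong naturality'' conditions for the structural natural transformations of Lemma \ref{lememon}. For $\eta_X = (X\otimes n)\circ r^{-1}_X\colon X\to X\otimes M$, the triangle of Diagram \ref{diagsteta} reduces, after cancelling the associator, to the commutativity of the naturality square of $r^{-1}$ together with functoriality of $\otimes$ in the first variable; equivalently it expresses that $X\otimes Y\otimes n$ agrees with $(X\otimes Y)\otimes n$, which is immediate. For $\mu_X = (X\otimes m)\circ a_{X,M,M}$, the square of Diagram \ref{diagstmu} again reduces, under strict associativity, to functoriality of $\otimes$ applied to $m\colon M\otimes M\to M$, hence commutes tautologically.

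The only step requiring genuine care is bookkeeping: in the non-strictified setting one must repeatedly insert the correct associators and unitors in each of these four diagrams and appeal to Mac Lane's coherence theorem to contract the resulting diagram of structural isomorphisms to an identity. This is the main (but purely formal) obstacle. Once this is done, we conclude that $(-\otimes M,\mu,\eta,\sigma)$ is a strong monad on $\mathcal E$, with tensorial strength given, as claimed, by the associativity isomorphism $a^{-1}_{-,-,M}$.
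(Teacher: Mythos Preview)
Your proposal is correct and follows essentially the same strategy as the paper: take the associator as the tensorial strength and observe that the four strong-monad axioms reduce to instances of Mac Lane's coherence theorem. The only difference is presentational: the paper first computes the strength via the enrichment formula $\sigma = ev\circ(\varphi\otimes 1)\circ(\gamma\otimes 1)$ (Diagram~\ref{famillesigma}) and shows this composite collapses to the associator, whereas you simply \emph{define} $\sigma$ to be $a^{-1}_{-,-,M}$ and check the axioms directly. Your route is slightly more economical; the paper's detour has the small benefit of confirming that the ``canonical'' strength (the one coming from the obvious $\mathcal E$-enrichment of $-\otimes M$) really is the associator, which matters later when one wants the equivalence of Corollary~\ref{cormons-e} to be compatible with this example.
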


\begin{proof}

For every monoid $M$, we know that $\left( -\otimes M,\mu, \eta\right) $ has a structure of a monad. 
In order to prove that $S=-\otimes M$ is a strong monad, we need to prove the existence of a tensorial strength together with the corresponding axioms.
For every pair of objects $\left( X,Y\right) $ in $\mathcal E$, the tensorial strength $\sigma_{X,Y}:X\otimes SY\rightarrow S\left(X\otimes Y \right) $ is given by
$$X\otimes \left( Y\otimes M\right) \rightarrow \left( X\otimes Y\right) \otimes M $$ which corresponds exactly to the associativity isomorphism in $\mathcal E$.

Indeed, by definition the tensorial strength is given by the following commutative diagram
$$
\shorthandoff{;:!?}
\xymatrix @!0 @C=8cm @R=2.5cm{\relax
X\otimes \left(Y\otimes M \right)    \ar[r]^{\sigma_{X,Y}} \ar[d]_{\gamma_{Y}\otimes \left(Y\otimes M \right) } & \left(X\otimes Y \right)\otimes M \\
\mathcal E\left(Y,X\otimes Y \right)\otimes  \left(Y\otimes M \right) \ar[r]^{\varphi\otimes \left(Y\otimes M \right) } & \mathcal E\left(Y\otimes M,\left( X\otimes Y\right) \otimes M \right)\otimes  \left(Y\otimes M \right) \ar[u]_{ev_{Y\otimes M}} 
}
$$
which is equivalent to
$$
\shorthandoff{;:!?}
\xymatrix @!0 @C=8cm @R=2.5cm{\relax
X\otimes \left(Y\otimes M \right)    \ar[r]^{\sigma_{X,Y}} \ar[d]_{\gamma_{Y}\otimes \left(Y\otimes M \right) } & \left(X\otimes Y \right)\otimes M \\
\mathcal E\left(Y,X\otimes Y \right)\otimes  \left(Y\otimes M \right) \ar[r]^{a} & \left( \mathcal E\left(Y,\left( X\otimes Y\right) \right)\otimes Y\right) \otimes M  \ar[u]_{ev_{Y}\otimes M} 
}
$$
By naturality of the associativity isomorphism
$$
\shorthandoff{;:!?}
\xymatrix @!0 @C=8cm @R=2.5cm{\relax
X\otimes \left(Y\otimes M \right)    \ar[r]^{\sigma_{X,Y}} \ar[d]_{a}  & \left(X\otimes Y \right)\otimes M \\
\left( X\otimes Y\right) \otimes M  \ar[r]^{\gamma_{Y}\otimes \left(Y \right)\otimes M } &  \mathcal E\left(Y,\left( X\otimes Y\right) \right)\otimes Y \otimes M  \ar[u]_{ev_{Y}\otimes M} 
}
$$
But we have $ev_{Y} \circ \gamma_{Y}= Id_{X\otimes Y}$, thus $\sigma_{X,Y}=a_{XYM}$.

Since the tensorial strength is given by the associativity isomorphism, the unit and the associativty axiom and the strong naturality conditions for $\eta$ and $\mu$ are obtained automatically.
\end{proof}

\begin{rem}
Another way to prove that the endofunctor $-\otimes M$ has a structure of a strong monad is to use the correspondance between
the tensorial strength and the enrichment. 
More precisely, by Lemma \ref{lememon}, the monad $\left( -\otimes M,\mu, \eta\right) $ has a structure of a $\mathcal E$-monad and by 
Corollary \ref{cormons-e} $\left( -\otimes M,\mu, \eta\right) $ is a strong monad.
\end{rem}

\begin{prop}\label{propmorsm}
Let $\mathcal E$ be a monoidal category and suppose that  $\left(T,\mu,\eta, \sigma \right)$ is a strong monad in $\mathcal E$.

There is a canonical map of strong monads $\lambda:-\otimes T\left(I\right)\rightarrow T $ given by the tensorial strength
\begin{diagram}{}
$$
\shorthandoff{;:!?}
\xymatrix @!0 @C=3cm @R=2.5cm{\relax
X\otimes T\left(I\right)    \ar[rr]^{\lambda_{X}} \ar[rd]_{\sigma_{X,I}} && TX \\
& T\left(X\otimes I \right) \ar[ru]_{T\left(r\right) }  
}
$$
\end{diagram}

This map is an isomorphism if and only if the monad T is induced by a monoid.
\end{prop}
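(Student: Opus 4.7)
The plan is to verify in sequence that (i) $\lambda$ is a natural transformation, (ii) it respects the units of the two monads, (iii) it respects the multiplications, (iv) it is a strong natural transformation in the sense of Definition \ref{defstnatt}, and then (v) to address the iff-statement.

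Naturality of $\lambda_X$ in $X$ is immediate from naturality of $\sigma$ in its first variable combined with functoriality of $T$ applied to $r_X$. For (ii), one computes
$$\lambda_X \circ (X \otimes \eta_I) \circ r_X^{-1} = T(r_X) \circ \sigma_{X,I} \circ (X \otimes \eta_I) \circ r_X^{-1} = T(r_X) \circ \eta_{X \otimes I} \circ r_X^{-1} = \eta_X,$$
using the strong naturality of $\eta$ (Diagram \ref{diagsteta}) and then ordinary naturality of $\eta$. I expect (iii) to be the main technical step: after expanding the monoid multiplication $m : T(I) \otimes T(I) \to T(I)$ via Lemma \ref{lemMonStr} as $\mu_I \circ T(r_I) \circ \sigma_{T(I),I}$, one has to chase a moderately large diagram combining the strong naturality of $\mu$, the associativity axiom for $\sigma$, and naturality of $\sigma$, in order to identify $\lambda_X \circ \tilde\mu_X$ with $\mu_X \circ T(\lambda_X) \circ \lambda_{X \otimes T(I)}$. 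For (iv), the required identity is
$$\sigma_{X,Y} \circ (X \otimes \lambda_Y) = \lambda_{X \otimes Y} \circ a^{-1}_{X, Y, T(I)},$$
whose left-hand side, after applying naturality of $\sigma$ in the second variable and then its associativity axiom at $(X, Y, I)$, reduces to $T(r_{X \otimes Y}) \circ \sigma_{X \otimes Y, I} \circ a^{-1}_{X, Y, T(I)}$ upon invoking the Mac Lane unit identity $(X \otimes r_Y) \circ a_{X,Y,I} = r_{X \otimes Y}$.

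For (v), if $T = -\otimes M$ is induced by a monoid $M$, then $T(I) = I \otimes M \cong M$ and by Lemma \ref{lemstmon} the tensorial strength $\sigma_{X,I}$ is a component of the associator; a short computation shows that under this identification $\lambda_X$ equals the identity of $X \otimes M$ up to canonical unit isomorphisms, hence is invertible. Conversely, if $\lambda$ is an isomorphism, then $\lambda$ exhibits $T$ as isomorphic, as a strong monad, to $-\otimes T(I)$; since $T(I)$ is a monoid (Proposition \ref{propmonTI}) whose associated strong monad is precisely $-\otimes T(I)$ (Lemma \ref{lememon} and Lemma \ref{lemstmon}), $T$ is induced by a monoid.
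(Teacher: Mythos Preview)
Your approach is correct and essentially identical to the paper's: both verify that $\lambda$ is a strong natural transformation (via the associativity axiom for $\sigma$ and naturality), then check the unit axiom (via strong and ordinary naturality of $\eta$), and finally the multiplication axiom (via strong naturality of $\mu$, naturality of $\mu$, and naturality of $\sigma$), only in a slightly different order. Your treatment of (iii) is left as a sketch whereas the paper carries out the diagram chase in full, while conversely you address the iff-statement (v) explicitly, which the paper's proof of this proposition omits entirely.
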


\begin{proof}
By Lemma \ref{lemstmon}, the endofunctor $S=-\otimes T\left(I\right)$ has a structure of a strong monad.
Given monads $\left(T,\mu,\eta, \sigma_{T} \right)$ and $\left( S,\tilde{\mu},\tilde{\eta}, \sigma_{S} \right)$, we have to prove that $\lambda:S\rightarrow T $ is a morphism of monads.

First, we have to verify that $\lambda$ is a strong natural transformation i.e. that the following diagram
$$
\shorthandoff{;:!?}
\xymatrix @C=4cm @R=2.3cm{\relax
X\otimes \left( Y\otimes T\left( I\right)  \right) \ar[r]^{\sigma_{S}} \ar[d]_{X\otimes \lambda_{Y}} & \left(X\otimes Y\right)\otimes T\left( I\right)  \ar[d]^{\lambda_{X\otimes Y}}\\
X\otimes TY \ar[r]^{\sigma_{T}} & T\left(X\otimes Y\right) 
}
$$
is commutative. Extending definitions:
$$
\shorthandoff{;:!?}
\xymatrix @C=4cm @R=2.3cm{\relax
X\otimes \left( Y\otimes T\left( I\right)  \right) \ar[r]^{a_{XYI}} \ar[d]_{X\otimes \sigma} & \left(X\otimes Y\right)\otimes T\left( I\right)  \ar[d]^{\sigma}\\
X\otimes T\left( Y\otimes I  \right) \ar[r]^{\sigma_{X,Y\otimes I}} \ar[d]_{X\otimes T\left(r\right) } & T\left(X\otimes Y\otimes I\right) \ar[d]^{T\left(r\right) }\\
X\otimes TY \ar[r]^{\sigma_{T}} & T\left(X\otimes Y\right) 
}
$$
The upper diagram commutes by the associativity axiom of a strong functor T.
The lower diagram commutes by naturality of $\sigma$.

It remains to prove that the two axioms which define a morphism of monads are satisfied.
First, we need to prove the commutativity of the following diagram
$$
\shorthandoff{;:!?}
\xymatrix  @!0 @C=2cm @R=2cm{\relax 
 X\otimes T\left(I \right)  \ar[rr]^{\lambda}  && TX \\ 
& X \ar[ru]_{\eta} \ar[lu]^{\tilde{\eta}} 
}
$$
Extending definitions: 
$$
\shorthandoff{;:!?}
\xymatrix @!0 @C=6cm @R=2cm{\relax 
 X\otimes T\left(I \right)  \ar[r]^{\sigma} & T\left(X\otimes I\right) \ar[r]^{T\left(r\right) } & TX \\
& X\otimes I \ar[u]_{\eta_{X\otimes I}} \ar[lu]_{X\otimes \eta_{I}} \ar[d]^{r} \\  
& X \ar[ruu]_{\eta_{X}} \ar[luu]^{\tilde{\eta}} 
}
$$
The left upper diagram commutes by strong naturality of $\eta$.
The left lower diagram commutes since r is the right unit isomorphism.
The right diagram commutes by naturality of $\eta$.

Finally, one has to prove that the following diagram commutes
$$
\shorthandoff{;:!?}
\xymatrix @!0 @C=5cm @R=2.5cm{\relax
\left(X\otimes T\left(I \right) \right)\otimes T\left( I\right)   \ar[r]^{\lambda\circ \lambda} \ar[d]_{\tilde{\mu}} & TTX \ar[d]^{\mu} \\
 X\otimes T\left(I \right) \ar[r]^{\lambda} & TX
}
$$
Extending definitions 
$$
\shorthandoff{;:!?}
\xymatrix @!0 @C=5cm @R=2.5cm{\relax
\left(X\otimes T\left(I \right) \right)\otimes T\left( I\right)   \ar[rr]^{\lambda\circ \lambda} \ar[d]_{\tilde{\mu}} && TTX \ar[d]^{\mu} \\
 X\otimes T\left(I \right)  \ar[r]^{\sigma} & T\left(X\otimes I\right) \ar[r]^{T\left(r\right) } & TX
}
$$
and
$$
\shorthandoff{;:!?}
\xymatrix @!0 @C=5cm @R=2cm{\relax
\left(X\otimes T\left(I \right) \right)\otimes T\left( I\right) \ar@{}[rrd]|{I}  \ar[rr]^{\sigma} \ar[d]_{\tilde{\mu}} && T(X\otimes T(I)\otimes I) \ar[d]^{T(r)}   \\
 X\otimes T\left(I \right) \ar@{}[rrd]|{II} \ar[d]_{\sigma} & X\otimes TT\left(I \right) \ar[l]_{X\otimes \mu_{I}}  \ar[r]^{\sigma} & T(X\otimes T(I)) \ar[d]^{T\sigma}\\
T\left(X\otimes I\right) \ar[d]_{T\left(r\right)} \ar@{}[rrd]|{III} && TT\left(X\otimes I\right) \ar[d]^{TTr} \ar[ll]_{\mu_{X\otimes I}}\\
TX && TTX \ar[ll]_{\mu_{X}}
}
$$
Diagram II commutes by strong naturality of $\mu$ (see Diagram \ref{diagstmu}).
Diagram III commutes by naturality of $\mu$.
We look more carefully Diagram I
$$
\shorthandoff{;:!?}
\xymatrix @!0 @C=5cm @R=2cm{\relax
\left(X\otimes T\left(I \right) \right)\otimes T\left( I\right) \ar[rd]^{X\otimes \lambda} \ar@{}[rrd]|{IV} \ar[rr]^{\sigma} \ar[d]_{\tilde{\mu}} && T(X\otimes T(I)\otimes I) \ar[d]^{T(r)}   \\
 X\otimes T\left(I \right) & X\otimes TT\left(I \right) \ar[l]_{X\otimes \mu_{I}}  \ar[r]^{\sigma} & T(X\otimes T(I))
}
$$
Diagram IV commutes by the naturality of $\sigma$. %The triangle commutes since $X\otimes \mu \circ X\otimes \lambda$
\end{proof}

The following corollary is a consequence of Proposition \ref{propenr}.
\begin{cor}
Let $\mathcal E$ be a closed symmetric monoidal category with equalizers and $-\otimes T\left(I \right) $ be a $\mathcal E$-monad, induced by the monoid $T\left(I\right) $. 
Then the category $Mod_{T\left(I\right) }$ of modules over $T\left(I \right)$ is canonically enriched over $\mathcal E$. 
\end{cor}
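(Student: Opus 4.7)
The plan is to identify the category $Mod_{T(I)}$ with a category of algebras over an enriched monad on $\mathcal{E}$, so that Proposition \ref{propenr} can be invoked directly.

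First I would recall that by Lemma \ref{lememon}, the monoid $T(I)$ canonically induces a monad $\left(-\otimes T(I),\eta,\mu\right)$ on $\mathcal{E}$. The underlying category of modules $Mod_{T(I)}$ is, by the standard identification, precisely the Eilenberg--Moore category $\mathcal{Alg}_{-\otimes T(I)}$: an action $X\otimes T(I)\to X$ satisfying unit and associativity is exactly the same data as a right $T(I)$-module structure.

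Next I would upgrade this monad to an enriched monad. By Lemma \ref{lemstmon}, the endofunctor $-\otimes T(I)$ extends to a strong monad with tensorial strength given by the associativity isomorphism, and then by Corollary \ref{cormons-e} this strong monad corresponds to a $\mathcal{E}$-monad $\left(-\otimes T(I),\eta,\mu,\varphi\right)$. This is the step that feeds into the hypothesis of Proposition \ref{propenr}.

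Finally, since $\mathcal{E}$ is assumed closed symmetric monoidal with equalizers, Proposition \ref{propenr} applies to the $\mathcal{E}$-monad $-\otimes T(I)$ and yields a canonical enrichment of $\mathcal{Alg}_{-\otimes T(I)}\cong Mod_{T(I)}$ over $\mathcal{E}$. Concretely, the hom-object $\underline{Mod}_{T(I)}(X,Y)$ is realised as the equalizer
$$\underline{Mod}_{T(I)}(X,Y)\hookrightarrow \underline{\mathcal{E}}(X,Y)\rightrightarrows \underline{\mathcal{E}}(X\otimes T(I),Y)$$
of the two maps induced, respectively, by the action of $T(I)$ on $Y$ composed with the enrichment of $-\otimes T(I)$, and by the action of $T(I)$ on $X$. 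There is no real obstacle here: the whole content of the corollary is the observation that the hypotheses of Proposition \ref{propenr} are met once one has packaged $T(I)$ as an enriched monad, and the two lemmas cited above do exactly that.
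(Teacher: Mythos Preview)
Your proof is correct and follows the same approach as the paper: identify $Mod_{T(I)}$ with $\mathcal{Alg}_{-\otimes T(I)}$ and apply Proposition~\ref{propenr}. The only difference is that you rederive the enrichment of the monad $-\otimes T(I)$ via Lemma~\ref{lemstmon} and Corollary~\ref{cormons-e}, whereas the paper simply takes ``$-\otimes T(I)$ is a $\mathcal{E}$-monad'' as part of the stated hypothesis and applies Proposition~\ref{propenr} directly.
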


\begin{proof}
We have $Mod_{T\left(I\right) }=\mathcal Alg_{-\otimes T\left(I \right) }$.
Since $-\otimes T\left(I \right) $ is a $\mathcal E$-monad, by Proposition \ref{propenr}, the category $\mathcal Alg_{-\otimes T\left(I \right) }$ is canonically enriched over $\mathcal E$.   
\end{proof}

\section{Homotopical Morita theorem}\label{SMth}
%\begin{rem}
%Tensorisation allows us to outsource the enriched structure.
%\end{rem}
In this section, we state the homotopical Morita theorem, Theorem \ref{thMain} which says that, under suitable conditions on $T$ and $\mathcal E$, there is a Quillen equivalence
between the category of $T$-algebras and the category of modules over the monoid $T\left(I\right)$.

\begin{theoreme}\label{thMain}{(Homotopical Morita theorem)}
Let $\mathcal E$ be a cofibrantly generated monoidal model category with cofibrant unit $I$ and with generating cofibrations having cofibrant domain.
Assume given a strong monad $\left(T,\mu,\eta, \sigma \right)$ on $\mathcal E$ such that
\begin{enumerate}
 %\item T is a $A_{\infty}$-strong monad; or
 \item The category of T-algebras $\mathcal Alg_{T}$ admits a transferred model structure;
 \item The unit $\eta_{X}: X\rightarrow T\left( X\right) $ is a cofibration at each cofibrant object X in $\mathcal E$;
 \item The tensorial strength $$\sigma_{X,Y}: X\otimes TY \xrightarrow{\sim} T\left(X \otimes Y \right)$$ is a weak equivalence for all cofibrant objects X,Y in $\mathcal E$;
 \item The forgetful functor takes free cell attachments in $\mathcal Alg_T$ to homotopical cell attachments in $\mathcal E$ (cf. Definition \ref{defcellatt}).

%The monad $T$ preserves reflexive coequalizers and takes cell attachments to homotopical cell attachments (%in particular T preserves cofibrations with cofibrant domain, 
%cf. Definition \ref{defcellatt}).
\end{enumerate}

Then the monad morphism $\lambda: -\otimes T\left(I\right)\rightarrow T $ induces a Quillen equivalence between the category of $T\left( I\right)$-modules and the category of T-algebras:  
$$ Ho\left( Mod_{T\left( I\right) }\right) \simeq Ho\left( \mathcal Alg_{T}\right) $$
\end{theoreme}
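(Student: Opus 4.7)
The plan is to verify that the adjunction $F : Mod_{T(I)} \rightleftarrows \mathcal Alg_T : G$ provided by Proposition \ref{propleftadj} (applied to the monad morphism $\lambda$) is a Quillen equivalence. The right adjoint $G$ is restriction of scalars along $\lambda$ and does not change the underlying $\mathcal E$-object. Since weak equivalences and fibrations in both transferred model structures are detected by the forgetful functors to $\mathcal E$ (the structure on $Mod_{T(I)}$ exists by Proposition \ref{propmodmod}, because the monoid $T(I)$ is well-pointed thanks to hypothesis (b)), $G$ preserves and reflects both. Hence $(F,G)$ is a Quillen pair, and it is a Quillen equivalence as soon as the unit $\eta_M : M \to GF(M)$ is a weak equivalence in $\mathcal E$ for every cofibrant $M \in Mod_{T(I)}$.

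I verify this by a cell induction on $M$. The base case is a free module $M = F_S(X) = X \otimes T(I)$ with $X$ cofibrant in $\mathcal E$. Since $F \circ F_S$ is left adjoint to $U_S \circ G = U_T$, one has $F(F_S(X)) = F_T(X)$, so the underlying $\mathcal E$-map of the unit is
\[
\lambda_X : X \otimes T(I) \xrightarrow{\sigma_{X,I}} T(X \otimes I) \xrightarrow{T(r_X)} T(X).
\]
The second factor is an isomorphism; the first is a weak equivalence by hypothesis (c), applied to the cofibrant pair $(X, I)$.

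For the inductive step, consider a free cell attachment in $Mod_{T(I)}$,
\[
\xymatrix{
F_S(X) \ar[r] \ar[d] & M_\alpha \ar[d] \\
F_S(Y) \ar[r] & M_{\alpha+1}
}
\]
along a generating cofibration $X \to Y$ of $\mathcal E$ (so $X$, $Y$ are cofibrant). Since $- \otimes T(I)$ preserves colimits, $U_S$ preserves this pushout, producing a cell attachment in $\mathcal E$ whose top map $X \otimes T(I) \to Y \otimes T(I)$ is a cofibration (by the pushout-product axiom and cofibrancy of $T(I)$, which itself follows from (b) since $I$ is cofibrant). Applying the left adjoint $F$ yields a free cell attachment in $\mathcal Alg_T$, and hypothesis (d) converts it into a homotopical cell attachment in $\mathcal E$ after $U_T$. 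The two resulting (homotopical) cell attachments fit into a cube in $\mathcal E$ whose four verticals are $\lambda_X, \lambda_Y$ (weak equivalences by the base case) and the units at $M_\alpha$ (weak equivalence by induction) and $M_{\alpha+1}$. The generalised Reedy patching lemma (Lemma \ref{lemReedGen}) forces the fourth vertical to be a weak equivalence.

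The transfinite composition step is handled by the telescope lemma (Lemma \ref{lemTeles}), as both the sequence $U_S(M_\alpha)$ and the sequence $U_T F(M_\alpha)$ consist of cofibrations between cofibrant objects; retracts preserve weak equivalences trivially. Since every cofibrant $M$ is a retract of a transfinite composition of free cell attachments starting from a free module on a cofibrant object (the cofibrant-domain hypothesis on the generating cofibrations ensuring that the first nontrivial stage already falls under the base case), the induction concludes. The main obstacle I anticipate is the cofibrancy bookkeeping: at every stage, Lemma \ref{lemReedGen} requires $U_T F(M_\alpha)$ to be cofibrant in $\mathcal E$, which must be extracted inductively from hypothesis (d) (whose homotopical cell attachments are defined to have cofibrant corners). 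Together with checking that $U_T$ commutes with the transfinite colimits appearing in the telescope step, this is the technical heart of the argument.
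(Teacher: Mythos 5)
Your proposal is correct and follows essentially the same route as the paper's own proof: the same Quillen pair obtained from Proposition \ref{propleftadj} and Proposition \ref{propmodmod}, the reduction (via $G$ preserving and reflecting fibrations and weak equivalences) to showing the unit is a weak equivalence at cofibrant modules, the identification of the unit at a free module with $\lambda_X=T(r_X)\circ\sigma_{X,I}$ handled by hypothesis (c), the cube argument combining the pushout-product axiom, hypothesis (d) and the generalised Reedy patching Lemma \ref{lemReedGen}, and finally the telescope Lemma \ref{lemTeles} plus a retract argument for cellular and then arbitrary cofibrant modules. The cofibrancy bookkeeping you flag is resolved exactly as you anticipate: the induction carries along the statement that the unit is a weak equivalence \emph{between cofibrant objects}, the cofibrant corners being supplied by Definition \ref{defcellatt} and the cofibrant domains of the generating cofibrations.
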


\begin{rem}~\\

\begin{enumerate}
 \item Since by Proposition \ref{propenradj}, the category of $T$-algebras $\mathcal Alg_{T}$ is cotensored over $\mathcal E$ (and $\mathcal Alg_{T}$ fulfills the adjoint of Hovey's pushout-product axiom rel. to $\mathcal E$), each fibrant $T$-algebra possesses a path object. Indeed, it suffices to factor the folding map of the unit $I\sqcup I\rightarrow I$ into a cofibration followed by a weak equivalence $I\sqcup I\to H\to I$. For each fibrant $T$-algebra $X$, the induced maps of $T$-algebras $X=X^I\to X^H\to X^{I\sqcup I}=X\times X$ then define a path-object for $X$. 
Therefore, by Theorem \ref{thcalgT} $(b)$, hypothesis $(a)$ essentially amounts to the existence of a fibrant replacement functor for $T$-algebras. 
 %\item Furthermore, hypothesis (b) is just needed to guarantee the existence of a model structure on the category of $T(I)$-modules, and can be dropped if this model structure is known to exist for other reasons (e.g. $\mathcal E$ fulfills the monoid axiom).
 \item Hypothesis $(c)$ of the theorem is slightly redundant under assumption $(b)$. Namely if all $\sigma_{Y,I}$ for cofibrant objects $Y$ are weak equivalences, then it follows from $(b)$ and from Brown's Lemma \ref{lemBrowns} that $X\otimes\sigma_{Y,I}$ is a weak equivalence for cofibrant objects X. Consequently, Diagram \ref{diagsteta} in the definition of strong monad implies that $\sigma_{X,Y}$ is a weak equivalence for cofibrant objects $X$,$Y$.

\item Hypothesis $(d)$ is admittedly of a technical nature. 
It is likely that slightly different conditions also suffice to ensure the validity of the theorem.
We chose this hypothesis because on one side it allows a quite direct application of Reedy's patching lemma and on the other side
it is satisfied in the main example we study, cf. Sections \ref{sRecResSS} and \ref{SRealMModCat}, especially Proposition \ref{propForgfuncRE}. 
%Finally, the proof of the theorem does not need the full force of hypothesis (c), but just uses that $\sigma_{X,I}$ is a weak equivalence for cofibrant objects $X$. Nevertheless, this latter condition almost implies (c). More precisely, if for all cofibrant $X,Y$, the map $X\otimes\sigma_{Y,I}$ is known to be a weak equivalence (for instance if in $\mathcal E$ weak equivalences are closed under tensor with cofibrant objects), then Diagram \ref{assff} in the definition of a tensorial strength implies that $\sigma_{X,Y}$ is a weak equivalence too. 
\end{enumerate}
%(En me relisant, je me rends compte que l'argument pour le path-objet utilise que E^T vérifie la version adjointe du pushout-product axiom rel. à E; vous n'avez pas établi ce résultat; il est peut-etre à rajouter juste avant le théorème principal, si toutefois on en trouve une preuve: son énoncé est si X->Y est une cofibration dans E et Z->W une fibration dans E^T alors Z^Y->W^Y\times_{W^X}Z^X est une fibration dans E qui est acyclic si l'une des 2 flèches données est acyclic).
\end{rem}

\begin{proof}

By Corollary \ref{cormons-e}, $\left(T,\mu,\eta \right)$ extends to a strong monad if and only if it extends to a $\mathcal E$-monad.
Therefore, by Proposition \ref{propenr}, \ref{propenradj} and \ref{proptens}, the category of $T$-algebras is enriched, tensored and cotensored over $\mathcal E$.
By Proposition \ref{propmonTI}, the object $T\left(I \right) $ has the structure of a monoid in $\mathcal E$.
We can give a look at the category $Mod_{T\left( I\right)}$ of modules over a monoid $T\left(I \right)$.
Since by hypothesis, $I$ is cofibrant, the unit $\eta_I:I \to T(I)$ is a cofibration by hypothesis $(b)$, i.e. the monoid $T(I)$ is well-pointed. 
Therefore, by Proposition \ref{propmodmod}, the category $Mod_{T\left( I\right)}$ admits a transferred model structure.

We know that $Mod_{T\left( I\right) }=\mathcal Alg_{-\otimes T\left( I\right) }$.

By Proposition \ref{propmorsm}, there is a canonical map  $\lambda:-\otimes T\left(I\right)\rightarrow T $ of strong monads.

There exists a functor $$ G:\mathcal Alg_{T}\rightarrow \mathcal Alg_{-\otimes T\left( I\right)}  $$ 
which takes the $T$-algebra $(X,\xi_X:TX\rightarrow X)$ to the $T(I)$-module $(X,\xi_X\lambda_X:X\otimes T(I)\rightarrow X)$. 
Observe in particular that $G$ preserves the underlying objects. 
%induced by the monad morphism:
%$$
%\shorthandoff{;:!?}
%\xymatrix @!0 @C=2.5cm @R=2cm{\relax
%X \ar[rr]^{G_{X}} \ar[rd]_{\varphi_{X}} && X\otimes T\left(I\right) \ar[ld]^{\lambda_{X}}\\
%& TX   
%}
%$$
The category $\mathcal Alg_{T}$ is cocomplete and therefore $\mathcal Alg_{T}$ admits reflexive coequalizers. 
Hence, by Proposition \ref{propleftadj}, the functor $G$ has a left adjoint 
$$F:\mathcal Alg_{-\otimes T\left( I\right)} \rightarrow \mathcal Alg_{T}$$
and we thus have a commutative diagram of right adjoint functors
$$
\shorthandoff{;:!?}
\xymatrix @!0 @C=3cm @R=1.8cm{\relax
\mathcal Alg_{T} \ar[rr]^{G} \ar[rd]_{U_{T}} && Mod_{T\left( I\right)} \ar[ld]^{V} \\
& \mathcal E 
}
$$
It follows that we have an analogous commutative diagram of left adjoint functors,
i.e. the left adjoint functor $F$ takes free $T(I)$-modules to free $T$-algebras.

Fibrations and weak equivalences in $Mod_{T\left( I\right) }$ are exactly the fibrations and weak equivalences in $\mathcal E$,
and fibrations and weak equivalences in $\mathcal Alg_{T}$  are exactly the fibrations and weak equivalences in $\mathcal E$.

Therefore, the right adjoint $G$ preserves and even reflects fibrations and weak equivalences.
In order to be a right Quillen functor, $G$ needs to preserve fibrations and acyclic fibrations.
Hence, $G$ is a right Quillen functor and $\left( F,G\right) $ forms a Quillen adjunction.

It remains to prove that $\left( F,G\right) $ is a Quillen equivalence.

Since the functor $G$ preserves and reflects fibrations and weak equivalences, $\left( F,G\right) $ is a Quillen equivalence 
if and only if for every cofibrant module $M$ the unit of the adjunction $\eta_{M}: M\rightarrow GFM$ is a weak equivalence. 

Since the left adjoint $F$ takes free $T(I)$-modules to free $T$-algebras, the unit of the adjunction at a free module $X\otimes T(I)$ is given by 
$$\eta_{X\otimes T(I)}:X\otimes T(I)\to TX$$ and coincides with the tensorial strength $\sigma_{X,I}$. 
It is therefore a weak equivalence if $X$ is cofibrant in $\mathcal E$, since $I$ is cofibrant by hypothesis. 
Using the patching and telescope lemmas of Reedy (Lemma \ref{lemPatch} and \ref{lemTeles})
we shall now extend this property to all cofibrant $T(I)$-modules.

We first show that the property ``$\eta_Z:Z \to GF(Z)$ is a weak equivalence between cofibrant objects'' is closed under cobase change of $Z$ along free $T(I)$-maps on cofibrations beween cofibrant objects in $\mathcal E$. 
Indeed, let us consider the following cube
$$
\shorthandoff{;:!?}
 \xymatrix {
    & T\left(X \right)  \ar[rr] \ar[dd]  && GF(Z') \ar[dd] \\
    X\otimes T\left(I \right) \ar[ru]^{\sim} \ar[rr] \ar[dd] && Z' \ar[ru]^{\sim} \ar[dd] \\
    & T\left(Y \right) \ar[rr] |!{[ur];[dr]}\hole && GF(Z) \\
     Y\otimes T\left(I \right)  \ar[ru]^{\sim} \ar[rr] && Z \ar[ru]^{\sim} \\
  }
$$
in which we suppose (inductively) that $\eta_{Z'}:Z' \to GF(Z')$ is a weak equivalence between cofibrant objects in $\mathcal{E}$. %the map $X\to Y$ is a generating cofibration in $\mathcal{E}$ and that the front square is a pushout. 
Since $X,Y,T(I)$  and $Z'$ are cofibrant, it follows from the pushout-product axiom that the front square is a (homotopical) cell attachment in the sense of Definition \ref{defcellatt}. It suffices thus to prove that the back square is a homotopical cell attachment as well.
But this follows from hypothesis $(d)$ since the back square is the image under $G$ of a free cell attachment in $\mathcal Alg_T$ and hence, using that $VG=U_T$, a homotopical cell attachment in $\mathcal{E}$.
%Since $\eta_X: X \to T(X)$ is a cofibration by (b) $T(X)$ is cofibrant in $\mathcal{E}$ and similarily $T(Y)$. Moreover $T(X)\to T(Y)$ is a cofibration by (d).

%Proposition \ref{propAlgHomCA} then implies that the back square (which comes from a free cell attachment in $Alg_T$) is a homotopical cell attachment in $\mathcal{E}$.

%Condition (d) actually shows that for each cofibrant object $W$ in $\mathcal{E}$ and each attaching map $X\to W$, the comparison map $T(Y)\cup_T(X) T(W)\to T(Y\cup_X W)$ is an acyclic cofibration in $\mathcal{E}$. The target $T(Y\cup_X W)$ is the underlying object of the free T-algebra on $Y\cup_X W$. It follows then from Reedy's patching and telescope lemmas that for each $I_T$-cellular object $W'$, the pushout square in $Alg_T$ obtained by attaching the free T-algebra map $T(X)\to T(Y)$ at $W'$ in $Alg_T$ (along an arbitrary attaching map $T(X)\to W'$) is a homotopical cell attachment when considered in $\mathcal{E}$.

%In particular, since by construction $GF(Z')$ is $I_T$-cellular, the back square above is a homotopical cell attachment in $\mathcal{E}$. 

Therefore, by the generalized Reedy patching Lemma \ref{lemReedGen}, $\eta_Z:Z\to GF(Z)$ is a weak equivalence between cofibrant objects as required for the inductive step. Any cellular $T(I)$-module is obtained from the initial $T(I)$-module by (possibly transfinite) composition of cobase changes of the aforementioned kind.
It is here that we need that the generating cofibrations of $\mathcal E$ have cofibrant domains. Therefore, Reedy's telescope Lemma \ref{lemTeles} implies that $\eta_Z$ is a weak equivalence for all cellular $T(I)$-modules. Finally, any cofibrant $T(I)$-module is retract of a cellular one, so that $\eta_Z$ is a weak equivalence for all cofibrant $T(I)$-modules $Z$ as required.

%Since $T(I)$ and $Z'$ are cofibrant, it follows from the pushout-product axiom that the front square is a pushout square of cofibrant objects in $\mathcal{E}$.

%Since $\eta_X:X\to T(X)$ is a cofibration by (b) and $X$ is cofibrant, $T(X)$ is cofibrant in $\mathcal{E}$ and similarily $T(Y)$. For each $(Z,m_Z:Z\otimes T(I)\to Z)$ in $Mod_{T(I)}$ the target of the unit $Z\to GF(Z)$ can be considered as a canonical coequalizer in $\mathcal{E}$, namely as the coequalizer of $Tm_Z:T(Z\otimes T(I))\to Z\otimes T(I)$ and of $\mu_X T\lambda_X:T(Z\otimes T(I))\to Z\otimes T(I)$. It follows that the back square of the cube is also a pushout square of cofibrant objects in $\mathcal{E}$.

%Finally, hypothesis (c) allows us to apply the generalized patching Lemma of Reedy \ref{lemReedGen} so that $\eta_Z:Z\to GFZ$ is a weak equivalence for any cobase change as above. 
\end{proof}

\section{$\Gamma$-spaces, $\Gamma$-rings and $\Gamma$-theories}\label{GspGrGth}

The category of $\Gamma$-spaces was introduced by Segal \cite{SECCT}, who showed that it has a homotopy category equivalent to the stable homotopy category of connective spectra. Bousfield and Friedlander \cite{BouHT} considered a bigger category of $\Gamma$-spaces in which the ones introduced by Segal appeared as the special $\Gamma$-spaces. Their category admits a closed simplicial model category structure with a notion of stable equivalences giving rise again to the homotopy category of connective spectra. Then Lydakis \cite{LydSG} showed that $\Gamma$-spaces admit internal function objects and a symmetric monoidal smash product with good homotopical properties.

In this section we give some basic definitions and results of the category of $\Gamma$-spaces $\mathcal{GS}$.

Consider the simplicial category $\Delta$ with objects  $\left[ n\right] =\left\lbrace 0,1,...,n\right\rbrace $, for $n\geq 0$, and morphisms the maps $f:\left[ n\right]\rightarrow \left[ k\right]$ such that $x\leq y$ implies  $f(x)\leq f(y)$.

%\begin{definition}\label{defsimpset}
A simplicial set is a functor $X:\Delta^{op} \rightarrow Set$.
%\end{definition}

%\begin{rem}
Simplicial sets and morphisms of simplicial sets, which are simply natural transformations of functors, constitute the category of simplicial sets, written $SSet$.
%\end{rem}

Consider the category $\Gamma^{op}$ where the objects are finite sets $n=\left\lbrace 0,1,...,n\right\rbrace $, for $n\geq 0$, and morphisms are the maps of sets which send 0 to 0. 
The category $\Gamma^{op}$ is the opposite of Segal's category $\Gamma$ (\cite{SECCT}), cf. the proof of Proposition \ref{propGamPG}. 

A $\Gamma$-space is a functor $A:\Gamma^{op} \rightarrow \mathcal{E}$ such that $A\left(0 \right)=* $. 
The category $\mathcal{E}$ is either Top or $SSet$.
$\Gamma$-spaces and morphisms of $\Gamma$-spaces, which are simply natural transformations of functors, constitute the category of  $\Gamma$-spaces, written $\mathcal{GS}$.
We have been careful to keep our arguments general enough so as to be independent of the particular choice of monoidal model category $\mathcal{E}$ for the homotopy theory of spaces. The formal properties we need are those appearing in the notion of solid monoidal model category $\mathcal{E}$ in (\cite{BMENRC}, Chapter 7). In particular the latter imply that the category of all functors $\Gamma^{op} \to \mathcal{E}$ is again a monoidal model category with pointwise weak equivalences, and cofibrations as well as fibrations of Reedy type, and with symmetric monoidal structure induced by Day convolution. This monoidal model structure restricts in a straighforward way to the full subcategory of those functors $A:\Gamma^{op} \to \mathcal{E}$ such that $A(0)=*$. If $\mathcal E=SSet$ or $\mathcal E=Top$ the resulting model structure is precisely Bousfield-Friedlander's strict model structure on $\Gamma$-spaces, equipped with the smash-product of Lydakis \cite{LydSG}.

Since 0 is a zero object in $\Gamma$, a $\Gamma$-space actually takes values in $\mathcal{E}_{*}$, the pointed category of based objects in $\mathcal{E}$. Moreover, the category of $\Gamma$-spaces is itself pointed by the representable $\Gamma$-space $\Gamma^0=\Gamma(-,0)$.

We assume here (as is the case for $SSet$ and $Top$) that the monoidal structure of $\mathcal{E}$ is given by the cartesian product. A based space is then an object $X$ of $\mathcal{E}$ together with a map $*\rightarrow X$ where $*$ denotes a terminal object of $\mathcal{E}$ (which serves at the same time as unit for the monoidal structure of $\mathcal{E}$). In particular, the category $\mathcal{E}_{*}$ of based spaces is again a monoidal model category with monoidal structure given by the smash-product $(X,*)\wedge (Y,*)=X\times Y /(X\times *)\cup(*\times Y)$. The category $\mathcal{E}_{*}$ is pointed by $*$.
The category of sets embeds into $\mathcal{E}$ by the functor which takes a set $X$ to the coproduct $\coprod_XI$ of $X$ copies of the unit $I$. For $\mathcal{E}=SSet$ (resp. $\mathcal{E}=Top$) this identifies sets with discrete simplicial sets (resp. discrete topological spaces). In particular, any representable $\Gamma$-set $\Gamma^k=\Gamma(-,k)$ may be considered as a discrete $\Gamma$-space. 

A $\Gamma$-space $A:\Gamma^{op}\rightarrow \mathcal{E}_{*}$ can be prolonged to an endofunctor $\underline{A}:\mathcal{E_{*}} \rightarrow \mathcal{E_{*}}$ by enriched left Kan extension along the canonical inclusion $\Gamma^{op}\rightarrow \mathcal{E_{*}}$.
This enriched Kan extension can be expressed as a coend, which was the way Segal \cite{SECCT} originally proceeded (cf. also Berger \cite{BIWPSCILS}, Section 2.6). More precisely, each based space $(X,*)$ induces a functor $X^-:\Gamma\rightarrow \mathcal{E}$ which takes $n$ to the $n$-fold cartesian product $X^n$ (observe the variance!).
The based space $\underline{A}(X)$ is then given by the coend $A\otimes_{\Gamma} X^-$, i.e. as a canonical quotient of $\coprod_{n\geq 0}A(n)\times X^n$ in $\mathcal{E}$.

It is now of fundamental importance that the endofunctor $\underline{A}$ of $\mathcal{E}_{*}$, associated to any $\Gamma$-space $A$, is a strong endofunctor of $\mathcal{E}_{*}$, i.e. it comes equipped with a strength $X \wedge \underline{A}(Y)\to\underline{A}(X \wedge Y)$. The latter is induced by the aforementioned coend formula together with a canonical trinatural transformation $X \wedge (Y^-)\rightarrow (X\wedge Y)^-$.

It follows that there are \emph{two} monoidal structures on $\Gamma$-spaces. A symmetric monoidal \emph{smash}-product $A\wedge B$, induced from the smash-product on $\Gamma^{op}$ by Day convolution, cf. Section \ref{SSmDayC}, and a non-symmetric \emph{circle}-product $A\circ B$ induced by the composition of the associated endofunctors of $\mathcal{E}_{*}$. Both monoidal structures share the same unit, namely the representable $\Gamma$-space $\Gamma^1=\Gamma(-,1)$.

More precisely, the smash-product $A\wedge B$ is characterized by the property that maps $A\wedge B \rightarrow C$ correspond one-to-one to binatural families of based maps $A(m)\wedge B(n)\rightarrow C(m\wedge n)$. The circle product $A\circ B$ is characterized by a binatural isomorphism $\underline{A\circ B}\cong\underline{A}\circ\underline{B}$, in particular $\underline{A\circ B}(n)=\underline{A}(B(n))$.

This leads to the following two definitions:

\begin{definition}
A $\Gamma$-ring is a monoid in $\Gamma$-spaces for the $\wedge$-product.
\end{definition}

\begin{definition}
A $\Gamma$-theory is a monoid in $\Gamma$-spaces for the $\circ$-product. 
\end{definition}

\begin{lem}\label{defassmap} 
There is a binatural map (the so called assembly map) $$A\wedge B\rightarrow A \circ B$$ from the smash product of $\Gamma$-spaces to the circle product of $\Gamma$-spaces which is in a suitable sense associative and unital. 
Formally, the identity functor is a lax monoidal functor from $(\mathcal{GS},\wedge, \Gamma^{1})$ to $(\mathcal{GS},\circ,\Gamma^{1})$. 
\end{lem}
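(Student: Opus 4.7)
The plan is to construct the assembly map using the universal property of Day convolution, and then verify the two lax monoidal coherence axioms. Since $\wedge$ is by definition Day convolution on $\Gamma^{op}$, giving a $\Gamma$-space morphism $A\wedge B\to A\circ B$ is the same data as giving a binatural family of based maps $\alpha_{m,n}:A(m)\wedge B(n)\to(A\circ B)(m\wedge n)=\underline{A}(B(m\wedge n))$. I define $\alpha_{m,n}$ as the composite
\begin{equation*}
A(m)\wedge B(n)\xrightarrow{\mathrm{id}\wedge\langle B(\iota_1),\ldots,B(\iota_m)\rangle}A(m)\wedge B(m\wedge n)^m\xrightarrow{\mathrm{can}}\underline{A}(B(m\wedge n)),
\end{equation*}
where for $i=1,\ldots,m$ the pointed map $\iota_i:n\to m\wedge n$, $k\mapsto i\wedge k$, is a morphism in $\Gamma^{op}$, and $\mathrm{can}$ is the canonical inclusion at stage $m$ into the coend $\underline{A}(X)=A\otimes_\Gamma X^-$ applied to $X=B(m\wedge n)$.

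Next I check binaturality. Naturality of $(\alpha_{m,n})$ in $B$ is immediate since each $\iota_i$ is natural in $n$ and $B$ is a functor on $\Gamma^{op}$. Naturality in $A$ along a pointed map $f:m\to m'$ in $\Gamma^{op}$ follows from the coend defining relation for $\underline{A}$: on generators $a\wedge b\in A(m)\wedge B(n)$, both sides send $a\wedge b$ to the class of $A(f)(a)\wedge\langle B(\iota'_j)(b)\rangle_{j=1}^{m'}$ in $\underline{A}(B(m'\wedge n))$, because the families of inclusions $\iota_i$ for $m$ and $\iota'_j$ for $m'$ are compatible with $f\wedge\mathrm{id}_n:m\wedge n\to m'\wedge n$ under the coend equivalence. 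For the lax monoidal structure on the identity functor, the comparison morphism $\Gamma^1\to\Gamma^1$ is chosen to be the identity; the two unit axioms reduce to checking that $\alpha_{1,n}$ and $\alpha_{m,1}$ recover the left/right unit isomorphisms of $\wedge$, which follows from $\underline{\Gamma^1}(X)\cong X$ and $m\wedge 1\cong m$ respectively.

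The hard part is the associativity pentagon, i.e.\ showing that the two natural transformations $(A\wedge B)\wedge C\rightrightarrows A\circ B\circ C$ obtained from the two bracketings of the assembly map coincide. By the universal property of the triple Day convolution this amounts to equality of two trinatural families $A(m)\wedge B(n)\wedge C(p)\to\underline{A}(\underline{B}(C(m\wedge n\wedge p)))$, and unravelling both composites shows they are built from the canonical coend inclusions together with the doubly indexed $\Gamma^{op}$-maps $p\to m\wedge n\wedge p$, $k\mapsto i\wedge j\wedge k$, indexed by $(i,j)\in\{1,\ldots,m\}\times\{1,\ldots,n\}$. Conceptually the cleanest route, which I would follow to avoid grinding through the nested coend bookkeeping, is to appeal to Proposition \ref{PropStDay}: applying it to the strong endofunctor $\underline{A}$ of $\mathcal{E}_*$ prolongs $\underline{A}$ to a strong endofunctor of $\mathcal{E}_*^{\Gamma^{op}}$ whose underlying functor is precisely $A\circ-$; the assembly map is then the map obtained by specialising the resulting tensorial strength at the unit, and the pentagon follows formally from the associativity coherence of a strength, reducing the lemma to the general $2$-categorical machinery already developed in Chapter $3$.
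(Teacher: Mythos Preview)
Your construction of the assembly map is essentially the paper's, though you unpack it more explicitly. The paper's proof is two lines: it observes that the costrength of $\underline{A}$ and the strength of $\underline{B}$ give binatural maps
\[
A(n)\wedge B(m)\;\to\;\underline{A}(n\wedge B(m))\;\to\;\underline{A}(B(n\wedge m)),
\]
and then invokes the universal properties of the smash and circle products. Your explicit formula via the inclusions $\iota_i:n\to m\wedge n$ and the coend inclusion is precisely what one obtains by unwinding these two strength maps at finite based sets; in particular, your tuple $\langle B(\iota_1),\ldots,B(\iota_m)\rangle$ is the strength $m\wedge\underline{B}(n)\to\underline{B}(m\wedge n)$ written out componentwise. (A minor technical point: the canonical coend map goes out of $A(m)\times X^m$, not $A(m)\wedge X^m$, so you should check that it factors through the smash; it does, using the coend relations coming from the unique operators $0\to m$ and $m\to 0$.)

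Where you go beyond the paper is in addressing the lax monoidal coherences; the paper simply asserts them. Your reduction of the pentagon to ``the associativity coherence of a strength'' is however not quite complete as stated. The pentagon involves assembly maps $\phi_{A,B}$, $\phi_{B,C}$, $\phi_{A\circ B,C}$, $\phi_{A,B\circ C}$, i.e.\ strengths of \emph{different} endofunctors $A\circ-$, $B\circ-$, $(A\circ B)\circ-$, and the associativity axiom of the strength of a single $A\circ-$ does not by itself relate these. What you additionally need is that the strength of the composite $(A\circ B)\circ-$ agrees with the composite strength of $A\circ-$ and $B\circ-$ in the sense of Definition~\ref{deffcomp}, i.e.\ that the assignment $A\mapsto(\underline{A},\sigma^A)$ is strict monoidal from $(\mathcal{GS},\circ)$ to strong endofunctors under composition. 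This is true and formal, but it is this compatibility, together with the associativity of each strength, that yields the pentagon.
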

\begin{proof}
The costrength of $\underline{A}$ and the strength of $\underline{B}$ induce maps
$$A(n)\wedge B(m) \rightarrow \underline{A}(n\wedge B(m))\rightarrow \underline{A}(B(n\wedge m))$$ The characterizations of the smash and circle products then give the desired assembly map.
\end{proof}

Consequently, every $\Gamma$-theory A gives rise to a $\Gamma$-ring $A^S$ (with same underlying $\Gamma$-space) by pulling back the multiplication of the theory along the assembly map.
%In consequnce, every $\Gamma$-theory $\left(A,\circ,. \right)$ induces a $\Gamma$-ring $\underline{A}^{S}$ by pulling back the multiplication along the assembly map.
$\Gamma$-theories correspond precisely to strong monads on based spaces which are determined (through enriched left Kan extension) by their values on finite based sets. 
Schwede \cite{SchSHAT} shows actually that $\Gamma$-theories can be considered as pointed algebraic theories in $\mathcal{E}$. 
By Proposition \ref{PropStDay} the strong monad $\underline{A}$ on $\mathcal{E}_*$ induces a strong monad $(A\circ-)$ on $\Gamma$-spaces and hence,
by Proposition \ref{propmonTI}, an endomorphism monoid $A=A\circ \Gamma^1$ in $\Gamma$-spaces. This construction is consistent with the aforementioned construction of the $\Gamma$-ring $A^S$ as follows from:

\begin{prop}\label{propGring}
For each $\Gamma$-theory A, the endomorphism monoid of the associated strong monad $(A\circ-)$ on $\Gamma$-spaces may be identified with the $\Gamma$-ring $A^S$ induced by the assembly map.
%$\Gamma$-theories correspond exactly to pointed simplicial theories of Schwede \cite{SchSHAT}.

%The monoid $\underline{A}\left(\mathbb S\right)$ in $\left(\mathcal{GS}, \wedge, \mathbb S\right)$ is the $\Gamma$-ring $\underline{A}^{S}$ of %Schwede. 
\end{prop}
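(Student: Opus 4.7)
The plan is to verify that under the canonical isomorphism $A\circ\Gamma^1\cong A$ coming from the unit law for the circle product, the monoid structure produced by Proposition \ref{propmonTI} on the endomorphism monoid $T(\Gamma^1)$ of the strong monad $T=(A\circ-)$ matches, piece by piece, the $\Gamma$-ring multiplication on $A^S$ obtained by pulling back the $\Gamma$-theory multiplication $A\circ A\to A$ along the assembly map $A\wedge A\to A\circ A$. The underlying $\Gamma$-space of both monoids is $A$, so only the multiplication and unit need to be compared.

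I would start by invoking Lemma \ref{lemMonStr}, which expresses the multiplication on $T(\Gamma^1)$ as the composite
$$T(\Gamma^1)\wedge T(\Gamma^1)\xrightarrow{\sigma_{T(\Gamma^1),\Gamma^1}} T(T(\Gamma^1)\wedge\Gamma^1)\xrightarrow{T(r)} T(T(\Gamma^1))\xrightarrow{\mu_{\Gamma^1}} T(\Gamma^1),$$
and then substitute $T=(A\circ-)$ together with the identification $T(\Gamma^1)=A\circ\Gamma^1\cong A$. After this substitution the composite becomes
$$A\wedge A\xrightarrow{\sigma_{A,\Gamma^1}} A\circ(A\wedge\Gamma^1)\xrightarrow{\cong} A\circ A\xrightarrow{\mu_{\Gamma^1}} A.$$
Thus everything reduces to two identifications: first, that $\mu_{\Gamma^1}$ is the $\Gamma$-theory multiplication of $A$ evaluated at $\Gamma^1$ (this is built into the construction of $T=(A\circ-)$, for which the monad multiplication is postcomposition with the theory multiplication $A\circ A\to A$); and second, and crucially, that the strength $\sigma_{A,\Gamma^1}$ agrees with the assembly map of Lemma \ref{defassmap}.

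The second identification is the technical heart of the argument. The strength of the strong monad $(A\circ-)$ on $\mathcal{GS}$ was constructed in Proposition \ref{PropStDay} via the universal property of Day convolution applied to the strength of the endofunctor $\underline{A}$ on $\mathcal{E}_*$. On the other hand, the assembly map of Lemma \ref{defassmap} is produced from the very same strength of $\underline{A}$ combined with the defining binatural property of the smash product of $\Gamma$-spaces. So one needs to unwind the coend expression for $\underline{A}$ and the universal property of Day convolution, and check that the binatural family $A(m)\wedge A(n)\to \underline{A}(n\wedge A(m))\to \underline{A}(A(n\wedge m))$ classifying the assembly map $A\wedge A\to A\circ A$ coincides with the one classifying $\sigma_{A,\Gamma^1}$ (using $\underline{\Gamma^1}(X)=X$).

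Once this diagrammatic identification is established, the comparison of multiplications is immediate: the composite above equals the $\Gamma$-theory multiplication precomposed with the assembly map, which is precisely the defining multiplication of $A^S$. The comparison of units is analogous and easier, using that $\eta_{\Gamma^1}:\Gamma^1\to A\circ\Gamma^1=A$ is by construction the unit of the $\Gamma$-theory $A$, which is also the unit of $A^S$. The main obstacle is therefore the bookkeeping in identifying the strength $\sigma_{A,\Gamma^1}$ produced by Day convolution with the assembly map; once this is done, the monoid axioms follow from the already-known associativity and unitality of the assembly map together with the monad axioms for $(A\circ-)$.
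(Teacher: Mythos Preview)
Your proposal is correct and follows essentially the same route as the paper: both invoke Lemma~\ref{lemMonStr} to express the endomorphism-monoid multiplication as $A\wedge A\to A\circ A\to A$, identify the second arrow with the theory multiplication, and reduce the first arrow to the assembly map by unwinding the construction in Lemma~\ref{defassmap}. The paper is simply terser, referring directly to the proof of Lemma~\ref{defassmap} rather than spelling out the Day-convolution bookkeeping you describe.
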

\begin{proof}
The underlying $\Gamma$-space is $A$ in both cases. It remains to be shown that the multiplication coincides as well. 
This follows from Lemma \ref{lemMonStr} since the latter says that the multiplication of the endomorphism monoid $A=A\circ\Gamma^1$ is given by $$A \wedge A \to A\circ A\to A$$ 
where the first map is induced by the strength of the monad $(A\circ-)$ and the second by the theory multiplication.
This is precisely the multiplication $A^S \wedge A^S \to A^S$ induced by the assembly map, cf. the proof of Lemma \ref{defassmap}.
\end{proof} 

\section{The stable model structure on $\Gamma$-spaces}\label{sAsmPpax}

The category of $\Gamma$-spaces has a strict model structure of Bousfield and Friedlander, where the weak equivalences are pointwise, cofibrations and fibrations are Reedy-like (cf. \cite{BouHT,LydSG, BMENRC}).
In particular, the category of $\Gamma$-spaces for the strict structure is a monoidal model category by a result of Berger-Moerdijk on generalized Reedy categories (cf. \cite{BMENRC}, Theorem 7.6 and Example 7.7b).

Nevertheless, the strict model structure does not permit an application of our homotopical Morita theorem since the assembly map is in general not a weak equivalence in the strict model structure. Nevertheless, it was one of the insights of Lydakis \cite{LydSG} that the assembly map is a \emph{stable} equivalence in quite generality. So, only in the stable world, the category of models for a $\Gamma$-theory can possibly become Quillen equivalent to the category of modules over the associated $\Gamma$-ring. That this is indeed the case has been shown by Schwede \cite{SchSHAT}. We get Schwede's theorem as a formal consequence of our homotopical Morita theorem.

In order to render our thesis as self-contained as possible we include here a new proof of Lydakis' theorem on the assembly map. Our proof has the advantage of being independent of the choice of the cartesian model $\mathcal{E}$ for the homotopy theory of spaces. It is also interesting to observe that Lydakis' theorem implies that the stable model category of $\Gamma$-spaces is a monoidal model category, which is the other hypothesis for our homotopical Morita theorem to be applicable. 
In course of proving Lydakis' theorem on the assembly map, we relate in an interesting way the "$\Gamma$-spheres" $\Gamma^n/\partial\Gamma^n$ to the $\Gamma$-sets which represent the $n$-fold smash product of based spaces, cf. Proposition \ref{propGamPG}. 

The stable model structure of Bousfield and Friedlander is obtained by left Bousfield localization of the strict model structure. 
More precisely, while cofibrations are fixed, one extends weak equivalences. 
Bousfield and Friedlander \cite{BouHT} proved that such a localization exists by taking as new weak equivalences precisely the stable equivalences.
Schwede \cite{SStHomAlgGS} considered slightly different strict and stable model structures on $\Gamma$-spaces, the so called Q-model structures with less cofibrations and more fibrations, but the same classes of weak equivalences.

We recall here the \emph{stable model structure} on $\Gamma$-spaces (as defined by Bousfield and Friedlander) by specifying its cofibrations and 
weak equivalences. Since $\Gamma^{op}$ is a generalized Reedy category in the sense of Berger and Moerdijk, each $\Gamma$-space $A$ has a skeletal filtration $$...\rightarrow sk_{n-1}(A) \rightarrow sk_n (A) \rightarrow sk_{n+1}(A)\rightarrow...$$ 
where $sk_n(A)$ denotes the sub-$\Gamma$-object of $A$ generated by the based spaces $A(0),\dots,A(n)$. 
The $n$-th latching object $L_n(A)$ is then defined to be the based space $(sk_{n-1}(A))(n)$. 
The latter comes equipped with a $\Sigma_n$-equivariant map $L_n(A)\rightarrow A(n)$, where $\Sigma_n$ denotes the automorphism-group of $n$ in $\Gamma^{op}$.

A map of $\Gamma$-spaces $A \rightarrow B$ is then called a \emph{cofibration} if the induced comparison map $A(n)\cup_{L_n(A)}L_n(B)\rightarrow B(n)$ is a $\Sigma_n$-cofibration in $\mathcal E$
(which means that it has the left lifting property with respect to $\Sigma_n$-equivariant acyclic fibrations) for each $n \geq 1$. 
A discrete $\Gamma$-space $A$ is cofibrant if and only if for all $n\geq 1$, $\Sigma_n$ acts freely on $A(n)-L_n(A)$. 
In particular, all subobjects of the representable $\Gamma$-spaces $\Gamma^n$ are cofibrant.
This property fails for Schwede's strict and stable $Q$-model structure, which is the main reason for which we are forced to use 
Bousfield and Friedlander's strict and stable model structures instead.

A map of $\Gamma$-spaces $A \rightarrow B$ is called a stable equivalence if its \emph{spectrification} $\Phi A\rightarrow \Phi B$ is a stable equivalence of spectra. We recall that a \emph{spectrum} in $\mathcal E$ consists of a sequence $(X_n)$ of objects in $\mathcal{E}_*$ equipped with structural maps $S^1\wedge X_n\rightarrow X_{n+1}$ ($n\geq 0$). 
Any \emph{strong} endofunctor $\underline{A}$ takes spectra to spectra. Indeed, $(\underline{A}(X_n))$ comes equipped with structural maps $$S^1\wedge \underline{A}(X_n)\rightarrow \underline{A}(S^1\wedge X_n)\rightarrow \underline{A}(X_{n+1})$$

Since $\mathcal{E}_*$ is a pointed model category, there is a canonical suspension functor for $\mathcal{E}_*$. In particular, $\mathcal{E}_*$ possesses a \emph{sphere-spectrum} $\mathbb{S}$ such that $\mathbb{S}_n$ is a model for the $n$-sphere $S^n$. The spectrification functor of Segal \cite{SECCT} is then defined by $\Phi A=\underline{A}(\mathbb{S})$.

Recall also that a stable equivalence of spectra $(X_n)\rightarrow (Y_n)$ is defined to be a map of spectra that induces an isomorphism on \emph{stable homotopy groups}. %This amounts to the condition that the connectivities $r_n$ of the homotopy fibers of $X_n\varnothing\rightarrow Y_n$ have the property that $\lim_{n\to\infty}r_n-n=\infty$.

Bousfield and Friedlander show that the category of $\Gamma$-spaces equipped with these cofibrations and weak equivalences (i.e. stable equivalences) forms a model category. We shall call a map of $\Gamma$-spaces, which is at once a cofibration and a stable equivalence, a \emph{stably acyclic cofibration}. We define the stable homotopy groups of a $\Gamma$-space through its spectrification: $\pi_n^{st}(A)=\pi_n^{st}(\Phi A)$. 

\begin{lem}\label{lemLESHG}
Every cofibration of $\Gamma$-spaces $A \rightarrowtail B$ induces a long exact sequence of stable homotopy groups
$$ ....\rightarrow \pi^{st}_n(A)\rightarrow \pi^{st}_n(B)\rightarrow \pi^{st}_n(B/A)\rightarrow...$$ 
In particular,
\begin{enumerate}
 \item The cofibration is a stably acyclic cofibration if and only if its cofiber is stably acyclic (i.e. the stable homotopy groups of $B/A$ are trivial); %$*\rightarrow B/A$ is a stable acyclic cofibration or else 
 \item For every natural transformation of cofiber sequences 
%$$(\alpha,\beta,\gamma):(A\rightarrow B \rightarrow B/A)\rightarrow (A'\rightarrow B'\rightarrow B'/A')$$ 
\begin{diagram}{}
\label{assff}
$$
\shorthandoff{;:!?}
\xymatrix @!0 @C=2.5cm @R=2cm{\relax 
 A \ar[r]^{f} \ar[d]_{\alpha} &  B \ar[r]^{g} \ar[d]_{\beta} & B/A \ar[d]_{\gamma} \\ 
 A' \ar[r]^{f'} &  B' \ar[r]^{g'} & B'/A'
}
$$
\end{diagram}
if two among $(\alpha,\beta,\gamma)$ are stable equivalences then so is the third.
 %si vous avez le temps, essayez de dessiner un diagramme commutatif contenant les "cofiber sequences" horizontalement et les alpha,beta,gamma verticalement.
\end{enumerate}
\end{lem}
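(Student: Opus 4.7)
The plan is to reduce the three assertions to the classical long exact sequence in stable homotopy associated to a cofiber sequence of spectra, obtained by applying the spectrification functor $\Phi$ to the short exact sequence of $\Gamma$-spaces $A\rightarrowtail B \twoheadrightarrow B/A$.

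First I would verify that spectrification $\Phi:A\mapsto\underline{A}(\mathbb{S})$ preserves the relevant cofiber-sequence structure. Since the prolongation $\underline{A}(X)=A\otimes_{\Gamma}X^-$ is obtained by enriched left Kan extension along $\Gamma^{op}\hookrightarrow\mathcal{E}_*$, the coend formula shows that $\underline{(-)}(X)$ is colimit-preserving in the $\Gamma$-space variable. Hence the pushout $B/A=B\cup_A*$ in $\Gamma$-spaces translates, for every based space $X$, into a level-wise quotient $\underline{B/A}(X)\cong\underline{B}(X)/\underline{A}(X)$, and in particular $\Phi(B/A)_n\cong(\Phi B)_n/(\Phi A)_n$ for all $n\ge 0$. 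It then remains to verify that $(\Phi A)_n\to(\Phi B)_n$ is a cofibration in $\mathcal{E}_*$. This follows from the Reedy-like definition of cofibrations of $\Gamma$-spaces: the comparison maps $A(k)\cup_{L_k(A)}L_k(B)\to B(k)$ are $\Sigma_k$-cofibrations, and splicing them together through the coend with the cofibrant $\Sigma_k$-objects $\mathbb{S}_n^{\wedge k}$ gives the desired cofibration at each spectrum level.

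Once $\Phi A\to\Phi B\to\Phi(B/A)$ is known to be a level-wise cofiber sequence of spectra, the classical long exact sequence in stable homotopy of spectra yields
$$\cdots\to\pi^{st}_n(A)\to\pi^{st}_n(B)\to\pi^{st}_n(B/A)\to\pi^{st}_{n-1}(A)\to\cdots$$
which, by definition of stable homotopy groups of $\Gamma$-spaces, is the required long exact sequence. Assertion $(a)$ is then a direct consequence: $A\to B$ is a stable equivalence if and only if it induces isomorphisms on all stable homotopy groups, which by exactness is equivalent to the vanishing of $\pi^{st}_*(B/A)$, i.e.\ to stable acyclicity of the cofiber. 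For assertion $(b)$, the natural transformation of cofiber sequences is sent by $\Phi$ to a map of cofiber sequences of spectra, hence induces a ladder between the two associated long exact sequences of stable homotopy groups; the classical five lemma then shows that whenever two among $(\alpha,\beta,\gamma)$ induce isomorphisms in every degree, so does the third, which is exactly the claim that the third map is a stable equivalence.

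The main technical obstacle is the verification in the first step that the Reedy cofibration condition on $A\rightarrowtail B$ is robust enough to guarantee, via the coend formula for $\underline{A}$, a genuine cofibration $\underline{A}(\mathbb{S}_n)\to\underline{B}(\mathbb{S}_n)$ in $\mathcal{E}_*$ for every $n$; this relies on the freeness of the $\Sigma_k$-action on the cells $A(k)-L_k(A)$ and $B(k)-L_k(B)$ together with an equivariant cofibrancy property of the smash powers of the spheres $\mathbb{S}_n$. Once this filtration-by-latching-objects argument is in place, the remainder of the proof is an essentially formal consequence of the classical stable homotopy theory of spectra.
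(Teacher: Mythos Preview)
Your proposal is correct and follows essentially the same route as the paper: reduce to the long exact sequence for cofiber sequences of spectra via the spectrification functor $\Phi$, then deduce (a) from exactness and (b) from the five lemma. The only difference is one of packaging: where you verify by hand (via the coend formula and a latching-filtration argument) that $\Phi$ preserves cofibrations and cofibers, the paper simply invokes the fact that $\Phi$ is a left Quillen functor for a suitable stable model structure on spectra (citing Bousfield--Friedlander and Schwede), which absorbs your ``main technical obstacle'' into a single citation.
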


\begin{proof}
The main statement follows from the analogous statement for cofiber sequences of spectra because Segal's spectrification functor $\Phi:(\mathcal {GS})\rightarrow (Spectra)$ is a left Quillen functor with respect to a suitable stable model structure on spectra (cf. \cite{BouHT} and \cite{SStHomAlgGS}, Lemma 1.3.).
%The main statement and the proof of Lemma \ref{lemLESHG}, in the case $\mathcal{E}=SSet$ is given in .
%In the case $\mathcal{E}=Top$, a cofibration of $\Gamma$-spaces is in particular a pointwise cofibration. 
Then $(a)$ and $(b)$ are immediate corollaries of well known results in homological algebra (for example $(b)$ is a consequence of the Five Lemma).
\end{proof}

For any $\Gamma$-space $A$ and any based space $X$ there is a $\Gamma$-space $X\wedge A$ defined by $(X \wedge A)(n)=X\wedge A(n)$. 
In other words, the category of $\Gamma$-spaces is tensored over the category $\mathcal {E}_*$ of based spaces. 
This structure is compatible with Segal's spectrification functor in the following sense: there is a canonical map of spectra $X\wedge\Phi(A)\to\Phi(X\wedge A)$ which is a stable equivalence for any cofibrant $\Gamma$-space A and any cofibrant based space $X$, cf. Lemma 4.1 of Bousfield-Friedlander \cite{BouHT}.

The following lemma is a fundamental tool (also used by Lydakis \cite{LydSG}, 3.11).
\begin{lem}\label{lemQuotisom}
For any cofibrant $\Gamma$-space A and any $n>0$, there is a cofibrant based space $\tilde{A}(n)$ such that the quotient 
$sk_n(A)/sk_{n-1}(A)$ is isomorphic (as $\Gamma$-space) to $\tilde{A}(n)\wedge \Gamma^n/\partial\Gamma^n$, where the boundary $\partial\Gamma^n$ is given by $sk_{n-1}(\Gamma^n)$.
\end{lem}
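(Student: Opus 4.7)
The plan is to build the quotient $sk_n(A)/sk_{n-1}(A)$ directly from the generalized Reedy presentation of $\Gamma^{op}$ (Berger--Moerdijk), and then to extract the desired smash decomposition from the $\Sigma_n$-cofibrancy condition encoded in the definition of cofibrant $\Gamma$-spaces.

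First, I would recall that since $\Gamma^{op}$ is a generalized Reedy category, for any $\Gamma$-space $A$ the $n$-skeleton is obtained from $(n-1)$-skeleton by the pushout
$$
\xymatrix{
A(n)\wedge_{\Sigma_n}\partial\Gamma^n \cup_{L_n(A)\wedge_{\Sigma_n}\partial\Gamma^n} L_n(A)\wedge_{\Sigma_n}\Gamma^n \ar[r] \ar[d] & A(n)\wedge_{\Sigma_n}\Gamma^n \ar[d] \\
sk_{n-1}(A) \ar[r] & sk_n(A)
}
$$
in $\mathcal{GS}$, where $\partial\Gamma^n=sk_{n-1}(\Gamma^n)$ exactly as in the statement. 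Dividing by $sk_{n-1}(A)$ collapses the top-left corner to the basepoint, so the cofiber of $sk_{n-1}(A)\hookrightarrow sk_n(A)$ becomes the cofiber of the top horizontal arrow. A straightforward diagram chase in $\Gamma$-spaces then yields the identification
$$
sk_n(A)/sk_{n-1}(A)\;\cong\;(A(n)/L_n(A))\wedge_{\Sigma_n}(\Gamma^n/\partial\Gamma^n).
$$

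Second, I would exploit cofibrancy of $A$: by the very definition recalled in Section~\ref{sAsmPpax}, the latching map $L_n(A)\hookrightarrow A(n)$ is a $\Sigma_n$-cofibration in the pointed cartesian model category $\mathcal{E}_*$. For the projective equivariant model structure, this forces the quotient $A(n)/L_n(A)$ to be a \emph{free} based $\Sigma_n$-object, that is, isomorphic to $\tilde{A}(n)\wedge(\Sigma_n)_+$ where $\tilde{A}(n):=(A(n)/L_n(A))/\Sigma_n$ is cofibrant in $\mathcal{E}_*$. In the discrete case this is exactly the paragraph of Section~\ref{sAsmPpax} asserting that $\Sigma_n$ acts freely on $A(n)-L_n(A)$; in the general cartesian setting it follows from the standard recognition of projective cofibrations for a finite group action.

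Third, substituting the decomposition and using the cancellation $(\Sigma_n)_+\wedge_{\Sigma_n}X\cong X$, I obtain
$$
sk_n(A)/sk_{n-1}(A)\;\cong\;\tilde{A}(n)\wedge(\Sigma_n)_+\wedge_{\Sigma_n}(\Gamma^n/\partial\Gamma^n)\;\cong\;\tilde{A}(n)\wedge\Gamma^n/\partial\Gamma^n,
$$
as required.

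The main obstacle is the second step, namely the clean factorisation of $A(n)/L_n(A)$ as a smash $\tilde{A}(n)\wedge(\Sigma_n)_+$ in the generality of a cartesian Quillen model category $\mathcal{E}_*$. For $\mathcal{E}=\mathbf{SSet}$ or $\mathbf{Top}$ this is immediate from the description of projective $\Sigma_n$-cofibrations as free-relative monomorphisms, but in the axiomatic setup of Section~\ref{GspGrGth} one has to invoke the Berger--Moerdijk characterisation of cofibrant objects in functor categories indexed by a generalised Reedy category so as to guarantee both the free-action decomposition and the cofibrancy of the orbit object $\tilde{A}(n)$.
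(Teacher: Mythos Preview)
Your argument follows the same route as the paper: present $sk_n(A)$ as a pushout over $sk_{n-1}(A)$ along a skeletal attaching map, then pass to cofibers. The difference lies in how the $\Sigma_n$-symmetry is handled. The paper writes the attaching cell in plain form $(L_n(A)\wedge\Gamma^n)\cup(A(n)\wedge\partial\Gamma^n)\to A(n)\wedge\Gamma^n$ and simply sets $\tilde A(n)=A(n)/L_n(A)$, noting only that this quotient is cofibrant because $L_n(A)\to A(n)$ is a $\Sigma_n$-cofibration. You instead use the balanced smash $\wedge_{\Sigma_n}$ dictated by the generalised Reedy structure, obtain $sk_n(A)/sk_{n-1}(A)\cong(A(n)/L_n(A))\wedge_{\Sigma_n}(\Gamma^n/\partial\Gamma^n)$, and then invoke freeness of the $\Sigma_n$-action on $A(n)/L_n(A)$ to cancel the group, taking $\tilde A(n)=(A(n)/L_n(A))/\Sigma_n$.

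Your treatment is the more precise one: testing the paper's non-equivariant formula on $A=\Gamma^n$ itself gives $(\Sigma_n)_+\wedge(\Gamma^n/\partial\Gamma^n)$ rather than $\Gamma^n/\partial\Gamma^n$, so the smash in the paper's proof must be read as balanced over $\Sigma_n$ for the statement to hold. What your version buys is a genuinely correct argument in the Berger--Moerdijk framework; what it costs is the extra free-decomposition step you flag as the main obstacle. That step is indeed routine for $\mathcal E=\mathbf{SSet}$ or $\mathbf{Top}$ (projective $\Sigma_n$-cofibrant objects are free) and is exactly what the paper's cofibrancy remark is gesturing at, so your caution there is well placed but not a gap.
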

\begin{proof}
It follows from the definition of the skeletal filtration of a $\Gamma$-space (cf. \cite{BouHT}, \cite{LydSG}, \cite{BMENRC}) that $sk_n(A)$ is obtained from $sk_{n-1}(A)$ by attaching the cofibration 
$$(L_n(A)\wedge \Gamma^n)\cup(A(n)\wedge \partial\Gamma^n)\rightarrow A(n)\wedge \Gamma^n$$ along a canonical attaching map to $sk_{n-1}(A)$. 
Taking cofibers, we get an isomorphism (confer the proof of Corollary \ref{propBFMMC})
$$(A(n)/L_n(A))\wedge (\Gamma^n/\partial\Gamma^n)\cong sk_n(A)/sk_{n-1}(A).$$ 
Since $A$ is a cofibrant $\Gamma$-space the inclusion  $L_n(A) \rightarrow A(n)$  is a cofibration of $\Sigma_n$-spaces; therefore we can take the cofibrant quotient $A(n)/L_n(A)$ for $\tilde{A}(n)$.
\end{proof}

\begin{prop}\label{propGamPG}
For each $n>0$ one has:
\begin{enumerate}
 \item The endofunctor $\underline{\Gamma^n}$ takes a based space $X$ to the $n$-fold cartesian product $X^n$; 
 \item The boundary $\partial\Gamma_n=sk_{n-1}(\Gamma^n)$ contains a uniquely determined $\Gamma$-subset $\partial_{out}\Gamma^n$ ("outer" boundary) with the property that the endofunctor associated to the quotient $\Gamma^n/\partial_{out}\Gamma^n$ takes a based space $X$ to the $n$-fold smash product $X^{\wedge n}$;
 \item The poset of \emph{monogenic} subobjects of $\Gamma^n$ not contained in $\partial_{out}\Gamma^n$ is anti-isomorphic to the partition 
lattice $\Pi_n$ of an $n$-element set;
 \item The quotient map $\Gamma^n\to\Gamma^n/\partial_{out}\Gamma^n$ takes the lattice of monogenic subobjects of $\Gamma^n$ to an isomorphic lattice of subobjects of $\Gamma^n/\partial_{out}\Gamma^n$. 
Each of these image-subobjects is isomorphic to a $\Gamma^k/\partial_{out}\Gamma^k$ where $k$ is the number of pieces of the corresponding partition;
 \item There is a cofiber sequence of (discrete) $\Gamma$-spaces 
$$\partial\Gamma^n/\partial_{out}\Gamma^n\to\Gamma^n/\partial_{out}\Gamma^n\to\Gamma^n/\partial\Gamma^n$$
where the first term is canonical colimit (over $\Pi_n^*$) of $\Gamma$-spaces of the form $\Gamma^k/\partial_{out}\Gamma^k$. 
 %\item The endofunctor $\underline{sk_1(\Gamma^n)}$ takes a based space $X$ to the $n$-fold wedge $X^{\vee n}$;
 %\item The endofunctor $\underline{\partial\Gamma^n}=\underline{sk_{n-1}(\Gamma^n)}$ takes a based space $X$ to the subspace $\{(x_1,\dots,x_n)\in X^n|\exists i: x_i=*\}$ in $X^n$;
 %\item The endofunctor $\underline{\Gamma^n/\partial\Gamma^n}$ takes a based space $X$ to the $n$-fold smash-product $X^{\wedge n}$.
\end{enumerate}
\end{prop}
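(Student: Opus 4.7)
The plan is to treat the five parts in order, using the coend description of the endofunctor $\underline{A}(X) = A \otimes_\Gamma X^-$ throughout.

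First, for (a), I would simply apply the enriched Yoneda lemma: since $\Gamma^n = \Gamma(-,n)$ is representable, the coend collapses to $X^-(n) = X^n$. For (b), I would define $\partial_{out}\Gamma^n$ as the sub-$\Gamma$-space whose value at $m$ consists of those based maps $f\colon n \to m$ in $\Gamma^n(m) = \Gamma(m,n)$ such that $f^{-1}(0) \supsetneq \{0\}$, i.e.\ that send at least one non-basepoint to the basepoint. Closure under the $\Gamma$-action is immediate from the observation that if $f$ sends some $i \in \{1,\dots,n\}$ to $0$, then so does $h\circ f$ for any $h$. Then I would compute the coend $\underline{\partial_{out}\Gamma^n}(X)$: under the map $\Gamma^n(m)\wedge X^m \to X^n$ sending $(f,(y_1,\dots,y_m))$ to $(y_{f(1)},\dots,y_{f(n)})$ (with $y_0 = *$), the image of $\partial_{out}\Gamma^n \otimes_\Gamma X^-$ inside $X^n$ consists precisely of tuples with at least one basepoint coordinate, i.e.\ the fat wedge $W_nX$. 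The quotient $X^n/W_nX$ is the $n$-fold smash product $X^{\wedge n}$, and uniqueness is forced: the condition $X \mapsto X^{\wedge n}$ pins down the subfunctor of $\Gamma^n$ represented.

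For (c), I would use epi-mono factorisation in the category of finite based sets: any $f \in \Gamma^n(k)$ factors as $n \twoheadrightarrow k_0 \hookrightarrow k$, and since monomorphisms in based sets split, the subobject generated by $f$ equals that generated by its surjective part $\pi\colon n \twoheadrightarrow k_0$. Two surjections generate the same subobject iff they differ by an isomorphism of the target, so monogenic subobjects of $\Gamma^n$ correspond bijectively to partitions of $\{0,1,\dots,n\}$ into a distinguished block (containing $0$) and several unlabelled non-empty blocks. Those \emph{not} contained in $\partial_{out}\Gamma^n$ are exactly those with trivial distinguished block, i.e.\ unrestricted partitions of $\{1,\dots,n\}$, hence the underlying set of $\Pi_n$. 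Containment of subobjects corresponds to the source surjection factoring through the target surjection, which is the same as the associated partition being a refinement, giving the claimed anti-isomorphism with the refinement order.

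For (d), given a partition with $k$ blocks represented by $\pi\colon n \twoheadrightarrow k$ with $\pi^{-1}(0) = \{0\}$, I would show the intersection of the monogenic subobject $\Gamma^k \hookrightarrow \Gamma^n$ (induced by $\pi$) with $\partial_{out}\Gamma^n$ equals $\partial_{out}\Gamma^k$: an element $g\circ\pi \in \Gamma^n(m)$ sends some non-basepoint to $0$ iff $g \in \Gamma^k(m)$ does, precisely because $\pi$ itself does not. So taking the image under the quotient map preserves the lattice structure and yields isomorphic copies of $\Gamma^k/\partial_{out}\Gamma^k$. Finally for (e), a map $f\colon n \to m$ in $\partial_{out}\Gamma^n(m)$ has image of cardinality at most $n-1$, hence factors through some object of size $\leq n-1$, placing it in $sk_{n-1}\Gamma^n = \partial\Gamma^n$; the chain $\partial_{out}\Gamma^n \subset \partial\Gamma^n \subset \Gamma^n$ then yields the required cofibre sequence. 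The description of $\partial\Gamma^n/\partial_{out}\Gamma^n$ as a colimit over $\Pi_n^* := \Pi_n\setminus\{\text{discrete partition}\}$ of the $\Gamma^k/\partial_{out}\Gamma^k$ follows from the fact that $\partial\Gamma^n$ is the union of the monogenic subobjects corresponding to proper (non-discrete) partitions, combined with the lattice isomorphism of (d). The main obstacle I expect is purely bookkeeping in (b)--(c): translating cleanly between the Yoneda/coend description of subobjects, the combinatorics of based surjections, and the resulting partition lattice, and in particular verifying that the combinatorial condition defining $\partial_{out}\Gamma^n$ really is stable under the $\Gamma$-action and really cuts out exactly the fat wedge inside the coend.
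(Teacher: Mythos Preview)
Your proposal is correct and follows essentially the same approach as the paper. The only differences are cosmetic: the paper works in Segal's ``disjoint subsets'' notation $\phi=(\phi_1,\dots,\phi_k)$ for operators $k\to n$ (so your condition $f^{-1}(0)\supsetneq\{0\}$ becomes the non-covering condition $\phi_1\cup\cdots\cup\phi_k\neq\{1,\dots,n\}$), and in (b) the paper describes $\partial_{out}\Gamma^n$ as a union of $n$ copies of $\Gamma^{n-1}$ along outer face operators rather than computing the fat wedge directly via the coend, but your identification $\partial_{out}\Gamma^n\cap(\pi)=\partial_{out}\Gamma^k$ in (d) and the epi--mono/partition argument in (c) match the paper's exactly.
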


\begin{proof}~\\
\begin{enumerate}
 \item $n$ is the n-fold coproduct of $1$ in $\Gamma^{op}$ so that $\Gamma^n$ is the $n$-fold cartesian product of $\Gamma^1$ in the category of (discrete) $\Gamma$-spaces. The coend formula for the associated endofunctors commutes with these finite products.
 \item We shall use Segal's original description of the operators of $\Gamma$,
 i.e. an operator $\phi:k\to n$ is given by a $k$-tuple $(\phi_1,\dots,\phi_k)$ of pairwise disjoint subsets of $\{1,\dots,n\}$. 
Then the Reedy-boundary of $\Gamma^n=\Gamma(-,n)$ is given by $$\partial\Gamma^n(k)=\{\phi:k\to n\text{ non-invertible}\}$$
We define a second smaller boundary by $$\partial_{out}\Gamma^n(k)=\{\phi:k\to n\text{ non-covering}\}$$
where an operator $\phi=(\phi_1,\dots\phi_k):k\to n$ is \emph{non-covering} if $\phi_1\cup\cdots\cup\phi_k\not=\{1,\dots,n\}$.  
$\partial_{out}\Gamma^n$ is a union of $n$ copies of $\Gamma^{n-1}$ where the inclusions are induced by the $n$ outer face operators 
$$(\{1\},\dots,\{i-1\},\{i+1\},\dots,\{n\}):n-1\to n$$ in $\Gamma$. 
By $(a)$, each inclusion induces, on the level of associated endofunctors, a specific inclusion of the $(n-1)$-fold cartesian product into the $n$-fold cartesian product, namely by specifying which of the $n$ factors in the image is at the base point.
It follows that the quotient $\Gamma^n/\partial_{out}\Gamma^n$ defines the endofunctor $X\mapsto X^{\wedge n}$.
%Dans (c) rajouter (apres order-preserving) non-degenerate (ce que je veux dire c'est que ce sont les applications qui sont surjectives, croissantes, et injective en dehors du point base, donc 1,2,...,n sont appliqués dans l'ordre sur 1,...,n-1,mais un d'entre eux est appliqués sur *;
 \item We call a $\Gamma$-set \emph{monogenic} if it can be generated by a single element. 
The monogenic subobjects of $\Gamma^n$ not contained in $\partial_{out}\Gamma^n$ correspond thus to non-degenerate elements of $\Gamma^n$ which do not belong to $\partial_{out}\Gamma^n$. 
The latter are precisely the partitions of $\{1,\dots,n\}$ where the number of pieces is given by the domain of the operator $\phi=(\phi_1,\dots,\phi_k):k\to n$. 
A partition $\phi$ into $k$ pieces is refined by a partition $\psi$ into $l$ pieces if and only if there is a $\Gamma$-operator $\rho:k\to l$ such that $\phi=\psi\rho$; this means that the subobject generated by $\phi$ is contained in the subobject generated by $\psi$ if and only if the partition $\psi$ refines the partition $\phi$.  
 \item The first assertion follows from the fact that the generator of each of these subobjects does not belong to the outer boundary. 
The second assertion follows from the identification $\partial_{out}\Gamma^n\cap(\phi)=\partial_{out}\Gamma^k$ for each covering $\Gamma$-operator $\phi:k\to n$. 
 \item It is clear that the boundary $\partial\Gamma^n$ contains the outer boundary $\partial_{out}\Gamma^n$ so that the left arrow is well defined and monic. 
Since its quotient is cofibrant the left arrow is a cofibration. 
The second assertion follows then from $(d)$ and the fact that the boundary $\partial\Gamma^n$ is the union of proper, monogenic subobjects as considered in $(c)$. 
\end{enumerate}
\end{proof}

\begin{rem}
For $n=2$, the outer boundary $\partial_{out}\Gamma^2$ represents the endofunctor which sends $X$ to the wedge $X\vee X$. 
It is the union of two copies of $\Gamma^1$ (representing the identity functor) along the two face operators $(\{1\}):1\to 2$ and $(\{2\}):1\to 2$. 
The whole boundary $\partial\Gamma^2$ contains a third copy of $\Gamma^1$ glued in by the covering face operator $(\{1,2\}):1\to 2$. 
We therefore get $\partial\Gamma^2/\partial_{out}\Gamma^2=\Gamma^1/\partial_{out}\Gamma^1=\Gamma^1$. 
The cofiber sequence above reads then as follows: $$\Gamma^1\to\Gamma^2/\partial_{out}\Gamma^2\to\Gamma^2/\partial\Gamma^2$$
This shows (in virtue of Proposition \ref{propGamPG}$(b)$ and Lemma \ref{lemLESHG}(b)) that the endofunctor defined by the $\Gamma$-sphere $\Gamma^2/\partial\Gamma^2$ preserves stable equivalences between connective spectra. 
The same is true for the higher $\Gamma$-spheres $\Gamma^n/\partial\Gamma^n$ and indeed for any cofibrant $\Gamma$-space $A$, more precisely:
\end{rem}

\begin{prop}\label{propCircse}
For each cofibrant $\Gamma$-space $A$, the left circle-product $A\circ-$ takes stable equivalences between cofibrant $\Gamma$-spaces to stable equivalences.
\end{prop}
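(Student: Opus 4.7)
The plan is to carry out two nested reductions exploiting the cellular structure of cofibrant $\Gamma$-spaces developed in Lemma \ref{lemQuotisom} and the cofiber sequence of Proposition \ref{propGamPG}(e).

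\emph{Reduction 1: skeletal filtration of $A$.} Because the circle product $-\circ B$ preserves colimits and cofibrations in its first variable (the prolongation satisfies $\underline{-\circ B}=\underline{-}\circ\underline{B}$, and enriched left Kan extensions are cocontinuous), applying it to the skeletal filtration of $A$ produces a filtration of $A\circ B$ whose successive cofibers are $\tilde{A}(n)\wedge((\Gamma^n/\partial\Gamma^n)\circ B)$. Combining Lemma \ref{lemLESHG}(b) with Reedy's patching Lemma \ref{lemReedGen} and telescope Lemma \ref{lemTeles}, it suffices to show that $\tilde{A}(n)\wedge((\Gamma^n/\partial\Gamma^n)\circ -)$ preserves stable equivalences between cofibrant $\Gamma$-spaces for every $n\geq 1$. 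Since smashing with the cofibrant based space $\tilde{A}(n)$ preserves stable equivalences of $\Gamma$-spaces (by compatibility of spectrification with pointed tensoring, \cite{BouHT}, Lemma 4.1), the problem reduces to showing that $(\Gamma^n/\partial\Gamma^n)\circ -$ preserves stable equivalences between cofibrant $\Gamma$-spaces for every $n\geq 1$.

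\emph{Reduction 2: induction on $n$.} The case $n=1$ is trivial, since $\Gamma^1/\partial\Gamma^1=\Gamma^1$ is the monoidal unit for $\circ$ and so $\Gamma^1\circ -$ is the identity functor. For $n>1$, apply $-\circ B$ to the cofiber sequence of Proposition \ref{propGamPG}(e)
$$\partial\Gamma^n/\partial_{out}\Gamma^n \longrightarrow \Gamma^n/\partial_{out}\Gamma^n \longrightarrow \Gamma^n/\partial\Gamma^n.$$
This is again a cofiber sequence of cofibrant $\Gamma$-spaces, so Lemma \ref{lemLESHG}(b) reduces the inductive step to showing that both $(\Gamma^n/\partial_{out}\Gamma^n)\circ -$ and $(\partial\Gamma^n/\partial_{out}\Gamma^n)\circ -$ preserve stable equivalences between cofibrant $\Gamma$-spaces. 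The left term is, by Proposition \ref{propGamPG}(e), a cellular colimit indexed by the reduced partition lattice $\Pi_n^*$ of pieces $\Gamma^k/\partial_{out}\Gamma^k$ with $k<n$, so a further application of Lemmas \ref{lemReedGen} and \ref{lemTeles} reduces its treatment to that of the middle-type terms at lower indices.

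\emph{Main obstacle: the middle term $(\Gamma^n/\partial_{out}\Gamma^n)\circ B$.} By Proposition \ref{propGamPG}(b) the associated endofunctor is $X\mapsto\underline{B}(X)^{\wedge n}$, so its spectrification is the spectrum with $m$-th level $\underline{B}(S^m)^{\wedge n}=\Phi(B)_m^{\wedge n}$. The core technical step is to prove directly, without invoking Lydakis's theorem (which is the very statement we are establishing), that this levelwise $n$-fold smash of the connective cofibrant spectrum $\Phi(B)$ depends homotopically on $B$: a stable equivalence $B\to C$ between cofibrant $\Gamma$-spaces induces a stable equivalence between the resulting levelwise $n$-fold smash spectra. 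The argument proceeds by an auxiliary induction on $n$, writing $\Phi(B)_m^{\wedge n}=\Phi(B)_m\wedge\Phi(B)_m^{\wedge(n-1)}$, and exploits the fact that for cofibrant $B$ the connectivity of the based space $\Phi(B)_m=\underline{B}(S^m)$ grows with $m$, so that classical connectivity estimates for smash products of pointed spaces allow one to promote the stable equivalence $\Phi(B)\to\Phi(C)$ to a stable equivalence of the $n$-fold smash spectra.
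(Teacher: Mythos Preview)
Your reductions are essentially the paper's: skeletal filtration of $A$, then the cofiber sequence of Proposition~\ref{propGamPG}(e) applied in the first circle variable, reducing everything to the endofunctors $\underline{\Gamma^k/\partial_{out}\Gamma^k}$, i.e.\ levelwise $k$-fold smash of the connective spectrum $\Phi(B)$.

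The difference lies in how you treat this last step. You set up an auxiliary induction on $n$, factoring $\Phi(B)_m^{\wedge n}=\Phi(B)_m\wedge\Phi(B)_m^{\wedge(n-1)}$ and appealing to ``classical connectivity estimates'' to promote the stable equivalence. This is workable but roundabout, and the sketch leaves the actual promotion step unjustified (levelwise smash of spectra is \emph{not} the derived smash, so one cannot simply quote that smash preserves stable equivalences). The paper's observation is sharper and makes the induction on $n$ unnecessary: since $\Phi(B)_m$ is $(m{-}1)$-connected, $\Phi(B)_m^{\wedge k}$ is $(km{-}1)$-connected, so for $k\geq 2$ the connectivity of the $m$-th level minus $m$ tends to infinity and the spectrum $\underline{\Gamma^k/\partial_{out}\Gamma^k}(\Phi(B))$ is \emph{stably trivial}. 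Hence for $k\geq 2$ any map between such spectra is automatically a stable equivalence; only $k=1$ (the identity functor) has content. This one-line connectivity count replaces your entire ``Main obstacle'' paragraph and also shows that your ``induction on $n$'' in Reduction~2 is superfluous: once all $\Gamma^k/\partial_{out}\Gamma^k$ are handled directly, the cofiber sequence and colimit decomposition of Proposition~\ref{propGamPG}(e) finish the job for every $n$ simultaneously.
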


\begin{proof}
By the telescope lemma of Reedy, cf. Lemma \ref{lemTeles}, it suffices to establish the property for each $sk_n(A),\,n\geq 0$. 
For $n=0$ it holds. By Lemma \ref{lemLESHG} $(b)$ above and an induction on $n$, it then suffices to show the property for 
$sk_n(A)/sk_{n-1}(A)=\tilde{A}_n\wedge \Gamma^n/\partial\Gamma^n$ (by Lemma \ref{lemQuotisom} above). 
Smashing by a cofibrant based space preserves stable equivalences between cofibrant spectra so that it is finally sufficient to prove that $\Gamma^n/\partial\Gamma^n\circ-$ has the required property.

In other words, we have to show that for any stable equivalence of cofibrant $\Gamma$-spaces $B\rightarrow C$, the map of spectra 
$$\underline{\Gamma^n/\partial\Gamma^n}(\underline{B}(\mathbb{S}))\rightarrow\underline{\Gamma^n/\partial\Gamma^n}(\underline{C}(\mathbb{S}))$$
is a stable equivalence knowing that $\underline{B}(\mathbb{S})\rightarrow \underline{C}(\mathbb{S})$ is a stable equivalence of connective cofibrant spectra.
By Proposition \ref{propGamPG}$(e)$ we know that $\Gamma/\partial\Gamma^n$ is a quotient of $\Gamma^n/\partial_{out}\Gamma^n$ and that $\partial\Gamma^n/\partial_{out}\Gamma^n$ itself is a "nice" colimit of $\Gamma$-spaces of the form $\Gamma^k/\partial_{out}\Gamma^k$. 
Applying Lemma \ref{lemLESHG}$(b)$ and the fact that the circle-product preserves colimits in the \emph{first} variable, it is thus sufficient to show that the strong endofunctors $\underline{\Gamma^k/\partial_{out}\Gamma^k}$ preserve stable equivalences between \emph{connective} cofibrant spectra. 
According to Proposition \ref{propGamPG} $(b)$ these endofunctors are given by $k$-fold smash product.
For $k=1$ the $1$-fold smash product (i.e. the identity functor) certainly has the required property. 
For $k\geq 2$, the levelwise $k$-fold smash product of a \emph{connective} spectrum $X=(X_n)$ has the property that $X_n^{\wedge k}$ is $(nk-1)$-connected, and hence, the connectivity of $X_n^{\wedge k}$ minus $n$ goes to infinity when $n$ goes to $\infty$. 
Thus, for $k\geq 2$ and any connective spectrum $X$, $\underline{\Gamma^k/\partial_{out}\Gamma^k}(X)$ is stably trivial. 
It follows that for $k\geq 2$, the endofunctors $\underline{\Gamma^k/\partial_{out}\Gamma^k}$ also have the required property.
\end{proof}

Beside the preceding proposition there is one other ingredient for our proof of Lydakis's theorem on the assembly map. 
It concerns a homotopical property of reduced bifunctors $F:\Gamma^{op}\times\Gamma^{op}\to\mathcal E$ which we call \emph{bi-$\Gamma$-spaces}. 
A bifunctor $F$ is called \emph{reduced} if $F(0,n)=F(m,0)=*$ where $*$ is the terminal object of $\mathcal E$. The following property is reminiscent of the well-known property of the diagonal of a bisimplicial set (or space). 
It differs insofar as we are not using the diagonal, but left Kan extension along the \emph{smash-product} $s:\Gamma^{op} \times \Gamma^{op}\to\Gamma^{op}:(m,n)\mapsto mn$.

Observe that the product of two generalized Reedy-categories is again a generalized Reedy category in the sense of Berger-Moerdijk \cite{BMENRC}. 
Therefore, it makes sense to speak of cofibrant bi-$\Gamma$-spaces.

\begin{prop}\label{propKanextEqGS}
Let $\phi:F(\cdot,\cdot)\to G(\cdot,\cdot)$ be a map of cofibrant bi-$\Gamma$-spaces and assume that either, for each $m$, $\phi(m,\cdot):F(m,\cdot)\to G(m,\cdot)$ is a stable equivalence of $\Gamma$-spaces or, for each $n$, $\phi(\cdot,n):F(\cdot,n)\to G(\cdot,n)$ is a stable equivalence of $\Gamma$-spaces.

Then, the left Kan extension $s_!\phi:s_!F\to s_!G$ is again a stable equivalence of $\Gamma$-spaces.
\end{prop}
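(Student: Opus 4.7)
The plan is to proceed by induction on the skeletal filtration in one variable, in exactly the spirit of the proof of Proposition \ref{propCircse}. Assume without loss of generality that $\phi(m,\cdot):F(m,\cdot)\to G(m,\cdot)$ is a stable equivalence of $\Gamma$-spaces for every $m$. The crucial preliminary observation is that, since by Definition \ref{defDayconv} the smash-product $\wedge$ of $\Gamma$-spaces is exactly the Day convolution along $s$, the left Kan extension $s_!$ sends an external product $A\boxtimes B$ (with $(A\boxtimes B)(m,n)=A(m)\wedge B(n)$) to the smash-product $A\wedge B$. In particular $s_!$ is a left Quillen functor with respect to the Reedy structure, commutes with colimits, and preserves cofibrations and cofiber sequences of cofibrant bi-$\Gamma$-spaces.

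I would then filter $F$ (and analogously $G$) by the skeletal filtration $sk^{(1)}_{-1}F\subset sk^{(1)}_{0}F\subset sk^{(1)}_{1}F\subset\cdots$ in the first variable, with colimit $F$. The bi-$\Gamma$-space analogue of Lemma \ref{lemQuotisom} identifies each subquotient $sk^{(1)}_{p}F/sk^{(1)}_{p-1}F$ with an external product $\tilde F_{p}\boxtimes(\Gamma^{p}/\partial\Gamma^{p})$, where $\tilde F_{p}$ is the cofibrant $\Gamma$-space in the second variable given by $n\mapsto F(p,n)/L_{p}^{(1)}F(n)$. Applying $s_!$ yields a tower of cofibrations of cofibrant $\Gamma$-spaces with colimit $s_!F$ and successive cofibers $\tilde F_{p}\wedge(\Gamma^{p}/\partial\Gamma^{p})$. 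By Reedy's telescope lemma (Lemma \ref{lemTeles}) combined with the five-lemma for cofiber sequences of stable homotopy groups (Lemma \ref{lemLESHG}$(b)$), it suffices to prove that each induced map
$$\tilde F_{p}\wedge(\Gamma^{p}/\partial\Gamma^{p})\longrightarrow\tilde G_{p}\wedge(\Gamma^{p}/\partial\Gamma^{p})$$
is a stable equivalence. An auxiliary induction on $p$, based on the cofiber sequence $L_{p}^{(1)}F\to F(p,\cdot)\to\tilde F_{p}$ of $\Gamma$-spaces in the second variable, the hypothesis on $\phi(p,\cdot)$, and one further application of Lemma \ref{lemLESHG}$(b)$, produces stable equivalences $\tilde F_{p}\to\tilde G_{p}$.

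The main obstacle is therefore the purely monoidal statement that smashing with the $\Gamma$-sphere $\Gamma^{p}/\partial\Gamma^{p}$ preserves stable equivalences between cofibrant $\Gamma$-spaces. I would attack this in complete parallel with the proof of Proposition \ref{propCircse}. Using the cofiber sequence of Proposition \ref{propGamPG}$(e)$
$$\partial\Gamma^{p}/\partial_{out}\Gamma^{p}\longrightarrow\Gamma^{p}/\partial_{out}\Gamma^{p}\longrightarrow\Gamma^{p}/\partial\Gamma^{p}$$
together with the description of $\partial\Gamma^{p}/\partial_{out}\Gamma^{p}$ as a canonical colimit of terms $\Gamma^{k}/\partial_{out}\Gamma^{k}$ for $k<p$, the problem reduces by induction on $p$ to showing, for every $k\geq 1$, that smashing with $\Gamma^{k}/\partial_{out}\Gamma^{k}$ preserves stable equivalences of cofibrant $\Gamma$-spaces. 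For $k=1$ the object is the tensor unit $\Gamma^{1}$, so the claim is trivial. For $k\geq 2$ one applies Proposition \ref{propGamPG}$(b)$, which identifies the associated endofunctor with the $k$-fold smash, together with the compatibility between the Day-convolution smash and Segal's spectrification $\Phi$, to show that the $n$-th term of the spectrification of $(\Gamma^{k}/\partial_{out}\Gamma^{k})\wedge B$ is at least $(nk-1)$-connected for connective cofibrant $B$; hence both source and target become stably trivial and the induced map is trivially a stable equivalence. The delicate point, and what is most likely to require genuine care, is making the compatibility between the smash-product of $\Gamma$-spaces and the spectrification functor fully rigorous, in exactly the same spirit as was done for the circle-product in Proposition \ref{propCircse}.
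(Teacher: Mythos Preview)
Your approach has a genuine circularity problem, and the ``delicate point'' you flag at the end is exactly where it bites. After reducing to the claim that smashing with $\Gamma^{k}/\partial_{out}\Gamma^{k}$ preserves stable equivalences of cofibrant $\Gamma$-spaces, you appeal to a connectivity estimate for $\Phi\big((\Gamma^{k}/\partial_{out}\Gamma^{k})\wedge B\big)_n$. But Proposition~\ref{propGamPG}(b) only tells you that the associated endofunctor of $\Gamma^{k}/\partial_{out}\Gamma^{k}$ is the $k$-fold smash; it says nothing about the endofunctor associated to the \emph{Day-convolution smash} $(\Gamma^{k}/\partial_{out}\Gamma^{k})\wedge B$. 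The only tool in the paper relating $\underline{A\wedge B}$ to $\underline{A}$ and $\underline{B}$ is the assembly map $A\wedge B\to A\circ B$, and knowing this is a stable equivalence is precisely Lydakis' Theorem~\ref{thasmap}, which is proved \emph{using} Proposition~\ref{propKanextEqGS}. So your argument, as written, presupposes the very fact whose proof depends on the proposition you are proving. There is no evident direct spectrification formula for the Day smash that would let you bypass this.

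The paper's proof sidesteps the issue entirely by working purely model-categorically: it puts a ``mixed'' model structure on bi-$\Gamma$-spaces (Reedy in the outer variable, stable in the inner variable) whose weak equivalences are exactly your hypothesis, and then shows via Barwick's criterion on the comma categories $s/\sigma$ that $(s_!,s^*)$ is a Quillen pair between this mixed structure and the stable structure on $\Gamma$-spaces. Brown's Lemma then gives the result immediately. The point is that Barwick's argument expresses the relevant matching maps for $s^*$ as products of matching maps, and products of stable fibrations are stable fibrations; no homotopical input about the smash product is needed. Your skeletal-filtration strategy is natural and would work \emph{after} Corollary~\ref{propBFMMC} is available, but at this stage of the paper it cannot be made to close.
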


\begin{proof}
The idea is quite simple: we endow the category $\mathcal E_{red}^{{\Gamma^{op}}\times{\Gamma^{op}}}$ with a Quillen model structure such that 
\begin{enumerate}
 \item[\textit{(i)}] The weak equivalences are precisely the pointwise stable equivalences (with respect to one of the two variables);
 \item[\textit{(ii)}] The adjoint pair $(s_!,s^*)$ is a Quillen pair between this model structure on $\mathcal E_{red}^{{\Gamma^{op}}\times{\Gamma^{op}}}$ and the stable model structure on $\mathcal E_{red}^{\Gamma^{op}}$. 
\end{enumerate}

The proposition follows then from Brown's Lemma \ref{lemBrowns}(a).

Point $(i)$ is immediate since $\mathcal E_{red}^{{\Gamma^{op}}\times{\Gamma^{op}}}$ can be identified (in two ways, depending on the ordering of the variables) with $(\mathcal E_{red}^{\Gamma^{op}})_{red}^{\Gamma^{op}}$. The model structure, we are interested in, is the "Reedy" model structure on (reduced) $\Gamma$-objects in the stable model category of (reduced) $\Gamma$-spaces. 
It is thus a mixture between the stable (inside) and the strict (outside) model structure. Nevertheless, its existence follows from Bousfield-Friedlander \cite{BouHT} and Berger-Moerdijk \cite{BMENRC}.

Point $(ii)$ is more subtle since, a priori, we only know that $s^*$ takes stable equivalences (resp. stable fibrations) to pointwise stable equivalences (resp. pointwise stable fibrations). 
Therefore, $s^*$ preserves the respective notions of weak equivalence, but it has to be shown that $s^*$ also preserves the respective notions of fibration, i.e. that $s^*$ takes stable fibrations to Reedy fibrations for the "mixed" model structure on $(\mathcal E_{red}^{\Gamma^{op}})_{red}^{\Gamma^{op}}$.

In order to establish the latter property we shall use an idea of Barwick, cf. Lemma 3.21 of \cite{BarLRModC}. 
The latter is formulated for a functor $f:\mathcal R\to\mathcal S$ of strict Reedy categories and implies that $f_!:\mathcal E^\mathcal R\leftrightarrows\mathcal E^\mathcal S:f^*$ is a Quillen adjunction between the respective Reedy model structures provided that for each object $\sigma$ of $\mathcal S$ the inverse part of the comma category $f/\sigma$ is a coproduct of categories with terminal object. 
There is an analogous statement for a functor of generalized Reedy categories, replacing terminal by weakly terminal object. The proof idea consists roughly in showing that certain relevant matching maps in $\mathcal E^\mathcal R$ derive from the matching maps in $\mathcal E^\mathcal S$ by taking products.

It is now straightforward to show that the smash product functor $s:\Gamma^{op}\times\Gamma^{op}\to\Gamma^{op}$ has this property.
Indeed, one shows that for each object $m\wedge n\to p$ in $s/p$ there is an object $m'\wedge n'\to p$, weakly terminal in the inverse part of $s/p$, such that former map factors through the latter map by a pair of surjections $m\to m'$ and $n\to n'$ in $\Gamma^{op}$.

This shows that $(s_!,s^*)$ is a Quillen adjunction for the Reedy model structures on $\mathcal E_{red}^{{\Gamma^{op}}\times{\Gamma^{op}}}$ and $\mathcal E_{red}^{\Gamma^{op}}$. 
It also gives a Quillen adjunction for the mixed model structure on $\mathcal E_{red}^{{\Gamma^{op}}\times{\Gamma^{op}}}$ and the stable model structure on $\mathcal E_{red}^{\Gamma^{op}}$ since a product of stable fibrations is again a stable fibration. 
\end{proof}

\begin{theoreme}(Lydakis \cite{LydSG})\label{thasmap}
For any pair A,B of cofibrant $\Gamma$-spaces the assembly map $A\wedge B\rightarrow A \circ B$ is a stable equivalence. 
\end{theoreme}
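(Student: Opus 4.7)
The plan is to prove this by cellular induction on the skeletal filtrations of $A$ and $B$, reducing to a base case involving $\Gamma$-spheres and their outer-boundary variants. The two fundamental tools are Proposition \ref{propCircse}, which says that $A\circ -$ takes stable equivalences between cofibrant $\Gamma$-spaces to stable equivalences, and Proposition \ref{propKanextEqGS}, which yields the corresponding property for $A\wedge -$ (since $A\wedge B$ is the Day convolution $s_!(A(-)\wedge B(-))$ of a cofibrant bi-$\Gamma$-space). Both $A\wedge -$ and $A\circ -$ preserve colimits and send cofibrations to cofibrations.

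Fix a cofibrant $\Gamma$-space $A$ and consider the class $\mathcal{S}_A$ of cofibrant $B$ for which the assembly $A\wedge B\to A\circ B$ is a stable equivalence. By the long exact sequence for a cofibration (Lemma \ref{lemLESHG}(b)) and the telescope lemma (Lemma \ref{lemTeles}), $\mathcal{S}_A$ is closed under transfinite composition of cofibrations and under cofibers of cofibrations whose domain already lies in $\mathcal{S}_A$. Applying this to the skeletal filtration of $B$ together with Lemma \ref{lemQuotisom}, it suffices to verify the assembly map is a stable equivalence when $B$ is of the form $\tilde{B}(n)\wedge \Gamma^n/\partial\Gamma^n$. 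A symmetric cellular induction on $A$ further reduces to $A\cong\tilde{A}(m)\wedge\Gamma^m/\partial\Gamma^m$. Finally the cofiber sequence of Proposition \ref{propGamPG}(e), together with another induction on the number of pieces in the partition lattice, lets us replace the $\Gamma$-spheres $\Gamma^k/\partial\Gamma^k$ by outer-boundary quotients $\Gamma^k/\partial_{out}\Gamma^k$, whose associated endofunctors are $k$-fold smash products by Proposition \ref{propGamPG}(b).

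For the base case one is thus comparing $(\Gamma^m/\partial_{out}\Gamma^m)\wedge(\Gamma^n/\partial_{out}\Gamma^n)$ with $(\Gamma^m/\partial_{out}\Gamma^m)\circ(\Gamma^n/\partial_{out}\Gamma^n)$. Both sides have an associated endofunctor naturally isomorphic to the $mn$-fold smash product on $\mathcal{E}_*$, and on spectrification both become stably equivalent to suitably shifted smash powers of the sphere spectrum. The assembly map identifies, after spectrification, with a natural comparison between two such models; the connectivity estimate for iterated smash products of connective spectra used in the proof of Proposition \ref{propCircse} then shows the induced map to be a stable equivalence.

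The main obstacle lies in propagating the external based-space factors $\tilde{A}(m),\tilde{B}(n)$ through the reduction. While the smash product on $\Gamma$-spaces commutes strictly with tensoring by a based space ($X\wedge(A\wedge B)\cong (X\wedge A)\wedge B$), the circle product does not: $A\circ(X\wedge B)$ evaluates $\underline{A}$ on $X\wedge B(-)$, which does not equal $X\wedge\underline{A}(B(-))$. This mismatch is overcome by comparing both sides of the assembly map after spectrification, where the Bousfield-Friedlander stable equivalence $X\wedge\Phi(C)\to\Phi(X\wedge C)$ for cofibrant based $X$ and cofibrant $\Gamma$-space $C$ lets the external factors be commuted past $\Phi$ on both sides, thereby completing the inductive step.
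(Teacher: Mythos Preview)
Your approach has a genuine gap: the assertion that $A\circ -$ preserves colimits is false. The circle product is computed by $(A\circ B)(n)=\underline{A}(B(n))$, and the prolongation $\underline{A}$ involves iterated products $X\mapsto X^k$, which do not preserve pushouts or cofibers. Consequently, for a cofibration $X\to Y$ of cofibrant $\Gamma$-spaces there is only a comparison map $(A\circ Y)/(A\circ X)\to A\circ(Y/X)$, not an isomorphism. Your cellular induction on $B$ needs this comparison to be a stable equivalence in order to pass from the skeleta to the quotients $sk_n(B)/sk_{n-1}(B)$; but that statement is exactly Corollary~\ref{corCofGS}, which in the paper is \emph{deduced from} Theorem~\ref{thasmap}, not used to prove it. The difficulty you flag in your last paragraph (the failure of $A\circ(X\wedge B)$ to equal $X\wedge(A\circ B)$) is one symptom of this, but the underlying obstruction is the non-exactness of $A\circ -$ in the second variable, and the Bousfield--Friedlander comparison $X\wedge\Phi(C)\to\Phi(X\wedge C)$ does not repair it.

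The paper's proof sidesteps this entirely. Rather than decomposing $A\circ B$ cellularly, it introduces three cofibrant bi-$\Gamma$-spaces
\[
(A\Box_1B)(m,n)=A(m)\wedge B(n),\quad (A\Box_2B)(m,n)=\underline{A}(m\wedge B(n)),\quad (A\Box_3B)(m,n)=\underline{A}(B(mn)),
\]
and shows $A\Box_1B\to A\Box_2B$ is a stable equivalence in the first variable (Bousfield--Friedlander, Lemma~4.1) while $A\Box_2B\to A\Box_3B$ is a stable equivalence in the second variable (same lemma plus Proposition~\ref{propCircse}). Proposition~\ref{propKanextEqGS} then gives a stable equivalence after left Kan extension along $s:\Gamma^{op}\times\Gamma^{op}\to\Gamma^{op}$, and one identifies $s_!(A\Box_1B)=A\wedge B$ and $s_!(A\Box_3B)=s_!s^*(A\circ B)=A\circ B$. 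No cellular induction on $B$, and no preservation of colimits by $A\circ -$, is required.
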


\begin{proof}
We define three different cofibrant bi-$\Gamma$-spaces, $A\Box_1 B$, $A\Box_2 B$ and $A\Box_3 B$ by the formulas: 
$$(A\Box_1 B)(m,n)=A(m)\wedge B(n)\quad (A\Box_2 B)(m,n)=\underline{A}(m\wedge B(n))$$
$$\quad (A\Box_3 B)(m,n)=(A\circ B)(mn)=\underline{A}(B(mn))$$
There are canonical maps of cofibrant bi-$\Gamma$-spaces
$$A\Box_1B\to A\Box_2B\to A\Box_3 B$$ 
The first is a stable equivalence (for fixed n) with respect to the first variable by Lemma 4.1 of Bousfield-
Friedlander \cite{BouHT}, the second is a stable equivalence (for fixed m) with respect to the second variable by Lemma 4.1 of Bousfield-Friedlander \cite{BouHT} together with our 
Proposition \ref{propCircse}. It follows then from Proposition \ref{propKanextEqGS} that left Kan extension along the smash-product gives a stable equivalence:
$$s_!(A\Box_1B)\to s_!(A\Box_3 B)$$
The $\Box_1$-product is the external smash-product of $\Gamma$-spaces so that $s_!(A\Box_1 B)$ is the internal smash-product of $\Gamma$-spaces (obtained by Day convolution, cf. Definition\ref{defDayconv}). 
The $\Box_3$-product is just $s^*(A\circ B)$ so that $s_!(A\Box_3 B)=s_!s^*(A\circ B)=A\circ B$, since $s^*$ is fully faithful. 
The constructed stable equivalence $A\wedge B\to A\circ B$ coincides with the assembly map, cf. the proof of Lemma \ref{defassmap}.

%We proceed by following steps:
%\begin{enumerate}
% \item We prove that for a cofibrant $\Gamma$-space A, $A\circ(-)$ preserves stable acyclic cofibrations. By induction 
%(par un argument de récurrence, en fait ma preuve (qui était destinée au produit smash) donne exactement  ce résultat
% \item We prove that $A\wedge \Gamma^n$ is isomorphic $A\circ\Gamma^n$ and that this isomorphism is restricted to the border of $\Gamma^n$.
% \item By (a), $$A\circ\partial\Gamma^n \rightarrow A\circ\Gamma^n$$ is a stable acyclic cofibration. In consequence, its quotient is stable 
%contractile. Moreover, by (b) this quotient identifies with $A \wedge (\Gamma^n/\partial\Gamma^n)$ since $A\wedge(-)$ (contrary to $A\circ(-)$) preserves the quotients (in fact all the colimits). We also know by (a) that $A\circ(\Gamma^n/\partial\Gamma^n)$ is stable contractile. This proves that the assembly map is a stable equivalence when $B=\Gamma^n/\partial\Gamma^n$.
% \item By induction on the skeleton of B, we deduce the same result for a general cofibrant B.
% \item We can suppose that one of two is not necessarily cofibrant, since every $\Gamma$-space has a cofibrant resolution in the strict model category and strict equivalences are stable by smashing with a cofibrant $\Gamma$-object and also stable by $\circ$ with a cofibrant $\Gamma$-object. 
%\end{enumerate}
\end{proof}

%\begin{prop}\cite{LydSG}\label{propG1S}
%The representable $\Gamma$-space $\Gamma^{1}$ is isomorphic to the $\Gamma$-space $\mathbb S$ which is the inclusion of $\Gamma^{op}$ in the category of spaces (we identifiy a pointed set with its associated discrete space). 
%\end{prop}
%In \cite{SStHomAlgGS}, Schwede observed that the Bousfield-Friedlander stable model structure on $\Gamma$-spaces also define a monoidal model category. In the case of Q-stable model category structure, he verified the smallness and the cofibrant generation for $\Gamma$-spaces, as well as the pushout-product axiom. In particular, to verify the pushout-product axiom, he used a result of Lydakis (\cite {LydSG},Theorem 5.17).
%This follows from the following theorem which is consequence of some deep results of Lydakis 
%In order to apply our results to $\Gamma$-spaces and $\Gamma$-rings we need to verify that the stable model structure on $\Gamma$-spaces is a monoidal model category. 

\begin{cor}\label{propBFMMC}
Bousfield and Friedlander's stable model category of $\Gamma$-spaces is a monoidal model category with cofibrant unit.
\end{cor}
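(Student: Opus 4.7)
The unit of the smash product is $\Gamma^1 = \Gamma(-,1)$, which is a subobject of a representable $\Gamma$-space and hence cofibrant (as noted in the description of the cofibrations of the stable model structure), so condition $(d)$ of Definition \ref{defmonmodc} is automatic. The cofibrations of the stable model structure coincide with those of Bousfield-Friedlander's strict model structure, and the latter is already known to be monoidal via the Berger-Moerdijk result on generalized Reedy categories (\cite{BMENRC}, Theorem 7.6 and Example 7.7b). Consequently the \emph{cofibration part} of Hovey's pushout-product axiom $(c)$ holds for free. It remains to verify its \emph{acyclicity part}: if $f:X\rightarrowtail Y$ and $g:X'\rightarrowtail Y'$ are cofibrations of $\Gamma$-spaces and $f$ (say) is additionally a stable equivalence, then $f\Box g$ is a stable equivalence.

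The first step is to identify the cofiber of $f\Box g$. Since $\wedge$ is a left adjoint in both variables, the standard cofiber calculation in the pointed cocomplete category of $\Gamma$-spaces yields an isomorphism
$$
(Y\wedge Y')/(X\wedge Y'\cup_{X\wedge X'} Y\wedge X') \;\cong\; (Y/X)\wedge (Y'/X'),
$$
so by Lemma \ref{lemLESHG}$(a)$ it suffices to show that this smash product is stably trivial, knowing that $Y/X$ is stably trivial (this last by applying Lemma \ref{lemLESHG}$(a)$ to the stably acyclic cofibration $f$). Observe also that $Y/X$ and $Y'/X'$ are cofibrant since $f$ and $g$ are cofibrations between cofibrant $\Gamma$-spaces (the cofibrancy of domains reduces to that of $X\wedge X'$, which follows from the strict monoidal structure).

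The second step invokes Lydakis' Theorem \ref{thasmap}: for any pair of cofibrant $\Gamma$-spaces $A,B$ the assembly map $A\wedge B\to A\circ B$ is a stable equivalence. Applied to $A=Y/X$, $B=Y'/X'$, it reduces us to showing that $(Y/X)\circ(Y'/X')$ is stably trivial. Swapping factors via the symmetry of $\wedge$ followed again by Lydakis gives a stable equivalence $(Y/X)\circ(Y'/X')\simeq (Y'/X')\circ(Y/X)$; thus it is equivalent to show that $(Y'/X')\circ(Y/X)$ is stably trivial. Since $Y'/X'$ is cofibrant, Proposition \ref{propCircse} tells us that $(Y'/X')\circ -$ takes stable equivalences between cofibrant $\Gamma$-spaces to stable equivalences; applied to the stable equivalence $Y/X\to *$ between cofibrant $\Gamma$-spaces, this gives the desired stable triviality.

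The main obstacle in this argument is not any single step but rather ensuring that Lydakis' theorem and Proposition \ref{propCircse}, which rest on the rather delicate analysis of the skeletal filtration of a cofibrant $\Gamma$-space (Lemma \ref{lemQuotisom}) and on the combinatorics of the "$\Gamma$-spheres" $\Gamma^n/\partial\Gamma^n$ (Proposition \ref{propGamPG}), can be applied in the needed generality; in particular it is essential that the cofibration structure of Bousfield-Friedlander is used (rather than Schwede's $Q$-structure), since only then are all relevant subobjects of representables cofibrant. Given these two results, the verification above is essentially formal.
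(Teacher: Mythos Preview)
Your proof is correct and follows essentially the same route as the paper: reduce the acyclicity part of the pushout-product axiom to showing that the cofiber $(Y/X)\wedge(Y'/X')$ is stably trivial, then invoke Lydakis' Theorem \ref{thasmap} together with Proposition \ref{propCircse}. Your detour through $(Y/X)\circ(Y'/X')$ before swapping is unnecessary---you can apply the symmetry of $\wedge$ first and then Lydakis once---and the parenthetical about ``cofibrancy of domains'' is slightly muddled (the cofibrancy of $Y/X$ and $Y'/X'$ follows directly from $f,g$ being cofibrations, since cofibrations are closed under cobase change along $X\to *$), but neither point affects the validity of the argument.
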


\begin{proof}
The only axiom to be checked is Hovey's pushout-product axiom. 
For this consider Diagram \ref{diagppa} of Definition \ref{defmonmodc}. 
Since the non-localized category is monoidal, $f \Box g$ is a cofibration and it is sufficient to prove that $f\Box g$ is a stable equivalence if
either $f$ or $g$ is. By Lemma \ref{lemLESHG} $(a)$ this amounts to proving that the cofiber of $(f\Box g)$ is stably acyclic if either the cofiber of $f$ or the cofiber of $g$ is stably acyclic.
\emph{But the cofiber of $f\Box g$ is the smash product of the cofibers of $f$ and of $g$}. Therefore it suffices to prove that the smash product of two cofibrant $\Gamma$-spaces is stably acyclic as soon as one of the factors is. 
This follows from Lydakis' theorem \ref{thasmap} together with Proposition \ref{propCircse}. 
\end{proof}

\begin{cor}\label{corCofGS}
For each cofibration $X\to Y$ between cofibrant $\Gamma$-spaces and each cofibrant $\Gamma$-space $A$, 
the canonical map $(A\circ Y) / (A\circ X) \to A\circ(Y/X)$ is a stable equivalence. 
\end{cor}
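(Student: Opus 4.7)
The plan is to exploit Lydakis' Theorem \ref{thasmap}, the monoidal model structure of Corollary \ref{propBFMMC}, and the long exact sequence of stable homotopy groups from Lemma \ref{lemLESHG}. The idea is to resolve the strict quotient $(A\circ Y)/(A\circ X)$ by a cofiber sequence arising from a pushout, replace it (up to stable equivalence) by $A\wedge(Y/X)$, and then invoke Lydakis' theorem one more time on $Y/X$.

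Concretely, form the pushout $P=(A\circ X)\cup_{A\wedge X}(A\wedge Y)$ in $\mathcal{GS}$ along the assembly map $A\wedge X\to A\circ X$ and the map $A\wedge X\to A\wedge Y$ induced by $X\to Y$. By Corollary \ref{propBFMMC} (pushout-product axiom), $A\wedge X\to A\wedge Y$ is a cofibration, so $A\circ X\to P$ is also a cofibration. By Theorem \ref{thasmap}, $A\wedge X\to A\circ X$ is a stable equivalence, so by left properness of the Bousfield--Friedlander stable model structure (inherited from the Reedy model structure via Bousfield localization) the pushout $A\wedge Y\to P$ is a stable equivalence. Since the composite $A\wedge Y\to P\to A\circ Y$ is the assembly map --- again a stable equivalence by Theorem \ref{thasmap} --- two-out-of-three yields that $P\to A\circ Y$ is a stable equivalence. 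As the smash product preserves colimits (being a left adjoint via Day convolution), one computes $P/(A\circ X)=(A\wedge Y)/(A\wedge X)=A\wedge(Y/X)$, giving a cofiber sequence $A\circ X\rightarrowtail P\twoheadrightarrow A\wedge(Y/X)$.

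Now consider the commutative diagram
$$
\shorthandoff{;:!?}
\xymatrix @C=1.5cm @R=1.2cm{\relax
A\circ X \ar@{=}[d] \ar[r] & P \ar[d]^{\sim} \ar[r] & A\wedge(Y/X) \ar[d] \\
A\circ X \ar[r] & A\circ Y \ar[r] & (A\circ Y)/(A\circ X)
}
$$
in which the top row is a cofiber sequence. To apply Lemma \ref{lemLESHG}(b) one needs the bottom row also to be a cofiber sequence, i.e. $A\circ X\to A\circ Y$ to be a cofibration of $\Gamma$-spaces; this is the cofibration-preservation property of the functor $A\circ -$ for cofibrant $A$, and can be established separately by reduction to the case $A=\Gamma^n/\partial\Gamma^n$ via the skeletal filtration (Lemma \ref{lemQuotisom}) together with the explicit description of the associated endofunctors provided by Proposition \ref{propGamPG}. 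Granting this, Lemma \ref{lemLESHG}(b) produces a stable equivalence $A\wedge(Y/X)\to(A\circ Y)/(A\circ X)$. Composing with the canonical map $(A\circ Y)/(A\circ X)\to A\circ(Y/X)$ yields the assembly map $A\wedge(Y/X)\to A\circ(Y/X)$, a stable equivalence by a third application of Theorem \ref{thasmap}. A final two-out-of-three concludes the argument.

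The main obstacle is the cofibration-preservation property of $A\circ -$ invoked in the lower row of the diagram: unlike $A\wedge-$, the circle product is not a left adjoint in its second variable, so this property does not follow from the pushout-product axiom and requires the more delicate cellular analysis via the $\Gamma$-spheres and outer boundary subobjects of Proposition \ref{propGamPG}. Everything else in the argument is a formal consequence of Lydakis' theorem, left properness, and 2-out-of-3.
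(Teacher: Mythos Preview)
Your argument is correct and rests on the same core idea as the paper's proof: compare $(A\circ Y)/(A\circ X)$ to $A\wedge(Y/X)$ via a map of cofiber sequences and Lemma~\ref{lemLESHG}(b), then invoke the assembly map $A\wedge(Y/X)\to A\circ(Y/X)$ and 2-out-of-3. The paper, however, does this more directly: it writes down the two-row diagram
$$
\xymatrix{
A\wedge X \ar[r] \ar[d] & A\wedge Y \ar[r] \ar[d] & A\wedge(Y/X) \ar[d] \\
A\circ X \ar[r] & A\circ Y \ar[r] & (A\circ Y)/(A\circ X)
}
$$
with vertical assembly maps, applies Lemma~\ref{lemLESHG}(b) to conclude that the right vertical is a stable equivalence, and finishes with the assembly map on $Y/X$. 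Your intermediate pushout $P$ and the appeal to left properness are unnecessary: since $A\wedge-$ already preserves cofibrations by the pushout-product axiom, the paper's top row is a genuine cofiber sequence from the start, so there is nothing to repair. The obstacle you correctly flag---that $A\circ X\to A\circ Y$ must be a cofibration for the bottom row to qualify---is equally present in the paper's argument but is simply taken for granted there; your detour through $P$ does not circumvent it.
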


\begin{proof}
The assembly map induces a natural transformation of cofiber sequences
$$
\shorthandoff{;:!?}
\xymatrix @!0 @C=4cm @R=2cm{\relax 
 A\wedge X \ar[r] \ar[d]_{\alpha} &  A\wedge Y \ar[r] \ar[d]_{\beta} & A\wedge (Y/X) \ar[d]_{\gamma} \\ 
 A\circ X \ar[r] & A\circ Y \ar[r] & (A\circ Y)/ (A\circ X)
}
$$
By Theorem \ref{thasmap} and Lemma \ref{lemLESHG} (b) the induced map on the quotients $\gamma$ is a stable equivalence. On the other hand, Theorem \ref{thasmap} also gives a stable equivalence $\delta:A\wedge(Y/X)\to A\circ (Y/X)$. The canonical map $(A\circ Y)/(A\circ X)\to A\circ(Y/X)$ precomposed by $\gamma$ yields $\delta$. The 2 out of 3 property of stable equivalences thus yields the asserted result. 
\end{proof}

\section{Recovering a theorem of Stefan Schwede}\label{sRecResSS}

For the following theorem of Schwede we restrict ourselves to the case where $\mathcal E$ is the category of simplicial sets. 
Therefore, from now on, $\Gamma$-spaces are understood to take values in the category of simplicial sets. 
A $\Gamma$-theory is said to be \emph{well-pointed} if its unit is a cofibration of $\Gamma$-spaces; this implies in particular that the $\Gamma$-theory has an underlying cofibrant $\Gamma$-space.
We begin by showing that the category of $\Gamma$-spaces has the required properties for an application of our homotopical Morita Theorem \ref{thMain} and our simplifying assumptions made in Section \ref{SRealMModCat}.

%Product $\circ$ corresponds to the composition of endofunctors.

%We saw in Section \ref{sAsmPpax} that the category of $\Gamma$-spaces is stable monoidal model category and further that the axioms of the homotopical Morita theorem are satisfied. %We are now able to recover a theorem of Schwede \cite{SchSHAT}.

%The following proposition is a special case of Proposition \ref{PropStDay}, where $\mathcal E=SSet_{*}$ and $\mathcal A=\Gamma^{op}$.

%\begin{prop}\label{propGstmon}
%The strong monad $\left(\underline{A},\mu,\eta,\sigma \right)$ on the category of pointed simplicial sets $SSet_{*}$ induces by postcomposition a strong monad on the category of $\Gamma$-spaces $\mathcal{GS}$ and for the smash product $\wedge$ of Lydakis.
%\end{prop}

%\begin{proof}
%A $\Gamma$-theory $\left(A,\circ,. \right)$ induces a strong monad on the category $SSet_{*}$. By Proposition \ref{PropStDay}, $\left(\underline{A},\mu,\eta,\sigma \right)$ induces a strong monad on the category of $\Gamma$-spaces.  
%\end{proof}

\begin{prop}\label{propsteq}
For every cofibrant $\Gamma$-space A and every pair X,Y of cofibrant $\Gamma$-spaces there is a stable equivalence
$$(A\circ X) \wedge Y\rightarrow A\circ \left( X \wedge Y\right) $$
\end{prop}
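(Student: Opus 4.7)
The plan is to produce the asserted natural map as a tensorial strength and then to verify that it is a stable equivalence by comparison with the assembly map. First, I observe that by Proposition \ref{PropStDay} applied to the strong endofunctor $\underline{A}$ of $\mathcal{E}_*$, the induced endofunctor $A\circ-$ on $\Gamma$-spaces is strong with respect to the smash product coming from Day convolution. Its tensorial strength is a natural transformation
$$\sigma_{X,Y}\colon (A\circ X)\wedge Y\longrightarrow A\circ (X\wedge Y),$$
which provides the map whose homotopical behaviour we are interested in.

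The key step is then to establish the commutative square of $\Gamma$-spaces
$$
\xymatrix @C=1.6cm @R=1.2cm {
(A\circ X)\wedge Y \ar[r]^{\sigma_{X,Y}} \ar[d]_{\mathrm{asm}_{A\circ X,\,Y}} & A\circ (X\wedge Y) \ar[d]^{A\circ\, \mathrm{asm}_{X,Y}} \\
(A\circ X)\circ Y \ar@{=}[r] & A\circ (X\circ Y)
}
$$
where $\mathrm{asm}$ denotes the assembly map of Lemma \ref{defassmap} and the bottom equality is the associativity of the circle product. Unwinding the definitions, both composites are determined by the same binatural family of based maps $\underline{A}(X(m))\wedge Y(n)\to \underline{A}(\underline{X}(Y(m\wedge n)))$ obtained from the strengths of $\underline{A}$ and $\underline{X}$, so the square commutes by the universal property of Day convolution.

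Once commutativity of this square is in hand, the left vertical arrow is the assembly map of the cofibrant pair $(A\circ X,\,Y)$ and is therefore a stable equivalence by Lydakis' Theorem \ref{thasmap}. The right vertical arrow is $A\circ-$ applied to the assembly map $X\wedge Y\to X\circ Y$, which is a stable equivalence between cofibrant $\Gamma$-spaces again by Theorem \ref{thasmap}; Proposition \ref{propCircse} thus implies that this right vertical arrow is also a stable equivalence. The 2-out-of-3 property of stable equivalences then forces $\sigma_{X,Y}$ itself to be a stable equivalence.

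The main obstacle is the verification that the circle products $A\circ X$ and $X\circ Y$ occurring above are cofibrant $\Gamma$-spaces, since both Theorem \ref{thasmap} and Proposition \ref{propCircse} are stated under a cofibrancy assumption. For $X\wedge Y$ this is immediate from the pushout-product axiom of Corollary \ref{propBFMMC}. For circle products the argument proceeds by induction along the skeletal filtration of the first factor, using the cofiber description $sk_n/sk_{n-1}\cong \widetilde{A}(n)\wedge \Gamma^n/\partial\Gamma^n$ of Lemma \ref{lemQuotisom} together with the transparent structure of $\Gamma^n/\partial\Gamma^n$ provided by Proposition \ref{propGamPG}; in case of difficulty one can alternatively replace each circle product by a cofibrant replacement, compatibly on both sides of the square, without affecting the conclusion.
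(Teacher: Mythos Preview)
Your argument is correct, but it takes a different route from the paper's. The paper factors in the \emph{other} direction, through the smash-product rather than the circle-product: it considers the triangle
\[
A\wedge X\wedge Y \xrightarrow{\ \mathrm{asm}_{A,X}\wedge Y\ } (A\circ X)\wedge Y \xrightarrow{\ \sigma\ } A\circ(X\wedge Y),
\]
whose composite is the assembly map $\mathrm{asm}_{A,\,X\wedge Y}$ (this is precisely the associativity clause of the lax monoidal structure in Lemma~\ref{defassmap}). The composite is then a stable equivalence by Lydakis' Theorem~\ref{thasmap}, and the first arrow is a stable equivalence because it is $(-\wedge Y)$ applied to a stable equivalence between cofibrant objects, which is handled by Brown's Lemma and the monoidal model structure of Corollary~\ref{propBFMMC}. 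The 2-out-of-3 rule finishes.

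Compared to yours, the paper's argument is lighter: it invokes only Theorem~\ref{thasmap} (twice) and never needs Proposition~\ref{propCircse}; the only cofibrancy of a circle product required is that of $A\circ X$, not also of $X\circ Y$; and the coherence to be checked is just the associativity of the assembly map rather than the compatibility of the strength with the assembly map. On the other hand, your square makes the relationship between the strength and the assembly map explicit, and once the cofibrancy of circle products is established (which the paper also asserts parenthetically), both routes work equally well.
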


\begin{proof}

For two cofibrant $\Gamma$-spaces $X$ and $Y$, consider the following:
$$A \wedge X \wedge Y\rightarrow \left( A\circ X\right) \wedge Y \rightarrow A\circ \left( X \wedge Y\right) $$
Since by the pushout-product axiom (Definition \ref{defmonmodc}), the smash product $X\wedge Y$ is a cofibrant $\Gamma$-space, the composed arrow $$A \wedge X \wedge Y\rightarrow A\circ \left( X \wedge Y\right) $$ is a stable equivalence by Theorem \ref{thasmap}.

But $A \wedge X \rightarrow A\circ X$ is a stable equivalence of cofibrant $\Gamma$-spaces by Theorem \ref{thasmap} (it can be checked that the circle product of two cofibrant $\Gamma$-spaces is again cofibrant). Then Brown's Lemma \ref{lemBrowns} (a), the pushout-product axiom and Corollary \ref{propBFMMC} imply that smashing with a cofibrant $\Gamma$-space yields
$$A \wedge X \wedge Y\rightarrow \left( A\circ X\right) \wedge Y $$
also a stable equivalence.
Using the two out of three rule, we obtain that the right arrow $$\left( A\circ X\right) \wedge Y \rightarrow A\circ \left( X \wedge Y\right)$$ is a stable equivalence.
\end{proof}

%\begin{rem}
%By Lydakis, simplicial functors on the category of pointed simplicial sets $SSet_{*}$ are defined by their values on the finite objects in $SSet_{*}$.

%By Schwede, pointed algebraic theories on the category of pointed simplicial sets $SSet_{*}$ are defined by their values on the finite sets $\Gamma^{op}$.    
%\end{rem}

\begin{lem}\label{lemRealExc}
The stable model category $\mathcal GS$ of $\Gamma$-spaces (with space=simplicial set) has the following properties:
\begin{enumerate}
 \item $\mathcal GS$ is endowed with a standard system of simplices whose associated realisation functor is good (cf. Definition \ref{defsimpobj});
 \item $\mathcal GS$ satisfies excision (cf. Definition \ref{defexc});
 \item For any well-pointed $\Gamma$-theory $A$, free cell extensions in $\mathcal GS^{A\circ}$ (cf. proof of Proposition \ref{propForgfuncRE}) have as underlying map a cofibration of $\Gamma$-spaces.
\end{enumerate}
\end{lem}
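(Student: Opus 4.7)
The plan is to address the three claims separately, with (c) being the most delicate; the results of Bousfield--Friedlander, Lemma~\ref{lemLESHG}, Theorem~\ref{thasmap} and Corollary~\ref{propBFMMC} established earlier in the thesis will do most of the work.

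For (a), I would equip $\mathcal{GS}$ with the standard system of simplices obtained by copowering the standard cosimplicial simplicial set $[n]\mapsto\Delta[n]_+$ with the monoidal unit $\Gamma^1$, i.e.\ $C_n=\Delta[n]_+\wedge\Gamma^1$. The associated realisation of a simplicial $\Gamma$-space is then computed pointwise in $\Gamma^{op}$ as the diagonal of a bisimplicial set. Since cofibrations of simplicial sets are precisely the monomorphisms, the goodness of a simplicial $\Gamma$-space amounts to asking that its degeneracy operators are levelwise monomorphisms. The diagonal is well known to send a degreewise weak equivalence between such good bisimplicial sets to a weak equivalence of simplicial sets (Bousfield--Friedlander, Illusie); applying this pointwise over $\Gamma^{op}$ gives a strict (hence \emph{a fortiori} stable) equivalence of $\Gamma$-spaces, which is exactly the goodness required by Definition~\ref{defsimpobj}.

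For (b), I would deduce excision directly from the long exact sequence of Lemma~\ref{lemLESHG}. Given a map $f\colon Y_1\to Y_2$ of strong cofibrations under $X$, naturality of the long exact sequence of stable homotopy groups produces a commutative ladder
$$\xymatrix@R=1.1cm@C=0.9cm{\cdots\ar[r] & \pi^{st}_n(X)\ar[r]\ar@{=}[d] & \pi^{st}_n(Y_1)\ar[r]\ar[d] & \pi^{st}_n(Y_1/X)\ar[r]\ar[d] & \pi^{st}_{n-1}(X)\ar[r]\ar@{=}[d] & \cdots\\ \cdots\ar[r] & \pi^{st}_n(X)\ar[r] & \pi^{st}_n(Y_2)\ar[r] & \pi^{st}_n(Y_2/X)\ar[r] & \pi^{st}_{n-1}(X)\ar[r] & \cdots}$$
to which the five-lemma applies: $f$ is a stable equivalence if and only if $f/X$ is. The ``only if'' direction is automatic in any pointed model category, as noted after Definition~\ref{defexc}, so only the ``if'' direction requires the argument above.

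For (c), the plan is to exhibit the free cell extension $W\to W'$ arising from a pushout
$$\xymatrix@R=1cm@C=1.2cm{A\circ X\ar[r]\ar[d] & W\ar[d]\\ A\circ Y\ar[r] & W'}$$
in $\mathcal{GS}^{A\circ}$, with $X\to Y$ a cofibration between cofibrant $\Gamma$-spaces, as a transfinite composition $W=W_0\to W_1\to W_2\to\cdots$ of cofibrations of $\Gamma$-spaces whose colimit is $W'$. Each transition $W_{n-1}\to W_n$ should be built from an iterated pushout-product of $X\to Y$ against copies of $A$, in the same spirit as the cellular filtration of pushouts in categories of algebras over a well-pointed monad or operad (as in Schwede--Shipley~\cite{SchAMMMC} and Berger--Moerdijk~\cite{BMENRC}). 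Well-pointedness of $A$, i.e.\ the fact that the unit $\Gamma^1\to A$ is a cofibration, together with the pushout-product axiom for the stable model structure (Corollary~\ref{propBFMMC}), will ensure that each $W_{n-1}\to W_n$ is a cofibration, and then Reedy's telescope lemma~\ref{lemTeles} will yield that $W\to W'$ is a cofibration of $\Gamma$-spaces.

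The main obstacle will be carrying out (c) in detail: because the circle product is non-symmetric and does not itself satisfy a pushout-product axiom, explicating the cellular filtration for a pushout of $(A\circ-)$-algebras requires a careful book-keeping of the interactions between $A\circ-$ and the relevant iterated pushouts. It is exactly at each stage of this filtration that well-pointedness of the $\Gamma$-theory is used, in the same way as well-pointedness of a monoid $M$ is required to lift cofibrancy through pushouts of $M$-modules.
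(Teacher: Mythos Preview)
Your treatment of (b) matches the paper's exactly. Parts (a) and (c), however, diverge from the paper in ways worth flagging.

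\textbf{On (a).} Your diagonal argument only addresses degreewise \emph{strict} equivalences: you feed in a levelwise weak equivalence of bisimplicial sets and get a weak equivalence of simplicial sets out. But Definition~\ref{defsimpobj} asks that degreewise \emph{weak equivalences in $\mathcal{GS}$} realise to weak equivalences, and here $\mathcal{GS}$ carries the \emph{stable} structure. A degreewise stable equivalence of $\Gamma$-spaces is not pointwise (in $\Gamma^{op}$) a weak equivalence of simplicial sets, so the Bousfield--Friedlander/Illusie diagonal result does not apply as stated. (Your sentence ``goodness \dots\ amounts to asking that its degeneracy operators are levelwise monomorphisms'' also conflates cofibrations in $\mathcal{SS}et$ with cofibrations in $\mathcal{GS}$; the latter are monomorphisms with an isotropy-freeness condition on the cofibre.) The paper closes this gap by showing that good simplicial objects in $\mathcal{GS}$ are Reedy-cofibrant---using that Bousfield--Friedlander cofibrations have a ``hereditary'' property: if $X\subset Z\subset Y$ and $X\to Y$ is a cofibration then so are $X\to Z$ and $Z\to Y$---and then invoking that realisation is left Quillen for the Reedy structure, which handles stable equivalences automatically via Corollary~\ref{corQfWE}.

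\textbf{On (c).} Your proposed cellular filtration is the standard reflex, but the references you cite (Schwede--Shipley for monoids, Berger--Moerdijk for operads) do not produce such a filtration for a general strong monad, and $A\circ-$ is neither a free monoid action nor an operad algebra structure. You correctly identify this as the main obstacle, but the paper sidesteps it entirely with a much more elementary argument specific to simplicial sets. The paper first observes (via Lemma~\ref{lemMonAlgT}) that $\mathcal{GS}^{A\circ}$ is the category of reduced $\Gamma$-objects in $\mathcal{SS}et_*^{\underline{A}}$, so it suffices to show two things: that $A\circ-$ preserves cofibrant $\Gamma$-spaces (which follows from well-pointedness and cofibrancy of circle products), and that free $\underline{A}$-cell extensions in $\mathcal{SS}et_*$ are monomorphisms. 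For the latter, one computes in each simplicial degree: a monomorphism of pointed \emph{sets} is split, hence $\underline{A_n}$ sends it to a split mono, and split monos are stable under pushout in any category. No filtration is needed. Your approach, if it could be made to work, would be more portable to other base categories, but as you note it would require substantial new bookkeeping that the paper avoids by exploiting the discreteness trick.
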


\begin{proof}~\\
\begin{enumerate}
 \item The Yoneda-embedding $\Delta\to\mathcal SSet$ defines a standard system of simplices for simplicial sets. There are strong symmetric monoidal
left Quillen functors $\mathcal SSet\to\mathcal SSet_*$ (adjunction of base point) and $\mathcal SSet_*\to
\mathcal GS$ (left adjoint of the underlying space functor $A\mapsto A(1)$). According to Berger-Moerdijk \cite{BerBoardVogtRes} Cor. A.14, this provides $\mathcal GS$ with a standard system of simplices. 

The associated realisation functor is good, since the Bousfield-Friedlander cofibrations in $\mathcal GS$ can be characterised as those monomorphisms $X\to Y$ for which the quotient $Y/X$ has the property that the non-degenerate simplices in $\Gamma$-degree $n>0$ have no isotropy for the canonical $\Sigma_n$-action. In particular, for any "intermediate" $\Gamma$-space $Z$ such that $X\subset Z\subset Y$ the two inclusions $X\to Z$ and $Z\to Y$ are also cofibrations. This implies that 
good simplicial objects in the sense of Definition \ref{defsimpobj} are actually Reedy-cofibrant (for the Reedy model structure on simplicial objects in $\mathcal GS$). Therefore, the realisation functor (which is a left Quillen functor with respect to this 
Reedy model structure, cf. \cite{BerBoardVogtRes} Lemma A.8 ) takes weak equivalence between good simplicial objects to weak equivalences by Corollary \ref{corQfWE} (a).

 \item Excision follows directly from Lemma \ref{lemLESHG} (b) by taking $\alpha$ to be the identity.

 \item We have to show that in any pushout diagram

$$
\shorthandoff{;:!?}
 \xymatrix @!0 @C=2.4cm @R=2.4cm{\relax
    A\circ X \ar[r] \ar[d] & W \ar[d] \\
    A\circ Y \ar[r]  & W' \\
  }
$$
in $\mathcal GS^{A\circ}$ the underlying map of $W\to W'$ is a cofibration of $\Gamma$-spaces as soon as $X\to Y$ is so. It follows from Lemma \ref{lemMonAlgT} that the category of algebras $\mathcal GS^{A\circ}$ is isomorphic (over $\mathcal GS$) to the category of reduced $\Gamma$-objects in $\mathcal SSet^{\underline{A}}_*$. Moreover, a cofibration of $\mathcal GS$-spaces is (as already mentioned above) a monomorphism $X\to Y$ such that the quotient $Y/X$ is cofibrant. 
It therefore suffices to show that free $\underline{A}$-extensions of $\underline{A}$-algebras in $\mathcal SSet_*$ are monic, and that $A\circ-$ preserves cofibrant $\Gamma$-spaces.
The circle product of two cofibrant $\Gamma$-spaces is cofibrant so that the second assertion follows from the well-pointedness of $A$. For the first assertion we use that the strong endofunctor $\underline{A}$ may be computed in each simplicial degree seperately. More precisely, the $\Gamma$-theory $A$ defines in each simplicial degree $n$ a discrete $\Gamma$-theory $A_n$ so that for an arbitrary pointed simplicial set $X$ the value $\underline{A}(X)$ in simplicial degree $n$ is given by $\underline{A_n}(X_n)$. 

Consider now a pushout square like above for a monomorphism of pointed simplicial sets $X\to Y$. Since any monomorphism of pointed sets $X_n\to Y_n$ is a split monomorphism (i.e. admits a retraction) the induced map $\underline{A_n}(X_n)\to\underline{A_n}(Y_n)$ is a split monomorphism in $\mathcal Set_*^{\underline{A_n}}$. Split monomorphisms are stable under pushout in any category. Thus, the pushout $W_n\to W'_n$ is a split monomorphism in $\mathcal Set_*^{\underline{A_n}}$ and hence the pushout $W\to W'$ is a monomorphism (not anymore split) in $\mathcal SSet_*^{\underline{A}}$.
\end{enumerate}
\end{proof}

\begin{theoreme}(Schwede \cite{SchSHAT})\label{ThSch}
Each well-pointed $\Gamma$-theory $A$ induces a Quillen equivalence between the category of $(A\circ)$-algebras in $\Gamma$-spaces and the category of $A^S$-modules in $\Gamma$-spaces for a functorially associated $\Gamma$-ring $A^S$.
\end{theoreme}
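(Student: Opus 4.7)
The plan is to apply our homotopical Morita theorem (Theorem \ref{thMain}) with $\mathcal{E}=\mathcal{GS}$ equipped with Bousfield--Friedlander's stable model structure and with the strong monad $T=A\circ-$ on $\mathcal{GS}$. The latter is indeed a strong monad with respect to the smash-product: by Proposition \ref{PropStDay} applied to the strong monad $\underline{A}$ on $\mathcal{SSet}_*$, postcomposition with $\underline{A}$ yields a strong monad on $\mathcal{SSet}_*^{\Gamma^{op}}$, and by Lemma \ref{lemMonAlgT} this monad is precisely $A\circ-$, with $(A\circ-)$-algebras corresponding to functors $\Gamma^{op}\to \mathcal{SSet}_*^{\underline{A}}$. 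Once the hypotheses of Theorem \ref{thMain} are verified, the conclusion gives a Quillen equivalence between the $T$-algebras and the modules over the monoid $T(\Gamma^1)=A\circ\Gamma^1=A$. By Proposition \ref{propGring}, the monoid structure on $A$ thus obtained coincides with the $\Gamma$-ring $A^S$ induced by the assembly map, which will identify the target of the equivalence with $\mathrm{Mod}_{A^S}$.

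The background hypotheses on $\mathcal{E}=\mathcal{GS}$ are all in place: Corollary \ref{propBFMMC} shows that $\mathcal{GS}$ is a cofibrantly generated monoidal model category whose unit $\Gamma^1$ is cofibrant, and the standard generating cofibrations (boundary inclusions $\partial\Gamma^n\wedge\partial\Delta[k]_+\cup\Gamma^n\wedge\Delta[k]_+\to\Gamma^n\wedge\Delta[k]_+$ up to symmetry) have cofibrant domains, since every subobject of a representable $\Gamma$-space is cofibrant. I then verify the four numbered hypotheses in turn. Hypothesis (b) follows from well-pointedness of $A$: the unit $\eta_X\colon X\to A\circ X$ is the image under $X\circ-$ of the monoid unit $\Gamma^1\to A$, which is a cofibration of cofibrant $\Gamma$-spaces, so the pushout-product axiom (Corollary \ref{propBFMMC}) gives a cofibration. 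Hypothesis (c) is precisely Proposition \ref{propsteq}.

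The key hypothesis (d) is established by appealing to Proposition \ref{propForgfuncRE}, whose applicability to $\mathcal{E}=\mathcal{GS}$ is the content of Lemma \ref{lemRealExc}: $\mathcal{GS}$ is pointed, carries a good realization functor for simplicial objects, satisfies excision, and the monad $A\circ-$ preserves the zero object and sends cofibrant objects to cofibrant objects. It then remains to check the two conditions of Proposition \ref{propForgfuncRE}(b): the first, that free cell extensions in $\mathcal{GS}^{A\circ-}$ have underlying cofibrations, is Lemma \ref{lemRealExc}(c); the second, that for any cofibration $X\to Y$ of cofibrant $\Gamma$-spaces the map $A\circ X\to A\circ Y$ is a cofibration between cofibrant objects with $(A\circ Y)/(A\circ X)\to A\circ(Y/X)$ a stable equivalence, follows from the pushout-product axiom together with Corollary \ref{corCofGS}. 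Hypothesis (a), the existence of a transferred model structure on $\mathcal{Alg}_{A\circ-}$, can be obtained from Theorem \ref{thcalgT}: the monad $A\circ-$ preserves filtered colimits (computed pointwise from $\underline{A}$, whose values $\underline{A_n}$ are determined by their restrictions to finite sets), so the smallness condition holds, and the stable model structure on $\mathcal{GS}$ possesses a path-object construction obtained by cotensoring with a good cylinder on $\Gamma^1\sqcup\Gamma^1\to\Gamma^1$, giving condition (b) of Theorem \ref{thcalgT}.

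The main technical obstacle is the verification of hypothesis (d), since free cell extensions in $\mathcal{Alg}_{A\circ-}$ are not accessible by elementary means; it is precisely for this reason that Section \ref{SRealMModCat} was developed, reducing the check to the two simpler conditions (d') and (d'') of Proposition \ref{propForgfuncRE}, both of which are then within reach thanks to Corollary \ref{corCofGS} (itself an instance of Lydakis' theorem on the assembly map, Theorem \ref{thasmap}). With all hypotheses satisfied, Theorem \ref{thMain} yields the canonical monad morphism $\lambda\colon -\wedge A\to A\circ-$ of Proposition \ref{propmorsm} as a Quillen equivalence $\mathrm{Mod}_{A^S}\simeq \mathcal{Alg}_{A\circ-}$ on homotopy categories, which is Schwede's theorem.
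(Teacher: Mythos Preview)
Your overall strategy is exactly the paper's: apply Theorem \ref{thMain} to the strong monad $A\circ-$ on $\mathcal{GS}$, verify hypotheses (a)--(d), and identify the resulting endomorphism monoid with $A^S$ via Proposition \ref{propGring}. The reduction of (d) via Proposition \ref{propForgfuncRE}, Lemma \ref{lemRealExc}, and Corollary \ref{corCofGS} is precisely what the paper does.

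Two points deserve correction. First, in your verification of (b) you write that $\eta_X$ is the image of $\Gamma^1\to A$ under $X\circ-$ and then invoke the pushout-product axiom. Both parts are off: the unit $\eta_X:X\cong\Gamma^1\circ X\to A\circ X$ is obtained by applying $-\circ X$ (not $X\circ-$) to the theory unit, and the pushout-product axiom of Corollary \ref{propBFMMC} concerns the smash product, not the circle product. What is actually needed is that $-\circ X$ takes cofibrations between cofibrant $\Gamma$-spaces to cofibrations when $X$ is cofibrant; this is a separate (elementary) fact about the circle product, and the paper simply asserts it.

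Second, and more substantively, your argument for hypothesis (a) is incomplete. Theorem \ref{thcalgT}(b) asks for \emph{two} things: path objects for fibrant $T$-algebras \emph{and} a fibrant replacement functor for $T$-algebras. You supply only the former. The paper fills this gap by observing that the forgetful functor $\mathcal{GS}^{A\circ-}\to\mathcal{GS}$ preserves finite products, so the product-preserving fibrant replacement functor that Bousfield--Friedlander construct for the stable model structure on $\mathcal{GS}$ lifts to $(A\circ-)$-algebras. Without this step the transferred model structure is not established.
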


\begin{proof}
Consider the category of $\Gamma$-spaces $\mathcal{GS}$ with the symmetric monoidal structure of Lydakis. By Corollary \ref{propBFMMC}, $\Gamma$-spaces admit a stable monoidal model structure of Bousfield-Friedlander with cofibrant unit $\Gamma^{1}$. %Moreover it follows from Berger-Moerdijk \cite{BMENRC} that there exists a set of generating cofibrations with cofibrant domains. 
Moreover, the category of $\Gamma$-spaces has generating cofibrations with cofibrant domain, since they are of the form 
$$Y_{+}\wedge\partial\Gamma^n\cup X_{+}\wedge\Gamma^n\to Y_{+}\wedge\Gamma^n$$ where $X\to Y$ is a generating cofibration of $\mathcal SSet$.
%(it is a category of presheaves and hence a regular category; moreover, the canonical generating cofibrations, cf. \cite{BMENRC}, are monic and have cofibrant domains since the same is true for the canonical generating cofibrations of $SSet$).
 %Moreover the unit $\Gamma^{1}$ is a cofibrant $\Gamma$-space since all the representables are cofibrant. 

In order to prove the existence of a transferred model structure for $(A\circ)$-algebras, we will use the remark $(a)$ after Theorem \ref{thMain}. 
Since the monad $A\circ$ preserves reflexive coequalizers (indeed, the functor $X\mapsto (-\mapsto X^-)$ preserves reflexive coequalizers) Proposition \ref{propalgcocom} shows that $\mathcal GS^{(A\circ)}$ is cocomplete. 
Moreover, the forgetful functor preserves filtered colimits and $\mathcal GS$ is locally finitely presentable; it therefore suffices by Theorem \ref{thcalgT} $(b)$ to construct a fibrant replacement functor for $(A\circ)$-algebras. 
Since the forgetful functor commutes with finite products it suffices to construct a finite product preserving fibrant replacement functor for the stable model structure on $\mathcal GS$. Bousfield-Friedlander construct such fibrant replacement functor in \cite{BouHT}. 
%Indeed, we need to prove the existence of a fibrant replacement for $(A\circ)$-algebras. In this case, it is automatic since the monad $(A\circ)$ allows to lift every endofunctor in $\mathcal E$ that preserves finite products to an endufunctor in $Alg_{(A\circ)}(\mathcal E)$. Simplicial sets have a fibrant replacement functor that preserves finite products (either $Ex^\infty$ of Kan or the composed $Sing\circ |-|$ where $|-|$ is the geometric realization (that preserves finite limits) and $Sing$ is the right adjoint of $|-|$ (that preserves all limits)).

The hypothesis that a $\Gamma$-theory $A$ is well-pointed implies the hypothesis $(b)$ of Theorem \ref{thMain}, since $(\Gamma^1\to A)\circ B$ gives a cofibration $B \to A\circ B$ if $A$ is well-pointed and $B$ cofibrant.

By Proposition \ref{propsteq}, we have a stable equivalence:
$$\left(A\circ X\right) \wedge Y\rightarrow A\circ \left( X \wedge Y\right) $$ for X and Y cofibrant $\Gamma$-spaces.

By symmetry of the assembly map, we can do the following switch: 
$$Y \wedge \left(A\circ X\right)\rightarrow A\circ \left( Y \wedge X\right) $$
This coincides with the tensorial strength $\sigma_{Y,X}$ of the endofunctor $(A\circ)$, which yields hypothesis $(c)$.

It remains to verify that for a well-pointed $\Gamma$-theory $A$, the monad $(A\circ-)$ on $\Gamma$-spaces satisfies hypothesis $(d)$ of our 
homotopical Morita theorem. 
Since this monad preserves strong cofibrations (by well-pointedness of $A$), Lemma \ref{lemRealExc} and Proposition \ref{propForgfuncRE} of 
Section \ref{SRealMModCat} show that hypothesis $(d)$ reduces to Corollary \ref{corCofGS}.

% $\underline{A}$ satisfies hypothesis (d) of homotopical Morita theorem. Since the monad $\underline{A}$ preserves reflexive coequalizers (indeed, the functor $X \mapsto (-\mapsto X^-)$ preserves reflexive coequalizers) we need to verify that $\underline{A}$ takes cell attachments to homotopical cell attachments. We need thus to show that for any cell attachment in $\Gamma$-spaces (cf. Definition \ref{defcellatt} and following remark) the following map (diagram) is a stable equivalence between cofibrant $\Gamma$-spaces, and moreover that $A(X), A(Y), A(Z)$ are cofibrant $\Gamma$-spaces and $A(X)\to A(Y)$ is a cofibration of $\Gamma$-spaces. The second part is a consequence of the fact that $\underline{A})$ takes cofibrations between cofibrant $\Gamma$-spaces to cofibrations. In order to prove the first part, we first study the case $Z=\Gamma_0 $.%We need to prove that $$\underline{A}(Y)\cup_{\underline{A}(X)}\underline{A}(Z)\to\underline{A}(Y\cup_X Z)$$ is a stably acyclic cofibration if A is well-pointed, 
$X \to Y$ is a cofibration and $X,Y,Z$ are cofibrant.
By Proposition \ref{PropStDay}, where we suppose that $\mathcal E=SSet_{*}$ and $\mathcal A=\Gamma^{op}$, the strong monad $A\circ-$ on $\mathcal{E}_*$
induces a strong monad $A\circ-$ on $\Gamma$-spaces and hence,
by Proposition \ref{propmonTI}, an endomorphism monoid $A\circ \Gamma^1$ in $\Gamma$-spaces. 
Furthermore, by Proposition \ref{propGring} the endomorphism monoid of the associated strong monad $A\circ-$ on $\Gamma$-spaces may be identified with the Gamma-ring $A^S$ induced by the assembly map.

Therefore our homotopical Morita theorem shows that the monad morphism $$\lambda: -\wedge A^{S}  \rightarrow A\circ- $$ induces a Quillen equivalence between the category of $A^S$-modules and the category of $(A\circ)$-algebras:
$$ Ho\left( Mod_{A^{S}}\right) \simeq Ho\left( \mathcal Alg_{(A\circ)}\right) $$
\end{proof}

\begin{rem}
Schwede's original statement is slightly more general than ours insofar as he imposes no restriction at all on the $\Gamma$-theory $A$.
He is able to do so by cleverly using the monoid axiom (cf. Definition \ref{defmonax}) at all places where we use the cofibrancy of the underlying $\Gamma$-space $A$.
It should however be noticed that our cofibrancy condition is not as restrictive as that since we are using Bousfield-Friedlander's cofibrations.
%In the statement of Schwede's theorem there is no ``well-pointed'' axiom since he uses that $\Gamma$-spaces verify the monoid axiom (cf. Remark (b) after Theorem \ref{thMain}).
\end{rem}
%If the underlying simplicial $\Gamma$-set is cofibrant in Bousfield-Friedlander sense, then by the theorem \ref{thasmap}, the strong monad satisfies the axiom of our theorem and we recover the result of Stefan Schwede \cite{SchSHAT}.   
\begin{rem}

Theorem \ref{ThSch} has its intrinsic limitation in the fact that $\Gamma$-spaces solely model connective spectra. Lydakis \cite{LydSFSHT} proves that we can embed the category of 
$\Gamma$-spaces ($\mathcal E=SSet$) into the category of strong endofunctors of $\mathcal E_{*}$ which are determined
(by enriched left Kan extension) by their values on the simplicial sets of finite presentation (i.e. having only a finite number of non-degenerated simplices or, equivalently, having a compact geometric realization).
This actually is a category (the hom-sets are small) and we can define a model structure which extends in a certain sense the one on $\Gamma$-spaces (one more time there is a strict version and a stable version). 
Lydakis \cite{LydSFSHT} (cf. also \cite{MMSSDSp}) proves that the stable version provides a model for all spectra.
Moreover, this category admits two monoidal structures: one (non-symmetric) corresponding to the composition of endofunctors and the other (symmetric) corresponding to the smash-product. The monoids for this smash-product are precisely the FSP's (Functor with Smash Product) of Bökstedt.

The homotopical Morita theorem then allows
(if one verifies that the category of strong endofunctors ``with compact support'' is stable monoidal model category and further that the axioms of the theorem are satisfied, essentially the verifications done in Section \ref{sAsmPpax}, but now in the case of strong endofunctors ``with compact support'') 
to associate to a strong monad ``with compact support'' a FSP of Bökstedt such that the category of algebras of the monad is Quillen equivalent to the category of modules for the corresponding FSP. 

\end{rem}

\bibliographystyle{alpha.bst}
%Autres styles intéressants jbact.bst, amsalpha, alpha, finplain.bst, acm.bst
\bibliography{TheseEng}

\begin{thebibliography}{EKMM97}

\bibitem[Bar10]{BarLRModC}
C.~Barwick.
\newblock On left and right model categories and left and right {B}ousfield
  localizations.
\newblock {\em Homology, Homotopy and Applications}, 12(2):245--320, 2010.

\bibitem[Ber07]{BIWPSCILS}
Clemens Berger.
\newblock Iterated wreath product of the simplex category and iterated loop
  spaces.
\newblock {\em Adv. Math.}, 213(1):230--270, 2007.

\bibitem[BF78]{BouHT}
A.~K. Bousfield and E.~M. Friedlander.
\newblock Homotopy theory of {$\Gamma $}-spaces, spectra, and bisimplicial
  sets.
\newblock In {\em Geometric applications of homotopy theory ({P}roc. {C}onf.,
  {E}vanston, {I}ll., 1977), {II}}, volume 658 of {\em Lecture Notes in Math.},
  pages 80--130. Springer, Berlin, 1978.

\bibitem[BM06]{BerBoardVogtRes}
Clemens Berger and Ieke Moerdijk.
\newblock The {B}oardman-{V}ogt resolution of operads in monoidal model
  categories.
\newblock {\em Topology}, 45(5):807--849, 2006.

\bibitem[BM09]{BMDCAO}
Clemens Berger and Ieke Moerdijk.
\newblock On the derived category of an algebra over an operad.
\newblock {\em Georgian Math. J.}, 16(1):13--28, 2009.

\bibitem[BM11]{BMENRC}
Clemens Berger and Ieke Moerdijk.
\newblock On an extension of the notion of {R}eedy category.
\newblock {\em Math. Z.}, 269:977--1004, 2011.

\bibitem[Bor94]{BorHCA}
Francis Borceux.
\newblock {\em Handbook of categorical algebra. 1,2,3}, volume 50,51,52 of {\em
  Encyclopedia of Mathematics and its Applications}.
\newblock Cambridge University Press, Cambridge, 1994.
\newblock Basic category theory, Categories and structures, Categories of
  sheaves.

\bibitem[Bun69]{BungeRFCCA}
Marta~C. Bunge.
\newblock Relative functor categories and categories of algebras.
\newblock {\em J. Algebra}, 11:64--101, 1969.

\bibitem[DS95]{DwSpHtMc}
W.~G. Dwyer and J.~Spali{\'n}ski.
\newblock Homotopy theories and model categories.
\newblock In {\em Handbook of algebraic topology}, pages 73--126.
  North-Holland, Amsterdam, 1995.

\bibitem[EK66]{EilCC}
Samuel Eilenberg and G.~Max Kelly.
\newblock Closed categories.
\newblock In {\em Proc. {C}onf. {C}ategorical {A}lgebra ({L}a {J}olla,
  {C}alif., 1965)}, pages 421--562. Springer, New York, 1966.

\bibitem[EKMM97]{EKMRMASH}
A.~D. Elmendorf, I.~Kriz, M.~A. Mandell, and J.~P. May.
\newblock {\em Rings, modules, and algebras in stable homotopy theory},
  volume~47 of {\em Mathematical Surveys and Monographs}.
\newblock American Mathematical Society, Providence, RI, 1997.
\newblock With an appendix by M. Cole.

\bibitem[Gab62]{GDCA}
Pierre Gabriel.
\newblock Des cat\'egories ab\'eliennes.
\newblock {\em Bull. Soc. Math. France}, 90:323--448, 1962.

\bibitem[GLLN02]{GoubLRMT}
Jean Goubault-Larrecq, Slawomir Lasota, and David Nowak.
\newblock Logical relations for monadic types.
\newblock In {\em Computer science logic}, volume 2471 of {\em Lecture Notes in
  Comput. Sci.}, pages 553--568. Springer, Berlin, 2002.

\bibitem[GS07]{GoeMCSM}
Paul Goerss and Kristen Schemmerhorn.
\newblock Model categories and simplicial methods.
\newblock In {\em Interactions between homotopy theory and algebra}, volume 436
  of {\em Contemp. Math.}, pages 3--49. Amer. Math. Soc., Providence, RI, 2007.

\bibitem[Hir03]{DwyHLFMCHC}
Philip~S. Hirschhorn.
\newblock {\em Model categories and their localizations}, volume~99 of {\em
  Mathematical Surveys and Monographs}.
\newblock American Mathematical Society, Providence, RI, 2003.

\bibitem[Hov99]{HovMC}
Mark Hovey.
\newblock {\em Model categories}, volume~63 of {\em Mathematical Surveys and
  Monographs}.
\newblock American Mathematical Society, Providence, RI, 1999.

\bibitem[Kel82]{KelBCECT}
Gregory~Maxwell Kelly.
\newblock {\em Basic concepts of enriched category theory}, volume~64 of {\em
  London Mathematical Society Lecture Note Series}.
\newblock Cambridge University Press, Cambridge, 1982.

\bibitem[Koc70]{KockMS}
Anders Kock.
\newblock Monads on symmetric monoidal closed categories.
\newblock {\em Arch. Math. (Basel)}, 21:1--10, 1970.

\bibitem[Koc72]{KockSF}
Anders Kock.
\newblock Strong functors and monoidal monads.
\newblock {\em Arch. Math. (Basel)}, 23:113--120, 1972.

\bibitem[KS74]{KelRE2C}
G.~M. Kelly and Ross Street.
\newblock Review of the elements of {$2$}-categories.
\newblock In {\em Category {S}eminar ({P}roc. {S}em., {S}ydney, 1972/1973)},
  pages 75--103. Lecture Notes in Math., Vol. 420. Springer, Berlin, 1974.

\bibitem[Lin69a]{LCCA}
F.~E.~J. Linton.
\newblock Coequalizers in categories of algebras.
\newblock In {\em Sem. on {T}riples and {C}ategorical {H}omology {T}heory
  ({ETH}, {Z}\"urich, 1966/67)}, pages 75--90. Springer, Berlin, 1969.

\bibitem[Lin69b]{LintRFSAR}
F.~E.~J. Linton.
\newblock Relative functorial semantics: {A}djointness results.
\newblock In {\em Category {T}heory, {H}omology {T}heory and their
  {A}pplications, {III} ({B}attelle {I}nstitute {C}onference, {S}eattle,
  {W}ash., 1968, {V}ol. {T}hree)}, pages 384--418. Springer, Berlin, 1969.

\bibitem[LS02]{LackFTMII}
Stephen Lack and Ross Street.
\newblock The formal theory of monads. {II}.
\newblock {\em J. Pure Appl. Algebra}, 175(1-3):243--265, 2002.
\newblock Special volume celebrating the 70th birthday of Professor Max Kelly.

\bibitem[Lyd98]{LydSFSHT}
Manos Lydakis.
\newblock Simplicial functors and stable homotopy theory.
\newblock {\em preprint}, pages 1--47, 1998.

\bibitem[Lyd99]{LydSG}
Manos Lydakis.
\newblock Smash products and {$\Gamma$}-spaces.
\newblock {\em Math. Proc. Cambridge Philos. Soc.}, 126(2):311--328, 1999.

\bibitem[Mac71]{MaclCWM}
Saunders MacLane.
\newblock {\em Categories for the working mathematician}.
\newblock Springer-Verlag, New York, 1971.
\newblock Graduate Texts in Mathematics, Vol. 5.

\bibitem[May72]{MayGeoIterLoopSp}
J.~P. May.
\newblock {\em The geometry of iterated loop spaces}.
\newblock Springer-Verlag, Berlin, 1972.
\newblock Lectures Notes in Mathematics, Vol. 271.

\bibitem[MMSS01]{MMSSDSp}
M.~A. Mandell, J.~P. May, S.~Schwede, and B.~Shipley.
\newblock Model categories of diagram spectra.
\newblock {\em Proc. London Math. Soc. (3)}, 82(2):441--512, 2001.

\bibitem[Mor58]{MorDM}
Kiiti Morita.
\newblock Duality for modules and its applications to the theory of rings with
  minimum condition.
\newblock {\em Sci. Rep. Tokyo Kyoiku Daigaku Sect. A}, 6:83--142, 1958.

\bibitem[PG64]{GPCCA}
Nicolae Popesco and Pierre Gabriel.
\newblock Caract\'erisation des cat\'egories ab\'eliennes avec g\'en\'erateurs
  et limites inductives exactes.
\newblock {\em C. R. Acad. Sci. Paris}, 258:4188--4190, 1964.

\bibitem[Qui67]{QuiHA}
Daniel~G. Quillen.
\newblock {\em Homotopical algebra}.
\newblock Lecture Notes in Mathematics, No. 43. Springer-Verlag, Berlin, 1967.

\bibitem[Qui69]{QRatHTh}
Daniel Quillen.
\newblock Rational homotopy theory.
\newblock {\em Ann. of Math. (2)}, 90:205--295, 1969.

\bibitem[Sch99]{SStHomAlgGS}
Stefan Schwede.
\newblock Stable homotopical algebra and {$\Gamma$}-spaces.
\newblock {\em Math. Proc. Cambridge Philos. Soc.}, 126(2):329--356, 1999.

\bibitem[Sch01]{SchSHAT}
Stefan Schwede.
\newblock Stable homotopy of algebraic theories.
\newblock {\em Topology}, 40(1):1--41, 2001.

\bibitem[Sch04]{SchMTADSMC}
Stefan Schwede.
\newblock Morita theory in abelian, derived and stable model categories.
\newblock In {\em Structured ring spectra}, volume 315 of {\em London Math.
  Soc. Lecture Note Ser.}, pages 33--86. Cambridge Univ. Press, Cambridge,
  2004.

\bibitem[Seg74]{SECCT}
Graeme Segal.
\newblock Categories and cohomology theories.
\newblock {\em Topology}, 13:293--312, 1974.

\bibitem[Shi07]{ShiMTSHT}
Brooke Shipley.
\newblock Morita theory in stable homotopy theory.
\newblock In {\em Handbook of tilting theory}, volume 332 of {\em London Math.
  Soc. Lecture Note Ser.}, pages 393--411. Cambridge Univ. Press, Cambridge,
  2007.

\bibitem[SS00]{SchAMMMC}
Stefan Schwede and Brooke~E. Shipley.
\newblock Algebras and modules in monoidal model categories.
\newblock {\em Proc. London Math. Soc. (3)}, 80(2):491--511, 2000.

\bibitem[Str72]{StrFTM}
Ross Street.
\newblock The formal theory of monads.
\newblock {\em J. Pure Appl. Algebra}, 2(2):149--168, 1972.

\bibitem[Tab08]{TabMRCLT}
Nicolas Tabareau.
\newblock Modalités de ressource et contrôle en logique tensorielle.
\newblock {\em PhD, Paris 7}, 2008.

\end{thebibliography}
\nocite{MayGeoIterLoopSp, BerBoardVogtRes, BorHCA,EilCC,EKMRMASH, KelBCECT,KockSF,KockMS,MaclCWM,QuiHA,SchMTADSMC,StrFTM,SchSHAT,HovMC,LackFTMII,BungeRFCCA,SchAMMMC,GoubLRMT,LydSG, LydSFSHT, DwyHLFMCHC,ShiMTSHT,BouHT,KelRE2C,BMDCAO,BIWPSCILS,LCCA, SECCT, SStHomAlgGS, GDCA, GPCCA, QRatHTh,BMENRC, MMSSDSp, TabMRCLT, BarLRModC}
%Bibliographie, ne les range pas dans l'ordre alphabetique. A voir
%Pour utiliser une ref biblio \cite{}
%Pour utiliser une ref dans la these \ref{}

%Exemple de save et restore qui permet de placer un objet de dimension specifique dans le diagramme sans le deranger 
%\begin{diagram}{}
%$$
%\shorthandoff{;:!?}
%\xymatrix @!0 @C=3cm @R=2.3cm{\relax
%\underline{\mathcal Alg}_{T}\left( X,Y\right) \ar@{^{(}->}[rr]^{i} && \underline{\mathcal E}\left( X,Y\right) \ar@<-2pt>[rr]_{\varphi} & %\save[]+<0pt,5pt>*{\underline{\mathcal E}\left( TX,TY\right)} \ar@<-2pt>[l] \ar@<2pt>[r] \restore & \underline{\mathcal E}\left( TX,Y\right)\\
%& I \ar@{-->}[lu] \ar[ru]_{f} & \\
%}
%$$
%\end{diagram}
\begin{titlepage}
\vspace*{2cm}
\begin{center}
{\Large\bf R\'esum\'e}\\
\vspace*{0.5cm}
%\selectlanguage{francais}
\end{center}

Nous développons une version homotopique de la théorie de Morita classique en utilisant la notion de monade forte. 
C'était Anders Kock qui a montré qu'une monade $T$ dans une catégorie monoidale $\mathcal{E}$ est forte si et seulement si la monade $T$ est enrichie.
Nous montrons que cette correspondance entre force et enrichissement se traduit par un 2-isomorphisme de 2-catégories. 
Sous certaines conditions sur la monade $T$,
nous montrons que la catégorie homotopique des $T$-algèbres est équivalente au sens de Quillen à la catégorie homotopique des modules sur le monoïde d'endomorphismes de la $T$-algèbre $T(I)$ librement engendré par
l'unité $I$ de $\mathcal{E}$.
Dans le cas particulier où $\mathcal{E}$ est la catégorie des $\Gamma$-espaces de Segal munie de la structure de modèle stable de Bousfield-Friedlander et $T$ est la monade forte associée à une $\Gamma$-théorie bien pointée, 
nous retrouvons un théorème de Stefan Schwede, comme corollaire du théorème homotopique de Morita.     

\textbf{Mots-clés}: Equivalence de Morita, Monade forte, Monade enrichie, Catégorie de modèles, Homotopie stable, Gamma espaces. 
\vspace*{2cm}
\begin{center}
{\Large\bf Abstract}\\
\vspace*{0.5cm}
\end{center}

We develop a homotopy theoretical version of classical Morita theory using the notion of a strong monad. 
It was Anders Kock who proved that a monad $T$ in a monoidal category $\mathcal{E}$ is strong if and only if $T$ is enriched in $\mathcal{E}$. 
We prove that this correspondence between strength and enrichment follows from a 2-isomorphism of 2-categories. 
Under certain conditions on $T$,
we prove that the category of $T$-algebras is Quillen equivalent to the category of modules over the endomorphism monoid of the $T$-algebra
$T(I)$ freely generated by the unit $I$ of $\mathcal{E}$.
In the special case where $\mathcal{E}$ is the category of $\Gamma$-spaces equipped with Bousfield-Friedlander's stable model structure and $T$ is the strong monad associated to a well-pointed $\Gamma$-theory, 
we recover a theorem of Stefan Schwede, as an instance of a general homotopical Morita theorem. 

\textbf{Key-words}: Morita equivalence, Strong monad, Enriched monad, Model category, Stable homotopy theory, Gamma spaces.    
\end{titlepage} 
\end{document}